\documentclass[11pt]{article}

\usepackage[utf8]{inputenc}
\usepackage[english]{babel}
\usepackage{dutchcal}
\usepackage[T1]{fontenc}

\usepackage{amsmath}
\usepackage{amssymb}
\usepackage{amsthm}
\usepackage{tikz}
\usepackage{tikz-cd}
\usepackage{todonotes}
\usepackage{hyperref}

\usepackage{zref-clever}
\newcommand{\cref}[1]{\zcref{#1}}
\newcommand{\Cref}[1]{\zcref[S]{#1}}

\usepackage{quiver}

\usepackage{footmisc}
\usepackage{mathrsfs}
\usepackage{sseq}
\usepackage{spectralsequences}
\usepackage{extarrows}
\usepackage{enumitem}
\usepackage{stmaryrd}

\hypersetup{
  colorlinks   = true, 
  urlcolor     = blue, 
  linkcolor    = blue, 
  citecolor   = blue 
}
\newcommand{\llpar}{\llparenthesis}
\newcommand{\rrpar}{\rrparenthesis}

\newcommand{\ab}{\mathrm{ab}}
\newcommand{\abfgq}{\textup{ab-fgq}}

\newcommand{\cn}{\mathrm{cn}}
\newcommand{\cpl}{\mathrm{cpl}}

\newcommand{\Ell}{\mathrm{Ell}}
\newcommand{\Ellst}{\mathrm{Ell}^{\mathrm{st}}}
\newcommand{\et}{\mathrm{\acute{e}t}}
\newcommand{\fgq}{\mathrm{fgq}}
\newcommand{\fin}{\mathrm{fin}}
\newcommand{\fpqc}{\mathrm{fpqc}}

\newcommand{\gl}{\mathrm{gl}}
\newcommand{\loc}{\mathrm{loc}}
\newcommand{\LS}{\mathrm{LS}}
\newcommand{\ori}{\mathrm{or}}
\newcommand{\nc}{{\mathrm{nc}}}
\newcommand{\relSpec}{{\underline{\mathrm{Spec}}}}
\newcommand{\st}{\mathrm{st}}
\newcommand{\temp}{\mathrm{temp}}
\DeclareMathOperator{\Spec}{Spec}
\DeclareMathOperator{\Spet}{Sp\et}

\newcommand{\twoabgl}{\textup{(2,ab)-gl}}

\newcommand{\1}{\mathbf{1}}

\newcommand{\ELL}{{{E}{\ell\ell}}}

\DeclareMathOperator{\KU}{KU}

\newcommand{\gTMF}{{\mathbf{TMF}}}
\DeclareMathOperator{\ku}{ku}
\DeclareMathOperator{\KO}{KO}
\DeclareMathOperator{\ko}{ko}

\DeclareMathOperator{\res}{res}

\newcommand{\Sph}{\mathbf{S}}

\newcommand{\TMF}{\mathrm{TMF}}

\DeclareMathOperator{\CAlg}{CAlg}

\DeclareMathOperator{\Cat}{Cat}
\newcommand{\PrL}{{\mathrm{Pr}^L}}
\newcommand{\PrLst}{{\mathrm{Pr}^L_\st}}

\DeclareMathOperator{\DM}{DM}
\newcommand{\FG}{{\mathrm{FG}}}
\DeclareMathOperator{\Fun}{Fun}

\DeclareMathOperator{\Glo}{Glo}
\DeclareMathOperator{\Orb}{Orb}

\DeclareMathOperator{\Ind}{Ind}
\DeclareMathOperator{\Mod}{Mod}

\DeclareMathOperator{\QCoh}{QCoh}

\DeclareMathOperator{\Shv}{Shv}
\DeclareMathOperator{\Stk}{Stk}

\DeclareMathOperator{\SpSch}{SpSch}
\DeclareMathOperator{\Sp}{Sp}
\DeclareMathOperator{\SpDM}{SpDM}
\newcommand{\Spc}{\mathcal{S}}

\DeclareMathOperator{\Top}{Top}

\newcommand{\colim}{\mathrm{colim}\,}

\DeclareMathOperator{\fib}{fib}

\newcommand{\id}{\mathrm{id}}
\DeclareMathOperator{\Map}{Map}
\DeclareMathOperator{\map}{map}

\DeclareMathOperator{\Hom}{Hom}
\newcommand{\HOM}{\underline{\mathrm{Hom}}}
\newcommand{\op}{\mathrm{op}}
\newcommand{\dual}[1]{{{\widehat{#1}}}}

\newcommand{\B}{\mathbf{B}}

\newcommand{\calC}{\mathcal{C}}

\newcommand{\calD}{\mathcal{D}}

\newcommand{\E}{\mathbf{E}}
\newcommand{\sfE}{\mathsf{E}}
\newcommand{\EE}{\mathcal{E}}
\newcommand{\calE}{\mathcal{E}}

\newcommand{\F}{\mathbf{F}}
\newcommand{\calF}{\mathcal{F}}
\newcommand{\G}{\mathbf{G}}
\newcommand{\sfG}{\mathsf{G}}

\newcommand{\M}{\mathsf{M}}
\newcommand{\sfM}{\mathsf{M}}
\newcommand{\calM}{\mathcal{M}}
\newcommand{\calN}{\mathcal{N}}

\renewcommand{\O}{\mathcal{O}}

\renewcommand{\P}{\mathbf{P}}
\newcommand{\calP}{\mathcal{P}}
\newcommand{\calQ}{\mathcal{Q}}

\newcommand{\Q}{\mathbf{Q}}

\newcommand{\sfS}{\mathsf{S}}

\newcommand{\sfT}{\mathsf{T}}

\newcommand{\sfU}{\mathsf{U}}
\newcommand{\sfV}{\mathsf{V}}
\newcommand{\U}{\mathsf{U}}
\newcommand{\sfW}{\mathsf{W}}

\newcommand{\X}{\mathsf{X}}
\newcommand{\sfX}{\mathsf{X}}
\newcommand{\calX}{\mathcal{X}}

\newcommand{\Y}{\mathsf{Y}}
\newcommand{\calY}{\mathcal{Y}}
\newcommand{\sfY}{\mathsf{Y}}

\newcommand{\Z}{\mathbf{Z}}
\newcommand{\sfZ}{\mathsf{Z}}

\newcommand{\Ga}{\Gamma}

\newcommand{\gGa}{\underline{\Ga}}

\NewDocumentCommand{\newzctheorem}{mO{#1}m}{
  \newtheorem{#1}[sharedtheoremcounter]{#3}
    \AddToHook{env/#1/begin}{
      \zcsetup{countertype={sharedtheoremcounter=#2}}}
}

\numberwithin{equation}{sharedtheoremcounter}

\newzctheorem{theorem}{Theorem}
\zcRefTypeSetup{theorem}{
  Name-sg = Th.\hspace{-0.4em} ,
  name-sg = th.\hspace{-0.4em} ,
  Name-pl = Ths.\hspace{-0.4em} ,
  name-pl = ths.\hspace{-0.4em} ,
}

\newzctheorem{cor}{Corollary}
\zcRefTypeSetup{cor}{
  Name-sg = Cor.\hspace{-0.4em} ,
  name-sg = cor.\hspace{-0.4em} ,
  Name-pl = Cors.\hspace{-0.4em} ,
  name-pl = cors.\hspace{-0.4em} ,
}

\newtheorem{theoremalph}{Theorem}

\zcRefTypeSetup{theoremalph}{
  Name-sg = Th.\hspace{-0.3em} ,
  name-sg = th.\hspace{-0.3em} ,
  Name-pl = Ths.\hspace{-0.3em} ,
  name-pl = ths.\hspace{-0.3em} ,
}

\zcRefTypeSetup{coralph}{
  Name-sg = Cor.\hspace{-0.4em} ,
  name-sg = cor.\hspace{-0.4em} ,
  Name-pl = Cors.\hspace{-0.4em} ,
  name-pl = cors.\hspace{-0.4em} ,
}

\newzctheorem{lemma}{Lemma}
\zcRefTypeSetup{lemma}{
  Name-sg = Lm.\hspace{-0.4em},
  name-sg = lm.\hspace{-0.4em} ,
  Name-pl = Lms.\hspace{-0.4em} ,
  name-pl = lms.\hspace{-0.4em} ,
}

\newzctheorem{prop}{Proposition}
\zcRefTypeSetup{prop}{
  Name-sg = Pr.\hspace{-0.4em} ,
  name-sg = pr.\hspace{-0.4em} ,
  Name-pl = Prs.\hspace{-0.4em} ,
  name-pl = prs.\hspace{-0.4em} ,
}

\theoremstyle{definition}
\newzctheorem{mydef}{Definition}
\zcRefTypeSetup{mydef}{
  Name-sg = Df.\hspace{-0.2em} ,
  name-sg = df.\hspace{-0.2em} ,
  Name-pl = Dfs.\hspace{-0.2em} ,
  name-pl = dfs.\hspace{-0.2em} ,
}

\newzctheorem{example}{Example}
\zcRefTypeSetup{example}{
  Name-sg = Ex.\hspace{-0.4em} ,
  name-sg = ex.\hspace{-0.4em} ,
  Name-pl = Exs.\hspace{-0.4em} ,
  name-pl = exs.\hspace{-0.4em} ,
}

\newzctheorem{remark}{Remark}
\zcRefTypeSetup{remark}{
  Name-sg = Rmk.\hspace{-0.4em} ,
  name-sg = rmk.\hspace{-0.4em} ,
  Name-pl = Rmks.\hspace{-0.4em} ,
  name-pl = rmks.\hspace{-0.4em} ,
}

\newzctheorem{warning}{Warning}
\zcRefTypeSetup{warning}{
  Name-sg = Wrn.\hspace{-0.4em} ,
  name-sg = wrn.\hspace{-0.4em} ,
  Name-pl = Wrns.\hspace{-0.4em} ,
  name-pl = wrns.\hspace{-0.4em} ,
}

\newzctheorem{notation}{Notation}
\zcRefTypeSetup{notation}{
  Name-sg = Nt.\hspace{-0.4em} ,
  name-sg = nt.\hspace{-0.4em} ,
  Name-pl = Nts.\hspace{-0.4em} ,
  name-pl = nts.\hspace{-0.4em} ,
}

\newzctheorem{construction}{Construction}
\zcRefTypeSetup{construction}{
  Name-sg = Con.\hspace{-0.4em} ,
  name-sg = con.\hspace{-0.4em} ,
  Name-pl = Cons.\hspace{-0.4em} ,
  name-pl = cons.\hspace{-0.4em} ,
}

\newzctheorem{variant}{Variant}
\zcRefTypeSetup{variant}{
  Name-sg = Var.\hspace{-0.4em} ,
  name-sg = var.\hspace{-0.4em} ,
  Name-pl = Vars.\hspace{-0.4em} ,
  name-pl = vars.\hspace{-0.4em} ,
}

\zcRefTypeSetup{section}{
  Name-sg = \textsection\negthickspace,
  name-sg = \textsection\negthickspace,
  Name-pl = \textsection\negthickspace,
  name-pl = \textsection\negthickspace,
}
\zcRefTypeSetup{subsection}{
  Name-sg = \textsection\negthickspace,
  name-sg = \textsection\negthickspace,
  Name-pl = \textsection\negthickspace,
  name-pl = \textsection\negthickspace,
}
\zcRefTypeSetup{subsubsection}{
  Name-sg = \textsection\negthickspace,
  name-sg = \textsection\negthickspace,
  Name-pl = \textsection\negthickspace,
  name-pl = \textsection\negthickspace,
}
\zcRefTypeSetup{appendix}{
  Name-sg = \textsection\negthickspace,
  name-sg = \textsection\negthickspace,
  Name-pl = \textsection\negthickspace,
  name-pl = \textsection\negthickspace,
}

\newcommand{\triangleleftend}[1]{%
  \AddToHook{env/#1/begin}{%
    \pushQED{\qed}%
    \renewcommand{\qedsymbol}{\ensuremath{\triangleleft}}%
  }%
  \AddToHook{env/#1/end}{\popQED}%
}
\triangleleftend{mydef}
\triangleleftend{remark}
\triangleleftend{example}
\triangleleftend{construction}
\triangleleftend{warn}
\triangleleftend{warning}
\triangleleftend{notation}
\triangleleftend{conjecture}
\triangleleftend{question}

\usepackage{parskip}

\makeatletter
\def\thm@space@setup{%
  \thm@preskip=\parskip \thm@postskip=0pt
}
\makeatother
\begingroup
    \makeatletter
    \@for\theoremstyle:=mydef,remark,plain,theorem,defn\do{%
        \expandafter\g@addto@macro\csname th@\theoremstyle\endcsname{%
            \addtolength\thm@preskip\parskip
            }%
        }
\endgroup
\usetikzlibrary{spath3}
\tikzset{between/.style n args={2}{/tikz/spath/at end path construction={
    \tikzset{spath/split at keep middle={current}{#1}{#2}}
}}}

\begin{document}
\title{
On equivariant affineness in elliptic cohomology
}

\author{Jack Morgan Davies\footnote{\url{jack.davies@ruhr-uni-bochum.de}}, Sil Linskens\footnote{\url{sil.linskens@mathematiks.uni-regensburg.de}}, and Leonard Tokic\footnote{\url{leonard.tokic@ruhr-uni-bochum.de}}}
\date{\today}

\maketitle

\begin{abstract}
    We investigate {\it equivariant affineness} in spectral algebraic geometry: the phenomenon where $\infty$-categories of quasi-coherent sheaves of a spectral stack are equivalent to $\infty$-categories of modules over an equivariant ring spectrum. In the nonequivariant case, this is known as 0-affineness, and has been studied by Gaitsgory, Mathew--Meier, amongst others. Allowing for equivariant phenomena opens up a much wider class of spectral stacks to be studied through the lens of spectra.    
    
    The spectral stacks at the heart of our analysis are oriented elliptic curves, their associated Hilbert schemes, and related constructions. When working over the universal base, so the moduli stack of oriented elliptic curves, we show that these spectral stacks are $U(1)$-, $U(n)$-, and $SU(n)$-equivariantly affine, with associated $G$-ring spectrum Gepner--Meier's {\it equivariant topological modular forms} $\mathrm{TMF}_G$. Our results also encompass the groups $Sp(n)$ as well as all finite products of such groups. Our most general result is a relative statement over an arbitrary base stack. These results can be viewed as an integral and global refinement of Greenlees' algebraic model for modules over rational $U(1)$-equivariant elliptic cohomology.
\end{abstract}  

\setcounter{tocdepth}{2}
\tableofcontents

\section{Introduction}

A presentably symmetric monoidal stable $\infty$-category $(\calC, \otimes, \1)$ is \emph{affine} if the functor
\begin{equation}\label{eq:maps_out_of_unit}
\calC \to \Mod_R(\Sp); \qquad X \mapsto \map_\calC(\1, X)
\end{equation}
is an equivalence, where $R = \map_\calC(\1, \1)$ is the endomorphism $\E_\infty$-ring of the unit $\1$. The name is inspired from the situation where $\calC = \Mod_A(\Sp) = \QCoh(\Spec A)$ is the $\infty$-category of modules over an $\E_\infty$-ring $A$, in which case $\map_{\Mod_A(\Sp)}(A,A) = A$ and (\ref{eq:maps_out_of_unit}) is an equivalence. Establishing affineness for interesting $\calC$ can be very powerful, allowing one to pass information back and forth through the equivalence. This can benefit the study of both $\calC$ and $R$.

\begin{example}\label{ex:MM_mellor_affineness_intro}
On the geometric side, consider the $\infty$-category of quasi-coherent sheaves $\QCoh(\M_\Ell^\ori)$ on the \emph{spectral moduli stack of oriented elliptic curves}. This $\infty$-category is affine by a famous result of Mathew--Meier \cite{akhilandlennart}, and the functor (\ref{eq:maps_out_of_unit}) is given by the global sections functor
\[\Ga \colon \QCoh(\M_\Ell^\ori) \xrightarrow{\simeq} \Mod_{\Ga(\M_\Ell^\ori)}(\Sp).\]
Stacks $\X$ such that $\QCoh(\X)$ is affine are called \emph{0-affine}. The $\E_\infty$-ring of global sections $\Ga(\O_{\M_\Ell^\ori}) = \Ga(\M_\Ell^\ori)$ is Hopkins' ring $\TMF$ of \emph{topological modular forms} \cite{hopkinsfirsttmficm,hopkinssecondtmficm}. The affineness of $\QCoh(\M_\Ell^\ori)$ can be used to reduce questions concerning $\TMF$ to local questions on $\M_\Ell^\ori$, such as Galois theory \cite[\textsection 7]{akhilandlennart} and duality \cite{tmfwls}, and is also a key step in computations \cite[Th.B]{smfcomputation}. Conversely, it also shows that $\QCoh(\M_\Ell^\ori)$ is categorically extremely simple: it is compactly generated by its unit $\O_{\M_\Ell^\ori}$. This is perhaps a surprising fact, as $\infty$-categories of quasi-coherent sheaves on stacks often do \textbf{not} have a compact unit 
and can also easily fail to have a single generator.
\end{example}

\begin{example}\label{ex:MNN_ktheory_affineness_intro}
On the equivariant side, consider the $\infty$-category of modules $\Mod_{\KU_G}(\Sp_G)$ of Atiyah--Segal's \emph{$G$-equivariant topological $K$-theory spectrum} $\KU_G$ in $G$-spectra, where $G$ is a compact Lie group. In this case, the functor (\ref{eq:maps_out_of_unit}) sends a $\KU_G$-module in $G$-spectra $M$ to its $G$-fixed points $M^G$. For connected $G$ with torsion-free $\pi_1 G$, this $G$-fixed points functor is an equivalence and consequently $\Mod_{\KU_G}(\Sp_G)$ is affine. For tori $G=U(1)^r$, this is a result of Greenlees--Shipley \cite{greenlees_shipley_unipotence}, and the case for more general $G$ is due to Mathew--Naumann--Noel \cite{nilpotenceanddescentinequivariant}.
It is also remarkable in this case that $\Mod_{\KU_G}(\Sp_G)$ is affine, as although the $\infty$-category of $G$-spectrum $\Sp_G$ is compactly generated, it certainly requires more than one generator, often indexed by the orbits $\Sigma^\infty_+ G/H$ ranging over all conjugacy classes of closed subgroups $H \leq G$.
\end{example}

The goal of this article is to simultaneously unite and generalise the previous two examples under the banner of \emph{equivariant affineness}. From an algebro-geometric perspective, this shows that many $\infty$-categories of quasi-coherent sheaves are well-behaved, i.e.~have compact unit and generators parametrised by the orbit spaces $G/H$. For example, this will apply to $\calE$, the \emph{universal oriented elliptic curve}, where $\QCoh(\calE)$ is definitely \textbf{not} affine; see \Cref{rmk:universal_case_not_affine}. From an equivariant perspective, this shows the certain $G$-equivariant ring spectra can be studied using the local geometry of the associated stacks. Building from the previous example, the $U(1)$-equivariant ring spectrum associated with $\calE$ is \emph{equivariant topological modular forms} $\TMF_{U(1)}$ of Gepner--Meier \cite{davidandlennart}, and so equivariant affineness in this case yields an equivalence
\begin{equation}\label{eq:opening_intro}\Ga_{U(1)} \colon \QCoh(\calE) \xrightarrow{\simeq} \Mod_{\TMF_{U(1)}}(\Sp_{U(1)})\end{equation}
through a $U(1)$-equivariant global sections functor; see \Cref{maintheorem_universal} below. This shows that the algebra of $\TMF_{U(1)}$ is determined by the geometry of $\calE$. The use of $U(1)$-equivariant spectra compensates for the fact that the elliptic curve $\calE$ is not affine. This particular equivalence can also be viewed as an integral incarnation of Greenlees' algebraic model for rational $U(1)$-equivariant elliptic cohomology \cite{greenlees_rational_s1_elliptic}. 

Our most general result, see \Cref{maintheorem_arbitrary_base}, applies to arbitrary oriented elliptic curves as well as many related stacks such as their associated Hilbert schemes of $n$-points, quotients of symmetric products, and their finite products, all over a general base stack. This produces examples of $G$-equivariant affine $\infty$-categories of quasi-coherent sheaves for many more connected compact Lie groups $G$ other than $U(1)$. In order to state these results, we need to recall some foundations of equivariant elliptic cohomology.

\subsection*{Equivariant elliptic cohomology}
As pioneered by Grojnowski \cite{grojnowski_equivariant_elliptic}, Greenlees \cite{greenlees_rational_s1_elliptic}, Lurie \cite{lurieecsurvey,ec3}, and Gepner--Meier \cite{davidandlennart,gm2}, \emph{equivariant elliptic cohomology} uses the arithmetic and geometry of elliptic curves to produce powerful and subtle $G$-equivariant cohomology theories for compact Lie groups $G$. The construction of equivariant elliptic cohomology is similar to that of rational Bredon cohomology or equivariant topological $K$-theory, in the sense that all are born out of abelian schemes: one uses the additive group $\G_a$ for Bredon cohomology and the multiplicative group $\G_m$ for topological $K$-theory. 

To construct equivariant elliptic cohomology following Gepner--Meier, one starts with the data of an (oriented) elliptic curve $\sfE$ over a spectral stack $\M$. For each abelian compact Lie group $G$, one then defines a spectral scheme $\sfE_G$ as the homomorphism scheme
\[\sfE_G = \Hom_\M(\dual{G}, \sfE),\]
where $\dual{G} = \Hom(G,U(1))$ is the Pontryagin dual of $G$; in \cite{davidandlennart}, these $\sfE_G$ were written as $\Ell(\mathbf{B}G)$. In particular, $\sfE_{\Z/n} = \sfE[n]$ is the $n$-torsion of $\sfE$, and $\sfE_{U(1)} = \sfE$ recovers the elliptic curve itself. The global sections of $\sfE_G$ should be the $G$-fixed points of the equivariant cohomology theory associated with $\sfE$. Upgrading this construction to a genuine $G$-spectrum is challenging. Gepner--Meier \cite{davidandlennart} achieve this for $G=U(1)$ by relying on the the geometry of $\sfE_{U(1)} = \sfE$. The assignment $G\mapsto \sfE_G$ for abelian compact Lie groups is then generalised in \cite{gm2} to nonabelian Lie groups by way of a subtle Kan extension; this construction is hinted at in Lurie's survey \cite[Rmk.5.1]{lurieecsurvey} as well as Meier's Oberwolfach report \cite[p.1972-4]{integral_elliptic_oberwolfach}, and we give an overview in \Cref{rmk:construction_of_Ellst}.

For general compact Lie groups $G$, the stacks $\sfE_G$ are hard to control. Gepner--Meier identify the class of \emph{applicable} compact Lie groups (\Cref{df:applicableCLG}), including unitary groups, special unitary groups, symplectic groups, and finite products thereof, for which the assignment $G \mapsto \sfE_G$ has particularly nice properties:
\begin{itemize}
    \item The functor from the orbit category of $G$ to $\QCoh(\sfE_G)$ sending $G/H$ to $p_\ast \O_{\sfE_H}$ induced by $p\colon \sfE_H \to \sfE_G$, extends (and dualises) to a symmetric monoidal colimit-preserving \emph{equivariant elliptic homology functor}
    \begin{equation}\label{eq:equiv_ell_hom_intro} \O_{\sfE_G} \otimes - \colon \Sp_G \to \QCoh(\sfE_G);\end{equation}
    see \Cref{ssec:EEcohomology} for more details.
    \item For $G=U(n)$, $SU(n)$, or $Sp(n)$, the stacks $\sfE_G$ are relative schemes over their base, with underlying classical scheme given by an elliptic curve, Hilbert scheme, or a related construction; see \Cref{thm:extra-applicable}.
\end{itemize}

The value of the right adjoint to $\O_{\sfE_G}\otimes -$ on the unit $\O_{\sfE_G}$ of $\QCoh(\sfE_G)$ is written as $\Ga_G \sfE$; this is the $G$-equivariant ring spectrum, or \emph{$G$-equivariant elliptic cohomology theory}, associated with $\sfE$. As this right adjoint is lax monoidal, it naturally factors as the \emph{equivariant global sections functor}
\[\underline{\Ga}(\sfE_G, -) \colon \QCoh(\sfE_G) \to \Mod_{\Ga_G \sfE}(\Sp_G).\]
Over an affine base $\M = \Spec A$, we write $\Ga_G \sfE = A_G$, suppressing the elliptic curve, and over the universal base $\M_\Ell^\ori$ we write $\Ga_G(\calE) = \TMF_G$ for \emph{equivariant topological modular forms}, also suppressing the universal oriented elliptic curve $\calE$.

We can only define the $G$-equivariant global sections functor when $G$ is applicable, however, one can still define $\calE_G$, $\O_{\sfE_G} \otimes -$, and $\TMF_G$ for general compact Lie groups $G$. There are a few ways to achieve this, which we review in \Cref{thm:Ell_geo_nice_prop}, \Cref{rmk:general_cohomology_definition,rmk:TMFG_for_general_G}.

The main results of this article identify various $G$ and $\sfE$ such that $\QCoh(\sfE_G)$ is equivariantly affine, meaning the functor $\underline{\Ga}(\sfE_G,-)$ above is an equivalence.

\subsection*{Main results}
The first result applies to the universal oriented elliptic curve $\calE$ over $\M_\Ell^\ori$.

\begin{theoremalph}\label{maintheorem_universal}
    Let $G = \prod_i U(n_i) \times \prod_j SU(n_j) \times \prod_k Sp(n_k)$ be a finite product of unitary, special unitary, and symplectic groups. Then the $G$-global sections functor
    \[\underline{\Ga}(\calE_G,-) \colon \QCoh(\calE_G) \xrightarrow{\simeq} \Mod_{\TMF_G}(\Sp_G)\]
    is an equivalence of symmetric monoidal $\infty$-categories.
\end{theoremalph}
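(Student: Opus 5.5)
The plan is to recognise $\underline{\Ga}(\calE_G,-)$ as the right adjoint in an adjunction
\[\O_{\calE_G}\otimes_{\TMF_G} - \colon \Mod_{\TMF_G}(\Sp_G) \rightleftarrows \QCoh(\calE_G)\colon \underline{\Ga}(\calE_G,-),\]
and then run a Schwede--Shipley/Barr--Beck style argument. The left adjoint is obtained from the symmetric monoidal equivariant elliptic homology functor (\ref{eq:equiv_ell_hom_intro}) by base change along $\Sp_G\to\Mod_{\TMF_G}(\Sp_G)$, which is possible because $\O_{\calE_G}\otimes\TMF_G\to\O_{\calE_G}$ is the counit. Both sides are compactly generated: the source by the free modules $\TMF_G\otimes\Sigma^\infty_+G/H$ for closed $H\le G$. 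It therefore suffices to prove two things. First, the left adjoint is fully faithful on these generators. Second, its essential image generates $\QCoh(\calE_G)$ under colimits. Equivalently, we need that $\underline{\Ga}(\calE_G,-)$ is conservative and preserves colimits, and that the unit is an equivalence on generators.

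For full faithfulness, write $p\colon\calE_H\to\calE_G$ for the induced map. The left adjoint sends $\TMF_G\otimes \Sigma^\infty_+G/H$ to $p_*\O_{\calE_H}$. Its mapping spectra into an arbitrary $\calF$ are computed as $\Ga(\calE_H,p^*\calF)$, using that $p$ is affine or at least that $p_*$ has the appropriate dual behaviour, as recorded in \Cref{ssec:EEcohomology}. The key input is that $\O_{\calE_G}\otimes-$ sends $\Sigma^\infty_+G/H$ to a dualisable object. Combined with the definition $\Ga_G$ of the right adjoint on the unit, this gives $\underline{\Ga}(\calE_G,p_*\O_{\calE_H})^{K}\simeq\Ga(\calE_{K}\times_{\calE_G}\calE_H)$, which matches $(\TMF_G\otimes G/H)^K$ by the Kan-extension description of $\calE_{(-)}$ on orbits (\Cref{rmk:construction_of_Ellst}). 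Colimit preservation then reduces to compactness of each $p_*\O_{\calE_H}$. Over $\M_\Ell^\ori$ this holds because each $\calE_H\to\M_\Ell^\ori$ is a relative scheme (\Cref{thm:extra-applicable}), and pushforward to the base is quasi-compact, quasi-separated and of finite cohomological dimension; Mathew--Meier 0-affineness of $\M_\Ell^\ori$ then gives that $\Ga$ preserves colimits.

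The main obstacle is conservativity, or generation: showing that the sheaves $p_*\O_{\calE_H}\otimes$ (representation spheres) generate $\QCoh(\calE_G)$. First I would use Mathew--Meier to reduce to affine base $\Spec A$ étale over $\M_\Ell^\ori$, since generation is étale local on the base given compactness. Next, for $G=U(1)$, the curve $\calE$ has an ample family coming from the line bundles $\O(n\cdot e)$, which arise as $\O_{\calE}\otimes S^{n\lambda}$ for representation spheres. This is the analogue of Greenlees--Shipley's argument for $\KU_{U(1)}$. If $\calF$ has vanishing $\Ga(\calE,\calF(ne))$ for all $n$, then $\calF=0$. Concretely, the cofibre sequences $S(n\lambda)_+\to S^0\to S^{n\lambda}$ relate these twists to the torsion points $\calE[n]=\calE_{\Z/n}$, and the union of the $\calE[n]$ is dense.

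For products of $U(n_i)$, $SU(n)$ and $Sp(n)$, I would reduce to the maximal torus $T$. The map $\calE_T\to\calE_G$ is the quotient by the Weyl group (a Hilbert scheme or symmetric-product description). It is finite flat and faithful, so pushforward along $\calE_T\to\calE_G$ together with its adjoint detects zero objects. The torus case is a product of the $U(1)$ case by the Künneth property of $\calE_{(-)}$. Finally, symmetric monoidality is automatic: the left adjoint is symmetric monoidal, so its inverse equivalence is too.
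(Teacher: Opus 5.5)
Your outline is sound, but it is a genuinely different proof from the paper's. You prove full faithfulness formally and then prove that $\underline{\Ga}(\calE_G,-)$ is conservative:
\begin{itemize}
\item you pass to a strictly even-periodic affine étale cover, using compactness of the generators and $0$-affineness of $\M_\Ell^\ori$;
\item you treat $U(1)$ by twisting with $\O(ne)\simeq\O_\sfE\otimes S^{n\lambda}$, and tori by Künneth;
\item you handle every $G$ in the family at once by descent along the Weyl quotient $q\colon\sfE_T\to\sfE_G$.
\end{itemize}

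The step you leave implicit, and which carries this last reduction, is the following. By (\ref{eq:H-fixed_points}), $\underline{\Ga}(\sfE_T,q^*\calF)^K\simeq\Ga(\sfE_K,\calF|_{\sfE_K})\simeq\underline{\Ga}(\sfE_G,\calF)^K$ for every closed $K\subseteq T$. So $\underline{\Ga}(\sfE_G,\calF)=0$ forces $\underline{\Ga}(\sfE_T,q^*\calF)=0$, hence $q^*\calF=0$ by the torus case. You also need $q$ to be faithfully flat. Classically it is $E^n\to S^n(E/M)$, its base change $K_n\to K_n/\Sigma_n$, or $E^n\to(E/C_2)^n\to T^n(E/M)$ (or products of these), all finite flat. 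Flatness lifts to the spectral level because both sides are flat over the even-periodic base.

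The paper instead proves essential surjectivity with the inductive recollement argument of \Cref{lem:inductive_step_general}, along $U(n)\subset U(n+1)$, $SU(n)\subset SU(n+1)$ and $Sp(n)\subset Sp(n+1)$. It matches isotropy separation for $\widetilde{E}\langle H\rangle$ with the open--closed decomposition of $\sfE_G$ along $\sfE_H$. This requires the effective-Cartier-divisor and affine-complement results for Hilbert schemes and symmetric products, together with the comonadicity lemma.

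Your route needs that geometry only for $U(1)$. There your ampleness argument uses exactly the paper's input: $\Ga(\sfU,\calF)\simeq\colim_n\Ga(\sfE,\calF(ne))$ with $\sfU$ affine kills $\calF|_\sfU$. The cases $n=-1,0$ then give $e^*\calF=0$, so $\calF\simeq i_*i^*\calF=0$. In return, your argument gives $G$-equivariant affineness for any applicable $G$ with $\O_{\sfE_G}$ compact and $q^*$ conservative. The paper's route instead gives finer structure: it identifies the equivariant and geometric recollements, and the divisors $\sfE_H\subset\sfE_G$ with representation spheres.

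Two small corrections.
\begin{itemize}
\item The left adjoint sends $\TMF_G\otimes\Sigma^\infty_+G/H$ to the dual $(p_*\O_{\calE_H})^\vee$, not to $p_*\O_{\calE_H}$. So your fixed-point computation actually identifies $\underline{\Ga}(\calE_G,p_*\O_{\calE_H})$ with $\TMF_G\otimes\mathbb{D}(G/H_+)$. The cleanest justification is simply $\underline{\Ga}(\calE_G,\O_{\calE_G}\otimes X)\simeq\TMF_G\otimes X$ for dualisable $X$, as in \Cref{lem:source_rigid_fully_faithfulness}, rather than the Kan-extension description.
\item The torsion-point density remark is not what makes the $U(1)$ case work. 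The sequence $S(n\lambda)_+\to S^0\to S^{n\lambda}$ produces the structure sheaf of the divisor $n\cdot e$, not of $\calE[n]$. Density of torsion points does not detect quasi-coherent sheaves.
\end{itemize}
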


In particular, if $G=U(1)$ is the torus of rank 1, then this is exactly the equivalence (\ref{eq:opening_intro}). This can be viewed as both a computation of the $\infty$-category of quasi-coherent sheaves on the universal elliptic curve, and a tool for studying $\TMF_{U(1)}$-modules locally on $\calE$. By construction, one can match the generators $(e_n)_\ast \O_{\calE[n]}$ of $\QCoh(\calE)$, where $e_n \colon \calE[n] \to \calE$ is the inclusion of the $n$-torsion, with the generators $\TMF_{U(1)} \otimes (U(1)/C_n)_+$ of $\Mod_{\TMF_{U(1)}}(\Sp_{U(1)})$. There are also similar translations for other groups; see \Cref{rmk:identification-of-divisors} for some more examples.

\begin{remark}
Let us point out immediately that equivariant affineness is \textbf{false} for \emph{finite abelian} groups. Indeed, by construction one has $\calE_{\Z/n} = \calE[n] \subseteq \calE$ as the $n$-torsion of the universal oriented elliptic curve. Grojnowski notes that the equivariant elliptic homology functor $\Sp_{\Z/2} \to \QCoh(\calE[2])$ is not symmetric monoidal in this case, which obstructs equivariant affineness. One can also note that $\calE[n]$ is finite flat, and in particular affine, over $\M_\Ell^\ori$, hence \cite[Th.A]{reconstruction} shows that the \emph{nonequivariant} global sections functor
\[\QCoh(\calE_{\Z/n}) = \QCoh(\calE[n]) \xrightarrow{\simeq} \Mod_{\Ga(\calE[n])}(\Sp) = \Mod_{\TMF^{\Z/n}}(\Sp)\]
is an equivalence, where $\TMF^{\Z/n}$ indicates the genuine $\Z/n$-fixed points. The $\infty$-category $\Mod_{\TMF^{\Z/n}}(\Sp)$ is not equivalent to $\Mod_{\TMF_{\Z/n}}(\Sp_{\Z/n})$ defined by \cite{gepner2024global2ringsgenuinerefinements} or \cite{temperedglobal} for $n\geq 2$. These references instead highlight the need to use Lurie's $\infty$-categories of \emph{tempered local systems} as a replacement for quasi-coherent sheaves when dealing with finite groups to best model modules over \(\TMF_{\Z/n}\) geometrically. Indeed, we expect that \Cref{maintheorem_universal} is a phenomenon that can only occur for \emph{connected} compact Lie groups.
\end{remark}

\begin{remark}\label{rmk:universal_case_not_affine}
Let us emphasise that $\QCoh(\calE)$ is $U(1)$-equivariantly affine, but \textbf{not} affine. Indeed, if it were, then the unit map $\O_\calE \to [n]_\ast \O_\calE$ induced by the $n$-fold multiplication map $[n] \colon \calE \to \calE$ would be an equivalence after inverting $n$. Indeed, applying global sections induces the endomorphism of $\TMF\oplus \TMF[1]$ given by the identity on the first factor and multiplication by $n$ on the second; this can be seen with a descent spectral sequence computation. On the other hand, pulling this unit map back against the identity section $e\colon \M_\Ell^\ori \to \calE$ induces the inclusion of a summand $\O_{\M_\Ell^\ori} \to \O_{\M_\Ell^\ori}^{\oplus n^2}$, which is never an equivalence for $n\geq 2$.
\end{remark}

\begin{remark}
The work of Konovalov--Prikhodko \cite{temperedlocalsystems_Konovalov_Prikhodko} generalises Lurie's tempered local systems of \cite{ec3} to compact Lie groups, among other things. In particular, their categories $\LS^\temp(BG,\mathcal{TMF})$ specialise to Lurie's tempered local systems $\mathrm{LocSys}_{\calE[\P^\infty]}(BG)$ when $G$ is finite, and to $\QCoh(\calE_G)$ when $G=U(1)^r$ is a torus. Combining \cite[Th.15.27]{gepner2024global2ringsgenuinerefinements} with the derived defect base of $\TMF$ from \cite[Ex.1.4.2]{temperedglobal}, one can identify $\mathrm{LocSys}_{\calE[\P^\infty]}(BG)$ with $\Mod_{\TMF_G}(\Sp_G)$ for all finite groups $G$. Together with \Cref{maintheorem_universal} for $G=U(1)^r$, this shows that Konovalov--Prikhodko's $\LS^\temp(BG,\mathcal{TMF})$ agrees with $\Mod_{\TMF_G}(\Sp_G)$ for tori and finite groups. This suggests that this generalised category of tempered local systems might be the best model of $\Mod_{\TMF_G}(\Sp_G)$ for more general $G$.
\end{remark}



We also generalise \Cref{maintheorem_universal} to work over an arbitrary base $\M$. To formulate this, we write
\[\QCoh_G(\M) = \QCoh(\M) \otimes \Sp_G 
\]
for any compact Lie group $G$. By $\QCoh(\M)$-linearising the equivariant elliptic homology functor (\ref{eq:equiv_ell_hom_intro}) and taking right adjoints, one obtains an \emph{equivariant pushforward} functor
\[p_{\ast,G} \colon \QCoh(\sfE_G) \to \QCoh_G(\M),\]
where $p_G \colon \sfE_G \to \M$ is the structure map; see \Cref{df:g-pushforward} for more details. The following is a formal consequence of \Cref{maintheorem_universal}.

\begin{theoremalph}\label{maintheorem_arbitrary_base}
    Let $p\colon \sfE \to \M$ be an oriented elliptic curve over a spectral stack $\M$. Then for $G$ as in \Cref{maintheorem_universal}, the $G$-equivariant pushforward functor
    \[p_{\ast,G} \colon \QCoh(\sfE_G) \xrightarrow{\simeq} \Mod_{p_{\ast,G} \O_{\sfE_G}}(\QCoh_G(\M))\]
    is an equivalence.
\end{theoremalph}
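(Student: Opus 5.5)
The plan is to deduce \Cref{maintheorem_arbitrary_base} from \Cref{maintheorem_universal} by base change along the classifying map, using only the linearity of everything in sight.

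First, reduce to the case where the base $\M$ is affine. Since $\M$ is a spectral stack, $\QCoh(\M) \simeq \lim_{\Spec A \to \M} \Mod_A$. Because $\Sp_G$ is dualizable in $\PrLst$ (it is compactly generated), tensoring with $\Sp_G$ preserves this limit, so $\QCoh_G(\M) \simeq \lim \Mod_A(\Sp_G)$. The formation of $\sfE_G$ commutes with base change in $\M$, so the source $\QCoh(\sfE_G)$ is likewise a limit of the $\QCoh((\sfE_A)_G)$; here I use that $\sfE_G \to \M$ is a relative scheme for applicable $G$, by \Cref{thm:extra-applicable}. I also need $p_{\ast,G}$ to be compatible with pullback along $\Spec A \to \M$, i.e.\ a base-change formula for the equivariant pushforward. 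This holds because $p_{\ast,G}$ is the right adjoint of the $\QCoh(\M)$-linear functor $\QCoh_G(\M) \to \QCoh(\sfE_G)$, and that right adjoint is itself $\QCoh(\M)$-linear and colimit preserving. For the latter, the generators $p_\ast \O_{\sfE_H}$, or their duals, are compact, since $\sfE_H \to \sfE_G$ is proper. With this compatibility in hand, $p_{\ast,G}\O_{\sfE_G}$ base changes correctly, and so do the module categories. A limit of equivalences is an equivalence.

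Next, for affine $\M = \Spec A$, any oriented elliptic curve is pulled back from $\calE$ along a map $f\colon \Spec A \to \M_\Ell^\ori$; orientations are classified by this moduli stack. \Cref{maintheorem_universal} is equivalent to the statement that the functor $\Sp_G \to \QCoh(\calE_G)$ exhibits $\QCoh(\calE_G)$ as $\Mod_{R}(\QCoh_G(\M_\Ell^\ori))$ with $R = p_{\ast,G}\O_{\calE_G}$, once one knows the relative statement over $\M_\Ell^\ori$. Affineness of $\M_\Ell^\ori$ by Mathew--Meier gives $\QCoh_G(\M_\Ell^\ori) \simeq \Mod_{\TMF}(\Sp_G)$, and so $\Mod_R(\QCoh_G(\M_\Ell^\ori)) \simeq \Mod_{\TMF_G}(\Sp_G)$. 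Now tensor with $\Mod_A$ over $\QCoh(\M_\Ell^\ori)$. On the target this gives $\Mod_{A \otimes R}(\Mod_A(\Sp_G))$. On the source it gives $\QCoh(\calE_G) \otimes_{\QCoh(\M_\Ell^\ori)} \Mod_A \simeq \QCoh(\sfE_G)$. This last identification is the tensor product formula for quasi-coherent sheaves on fibre products; it holds because $\calE_G$ is a relative scheme, hence quasi-compact with affine diagonal, over the $0$-affine stack $\M_\Ell^\ori$. By the base-change step above, $A \otimes R \simeq p_{\ast,G}\O_{\sfE_G}$, and the resulting functor is $p_{\ast,G}$.

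The main obstacle is the linearity and base-change property of $p_{\ast,G}$. It amounts to showing that the equivariant homology functor's right adjoint preserves colimits and satisfies the projection formula. I would attack this first by checking compactness of the images of the orbit generators $\Sigma^\infty_+ G/H$ in $\QCoh(\sfE_G)$, using properness of $\sfE_H \to \sfE_G$ and duality.
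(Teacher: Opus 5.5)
Your skeleton is sound and close to the paper's. The paper does not first reduce to an affine base. Instead it writes one chain
$\QCoh(\sfE_G)\simeq \QCoh(\calE_G)\otimes_{\QCoh(\M_\Ell^\ori)}\QCoh(\M)\simeq \Mod_{\TMF_G}(\Sp_G)\otimes_{\QCoh(\M_\Ell^\ori)}\QCoh(\M)\simeq \Mod_{p_{\ast,G}\O_{\sfE_G}}(\QCoh_G(\M))$.
The first step is a Künneth formula for perfect stacks over an arbitrary $\M$. Its proof is itself a reduction to affine $\M$, using that $\QCoh(\calE_G)$ is dualisable over $\QCoh(\M_\Ell^\ori)$, since $\calE_G$ is perfect by \Cref{maintheorem_universal}. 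The last step identifies the algebra $\TMF_G\otimes_\TMF\O_\M$ with $p_{\ast,G}\O_{\sfE_G}$ via the base change theorem \Cref{pr:base_change_for_equivariant_theories}. Your descent-to-affines plus affine Künneth repackages this. Note that the affine Künneth you use holds simply because $\Spec A\to\M_\Ell^\ori$ is affine ($\M_\Ell^\ori$ has affine diagonal); it needs no property of $\calE_G$. Also, \Cref{thm:extra-applicable} only gives schemes over strictly even-periodic Noetherian bases, not over arbitrary affines.

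The genuine gap is the step you flag as the crux: base change for $p_{\ast,G}\O_{\sfE_G}$. Your justification fails on three counts.
(i) Properness of $\sfE_H\to\sfE_G$ does not make $i_\ast\O_{\sfE_H}$ compact; that needs finite Tor-amplitude (think of a closed point of a non-regular scheme). The correct reason is that $i_\ast\O_{\sfE_H}=\ELL_G(G/H_+)$ is dualisable, because $\ELL_G$ is symmetric monoidal on finite $G$-spectra.
(ii) Over a general $\M$, $\QCoh(\M)$ need not be compactly generated. Since $H$-fixed points are colimit-preserving and jointly conservative on $\QCoh_G(\M)$, colimit-preservation of $p_{\ast,G}$ amounts to each $(p_H)_\ast i_H^\ast$ preserving colimits. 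Compactness of objects in $\QCoh(\sfE_G)$ says nothing about this.
(iii) Even granting $\QCoh(\M)$-linearity and colimit-preservation, you only get base change along $\Spec A\to\M$ once you know $\QCoh(\sfE_G)\otimes_{\QCoh(\M)}\Mod_A\simeq \QCoh((\sfE_A)_G)$. For non-affine $\Spec A\to\M$ you have no such statement.

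The paper closes this directly. Checking on $H$-fixed points turns the comparison into the Beck--Chevalley map for $p_H\colon\sfE_H\to\M$. That map is an equivalence because $\sfE_H\to\M$ is universally adjointable (\Cref{lem:elliptic_stacks_uni_adjointable}), which is proved fpqc-locally by embedding $H\subset U(n)$ and using perfectness of $\sfE_{U(n)}$.

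Alternatively, your route can be repaired formally, without proving the crux at all. In the universal case, \Cref{maintheorem_universal} already exhibits $p_{\ast,G}$ as an equivalence followed by a forgetful functor, so it is colimit-preserving and $\QCoh(\M_\Ell^\ori)$-linear. Affine Künneth then gives base change along every $\Spec A\to\M_\Ell^\ori$. For general $\M$ with classifying map $f$ and $R=p_{\ast,G}\O_{\calE_G}$, glue the manifestly compatible family $f_A^\ast R$ rather than $p_{\ast,G}\O_{\sfE_G}$. This gives an equivalence $\Mod_{f^\ast R}(\QCoh_G(\M))\simeq\QCoh(\sfE_G)$ compatible with the linearised homology functor, from which $p_{\ast,G}\O_{\sfE_G}\simeq f^\ast R$ follows a posteriori. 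As written, though, the base-change step is not established.
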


When $\M$ is $0$-affine, the right-hand side is again simply modules in $G$-spectra over $\Gamma_G\sfE$. For example, if $\sfT$ is the \emph{oriented Tate curve} over $\Spec \KU\llpar q \rrpar / C_2$ of \cite[Th.A]{globaltate}, we have an equivalence
\[\Ga_G \colon \QCoh(\sfT_G) \xrightarrow{\simeq} \Mod_{\KO\llpar q \rrpar_G}(\Sp_G)\]
for those $G$ of \Cref{maintheorem_universal}, where $\KO\llpar q \rrpar_G$ is the $G$-equivariant $\E_\infty$-ring of \emph{real smooth Tate $K$-theory} from \cite[Df.7.4]{globaltate}.

There are many other examples and applications of \Cref{maintheorem_universal,maintheorem_arbitrary_base}. For instance, given an oriented elliptic curve $\sfE$ over a nonconnective qcqs spectral scheme $\sfS$ and $G$ as in \Cref{maintheorem_universal}, the $\infty$-category $\Mod_{\Ga_G \sfE}(\Sp_G)$ is generated by a single compact object; this is a consequence of \Cref{maintheorem_arbitrary_base} together with \Cref{ex:schemes_are_passable}. As another example, we see that the Galois groupoid $\pi_{\leq1}(\Mod_{\TMF_{U(1)}}(\Sp_{U(1)}))$ is equivalent to the Galois groupoid of the classical universal elliptic curve $\pi_{\leq 1} \calE^\heartsuit$; this follows from \Cref{maintheorem_universal} combined with (the proof of) \cite[Th.10.4]{mathew_Galois} and the fact that elliptic curves are regular. As yet another example, if $G$ and $K$ are two compact Lie groups as in \Cref{maintheorem_universal}, then we have a Künneth theorem for $\infty$-categories of modules over $G\times K$-equivariant topological modular forms:
\[\Mod_{\TMF_G}(\Sp_G) \otimes_{\Mod_\TMF} \Mod_{\TMF_K}(\Sp_K) \xrightarrow{\simeq} \Mod_{\TMF_{G\times K}}(\Sp_{G\times K})\]
This is a combination of \Cref{maintheorem_universal} and \Cref{ex:MM_mellor_affineness_intro} together with \Cref{pr:general_kuenneth}.

\subsection*{Proof outline}
Let us give an outline of the proof strategy for the above results; so roughly \Cref{sec:affine_case} in reverse.

Reducing \Cref{maintheorem_arbitrary_base} over an arbitrary base $\M$ to the universal case of \Cref{maintheorem_universal} over $\M_\Ell^\ori$ is formal. For simplicity, suppose that the base $\M$ is 0-affine, so $\QCoh(\M)$ is affine. One then has equivalences
 \[\QCoh(\sfE_G) \simeq \QCoh(\calE_G) \underset{\QCoh(\M_\Ell^\ori)}{\otimes} \QCoh(\M) \xrightarrow{\simeq} \Mod_{\TMF_G}(\Sp_G) \underset{\Mod_{\TMF}(\Sp)}{\otimes} \Mod_{\Ga(\M)}(\Sp)\]
 \[\simeq \Mod_{\TMF_G \otimes_\TMF \Ga(\M)} (\Sp_G) \simeq \Mod_{\Ga_G \sfE}(\Sp_G),\]
 the first from a Künneth formula for quasi-coherent sheaves on perfect stacks (\Cref{pr:general_kuenneth}) together with the base change equivalence of $G$-spectra $\TMF_G \otimes_\TMF \Ga(\M) \simeq \Ga_G \sfE$ (\Cref{pr:base_change_for_equivariant_theories}) for the last; also note \Cref{maintheorem_universal} and \Cref{ex:MM_mellor_affineness_intro} coming into the second equivalence. This reduces us to proving \Cref{maintheorem_universal}.

To prove the universal case over the moduli stack $\M_\Ell^\ori$, we can choose an étale cover $\Spec A \to \M_\Ell^\ori$, where $A$ is Noetherian and strictly even-periodic, and write $\Spec A^\bullet$ for the associated Čech nerve such that $\colim \Spec A^\bullet \simeq \M_\Ell^\ori$ and $\sfE_G^\bullet$ for the base change of the universal oriented elliptic curve $\calE_G$ to $\Spec A^\bullet$. We now have functors
\[\QCoh(\calE_G) \xrightarrow{\simeq} \lim \QCoh(\sfE^\bullet_G) \xrightarrow{\lim \Ga^\bullet_G} \lim \Mod_{A^\bullet_G}(\Sp_G) \xleftarrow{\simeq} \Mod_{\TMF_G}(\Sp_G),\]
the first being an equivalence as $\Spec A$ covers $\M_\Ell^\ori$ and the second is formal (\Cref{thm:step_to_universal_case}). We are then reduced to proving that
\[\underline{\Ga}(\sfE_G, -) \colon \QCoh(\sfE_G) \to \Mod_{A_G}(\Sp_G)\]
is an equivalence when working over strictly even-periodic Noetherian $\E_\infty$-rings $A$.

To this end, we simplify the situation in the direction of the group $G$. First, we show that if $\underline{\Ga}(\sfE_G, -)$ and $\underline{\Ga}(\sfE_K, -)$ are equivalences for two (applicable) compact Lie groups $G$ and $K$, then $\underline{\Ga}(\sfE_{G\times K}, -)$ is also an equivalence, reducing us from products $G=\prod_i U(n_i) \times \prod_j SU(n_j) \times \prod_k Sp(n_k)$ to each of the individual factors (\Cref{pr:propegation_to_products}). This relies on $\sfE_G$ being a \emph{perfect stack} for each $G$ as in \Cref{maintheorem_universal}.

We then tackle each of the families $U(n)$, $SU(n)$, and $Sp(n)$ separately via an inductive argument: the base case $SU(1)=1$ is trivial and $Sp(1) = SU(2)$ follows if we prove the special unitary case first. The case $U(1)$ is the first non-trivial case, proven in  (\Cref{thm:ellipticU(1)0affineness}). We now outline the proof. As we explain below, the method of proof for $U(1)$ admits a generalisation to an inductive argument for the other families (\Cref{lem:inductive_step_general}).

Let $p\colon \sfE \to \Spec A$ be an oriented elliptic curve over an $\E_\infty$-ring $A$ with zero section $e\colon \Spec A \to \sfE$. At this stage it is more convenient to work with the left adjoint to $\underline{\Ga}(\sfE_{U(1)}, -)$, a linearisation of (\ref{eq:equiv_ell_hom_intro}), which we also call {equivariant elliptic homology}
\[\O_{\sfE} \otimes_{A_{U(1)}} - \colon \Mod_{A_{U(1)}}(\Sp_{U(1)}) \to \QCoh(\sfE).\]
The fully faithfulness of this functor is a generic feature of the fact that $\sfE$ is perfect (\Cref{pr:left_adjoint_is_fullyfaithful}), so we are left with essential surjectivity. Two nice features of the zero section $e$ are that it defines an effective Cartier divisor (\Cref{ssec:effective_Cartier_divisors}) and that its complement $\sfU$ is affine. This leads to a commutative diagram
\[\begin{tikzcd}
    {\Mod_{A_{U(1)}}(\Sp_{U(1)})^{\loc}}\ar[d, "{F^\loc}"]\ar[r]  &   {\Mod_{A_{U(1)}}(\Sp_{U(1)})}\ar[d, "{\O_{\sfE} \otimes_{A_{U(1)}} - }"]\ar[r]  &   {\Mod_{A_{U(1)}}(\Sp_{U(1)})^\cpl}\ar[d, "{F^\cpl}"]  \\
    {\QCoh(\sfU)}\ar[r, "{i_\ast}"] &   {\QCoh(\sfE)}\ar[r] &   {\QCoh(\sfE)/\QCoh(\sfU),}
\end{tikzcd}\]
where the rows are Verdier sequences of stable $\infty$-categories, $i\colon \sfU \to \sfE$ is the open immersion, and the superscripts denote \emph{local} and \emph{complete} objects with respect to the $U(1)$-spectrum $\widetilde{E}U(1)$. This reduces us to showing that both $F^\loc$ and $F^\cpl$ are essentially surjective. As $\sfU$ is affine, we quickly see that $F^\loc$ is an equivalence, hence essentially surjective. For $F^\cpl$, one crucially uses that $\sfE$ is \emph{oriented},
which implies that the Verdier quotient $\QCoh(\sfE)/\QCoh(\sfU)$, given by quasi-coherent sheaves supported on the zero-section of $\sfE$, is equivalent to the functor $\infty$-category $\Fun(BU(1), \Mod_A(\Sp))$, also known as $A$-modules with a $U(1)$-action. By a form of the Atiyah--Segal completion theorem for $A_{U(1)}$, which again follows from the orientation of $\sfE$, this is in turn equivalent to $\Mod_{A_{U(1)}}(\Sp_{U(1)})^\cpl$. One then shows that these equivalences are inverse to $F^\cpl$, finishing the proof in the $U(1)$-case.

Our approach to the induction step in each case, from $U(n)$ to $U(n+1)$ for example, is similar: we match the relevant isotropy separation sequence on the equivariant side with an open-closed decomposition on the geometric side, using that the associated map on stacks is an effective Cartier divisor and that the complement is affine over a common base. A crucial ingredient here is the descriptions of the underlying schemes of $\sfE_{U(n)}$, $\sfE_{SU(n)}$, and $\sfE_{Sp(n)}$ in terms of Hilbert schemes and related objects, a result to appear in \cite{gm2}; we compile some of this classical material in \Cref{sec:appendix_classical_fun_stuff}. This general inductive argument is summarised as \Cref{lem:inductive_step_general}, and we apply it to the families of groups from \Cref{maintheorem_universal} in \Cref{ssec:explicit_sequences_of_compact_liegroups}. This then finishes the proof.

\subsection*{Outlook}
We restrict our attention to the family of compact Lie groups found in \Cref{maintheorem_universal} because it is for such $G$ that $\sfE_G^\heartsuit$ takes a well-understood form at the time of writing. We believe, however, that generalisations of our theorem deserve investigation. For example, we hope that an improvement in our collective understanding of $\sfE_{Spin(n)}$ might lead to equivariant affineness for the spin groups as well. For nonapplicable but connected compact Lie groups $G$ such as $SO(n)$, much less is known about the geometry of $\sfE_G$; we also do not know of any potential replacement of $\sfE_G$ with any better properties. Whether one uses Gepner--Meier's $\sfE_G$ or a replacement, it is unclear if equivariant affineness is reasonable to expect in general; perhaps it could be used as a test for an alternative definition of $\sfE_G$ for certain $G$, so as one in a list of desiderata.

Many of the arguments made here also work for a large class of oriented abelian stacks. For example, one could probably apply this to \emph{tori} $\sfG$, which are locally equivalent to the multiplicative group scheme $\G_m$, and recover \Cref{ex:MNN_ktheory_affineness_intro} for real and complex $G$-equivariant $K$-theory for various groups $G$. This strategy is clear when $G=U(1)^r$ is a torus, as in this case $\sfG_{U(1)^r} = \sfG^{\times r}$ is affine over its base. For $G=U(n)$, one can take $\sfG_{U(n)} = \sfG^{\times_n}/\Sigma_n$; other connected compact Lie groups $G$ would require more thought.

\subsection*{Outline}
This article is divided into three sections and three short appendices:

The first section \Cref{sec:SAG} contains some required background in spectral algebraic geometry. It begins (\Cref{ssec:immersions}) by discussing open and closed immersions in spectral algebraic geometry. Then (\Cref{ssec:effective_Cartier_divisors}) we define effective Cartier divisors in this context and prove their basic properties. Finally, we continue (\Cref{ssec:basics_of_perfect_stacks}) by developing the foundations of perfect stacks in this setting. Many of the stacks $\sfE_G$ of Gepner--Meier mentioned above are perfect and some of the crucial maps between them, such as $\sfE_{U(n)} \to \sfE_{U(n+1)}$, are effective Cartier divisors, highlighting the importance of these two concepts.

In \Cref{sec:review_of_GM}, we review the basics of equivariant elliptic cohomology that are needed to even state the theorems that we prove here. We begin (\Cref{ssec:EHT}) by summarising our notation from equivariant homotopy theory. The next two subsections summarise the input we require from Gepner--Meier's work-in-progress \cite{gm2}. The stacks $\sfE_G$ are defined and some of their nice properties stated (\Cref{ssec:GM_geometric_model}), then we show how they lead to equivariant elliptic (co)homology functors and the equivariant global sections functor at the heart of this article (\Cref{ssec:EEcohomology}).

Then in (\Cref{sec:affine_case}), we prove \Cref{maintheorem_universal,maintheorem_arbitrary_base} as outlined above. In \Cref{ssec:inductive_step}, we set up an induction procedure (\Cref{lem:inductive_step_general}), which gives conditions allowing one to lift equivariant affineness results from a subgroup of a compact Lie group to the group itself. In \Cref{ssec:explicit_sequences_of_compact_liegroups}, we apply this inductive approach to the sequences of applicable Lie groups $U(n)$, $SU(n)$, and $Sp(n)$, and deduce equivariant affineness in these cases over a strictly even-periodic Noetherian affine base. In \Cref{ssec:extension_to_finite_products}, we show how $G$- and $K$-equivariant affineness implies $G\times K$-equivariant affineness. In \Cref{ssec:equivariant_qcoh_and_base_change}, we prove some general base change properties of equivariant elliptic cohomology, perhaps of independent interest, which play an important role in the final subsection. Lastly, in \Cref{ssec:general_case}, we prove both \Cref{maintheorem_universal,maintheorem_arbitrary_base}.

The three appendices summarise some facts used in the main body of the article that we expect are known to experts, but for which we could not find direct references. The first appendix (\textsection\ref{ssec:properties_preserved_by_h}) proves some basic properties of the functor-of-points $h_{(-)} \colon \SpDM^\nc \to \Stk$ defined in \cite[\textsection3.2.2]{reconstruction}, such as how it behaves with respect to covers, affine maps, and immersions. The second appendix (\textsection\ref{sec:appendix_classical_fun_stuff}) records a collection of classical facts about Hilbert schemes and symmetric products on elliptic curves. We hope this summary may be helpful for those coming to this article from equivariant homotopy theory. The final appendix (\textsection\ref{sec:appendix2}) shows how a base change theorem for equivariant elliptic cohomology (\Cref{pr:base_change_for_equivariant_theories}) leads to an equivariant Wood's theorem for $\TMF$ and generic descendability results for equivariant $\TMF$-algebras.

\subsection*{Notation}
In this article, we use the language of $\infty$-categories, developed by Lurie in \cite{htt,ha,sag}. From now on, we drop the ``$\infty$-'' prefix. We also write mapping spaces $\Map_{\calC}(X,Y)$ simply as $\calC(X,Y)$. Generally, we follow the notation used by previous articles in equivariant homotopy theory and spectral algebraic geometry such as \cite{reconstruction,temperedglobal,gepner2024global2ringsgenuinerefinements,davidandlennart,gm2}. In particular, we work primarily in $\Stk$, the category of (fpqc-) stacks on $\E_\infty$-rings of \cite[Df.3.1.1.3]{reconstruction} with affine objects denoted by $\Spec A$. Lurie's category of connective spectral Deligne--Mumford stacks is denoted as $\SpDM$, with nonconnective variant $\SpDM^\nc$, whose affine objects are denoted by $\Spet A$; see \cite[\textsection1.4]{sag}. The category of connective spectral schemes is denoted by $\SpSch$, with nonconnective variant $\SpSch^\nc$. To reiterate what is often implicit: the adjective ``nonconnective'' means ``not necessarily connective'', hence it is the more general setting. The 2-category of classical Deligne--Mumford stacks will be written as $\DM$ and is viewed as a subcategory of the 1-localic 0-truncated connective spectral Deligne--Mumford stacks via \cite[Rmk.1.4.8.3]{sag}. The connective cover of some $\X\in \SpDM^\nc$ is denoted by $\X_{\geq 0}$, and similarly for $\E_\infty$-rings, and its further \(0\)-truncation by \(\X^\heartsuit\).

\subsection*{Acknowledgements}
Thank you to Lennart Meier and David Gepner for happily sharing a copy of \cite{gm2} with us and for the exposition of their work. Further thank you's to Lennart for reading a draft of this paper and Fabio Neugebauer for some fruitful conversations. SL would like to thank David Gepner for helpful discussions concerning this project.

During the writing of this article, JMD was supported by the DFG-funded research training group GRK 2240: Algebro-Geometric Methods in Algebra, Arithmetic and Topology and SL was an associate member of the SFB: Higher invariants. 

\subsection*{AI declaration}
This article was written without the use of generative AI tools nor LLMs. 

\section{Requisite spectral algebraic geometry}\label{sec:SAG}
The first two subsections (\Cref{ssec:immersions}-\Cref{ssec:effective_Cartier_divisors}) below, concerning immersions and effective Cartier divisors, take place entirely within Lurie's framework of nonconnective spectral Deligne--Mumford stacks; the relations to similar notions in $\Stk$ are discussed in \Cref{ssec:properties_preserved_by_h}. The third subsection (\Cref{ssec:basics_of_perfect_stacks}) on perfect stacks takes place in the generality of $\Stk$.

\subsection{Open and closed immersions of stacks}\label{ssec:immersions}
We begin with closed immersions. Recall from \cite[Th.3.1.2.1 \& Rmk.3.1.2.2]{sag} that an affine map of spectral Deligne--Mumford stacks $j \colon \sfZ \to \X$ is a \emph{closed immersion} if for all étale morphisms $\Spet A \to \X$ from a (necessarily connective) $\E_\infty$-ring $A$, the induced map of commutative rings
\[\pi_0 A \to \pi_0 \Ga(\sfZ \times_\X \Spet A)\]
is a surjection. The following is an adaptation of this definition to nonconnective Deligne--Mumford stacks; a variant of which appeared in \cite[Df.5.5.3]{temperedglobal}.

\begin{mydef}\label{df:closed_immersions}
    An affine morphism of nonconnective spectral Deligne--Mumford stacks $j \colon \sfZ \to \X$ is a \emph{closed immersion} if for all étale maps from an affine $\Spet A \to \X$, the induced map on commutative rings
    \[\pi_0 A \to \pi_0\Ga(\sfZ \times_\X \Spec A)\]
    is surjective.
\end{mydef}

By \Cref{lm:h_closed_immersion}, the functor of points $h_{(-)} \colon \SpDM^\nc \to \Stk$ preserves closed immersions, the latter being defined in \cite[Df.5.5.3]{temperedglobal}.

This definition of a closed immersion in $\SpDM^\nc$ does not appear in the literature so far, essentially due to the following, perhaps counterintuitive, fact.

\begin{warning}
    The natural embedding $\SpDM \to \SpDM^\nc$ does \textbf{not} generally preserve closed immersions. For example, $\Spec \Z \to \Spec \Sph$ is a closed immersion in $\SpDM$, but not in $\SpDM^\nc$ as its $\KU$-linearisation $\KU \to \KU \otimes \Z \simeq \KU \otimes \Q$ is not surjective on $\pi_0$. 
\end{warning}

If certain flatness conditions relative to a base are satisfied, then one can freely pass between various notions of closed immersion.

\begin{prop}\label{pr:equiv_of_closed_immersions}
    Let $\M$ be a nonconnective spectral Deligne--Mumford stack and $j \colon \sfZ \to \X$ a map between flat nonconnective spectral Deligne--Mumford stacks over $\M$. Then
    \begin{enumerate}
        \item $j$ is affine if and only if $j_{\geq 0}$ is affine, and
        \item $j$ is a closed immersion if and only if $j_{\geq 0}$ is a closed immersion.
    \end{enumerate}
\end{prop}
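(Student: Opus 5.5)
Both parts will be reduced to statements about affines over an étale cover of $\M$, and then flatness will do the comparison. First I would reduce to the case $\M = \Spet R$ is affine. Both affineness and being a closed immersion are étale local on the target. The connective cover functor $(-)_{\geq 0}$ commutes with étale base change, since étale maps to $\X$ and to $\X_{\geq 0}$ correspond (the underlying $\infty$-topoi agree). Working étale locally on $\X$ as well, I may take $\X = \Spet B$ with $B$ flat over $R$. Here flat means $\pi_\ast B \simeq \pi_\ast R \otimes_{\pi_0 R} \pi_0 B$ with $\pi_0 B$ flat over $\pi_0 R$. The key input throughout is Lurie's observation that for flat $R$-algebras $B$ the connective cover $B_{\geq 0}$ is flat over $R_{\geq 0}$. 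Moreover, $B \simeq R \otimes_{R_{\geq 0}} B_{\geq 0}$, and $\pi_0 B = \pi_0 B_{\geq 0}$.

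For (1), suppose first that $j$ is affine. Then $\sfZ \times_\X \Spet B = \Spet C$ with $C$ flat over $R$. Taking connective covers gives $(\Spet C)_{\geq 0} = \Spet C_{\geq 0}$, so $j_{\geq 0}$ is affine locally, hence affine. Conversely, suppose $j_{\geq 0}$ is affine, so that $\sfZ_{\geq 0} \times_{\X_{\geq 0}} \Spet B_{\geq 0} = \Spet C'$. I would set $C = B \otimes_{B_{\geq 0}} C'$ and compare it with $\sfZ \times_\X \Spet B$. The point is that a flat nonconnective stack over $\Spet R$ is recovered from its connective cover by base change along $R_{\geq 0} \to R$, i.e. $\sfZ \simeq \sfZ_{\geq 0} \times_{\Spet R_{\geq 0}} \Spet R$. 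Granting this, the fibre product $\sfZ \times_\X \Spet B$ is the base change of $\Spet C'$ along $R_{\geq 0} \to R$, which is affine. Justifying this reconstruction equivalence is the main obstacle. I would first check it for flat affines, where it is the formula $B \simeq R \otimes_{R_{\geq 0}} B_{\geq 0}$ above. I would then glue using that étale maps to $\sfZ$ and to $\sfZ_{\geq 0}$ agree and that the structure sheaves are flat, so tensoring with $R$ is computed stalkwise.

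For (2), with (1) in hand I may assume $j$ is affine, given over $\Spet B$ by $B \to C$ with $C$ flat over $R$. For every étale $\Spet B' \to \X$, the fibre $\sfZ \times_\X \Spet B'$ is $\Spet(C \otimes_B B')$. Its connective counterpart is $\Spet(C_{\geq 0} \otimes_{B_{\geq 0}} B'_{\geq 0})$, and the two have the same $\pi_0$, since flatness makes $\pi_0$ commute with these tensor products. Likewise $\pi_0 B' = \pi_0 B'_{\geq 0}$. So the surjectivity conditions in \Cref{df:closed_immersions} and in Lurie's definition compare literally the same maps of commutative rings. It remains to note that étale maps $\Spet B' \to \X$ and $\Spet B'_{\geq 0} \to \X_{\geq 0}$ correspond bijectively via connective covers, which again follows from the equivalence of underlying étale topoi. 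Hence $j$ is a closed immersion if and only if $j_{\geq 0}$ is.
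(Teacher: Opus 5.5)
Your proposal is correct and follows essentially the paper's argument. You reduce étale locally on $\X$, identify the connective cover of the local pullback $\sfZ\times_\X\Spet B$ with $\sfZ_{\geq0}\times_{\X_{\geq0}}\Spet B_{\geq0}$, and observe that the $\pi_0$-surjectivity conditions compare literally the same rings. The one variation is in the step ``$j_{\geq0}$ affine $\Rightarrow$ $j$ affine'': the paper appeals to Lurie's affineness criterion \cite[Cor.1.4.7.3]{sag}, whereas you use the reconstruction $\sfZ\simeq\sfZ_{\geq0}\times_{\Spet R_{\geq0}}\Spet R$, which is exactly \Cref{lem:flat_maps_are_good_in_SpDMnc}, proved just beforehand in the paper along the lines you sketch.
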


Proving this will use the following series of lemmata, surely well-known to experts.

\begin{lemma}\label{rmk:affine_maps_connectiveVSnonconnective}
    Let \(f:\sfY \to \sfX\) be a map of spectral Deligne--Mumford stacks. Then $f$ is affine in nonconnective spectral Deligne--Mumford stacks if and only if $f$ is affine in spectral Deligne--Mumford stacks. 
\end{lemma}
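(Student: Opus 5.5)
We prove the two implications separately, using only the definitions of affine morphisms in $\SpDM$ and in $\SpDM^\nc$ from \cite{sag} and the fact that $\SpDM \to \SpDM^\nc$ is fully faithful. This embedding identifies $\SpDM$ with those nonconnective stacks whose structure sheaf is connective. Recall that a map $f\colon \sfY \to \X$ in either setting is affine if for every map from an affine $\Spet A \to \X$ (with $A$ connective in $\SpDM$, arbitrary in $\SpDM^\nc$) the fibre product $\sfY \times_\X \Spet A$ is affine. Equivalently, it suffices to check this on an étale cover of $\X$ by affines, which for $\X \in \SpDM$ we may take to consist of connective affines.

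\textbf{Affine in $\SpDM^\nc$ implies affine in $\SpDM$.} Let $\Spet A \to \X$ with $A$ connective. The fibre product $\sfY\times_\X \Spet A$ computed in $\SpDM^\nc$ is affine, say $\Spet B$. The key point is that fibre products of objects in $\SpDM$ agree in $\SpDM$ and $\SpDM^\nc$. By \cite[\textsection1.4]{sag}, fibre products in both categories are computed via the underlying $\infty$-topoi and the pushout of structure sheaves. A relative tensor product of connective rings is connective, so the $\SpDM^\nc$-pullback of connective objects is again connective. In particular $B$ is connective, and $\sfY\times_\X\Spet A \simeq \Spet B$ in $\SpDM$.

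\textbf{Affine in $\SpDM$ implies affine in $\SpDM^\nc$.} Here I would use the étale-local characterisation. Affineness of $f$ in $\SpDM^\nc$ is equivalent to two conditions: $f_\ast\O_\sfY$ is a quasi-coherent algebra on $\X$, and $\sfY \simeq \relSpec_\X f_\ast\O_\sfY$. Both can be checked étale-locally on $\X$. Choose an étale cover of $\X$ by connective affines $\Spet A_i$; the pullbacks are affine by hypothesis and, by the previous paragraph, the pullbacks agree in the two categories. For an arbitrary (possibly nonconnective) $\Spet C \to \X$, we first reduce to the case where the map factors étale-locally through some $\Spet A_i$. This is possible because étale covers of $\X$ pull back to étale covers of $\Spet C$, and affineness can be checked étale-locally on the target, a descent statement in $\SpDM^\nc$ analogous to \cite[Prop.2.5.1.x]{sag}. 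Once $\Spet C\to \X$ factors through $\Spet A_i$, we get
\[\sfY\times_\X \Spet C \simeq (\sfY\times_\X\Spet A_i)\times_{\Spet A_i}\Spet C \simeq \Spet B_i \times_{\Spet A_i}\Spet C \simeq \Spet(B_i\otimes_{A_i}C),\]
which is affine.

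The main obstacle is this last descent step: checking that affineness of morphisms is étale-local on the target in $\SpDM^\nc$, and that the relevant gluing (quasi-coherent algebras and relative spectrum) works in the nonconnective setting. I would handle it by citing or mimicking Lurie's argument for connective stacks, which uses only étale descent for quasi-coherent sheaves and the formula $\Spet(-)$ for affine objects. Both of these hold verbatim in $\SpDM^\nc$.
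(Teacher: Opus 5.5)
Your first direction is the paper's argument: everything rests on fibre products of connective objects being the same in \(\SpDM\) and \(\SpDM^\nc\). The reason you give for this is not accurate as stated. Fibre products are not computed by taking fibre products of underlying topoi, and this already fails for ordinary schemes. The clean justification, which the paper uses, is that the inclusion \(\SpDM\hookrightarrow\SpDM^\nc\) is right adjoint to \(\X\mapsto\X_{\geq0}\) (SAG, Cor.~1.4.5.2), and so preserves limits.

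For the converse you take a genuinely different and heavier route. You factor \(\Spet C\to\X\) \'etale-locally through the connective affines of a cover, and then invoke \'etale descent for affine morphisms in \(\SpDM^\nc\). This is valid. That descent statement is exactly \Cref{lm:affine_maps_and_descent}, whose nonconnective case the paper proves independently via the \'etale functor of points \(h^\et_{(-)}\). Note, however, that the connective case of that lemma is deduced from the present one, so you may only invoke the nonconnective version.

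The paper needs no descent at all. The same adjunction gives \(\Map(\Spet C,\X)\simeq\Map(\Spet C_{\geq0},\X)\) for connective \(\X\). Hence every \(\Spet C\to\X\) factors globally as \(\Spet C\to\Spet C_{\geq0}\to\X\), and
\[\sfY\times_\X\Spet C\simeq(\sfY\times_\X\Spet C_{\geq0})\times_{\Spet C_{\geq0}}\Spet C\]
is a pullback of affines. The step you flagged as the main obstacle therefore disappears once you use the adjunction that already underlies the first direction. Your route gains nothing extra and imports a nontrivial gluing result.
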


\begin{proof}
    The inclusions $\SpDM \to \SpDM^\nc$ preserves limits by \cite[Cor.1.4.5.2]{sag}, so if $f$ is affine in $\SpDM^\nc$, it also is in $\SpDM$. On the other hand, if \(f\) is affine in \(\SpDM\), then it is affine in \(\SpDM^\nc\) by adjunction; any map \(\Spet A\to \sfY\) factors over \(\Spet A\to \Spet (A_{\geq0})\), and any pullback between affines is affine.
\end{proof}

\begin{lemma}\label{lm:affine_maps_and_descent}
    Affine morphisms of (nonconnective) spectral Deligne--Mumford stacks satisfy étale descent, meaning that a map $f\colon \Y\to\X$ of (nonconnective) spectral Deligne--Mumford stacks is affine if and only if for all étale maps $\Spet A \to \X$, the pullback $\Y\times_\X \Spet A$ is affine.
\end{lemma}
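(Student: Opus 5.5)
The forward direction is immediate from the definitions, because affineness is stable under base change. If $f$ is affine, then for every étale map $\Spet A \to \X$ the pullback $\Y \times_\X \Spet A$ is the base change of $f$ along a map from an affine. It is therefore affine over $\Spet A$, and hence affine. The content is in the converse, which we prove in the following steps.

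\textbf{Step 1 (reduce to affine targets).} Recall that a map is affine if its pullback along every map $\Spet B \to \X$, not necessarily étale, is affine. Given an arbitrary map $\Spet B \to \X$, choose an étale cover $\{\Spet A_i \to \X\}$. Pulling back gives an étale cover $\{\Spet B \times_\X \Spet A_i \to \Spet B\}$. Refine it by affines $\Spet B_{ij}$; since $\Spet B$ is quasi-compact, finitely many suffice. Each $\Y \times_\X \Spet B_{ij}$ is the base change of the affine $\Y\times_\X \Spet A_i$ along $\Spet B_{ij} \to \Spet A_i$, hence is affine, as pullbacks of affines are affine. We are thus reduced to the following claim: if $\sfZ \to \Spet B$ becomes affine after pulling back along an étale cover $\Spet B' \to \Spet B$, where $B' = \prod_j B_{ij}$, then $\sfZ$ is affine.

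\textbf{Step 2 (étale descent for affines).} Form the Čech nerve $\Spet B'^{\bullet}$. By hypothesis, the pullbacks $\sfZ^\bullet := \sfZ \times_{\Spet B} \Spet B'^{\bullet}$ are affine, say $\Spet C^\bullet$, forming a cosimplicial $\E_\infty$-ring. Put $C = \lim C^\bullet$. Étale descent for quasi-coherent sheaves (\cite[\textsection D]{sag}, and the nonconnective analogue via \cite[\textsection 2.2]{sag}) says that $\Mod_B \simeq \lim \Mod_{B'^\bullet}$, and the same holds for commutative algebras. The cosimplicial object $C^\bullet$ is a descent datum of $\E_\infty$-algebras: it is cartesian, since each $\sfZ^n$ is the pullback of $\sfZ^0$. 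Hence $C$ is a $B$-algebra with $C \otimes_B B'^n \simeq C^n$. Since $\sfZ$ is the geometric realisation of $\sfZ^\bullet$ in étale sheaves, the comparison map $\sfZ \to \Spet C$ becomes an equivalence after base change to $\Spet B'$. Étale surjections are effective epimorphisms, so the comparison map is an equivalence. In the nonconnective setting we use Lurie's description of affine morphisms as $\relSpec$ of a quasi-coherent algebra over the target. The same argument then shows that an affine morphism over $\X$ is determined by the quasi-coherent $\O_\X$-algebra $f_\ast \O_\Y$, and that this algebra glues étale locally.

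\textbf{Main obstacle.} The expected difficulty is Step 2: identifying $\sfZ$ with $\Spet C$ requires both faithfully flat (étale) descent for algebras and the fact that étale covers of $\Spet B$ are effective epimorphisms in $\SpDM^\nc$. The cleanest route is to cite \cite[Prop.2.5.1.2]{sag} (or its nonconnective version), which states that affine morphisms correspond to $\relSpec$ of quasi-coherent $\E_\infty$-algebras and that this correspondence is étale local on the target. In the connective case one may instead cite Lurie directly and transfer the result using \Cref{rmk:affine_maps_connectiveVSnonconnective}.
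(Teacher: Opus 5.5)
Your argument is correct, but it is organised differently from the paper's proof.

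You prove the descent statement by hand inside $\SpDM^\nc$. Quasi-compactness reduces an arbitrary $\Spet B\to\X$ to a single étale cover $\Spet B'\to\Spet B$. The cartesian cosimplicial algebra $C^\bullet$ is then descended to $C=\lim C^\bullet$ via étale descent for modules, and $\sfZ$ is identified with $\Spet C$.

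The paper instead transports the problem. It first reduces to the nonconnective case with \Cref{rmk:affine_maps_connectiveVSnonconnective}. It then uses that $h^\et_{(-)}\colon\SpDM^\nc\to\Stk^\et$ is fully faithful, preserves limits and sends $\Spet A$ to $\Spec A$. So affineness of objects and of morphisms can be tested in $\Stk^\et$, where étale descent for affine morphisms follows from the same argument as fpqc descent in $\Stk$ (\cite[Pr.A.3]{Davies_galois_geometric_FP}).

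The paper's route is three lines and recycles a general stack-theoretic result. Yours is self-contained and exhibits the actual mechanism, namely descent of quasi-coherent algebras. Note, however, that your Step 2 already leans on the same embedding: ``geometric realisation in étale sheaves'' and ``étale surjections are effective epimorphisms'' are statements about $\Stk^\et$, transferred back to $\SpDM^\nc$ through full faithfulness of $h^\et_{(-)}$.

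Two points should be made explicit.
\begin{enumerate}
\item You need to construct the comparison map. One option is $\sfZ\to\Spet\Ga(\sfZ,\O_\sfZ)\simeq\Spet C$, using étale descent for $\O_\sfZ$ along $\sfZ^0\to\sfZ$; its base change to $\Spet B'$ is then induced by the descent equivalence $C\otimes_B B'\simeq C^0$. Alternatively, write $\Spet C\simeq|\Spet C^\bullet|\simeq|\sfZ^\bullet|\simeq\sfZ$, since $\Spet C^\bullet$ is the \v{C}ech nerve of the étale cover $\Spet C^0\to\Spet C$.
\item If you run the argument directly in $\SpDM$ rather than reducing via \Cref{rmk:affine_maps_connectiveVSnonconnective}, you should check that $C$ is connective. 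This holds because $C\otimes_B B'\simeq C^0$ is connective and $B\to B'$ is faithfully flat.
\end{enumerate}
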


\begin{proof}
    By \Cref{rmk:affine_maps_connectiveVSnonconnective}, it suffices to prove the nonconnective statement. To that end, the functor of points into étale stacks $h^\et_{(-)}\colon \SpDM^\nc \to \Stk^\et$ used in \Cref{lem:h_finite_limits} commutes with limits, sends affines to affines, and is fully faithful, it suffices to prove this statement in $\Stk^\et$. The fact that affine morphisms in $\Stk^\et$ satisfy étale descent follows from the same proof used to show that affine morphisms satisfy fpqc descent in $\Stk$; see \cite[Pr.A.3]{Davies_galois_geometric_FP}.
\end{proof}

\begin{lemma}\label{lm:connective_cover_maps_are_affine}
    For a (nonconnective) spectral Deligne--Mumford stack $\X$, the natural connective cover map $c_\sfX:\sfX\to \sfX_{\geq0}$ is affine.
\end{lemma}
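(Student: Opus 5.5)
Since \Cref{lm:affine_maps_and_descent} says affineness is étale local on the target, I will reduce to the case where the target is affine. The first thing to check is how connective covers interact with étale maps. For a nonconnective spectral Deligne--Mumford stack $\X = (\mathcal{X}, \O_\X)$, the connective cover is $\X_{\geq 0} = (\mathcal{X}, \tau_{\geq 0}\O_\X)$, with the same underlying $\infty$-topos. Its étale maps $\Spet B \to \X_{\geq 0}$ from connective $B$ therefore correspond to étale objects $U$ of $\mathcal{X}$. For each such $U$, the open substack $\X_U = (\mathcal{X}_{/U}, \O_\X|_U)$ is an étale map into $\X$ whose connective cover is $\Spet B$.

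The key step is to identify the fibre product $\X \times_{\X_{\geq 0}} \Spet B$ with $\X_U$. I would argue at the level of ringed $\infty$-topoi. Both $\X \to \X_{\geq 0}$ and $\Spet B \to \X_{\geq 0}$ are maps over the same topos, or étale over it. Because étale pullbacks are computed by restricting the topos, the fibre product has underlying topos $\mathcal{X}_{/U}$ and structure sheaf $\O_\X|_U$. Here I would use the description of pullbacks along étale maps from \cite[\textsection1.4]{sag}: for an étale map $V \to Y$ and any $Z \to Y$, the pullback is $Z_{/(\text{pullback of } V)}$ with the restricted sheaf. Applying this with $Y = \X_{\geq 0}$ and $Z = \X$ gives $\X \times_{\X_{\geq 0}} \Spet B \simeq \X_U$.

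It then remains to show that $\X_U$ is affine. By construction it is a nonconnective spectral Deligne--Mumford stack whose connective cover $(\X_U)_{\geq 0} = \Spet B$ is affine. I need the fact that a nonconnective spectral DM stack is affine exactly when its connective cover is affine. To prove this, I would note that the structure sheaf $\O$ on the étale site of $\Spet B$ is quasi-coherent as a sheaf of $\tau_{\geq 0}\O$-modules. More precisely, $\O$ is an $\E_\infty$-algebra in $\QCoh(\Spet B)$, and quasi-coherent algebras on an affine are given by global sections, so $\X_U \simeq \Spet \Ga(\O)$.

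The main obstacle is justifying this quasi-coherence. Each $\pi_n \O$ is a quasi-coherent $\pi_0\O$-module, since $\X$ is locally of the form $\Spet A$ for nonconnective $A$, where $\pi_n \O$ restricted to the étale site of $\Spet A_{\geq 0}$ is $\pi_n A \otimes_{\pi_0 A} \pi_0(-)$. I would then use Lurie's characterisation of nonconnective spectral DM stacks as $\X_{\geq 0}$ together with a quasi-coherent sheaf of $\E_\infty$-algebras, which holds locally since étale algebras are flat, in order to conclude. If that characterisation is only available locally, I would instead glue through \Cref{lm:affine_maps_and_descent} applied to $c_\X$ itself, which reduces directly to the local statement $\Spet A \to \Spet A_{\geq 0}$. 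That local statement is tautologically affine.
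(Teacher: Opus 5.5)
Your argument is correct, and its skeleton (the description of pullbacks along étale maps, plus étale descent for affine morphisms via \Cref{lm:affine_maps_and_descent}) is the paper's. Your fallback is precisely the paper's proof. The connective covers $\Spet A_{\geq0}\to\X_{\geq0}$ of étale affines $\Spet A\to\X$ form an étale cover of $\X_{\geq0}$, since both stacks share the topos $\mathcal{X}$. The pullback of $c_\X$ along each of them is tautologically $c_A\colon\Spet A\to\Spet A_{\geq0}$.

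Your primary route instead tests against an arbitrary étale $\Spet B\to\X_{\geq0}$. This matches the literal ``for all étale maps'' wording of the descent lemma, whereas the paper implicitly uses its ``check on a cover'' form. The cost is an extra input: $\X_U$, a nonconnective spectral Deligne--Mumford stack whose connective cover is affine, must itself be affine. That is true and is exactly \cite[Cor.1.4.7.3]{sag}, which the paper invokes elsewhere (e.g.\ in the proof of \Cref{pr:equiv_of_closed_immersions}). You can therefore cite it rather than reprove it through quasi-coherence of $\O$ over $\tau_{\geq0}\O$.

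Your quasi-coherence sketch does go through, because locally $\O_{\Spet A}\simeq A\otimes_{A_{\geq0}}\O_{\Spet A_{\geq0}}$ by flatness of étale maps. It is simply more than is needed. Testing only on the cover coming from étale affines over $\X$, as the paper does, avoids this extra fact entirely.
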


\begin{proof}
    Let \(\Spet A\to \sfX\) be an étale map.
    From the explicit description of pullbacks along étale maps of \cite[Rmk.21.4.6.7]{sag}, we obtain a pullback square
\[\begin{tikzcd}
	{\Spet A} & \sfX \\
	{\Spet A_{\geq0}} & {\sfX_{\geq0}}
	\arrow[from=1-1, to=1-2]
	\arrow["{c_A}"', from=1-1, to=2-1]
	\arrow["{c_\sfX}", from=1-2, to=2-2]
	\arrow[from=2-1, to=2-2]
\end{tikzcd}\]
    Thus, \(c_\sfX\) is étale-locally an affine map, hence affine by \Cref{lm:affine_maps_and_descent}.
\end{proof}

\begin{lemma}\label{lem:flat_maps_are_good_in_SpDMnc}
    Let \(f:\sfY\to\sfX\) be a flat map of nonconnective spectral Deligne--Mumford stacks. Then the natural commutative square
\[\begin{tikzcd}
	\sfY & {\sfY_{\geq0}} \\
	\sfX & {\sfX_{\geq0}}
	\arrow["{c_\sfY}", from=1-1, to=1-2]
	\arrow["f"', from=1-1, to=2-1]
	\arrow["{f_{\geq0}}", from=1-2, to=2-2]
	\arrow["{c_\sfX}"', from=2-1, to=2-2]
\end{tikzcd}\]
    is a pullback in \(\SpDM^\nc\). Consequently, the functor
    \[(-)_{\geq 0} \colon \SpDM^{\nc}_{/\X} \to \SpDM_{/\X_{\geq0}}\]
    induces an equivalence when restricted to flat objects on both sides.
\end{lemma}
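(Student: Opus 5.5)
To prove that the square is a pullback, I will first reduce to the affine case by étale descent. The connective cover maps $c_\sfX$ and $c_\sfY$ are affine by \Cref{lm:connective_cover_maps_are_affine}. So the fibre product $\sfX\times_{\sfX_{\geq0}}\sfY_{\geq0}$ is affine over $\sfY_{\geq0}$, and the comparison map from $\sfY$ is a map between objects affine over $\sfY_{\geq0}$. It then suffices to check that this comparison map is an equivalence étale locally on $\sfY_{\geq 0}$.

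To do so, I choose étale maps $\Spet B\to\sfY_{\geq0}$ lying over étale maps $\Spet A\to\sfX_{\geq0}$, with $A,B$ connective. The étale sites of $\sfX$ and $\sfX_{\geq0}$ agree, by the explicit description used in \cite[Rmk.21.4.6.7]{sag}. Hence, after base change, we may assume $\sfX_{\geq0}=\Spet A_{\geq0}$ with $\sfX=\Spet A$, and similarly $\sfY=\Spet B$ with $\sfY_{\geq0}=\Spet B_{\geq 0}$. Here $A$ and $B$ are arbitrary $\E_\infty$-rings and $A\to B$ is flat. The pullback square of \Cref{lm:connective_cover_maps_are_affine} supplies exactly this local form.

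The statement then becomes the following algebraic claim: for flat $A\to B$, the natural map
\[A\otimes_{A_{\geq0}}B_{\geq0}\to B\]
is an equivalence, and $A_{\geq 0}\to B_{\geq0}$ is flat. Flatness of $A\to B$ means $\pi_0A\to\pi_0B$ is flat and $\pi_*A\otimes_{\pi_0A}\pi_0B\simeq\pi_*B$. From this, $\pi_*B_{\geq0}\simeq\pi_*A_{\geq0}\otimes_{\pi_0A}\pi_0B$, so $A_{\geq0}\to B_{\geq0}$ is flat. Flatness in turn gives a degenerate Tor spectral sequence,
\[\pi_*(A\otimes_{A_{\geq0}}B_{\geq0})\simeq\pi_*A\otimes_{\pi_*A_{\geq0}}\pi_*B_{\geq0}\simeq\pi_*A\otimes_{\pi_0A}\pi_0B\simeq\pi_*B.\]
This proves the claim. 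I expect the main bookkeeping obstacle to be the reduction to affines: one has to ensure that the fibre products in $\SpDM^\nc$ are computed compatibly with these étale charts. This is where \cite[Rmk.21.4.6.7]{sag} and \Cref{lm:affine_maps_and_descent} are used.

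For the \enquote{consequently} part, the functor $(-)_{\geq0}$ sends flat objects to flat objects; this is étale-local and follows from the affine computation above. A proposed inverse sends a flat $\sfY'\to\sfX_{\geq0}$ to $\sfX\times_{\sfX_{\geq0}}\sfY'$. This base change is flat over $\sfX$, since flatness is stable under base change. One composite is the identity on flat objects over $\sfX$ by the pullback square just proven. For the other composite, I will show that $(\sfX\times_{\sfX_{\geq0}}\sfY')_{\geq0}\simeq\sfY'$. Locally this says that for flat $A_{\geq0}\to C$ with $C$ connective, $(A\otimes_{A_{\geq0}}C)_{\geq0}\simeq C$. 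This again follows from the degenerate Tor spectral sequence, since $\pi_*(A\otimes_{A_{\geq0}}C)\simeq\pi_*A\otimes_{\pi_0A}\pi_0C$ has connective part $\pi_*C$. Naturality of these identifications upgrades them to an equivalence of categories.
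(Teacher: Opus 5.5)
Your proof is correct. It differs from the paper's in how the localisation is organised and in what is proved by hand.

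\textbf{The paper's route.} The paper localises only on the base. For an étale $\Spet A\to\sfX$, it uses \cite[Rmk.21.4.6.7]{sag} to identify the base change of the square along $\Spet A_{\geq0}\to\sfX_{\geq0}$ with the square for the flat map $\sfY_A\to\Spet A$. The source $\sfY_A$ there is still an arbitrary stack, so the paper cites \cite[Pr.2.8.2.10]{sag} for that case. It then glues using universality of étale colimits in $\SpDM^\nc$, and asserts the equivalence of categories in one line.

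\textbf{Your route.} You use that $c_\sfY$ and the base change of $c_\sfX$ are affine. So the comparison map is a map of stacks affine over $\sfY_{\geq0}$, and you check it on étale affine charts of $\sfY_{\geq0}$ lying over charts of $\sfX_{\geq0}$. This reduces everything to an algebraic statement about a flat map $A\to B$. You verify that directly with the Tor spectral sequence, equivalently the formula for homotopy groups of a tensor product with a flat module over the connective ring $A_{\geq0}$.

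\textbf{What yours buys.} It avoids the external citation and is self-contained. It also checks two points the paper leaves implicit: that $f_{\geq0}$ is again flat, and that $(\sfX\times_{\sfX_{\geq0}}\sfY')_{\geq0}\simeq\sfY'$ for flat connective $\sfY'\to\sfX_{\geq0}$.

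\textbf{Points to tidy when writing it up.}
\begin{enumerate}
\item Keep separate names for the connective charts and their nonconnective lifts; as written, your $A,B$ are first connective and then not.
\item Justify flatness of the chart map $A\to B$: it holds because $\Spet B\to\sfX$ is flat and $\Spet A\to\sfX$ is étale.
\item For the second composite, say that taking connective covers commutes with the étale base change along a chart $\Spet C\to\sfY'$, again by \cite[Rmk.21.4.6.7]{sag}.
\end{enumerate}
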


Variants of this appear in \cite[Lm.3.4.4.3]{reconstruction}, \cite[Lm.2.1]{elltempcomp}, and \cite[Pr.2.8.2.10]{sag}.

\begin{remark}
    The class of morphisms of stacks $f\colon \Y \to\X$ such that the commutative square of \Cref{lem:flat_maps_are_good_in_SpDMnc} is a pullback square is actually much larger than only flat maps. For instance, if $f \in \SpDM^\nc_{/\M}$ and both $\X$ and $\Y$ are flat over $\M$, then the above diagram is a pullback by the pasting lemma. More interestingly, it also holds for maps between complex-periodic (\Cref{def:even_periodic}) objects of $\SpDM^\nc$, as locally the map $\X\to\X_{\geq0}$ is given by inverting an element of degree 2. In fact, it holds in slightly more generality with only mild periodicity requirements, as exhibited by
    \[\KO \otimes_{\ko} \ku \simeq \ko[\beta^{-1}] \otimes_{\ko} \ku \simeq \ku[\beta^{-1}] \simeq \KU.\]
    Many of the surrounding statements, such as \Cref{pr:equiv_of_closed_immersions,prop:nc_ECD_flat_over_base}, hold in the generality of \(j:\sfZ\to\sfX\) such that square of \Cref{lem:flat_maps_are_good_in_SpDMnc} is a pullback square. This might be useful to extend the arguments here to stacks which are not flat over a common base.
\end{remark}

\begin{proof}[Proof of \Cref{lem:flat_maps_are_good_in_SpDMnc}]
    Given an étale map $\Spet A \to \X$, then just as in the proof of \Cref{lm:connective_cover_maps_are_affine}, we have a Cartesian square
\[\begin{tikzcd}
	{\Spet A} & {\Spet A_{\geq0}} \\
	{\sfX} & {\sfX_{\geq0}.}
	\arrow["g", from=1-1, to=2-1]
	\arrow["{c_A}"', from=1-1, to=1-2]
	\arrow["{c_\sfX}", from=2-1, to=2-2]
	\arrow["{g_{\geq0}}"', from=1-2, to=2-2]
\end{tikzcd}\]
    The same remark also shows that the base change of \(\sfY_{\geq0}\) along \(g_{\geq0}\) agrees with \((\sfY_A)_{\geq0}\).
    It follows that the base change of the original square along \(g_{\geq0}\) has the form
\[\begin{tikzcd}
	{\sfY_A} & {(\sfY_A)_{{\geq0}}} \\
	{\Spet A} & {\Spet A_{\geq0}.}
	\arrow["{c_{\sfX_A}}", from=1-1, to=1-2]
	\arrow["{f_A}"', from=1-1, to=2-1]
	\arrow["{(f_A)_{\geq0}}", from=1-2, to=2-2]
	\arrow["{c_A}"', from=2-1, to=2-2]
\end{tikzcd}\]
    Since flatness is preserved by base change, both of the vertical maps above are flat, hence this square is a pullback from \cite[Pr.2.8.2.10]{sag} and its proof. Writing $\X \simeq \colim \Spet A$ as an étale colimit of affines, the result follows from the universality of étale colimits in $\SpDM^\nc$: this in turn follows from universality of colimits in $\Stk^\et$, that \(h_{(-)}^\et\) is fully faithful and preserves étale colimits by \cite[Th.3.2.1.3]{reconstruction}, and  that \(h_{(-)}^\et\) preserves finite limits by \Cref{lem:h_finite_limits}.
    Gluing the two pullback diagrams above together then gives
    \[\Y \simeq \colim \Y_A \simeq \colim \left(\X \times_{\X_{\geq 0}} (\Y_A)_{\geq 0} \right) \simeq \X \times_{\X_{\geq 0}} \colim (\Y_A)_{\geq 0} \simeq \X \times_{\X_{\geq 0}} \Y_{\geq 0}.\]
    For the second statement, simply note that the inverse to the connective cover functor is base change along $\X \to \X_{\geq0}$.
\end{proof}

\begin{proof}[Proof of \Cref{pr:equiv_of_closed_immersions}]
    By \Cref{rmk:affine_maps_connectiveVSnonconnective}, the map $j_{\geq0}$ is affine in $\SpDM$ if and only if it is affine in $\SpDM^\nc$. By \Cref{lm:affine_maps_and_descent}, we may work étale locally, choose an étale cover of $\X$ by affines $\Spet A$, inducing an étale cover $\Spet A_{\geq 0}$ of $\X_{\geq 0}$. We then have the two pullbacks in $\SpDM^\nc$
    \[\begin{tikzcd}
        {P}\ar[r]\ar[d]     &   {\Spet A}\ar[d] \\
        {\sfZ}\ar[r, "j"]   &   {\sfX}
    \end{tikzcd}\qquad
    \begin{tikzcd}
        {P_{\geq 0}}\ar[r]\ar[d]     &   {\Spet A_{\geq0}}\ar[d] \\
        {\sfZ_{\geq 0}}\ar[r, "{j_{\geq0}}"]   &   {\sfX_{\geq0}.}
    \end{tikzcd}\]
    As all stacks in the left-hand square are flat over $\M$ and all those in the right-hand square are flat over $\M_{\geq 0}$, then \Cref{lem:flat_maps_are_good_in_SpDMnc} implies that $P_{\geq 0}$ really is the connective cover of $P$. In particular, by \cite[Cor.1.4.7.3]{sag}, we see that $P$ is affine if and only if $P_{\geq0}$ is affine, so $j$ is affine if and only if $j_{\geq0}$ is affine.

    Writing $P=\Spet B$ such that $P_{\geq 0} = \Spet B_{\geq0}$, it also immediately follows that $j$ is a closed immersion if and only if $j_{\geq 0}$ is a closed immersion, as $A \to B$ is surjective on $\pi_0$ if and only if $A_{\geq 0} \to B_{\geq0}$ is surjective on $\pi_0$.
\end{proof}

Complementary to closed immersions are open immersions.

\begin{mydef}[{\cite[Df.1.6.7.2]{sag}}]\label{df:open_immersion_SPDM}
    A map of nonconnective spectral Deligne--Mumford stacks $i\colon \sfY \to \X = (\calX, \O_\X)$ is an \emph{open immersion} if $\Y$ is equivalent to $(\calX_{/u}, \O_\X|_u)$ for a $(-1)$-truncated object in the underlying topos $\calX$ of $\X$. Such an open immersion is \emph{quasi-compact}, or simply a \emph{qc-open immersion}, if for each affine object $u\in \calX$, the pullback $i^\ast u$ is $0$-coherent in the underlying topos $\calY$ of $\Y$, meaning that every covering of $i^\ast u$ in $\calY$ admits a finite subcovering.
\end{mydef}

By \Cref{prop:h_open_immersion} below, (qc-) open immersions in $\SpDM^\nc$ induce (qc-) open immersions in $\Stk$.

\begin{construction}\label{con:connective_open_complement}
    Let $j \colon \sfZ \to \X$ be a closed immersion of connective spectral Deligne--Mumford stacks. In Lurie's definition of a closed immersion, equivalent to the above by \cite[Rmk.3.1.2.2]{sag}, there is a unique $(-1)$-truncated object $u$ in the underlying topos $\calX$ of $\X$, such that the underlying geometric morphism of \(\infty\)-topoi of \(j\) has the form $\mathcal{X}/ u\hookrightarrow\mathcal{X}$, where $\mathcal{X}/ u$ denotes the subtopos of $\mathcal{X}$ of objects $\mathcal{F}$ such that the projection $\mathcal{F}\times u \to \mathcal{F}$ is an isomorphism. The {\it complementary open immersion} of \(j\) is defined as
\[i\colon \sfU = (\mathcal{X}_{/u},\O_\X\vert_u)\to (\mathcal{X},\O_\X) = \sfX.\qedhere\]
\end{construction}

\begin{variant}\label{var:nonconnective_open_complement}
    Let $\M$ be a nonconnective spectral Deligne--Mumford stack and $j\colon \sfZ \to \X$ a closed immersion between flat nonconnective spectral Deligne--Mumford $\M$-stacks. Taking connective covers induces a closed immersion $j_{\geq0}$ by \Cref{pr:equiv_of_closed_immersions}, so there is an essentially unique $(-1)$-truncated $u$ in $\calX$, which is the underlying topos of both $\X$ and $\X_{\geq 0}$ by construction (\cite[Cor.1.4.5.2]{sag}), defining the complementary open immersion to $j_{\geq 0}$. The \emph{complementary open immersion} of $j$ is then defined as 
    \[i\colon \sfU = (\mathcal{X}_{/u},\O_\X\vert_u)\to (\mathcal{X},\O_\X) = \sfX.\]
    As $\sfU$ is flat over $\X$ and hence also over $\M$, and likewise the complementary open substack of $\X_{\geq 0}$ to $j_{\geq 0}$ is flat over $\M_{\geq 0}$, then \Cref{lem:flat_maps_are_good_in_SpDMnc} implies that the latter is the connective cover of the former, and that
    \[\sfU \simeq \sfU_{\geq 0} \times_{\sfX_\geq 0} \sfX \simeq \sfU_{\geq 0} \times_{\sfM_\geq 0} \sfM .\qedhere\]
\end{variant}

The following are generic facts concerning open immersions.

\begin{prop}[{\cite[Lm.2.22 \& Cor.2.23]{Tokic_2026_FamilyCompletionTheorem}}]\label{prop:open_immersion_omnibus}
    Let \(i\colon \mathsf{U}\to \X\) be a qc-open immersion of nonconnective spectral Deligne--Mumford stacks. Then \(i_*\colon \QCoh(\mathsf{U})\to \QCoh(\X)\) is fully faithful, \(i_*\O_{\mathsf{U}}\) is an idempotent \(\O_\X\)-algebra, and the essential image of \(i_*\) agrees with those objects in \(\QCoh(\X)\) that admit an \(i_*\O_{\mathsf{U}}\)-module structure.
\end{prop}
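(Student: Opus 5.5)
To prove \Cref{prop:open_immersion_omnibus}, I would first settle the affine case and then use étale descent to globalise, since all three assertions are étale local on $\X$.

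\textbf{Local case.} Take $\X = \Spet A$. Because $i$ is quasi-compact, $\sfU$ is covered by finitely many basic opens $\Spet A[f_k^{-1}]$. Hence $i_\ast$ preserves colimits and satisfies the projection formula, and it is computed by a finite Čech complex. Full faithfulness of $i_\ast$ amounts to the counit $i^\ast i_\ast \to \id$ being an equivalence. This can be checked after restriction to each $\Spet A[f_k^{-1}]$, and there it follows from flat base change along the open immersion: $\sfU \times_\X \sfU \simeq \sfU$, since open immersions are monomorphisms, being given by slicing over a $(-1)$-truncated object.

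\textbf{Globalisation.} For a general $\X$, quasi-coherent sheaves satisfy étale descent, and $i_\ast$ commutes with étale base change. This base change again uses quasi-compactness, together with quasi-separatedness of $\sfU \to \X$, which is automatic for open immersions. Therefore the counit statement descends.

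\textbf{Idempotence.} Idempotence of $i_\ast\O_\sfU$ means that the multiplication $i_\ast\O_\sfU \otimes i_\ast\O_\sfU \to i_\ast\O_\sfU$ is an equivalence. By the projection formula,
\[
i_\ast\O_\sfU \otimes i_\ast\O_\sfU \simeq i_\ast(\O_\sfU \otimes i^\ast i_\ast\O_\sfU) \simeq i_\ast \O_\sfU,
\]
where the last step uses $i^\ast i_\ast \simeq \id$ from the first part.

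\textbf{Essential image.} For the identification of the essential image, one direction is clear: objects $i_\ast \calF$ are modules over $i_\ast\O_\sfU$ via lax monoidality of $i_\ast$. Conversely, for an idempotent algebra $R$, being an $R$-module is a property, equivalent to $M \to R\otimes M$ being an equivalence. So if $M$ is a module, then
\[
M \simeq i_\ast\O_\sfU \otimes M \simeq i_\ast i^\ast M
\]
by the projection formula, and $M$ lies in the image.

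\textbf{Main obstacle.} I expect the main obstacle to be establishing the projection formula and base change for $i_\ast$ in the nonconnective Deligne--Mumford setting. I would handle this by reducing to the finite Čech presentation over affines, using that quasi-compactness gives a finite cover and that finite limits commute with tensor products in stable categories.
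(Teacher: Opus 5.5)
Your argument is correct, but it takes a different route from the paper. The paper gives no proof internal to $\SpDM^\nc$. It takes the corresponding statement for qc-open immersions in $\Stk$ from \cite[Lm.2.22 \& Cor.2.23]{Tokic_2026_FamilyCompletionTheorem} and transports it along the functor of points. For this it uses two facts: $h_i\colon h_\sfU\to h_\X$ is again a qc-open immersion (\Cref{prop:h_open_immersion}), and $\QCoh(\X)\simeq\QCoh(h_\X)$ naturally in $\X$ \cite[Ex.3.2.2.3]{reconstruction}, so pullbacks, and hence their right adjoints $i_\ast$, correspond.

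You instead work directly in Lurie's setting. Finite basic-open covers over affines give colimit preservation, the projection formula, and base change for $i_\ast$ through a finite Čech limit. Étale descent then globalises full faithfulness. Idempotence and the essential-image description follow formally from ``$i_\ast$ fully faithful $+$ projection formula'', together with the fact that being a module over an idempotent algebra is a property.

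The paper's route is short because the $\SpDM^\nc$--$\Stk$ comparison is needed elsewhere anyway (e.g.\ in \Cref{lem:locally_principal_conservative}), and it inherits the generality of the $\Stk$ statement. Its real cost is hidden in \Cref{prop:h_open_immersion}, which relies on the faithfully flat lifting of \Cref{lem:h_map_from_affine_local_lift} and on descent for open immersions in $\Stk$. Your route avoids that machinery and is self-contained, but you must verify qcqs base change and the projection formula for nonconnective spectral Deligne--Mumford stacks by hand; your finite Čech reduction does this.

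Two small points deserve a line each in a write-up. First, the equivalence $i_\ast\O_\sfU\otimes i_\ast\O_\sfU\simeq i_\ast\O_\sfU$ you produce should be checked to be the multiplication. Equivalently, show that the unit $\O_\X\to i_\ast\O_\sfU$ becomes an equivalence after tensoring with $i_\ast\O_\sfU$, which follows from a triangle identity. Second, for $\X=\Spet A$, quasi-compactness in the sense of \Cref{df:open_immersion_SPDM} (take the terminal affine object) does yield a quasi-compact open of $|\Spec\pi_0 A|$, and this is what justifies your finite cover by $\Spet A[f_k^{-1}]$.
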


The referenced statements from \cite{Tokic_2026_FamilyCompletionTheorem} are stated in $\Stk$, but as the functor of points $h_{(-)} \colon \SpDM^\nc \to \Stk$ preserves qc-open immersions by \Cref{prop:h_open_immersion} and there is a natural equivalence $\QCoh(\X) \simeq \QCoh(h_\X)$ by \cite[Ex.3.2.2.3]{reconstruction}, we obtain the above statement.

\begin{prop}\label{cor:compact_generator_for_complement}
    Let $A$ be an $\E_\infty$-ring and $i\colon \sfU \to \Spec A$ be a qc-open immersion, so necessarily $\sfU$ is a nonconnective spectral scheme. Then the kernel of $i^\ast$ is generated by a compact object of $\Mod_A$.
\end{prop}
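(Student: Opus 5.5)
The plan is to reduce to the classical statement about quasi-compact opens of $\Spec \pi_0 A$, or of the underlying Zariski/étale topos, and then build an explicit Koszul-type compact object. By \Cref{df:open_immersion_SPDM}, $\sfU$ corresponds to a $(-1)$-truncated object $u$ of the étale topos of $\Spet A$, which is the same as that of $\Spet A_{\geq 0}$ and hence, by topological invariance, that of $\Spec \pi_0 A$. Since $i$ is quasi-compact and $\Spet A$ is affine, $u$ is a quasi-compact open of the étale topos. Quasi-compact open subobjects of the final object of the étale topos of an affine are quasi-compact Zariski opens of $|\Spec \pi_0 A|$. Hence $u$ is the complement of the vanishing locus $V(f_1,\dots,f_n)$ for finitely many $f_k \in \pi_0 A$, and $\sfU$ is covered by the affine opens $\Spec A[f_k^{-1}]$ with the evident intersections $\Spec A[(f_{k_1}\cdots f_{k_m})^{-1}]$.

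The candidate generator is the Koszul object
\[K = A/f_1 \otimes_A \cdots \otimes_A A/f_n, \qquad A/f = \mathrm{cofib}(f\colon A\to A),\]
which is compact, being a finite tensor product of perfect $A$-modules. We first check $K \in \ker i^\ast$. The restriction $i^\ast$ can be tested on the affine cover by the $\Spec A[f_k^{-1}]$, because $\QCoh(\sfU)$ is the limit over the Čech nerve of this Zariski cover. On $\Spec A[f_k^{-1}]$ the factor $A/f_k$ becomes $A[f_k^{-1}]/f_k = 0$, so $K$ restricts to zero.

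For generation we show that $M \in \ker i^\ast$ and $\Hom(\Sigma^m K, M)=0$ for all $m$ together force $M=0$. The kernel of $i^\ast$ is described via the fibre of $M \to i_\ast i^\ast M$, using \Cref{prop:open_immersion_omnibus}. Here $i_\ast i^\ast M$ is computed by the Čech complex $\lim_{\emptyset\ne S} M[f_S^{-1}]$, a finite limit. So $M \in \ker i^\ast$ means precisely that every $M[f_k^{-1}]=0$, i.e.\ $M$ is $f_k$-power torsion for each $k$; in other words $M$ is $(f_1,\dots,f_n)$-torsion in the sense of Greenlees--May/\cite[\textsection7.1]{sag}-style local cohomology, which makes sense for nonconnective $A$ since only $\pi_0 A$ elements are involved. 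Now argue by induction on $n$. If $\Hom(\Sigma^\ast A/f_1\otimes N, M)=0$ then $f_1$ acts invertibly on $\HOM(N,M)$, giving $\HOM(N,M)\simeq \HOM(N,M)[f_1^{-1}]$. The latter vanishes since $M[f_1^{-1}]=0$ and $N$ is compact. Iterating, starting from $N=A/f_2\otimes\cdots\otimes A/f_n$ and peeling off the factors one at a time, we conclude $M=\HOM(A,M)=0$.

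Once $\ker i^\ast$ is closed under colimits (as $i^\ast$ preserves them) and $K$ lies in it and detects zero objects there, $K$ is a compact generator of $\ker i^\ast$. The main obstacle is the first step. One has to justify that a quasi-compact open of the nonconnective $\Spet A$ is cut out by finitely many elements of $\pi_0 A$, and that $\sfU$ is genuinely covered by the localisations $\Spec A[f_k^{-1}]$ in $\SpDM^\nc$, so that $i^\ast$ is detected by them. I would deduce this from \cite[Cor.1.4.5.2]{sag}, which gives the shared underlying topos with the connective cover, together with the flat base change of \Cref{lem:flat_maps_are_good_in_SpDMnc} applied to the flat map $\sfU_{\geq0}\to \Spet A_{\geq 0}$, which identifies $\sfU \simeq \sfU_{\geq0}\times_{\Spet A_{\geq0}}\Spet A$, reducing to the connective affine case where the claim is standard.
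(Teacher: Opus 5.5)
Your proposal is correct and takes the same route as the paper: the qc-open is cut out by finitely many $f_1,\dots,f_n\in\pi_0A$, and the Koszul object $A/(f_1,\dots,f_n)=\bigotimes_k A/f_k$ is the compact generator of $\ker i^\ast$. The paper gives this in two lines, so your checks that $K$ lies in the kernel and detects zero objects there (peeling off one factor $A/f_k$ at a time, using $M[f_k^{-1}]=0$ and compactness) fill in details the paper leaves implicit.
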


\begin{proof}
    The map $i$ is defined by a finite collection of elements $f_1, \ldots, f_n \in \pi_0 A$, and $i_\ast \O_\sfU$ is given by the localisation of $A$ at these elements. The kernel of $i^\ast$ is then generated by the $A$-module $A/(f_1,\ldots, f_n)$, which is compact as we only consider a finite set of elements.
\end{proof}

\subsection{Effective Cartier divisors}\label{ssec:effective_Cartier_divisors}
A particularly well-behaved class of closed immersions are the following:

\begin{mydef}\label{def:ECD_SpDM}
A closed immersion of connective (resp.\ nonconnective) spectral Deligne--Mumford stacks \(j\colon \sfZ\to \X\) is an \emph{effective Cartier divisor} if 
\[\O_\sfX(-\sfZ) = \fib(\O_\sfX\to j_*\O_\sfZ) \in \QCoh(\X)\] 
is étale locally isomorphic to \(\O_\sfX\). This implies that $\O_\sfX(-\sfZ)$ is a tensor invertible object of $\QCoh(\sfX)$, whose inverse, or equivalently dual, we denote by \(\O_\sfX(\sfZ)\).
\end{mydef}

This definition is most sensible when all stacks are flat over a common base, reducing us to the connective situation by \Cref{pr:equiv_of_closed_immersions}.

\begin{remark}
    The above notion of an effective Cartier divisor differs to that of Ma--Zhu \cite[Df.2.12]{ma_zhu_spectral_moduli_Jacquetlandlands} as we do not assume any extra hypotheses on $j$ such as properness, flattness, nor finiteness; this difference does not play a role in this article, but plays a key role in the use of spectral Artin representability found in \emph{ibid}.
\end{remark}

\begin{prop}\label{lem:locally_principal_conservative}
    Let \(j\colon \sfZ\to \sfX\) be an effective Cartier divisor of connective spectral Deligne--Mumford stacks. Then
    \begin{enumerate}
        \item the complementary open immersion \(i\colon \sfU\to \sfX\) of \Cref{con:connective_open_complement} is quasi-compact,
        \item \(i_*\O_\sfU\) is equivalent, as an \(\E_0\)-algebra, to \(\O_\sfX(\infty \sfZ)=\colim_n \O_\sfX(\sfZ)^{\otimes n}\), and
        \item the kernel of \(j^*\) agrees with the essential image of \(i_*\).
    \end{enumerate} 
\end{prop}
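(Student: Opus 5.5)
All three claims are étale local on $\X$, so the plan is to reduce to the affine case and then argue with explicit Koszul-type modules. Quasi-compactness of an open immersion can be checked after pulling back along an étale cover by affines, since $0$-coherence of $i^\ast u$ for affine $u$ only involves the pullbacks $\sfU\times_\X \Spet A$. Formation of $\O_\X(-\sfZ)$, of $i_\ast\O_\sfU$ (by flat base change for the qc-open immersion, using \Cref{prop:open_immersion_omnibus}) and the conditions $j^\ast M\simeq 0$ and ``$M$ is an $i_\ast\O_\sfU$-module'' are all compatible with étale base change. Hence I first fix an étale map $\Spet A\to\X$ with $A$ connective on which $\O_\X(-\sfZ)$ is trivial, and set $\sfZ_A=\Spet B$ with $\pi_0A\to\pi_0B$ surjective.

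On such an affine, the fibre sequence $\O(-\sfZ)\to A\to B$ with $\O(-\sfZ)\simeq A$ gives a map $A\xrightarrow{f}A$ with cofibre $B$, so $B\simeq A/f$ for some $f\in\pi_0A$. The underlying closed subset of $\sfZ_A$ is then $V(f)$, and its complement is the basic open $\Spet A[f^{-1}]$, which is affine. This gives (1) directly, and it also identifies $i_\ast\O_\sfU$ locally with $A[f^{-1}]$.

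For (2), I would build the global map $\O_\X\to\O_\X(\sfZ)$ as the dual of $\O_\X(-\sfZ)\to\O_\X$. Iterating it and tensoring gives a sequential diagram $\O_\X(\sfZ)^{\otimes n}\to\O_\X(\sfZ)^{\otimes n+1}$, and its colimit $\O_\X(\infty\sfZ)$ receives a unit map from $\O_\X$, which is the $\E_0$-structure. To compare with $i_\ast\O_\sfU$, note that $i^\ast$ of $\O(-\sfZ)\to\O$ is an equivalence, since $i^\ast j_\ast\O_\sfZ=0$ because $\sfZ\times_\X\sfU=\emptyset$. So $i_\ast\O_\sfU$ receives compatible maps from each $\O(\sfZ)^{\otimes n}$, obtained from $\O(\sfZ)^{\otimes n}\to i_\ast i^\ast\O(\sfZ)^{\otimes n}\simeq i_\ast\O_\sfU$. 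Locally this colimit map is $\colim(A\xrightarrow{f}A\xrightarrow{f}\cdots)=A[f^{-1}]\to A[f^{-1}]$, an equivalence, and it is compatible with the units. The main obstacle is making these maps coherent, i.e.\ choosing the trivialisation of $i^\ast\O(-\sfZ)\to\O$ globally and functorially in $n$. I would handle this by working in the symmetric monoidal category $\QCoh(\sfU)$, where $i^\ast\O(\sfZ)\simeq\O_\sfU$ canonically via the inverse of the equivalence above, and then adjoining.

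For (3), I would argue as follows. Since $j$ is affine, $j^\ast M\simeq0$ if and only if $j_\ast j^\ast M\simeq M\otimes j_\ast\O_\sfZ\simeq0$, which locally reads $M/f\simeq 0$; this is equivalent to $f$ acting invertibly on $M$, i.e.\ $M\simeq M[f^{-1}]$. Globally this says $M\otimes\O(-\sfZ)\to M$ is an equivalence, so $M\to M\otimes\O(\infty\sfZ)\simeq M\otimes i_\ast\O_\sfU$ is an equivalence. Hence $M$ lies in the essential image of $i_\ast$ by \Cref{prop:open_immersion_omnibus}, as $i_\ast\O_\sfU$ is idempotent. Conversely, $j^\ast i_\ast\simeq 0$ follows from base change together with $\sfZ\times_\X\sfU=\emptyset$, which gives the reverse inclusion.
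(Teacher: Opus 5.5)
Your proposal is correct and follows essentially the same route as the paper. For (1) you identify $B\simeq A/f$ and $\sfU_A\simeq\Spet A[f^{-1}]$ étale-locally; for (2) you build $\O_\X(\infty\sfZ)\to i_*\O_\sfU$ by adjunction, using that $i^*$ inverts $\O_\X\to\O_\X(\sfZ)$, and check that it is locally $Tel(A,f)\to A[f^{-1}]$; for (3) you use the projection formula for $j$ together with \Cref{prop:open_immersion_omnibus}. The one difference is that the paper passes to $\Stk$ via $h_{(-)}$ to invoke adjointability and proves $\sfZ\times_\X\sfU=\emptyset$ through underlying spaces and sobrification, whereas you take base change and this emptiness directly in $\SpDM$, where the emptiness holds by construction of the complementary open.
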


\begin{proof}
    Let \(f\colon \Spet A\to \sfX\) be an étale map equipped with an equivalence \(f^*\O_\sfX(-\sfZ)\simeq A\), and denote the base change of \(j\colon \sfZ\to \sfX\) along $f$ by \(j_A\colon \sfZ_A\to \Spet A\), and the base change of $f$ along $j$ by $f' \colon \sfZ_A \to \sfZ$. By \cite[Cor.3.1.2.3]{sag}, $j_A$ is a closed immersion and \cite[Th.3.1.2.1]{sag} tells us that \(\sfZ_A\simeq \Spet B\) is affine and that \(A\to B\) is surjective on \(\pi_0\).
    The fibre sequence \(\O_\sfX(-\sfZ)\to \O_\sfX\to j_*\O_\sfZ\) pulls back along $f$ to 
    \[A\xrightarrow{\cdot a} A\to B \simeq (j_A)_\ast f'^\ast \O_\sfZ \simeq f^\ast j_\ast \O_\sfZ \in \Mod_A,\]
    using the Beck--Chevalley equivalence for affine morphisms, see \cite[Cor.2.5.4.6]{sag} or \cite[Pr.2.2.2.5]{reconstruction}, for some \(a\in \pi_0 A\). In particular, we must have \(\pi_0B\simeq (\pi_0A)/(a)\) as $A$ is connective.
    Thus, the closed subspace \(|\sfZ_A|\subset |\Spet A|\) is given by \(V(a)\).
    It follows that the complementary open immersion \(\sfU_A \simeq \Spet A[a^{-1}] \to \Spet A\) is quasi-compact. As \(\sfU_A \simeq \Spet A[a^{-1}]\) is the base change of \(i\colon \sfU\to \sfX\) along \(f\colon \Spet A\to \sfX\), and as $f$ varies it covers $\sfX$, then \cite[Prs.2.3.3.1 \& 2.3.2.1(3)]{sag} tell us that \(i\colon \sfU\to \sfX\) must already be quasi-compact. This proves part 1.
    
    For parts 2 and 3, note that a closed immersion in \(\SpDM^\nc\) induces a closed immersion in \(\Stk\) by \Cref{lm:h_closed_immersion}. By \cite[Pr.2.2.2.5]{reconstruction}, the pullback square
\begin{equation}\label{eq:pullback_open_immersion}\begin{tikzcd}
	P\ar[r]\ar[d] & {h_\sfU}\ar[d, "i"] \\
	{h_\sfZ}\ar[r, "j"] & {h_\sfX}
	\arrow["\lrcorner"{anchor=center, pos=0.125}, draw=none, from=1-1, to=2-2]
\end{tikzcd}\end{equation}
    is adjointable; recall this definition from \cite[Df.2.2.1.8]{reconstruction}, reviewed as (\ref{eq:adjointable_map_of_stacks}) below. According to \cite[Pr.7.3.2.7]{htt}, the preimage of \(|\sfU|\subset |\sfX|\) in \(|\sfZ|\) on underlying topological spaces is empty.
    From \cite[Rmk.A.7]{Tokic_2026_FamilyCompletionTheorem}, we know that the natural comparison \(|h_\bullet|\to |{\bullet}|\) is {\it sobrification}, that is, the reflection into the subcategory of sober topological spaces.
    As the map \(h_\sfU\to h_\sfX\) is a quasi-compact open immersion by \Cref{prop:h_open_immersion} and sobrification induces bijections between the sets of open subsets, we find that the preimage of \(|h_\sfU|\subset |h_\sfX|\) along $j$ is empty, so we have \(|P|\simeq \emptyset\) by \cite[Cor.A.28]{Tokic_2026_FamilyCompletionTheorem}.
    That implies \(P = \Spec 0 = \varnothing\) and \(i^*j_*\O_\sfZ\simeq 0\) vanishes.

    Consequently, the map \(\O_\sfX\to \O_\sfX(\sfZ)\), dual to the defining $\O_\X(-\sfZ) \to \O_\X$, is an equivalence after applying \(i^*\). As $i^\ast$ is symmetric monoidal, this also means that \(i^*\O_\sfX\to i^*\O_\sfX(\infty \sfZ)\) is an equivalence. Taking an inverse, composing with \(i^*\O_\sfX\to \O_\sfU\), and taking adjoints, we obtain a map \(\O_\sfX(\infty \sfZ)\to i_*\O_\sfU\) of \(\E_0\)-algebras in $\QCoh(\X)$, which is an equivalence after applying \(i^*\). 
    
    For an étale map \(f\colon \mathrm{Sp\et}A\to \sfX\) such that \(f^*\O_\sfX(-\sfZ)\simeq A\), the map \(f^*\O_\sfX\to f^*\O_\sfX(\sfZ)\) can be identified with \(A\xrightarrow{\cdot a}A\), for some \(a\in \pi_0 A\). Similarly, \(f^*\O_\sfX\to f^*\O_\sfX(\infty \sfZ)\) can be identified with the canonical map from \(A\) to $Tel(A,a)$, the colimit of iterated multiplication by $a$ on $A$. We saw above that the base change of \(\sfU\to \sfX\) to \(\mathrm{Sp\et} A\) is given by \(\mathrm{Sp\et}A[a^{-1}]\to \mathrm{Sp\et} A\), so the map \(f^*\O_\sfX\to f^*i_*\O_\sfU\) in $\Mod_A$ can be identified with the localisation \(A\to A[a^{-1}]\). Applying \(f^*\) to \(\O_\sfX(\infty \sfZ)\to i_*\O_\sfU\) yields \(Tel(A,a)\to A[a^{-1}]\), which is an equivalence. As \(\sfX\) is covered by such étale affines, the original map \(\O_\sfX(\infty \sfZ)\to i_*\O_\sfU\) must be an equivalence.

    For part 3, let \(\calM\in \QCoh(\sfX)\) be such that \(j^*\calM\simeq 0\).
    Since \(j\colon h_\sfZ\to h_\sfX\) satisfies the projection formula, by \Cref{cor:h_affine_map} and \cite[Pr.2.2.2.5(2)]{reconstruction}, we conclude that \(j_*\O_\sfZ\otimes \calM\simeq 0\), and therefore the map \(\O_\sfX(-\sfZ)\otimes \calM \to \calM\) is an equivalence. Tensoring by $\O_\sfX(\sfZ)$, which recall is invertible by assumption, we find that \(\calM\to \O_\sfX(\sfZ)\otimes \calM\) is an equivalence. Therefore \(\O_\sfX\otimes \calM\to \O_\sfX(\infty \sfZ)\otimes \calM\simeq i_*\O_\sfU\otimes \calM\) is an equivalence, so \(\calM\) is in the essential image of \(i_*\) by \Cref{prop:open_immersion_omnibus}. Conversely, if \(\calN\in\QCoh(\sfU)\), then we compute that \(j^*i_*\calN\simeq 0\) using the {\it vertical} adjointability of the pullback square (\ref{eq:pullback_open_immersion}) above, which comes from \(i\) being a qc-open immersion (\cite[Pr.2.4.1.5(3)]{sag}) together with \(P\simeq\Spec 0 = \varnothing\).
\end{proof}

To make sensible statements in the nonconnective setting, we prefer to work with the following classes of stacks.

\begin{mydef}\label{def:even_periodic}
    A nonconnective spectral Deligne--Mumford stack $\X$ is \emph{complex-periodic} if for each map $\Spet A \to \X$, the $\E_\infty$-ring $A$ is complex-periodic à la \cite[Df.4.1.8]{ec2}. Such an $\X$ is \emph{even-periodic} if in addition $\pi_{2k+1}\O_\X$ vanishes for all integers $k$. On the other hand, an $\E_\infty$-ring $A$ is \emph{strictly even-periodic} if $\pi_2 A$ contains a unit and $\pi_1 A=0$.
\end{mydef}

\begin{remark}
    An object $\X$ in $\SpDM^\nc$ is complex-periodic if and only if its functor of points $h_\X$ is complex-periodic in $\Stk$ as defined in \cite[Df.4.0.0.1]{reconstruction}. Indeed, by \Cref{lem:h_map_from_affine_local_lift}, for each map $\Spec A \to h_\X$ there is a faithfully flat $A\to B$ and a lift $\Spet B \to \X$ in $\SpDM^\nc$ of the composite $\Spec B \to h_\X$. In particular, we see that $B$ is complex-periodic by assumption. Being complex-orientable is an fpqc-local condition by \cite[Lm.4.1.2.3]{reconstruction}, and being weakly 2-periodic is clearly fpqc-local, hence $A$ is complex-periodic, and so is $h_\X$. Conversely, if $h_\X$ is complex-periodic, then $\X$ is complex-periodic straight from the definitions by applying $h_{(-)}$.
\end{remark}

The following gives a criterion for producing effective Cartier divisors, in terms of classical algebraic geometry. Recall the notion of \emph{1-localic} objects of $\SpDM^\nc$ from \cite[\textsection6.4.5]{htt}.

\begin{lemma}\label{lem:even_periodic_underlying_ECD}
    Let \(j\colon \sfZ\to \sfX\) be a map of even-periodic 1-localic nonconnective spectral Deligne--Mumford stacks such that \(j^\heartsuit\colon \sfZ^\heartsuit\to \sfX^\heartsuit\) is an effective Cartier divisor of classical Deligne--Mumford stacks. Then $j$ and $j_{\geq0}$ are effective Cartier divisors in $\SpDM^\nc$ and $\SpDM$, respectively.
\end{lemma}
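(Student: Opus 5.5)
We reduce everything to étale-local statements over an even-periodic affine and then compare with the classical situation via $\pi_0$. Since $\X$ is 1-localic, its underlying topos is the étale topos of $\X^\heartsuit$, and similarly for $\sfZ$. Hence étale maps $\Spet A \to \X$ correspond to étale maps $\Spet \pi_0 A \to \X^\heartsuit$, and the question is local on such affines. Working étale locally, we may choose $\Spet A \to \X$ with $A$ even-periodic and such that the classical divisor $\sfZ^\heartsuit \times_{\X^\heartsuit} \Spet \pi_0 A$ is cut out by a nonzerodivisor $\bar a \in \pi_0 A$. We may also assume that $\pi_2 A$ is free of rank one, so that $A$ is strictly even-periodic after a further Zariski localisation.

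\textbf{Step 1: $j$ is affine and a closed immersion.} Because $j^\heartsuit$ is affine, the base change $\sfZ_A$ has underlying topos affine over $\Spet \pi_0 A$. Using 1-localicity, $\sfZ_A$ is then affine in $\SpDM^\nc$, say $\sfZ_A = \Spet B$, where $B$ is even-periodic with $\pi_0 B = \pi_0 A/\bar a$. I expect this to follow by the criterion of \cite[Cor.1.4.7.3]{sag}: a nonconnective spectral DM stack is affine when its connective cover is, and the connective cover $(\sfZ_A)_{\geq 0}$ has affine underlying $0$-truncation. Surjectivity $\pi_0 A \to \pi_0 B$ is then immediate, so $j$ is a closed immersion. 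The same argument applies to $j_{\geq 0}$.

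\textbf{Step 2: the fibre is locally free of rank one.} Lift $\bar a$ to $a \in \pi_0 A$ and consider the composite $A \xrightarrow{a} A \to B$, which is nullhomotopic. Indeed, the obstruction to nullity is the map $A \to B$ of $A$-modules given by the element $a \in \pi_0 B$, and this vanishes. We therefore obtain a map $A \to \fib(A \to B)$. We check that it is an equivalence on homotopy groups. Using even-periodicity of both $A$ and $B$, the long exact sequence of $A \to B$ has vanishing odd terms. In each even degree $2k$ it reads $\pi_{2k} A \to \pi_{2k} B = \pi_{2k}A \otimes_{\pi_0 A} \pi_0 A/\bar a$, which is surjective, because $\pi_{2k} B \cong \pi_{2k} A \otimes_{\pi_0 A} \pi_0 B$. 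That last isomorphism is where even-periodicity of the map is used: $\pi_2$ is an invertible module and $\pi_{2k} = \pi_2^{\otimes k}$ for both $A$ and $B$. The fibre in each even degree is therefore $\bar a\,\pi_{2k}A \cong \pi_{2k}A$, using that $\bar a$ is a nonzerodivisor on the invertible module $\pi_{2k}A$. Consequently $\fib \simeq A$ locally, and $\O_\X(-\sfZ)$ is étale locally trivial.

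\textbf{Step 3: the connective case.} For $j_{\geq 0}$, the same computation applies with $A_{\geq 0}$ and $B_{\geq 0}$, and the homotopy groups of the fibre are checked degreewise in the same way. Alternatively, since the square in \Cref{lem:flat_maps_are_good_in_SpDMnc} is a pullback for complex-periodic stacks (see the remark after that lemma), one can argue locally that $A_{\geq 0} \xrightarrow{a} A_{\geq 0} \to B_{\geq 0}$ is a cofibre sequence. The degree-zero injectivity again uses that $\bar a$ is a nonzerodivisor.

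\textbf{Main obstacle.} I expect the hardest part to be Step 1, namely the identification $\pi_{2k}B \cong \pi_{2k}A \otimes \pi_0 B$ and the affineness of $\sfZ_A$ in the nonconnective world. Both rely on the structure sheaves being even-periodic with $\pi_2$ invertible, which holds for the stacks in question. If a direct route fails, I would instead deduce these facts by comparing the quasi-coherent sheaves $\pi_{2k}\O$ with classical line bundles on $\X^\heartsuit$ and $\sfZ^\heartsuit$.
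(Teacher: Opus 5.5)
Your proposal is correct and follows the paper's proof essentially step for step. You pass to strictly even-periodic étale affines $\Spet A$ using 1-localicity, get affineness of $\sfZ_A$ from \cite[Cor.1.4.7.3]{sag} and surjectivity on $\pi_0$ from the classical closed immersion, and then show that the map $A\to\fib(A\to B)$ induced by $a$ is an equivalence (likewise for connective covers); the only difference is that you compare homotopy groups degree by degree, whereas the paper checks $\pi_0$ and propagates using the unit in $\pi_2 A$.
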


In particular, note that nonconnective spectral schemes are always 1-localic.

\begin{proof}
    As $j^\heartsuit$ is an effective Cartier divisor of classical Deligne--Mumford stacks, we see that the 1-topos $\X^\heartsuit$ admits an étale cover by opens $V$ such that the classical Deligne--Mumford stacks $\sfV = (V,\pi_0 \O_{\X^\heartsuit}|_V)$ are affine and the kernel of $\O_{\sfX^\heartsuit} \to j_\ast^\heartsuit \O_{\sfZ^\heartsuit}$ restricted to $\sfV$ is isomorphic in $\O_\sfV$ as an $\O_\sfV$-module. As $\X$ is 1-localic, we also have associated nonconnective spectral Deligne--Mumford stacks $\Spet A = \sfU = (V,\O_\X|_V)$ covering $\X$ such that $\sfU^\heartsuit \simeq \sfV$ as classical Deligne--Mumford stacks. The $\E_\infty$-rings $A$ are even-periodic by hypothesis, and taking further Zariski refinements, we can also assume that $\pi_2 A$ contains a unit of $\pi_\ast A$, ie, they are \emph{strictly 2-periodic}.   
    
    To see that $j$ and $j_{\geq 0}$ are affine, consider the commutative diagrams
    \[\begin{tikzcd}
        {\sfZ_A}\ar[d]\ar[r]  &   {\Spet A}\ar[d]   \\
        {\sfZ}\ar[r, "{j}"]   &   {\X}
    \end{tikzcd}\qquad
    \begin{tikzcd}
        {(\sfZ_A)_{\geq0}}\ar[d]\ar[r]  &   {\Spet A_{\geq0}}\ar[d]   \\
        {\sfZ_{\geq0}}\ar[r, "{j_{\geq0}}"]   &   {\X_{\geq0}}
    \end{tikzcd}\qquad
    \begin{tikzcd}
        {(\sfZ_A)^\heartsuit}\ar[d]\ar[r]  &   {\Spet \pi_0 A}\ar[d]   \\
        {\sfZ^\heartsuit}\ar[r, "{j^\heartsuit}"]   &   {\X^\heartsuit}
    \end{tikzcd}\]
    in $\SpDM^\nc$, $\SpDM$, and $\DM$, from left to right, where the left-most square is a pullback by definition. The centre square above is a pullback from the description of pulling back étale maps of spectral Deligne--Mumford stacks via their underlying topoi; see \cite[Rmk.21.4.6.7]{sag}. It follows that the right-most square is also a pullback as the truncation functor $\SpDM \to \DM$ of \cite[Cor.1.4.6.4]{sag} is a right adjoint. By assumption then, $(\sfZ_A)^\heartsuit$ is affine. The underlying topoi of both $\sfZ_A$ and $(\sfZ_A)_{\geq0}$ are 1-localic by \cite[Lm.1.4.7.7(2)]{sag}, so \cite[Cor.1.4.7.3]{sag} implies that they are also both affine, say $\Spet B$ and $\Spet B_{\geq 0}$, respectively. Hence $j$ and $j_{\geq 0}$ are both affine. Moreover, as $j^\heartsuit$ is a closed immersion and $(\sfZ_A)^\heartsuit \simeq \Spet \pi_0 B$, we see that $\pi_0 A \to \pi_0 B$ is surjective, so both $j$ and $j_{\geq 0}$ are closed immersions in $\SpDM^\nc$ and $\SpDM$, respectively.

    As $\Spet B \to \sfZ$ is étale and $\sfZ$ is even-periodic, then $B$ is necessarily even-periodic. By assumption, we have an equivalence of $\pi_0 A$-modules $\pi_0 A \simeq \ker(\pi_0A \to \pi_0 B)$. By the evenness of $B$ and the fact that $\pi_0A \to \pi_0 B$ is surjective, we can also identify $\pi_0 A$ with $\pi_0 F$, where $F=\fib(A \to B)$, as $\pi_0 A$-modules. In particular, this gives a map of $A$-modules $A \to F$ that is an equivalence on $\pi_0$. As $A$ is even-periodic with a unit in $\pi_2 A$, we see that $A \to F$ is an equivalence, hence $j$ is an effective Cartier divisor. The same argument shows that the fibre of $A_{\geq0} \to B_{\geq0}$ is equivalent to $A_{\geq0}$ as an $A_{\geq0}$-module, so $j_{\geq 0}$ is also an effective Cartier divisor.
\end{proof}

Finally, we would like a version of \Cref{lem:locally_principal_conservative} that holds in the nonconnective setting. This will come with an additional flatness assumption.

\begin{prop}\label{prop:nc_ECD_flat_over_base}
    Let $\M$ be a nonconnective spectral Deligne--Mumford stack and \(j:\sfZ\to\sfX\) be a map of nonconnective spectral Deligne--Mumford stacks over $\M$, where both $\sfZ$ and $\X$ are flat over $\M$. Suppose that $j_{\geq 0}$ is an effective Cartier divisor in $\SpDM$.
    Then
    \begin{enumerate}
        \item $j$ is an effective Cartier divisor in $\SpDM^\nc$,
        \item the complementary open immersion \(i \colon \sfU \to \X\) of \Cref{var:nonconnective_open_complement} is quasi-compact,
        \item \(i_*\O_\sfU\) is equivalent, as an \(\E_0\)-algebra, to \(\O_\sfX(\infty \sfZ) = \colim_n \O_\sfX(\sfZ)^{\otimes n}\), and 
        \item the kernel of \(j^*\) agrees with the essential image of \(i_*\).
    \end{enumerate}
\end{prop}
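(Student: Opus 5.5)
The plan is to reduce everything to the connective statement \Cref{lem:locally_principal_conservative} by base change along the connective cover maps, using the flatness hypotheses and \Cref{lem:flat_maps_are_good_in_SpDMnc}. Since $\sfZ$ and $\X$ are flat over $\M$, the square formed by $j$, $j_{\geq0}$ and the connective cover maps is a pullback. Thus $j$ is the base change of $j_{\geq0}$ along $c_\X\colon \X\to\X_{\geq0}$. By \Cref{pr:equiv_of_closed_immersions}, $j$ is a closed immersion, because $j_{\geq 0}$ is one.

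For part 1, the map $c_\X$ is affine by \Cref{lm:connective_cover_maps_are_affine}, so the Beck--Chevalley equivalence for affine morphisms gives $c_\X^\ast (j_{\geq0})_\ast \O_{\sfZ_{\geq0}} \simeq j_\ast c_\sfZ^\ast \O_{\sfZ_{\geq0}} = j_\ast \O_\sfZ$. Since $c_\X^\ast$ is exact, it follows that
\[c_\X^\ast \O_{\X_{\geq0}}(-\sfZ_{\geq0}) \simeq \O_\X(-\sfZ).\]
Local triviality is preserved by pullback: an étale $\Spet A\to\X$ lies over the étale $\Spet A_{\geq0}\to\X_{\geq0}$, and étale refinements of the latter pull back to étale refinements of the former. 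Hence $\O_\X(-\sfZ)$ is étale locally trivial, and $j$ is an effective Cartier divisor. We also get $\O_\X(\sfZ)\simeq c_\X^\ast\O_{\X_{\geq0}}(\sfZ_{\geq0})$, and then $\O_\X(\infty\sfZ)\simeq c_\X^\ast \O_{\X_{\geq0}}(\infty\sfZ_{\geq0})$ because $c_\X^\ast$ is symmetric monoidal and preserves colimits.

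For part 2, \Cref{var:nonconnective_open_complement} gives $\sfU\simeq \sfU_{\geq0}\times_{\X_{\geq0}}\X$, where $\sfU_{\geq 0}\to\X_{\geq0}$ is the complementary open immersion of $j_{\geq0}$. That open immersion is quasi-compact by \Cref{lem:locally_principal_conservative}. Quasi-compactness is a condition on the underlying topoi, and these agree for $\X$ and $\X_{\geq 0}$, and likewise for $\sfU$ and $\sfU_{\geq0}$. So it transfers directly; alternatively it can be checked étale locally, where $\sfU$ is $\Spet A[a^{-1}]$ by the argument of \Cref{lem:locally_principal_conservative}.

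For part 3, the open immersion is flat, so the square above is again a pullback along the affine map $c_\X$, and Beck--Chevalley gives $i_\ast\O_\sfU\simeq c_\X^\ast (i_{\geq0})_\ast\O_{\sfU_{\geq0}}$. Applying $c_\X^\ast$ to the $\E_0$-equivalence $\O_{\X_{\geq0}}(\infty\sfZ_{\geq0})\simeq (i_{\geq0})_\ast\O_{\sfU_{\geq0}}$ from \Cref{lem:locally_principal_conservative} yields part 3. The main obstacle is justifying Beck--Chevalley for the qc-open immersion, which need not be affine. I would either use \cite[Pr.2.4.1.5(3)]{sag}, or check the claim étale locally on affines, where it reduces to $A\otimes_{A_{\geq0}}A_{\geq0}[a^{-1}]\simeq A[a^{-1}]$.

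For part 4, I would rerun the argument of part 3 of \Cref{lem:locally_principal_conservative} verbatim, now that parts 1--3 are in hand. Suppose $j^\ast\calM\simeq0$. The projection formula for the affine map $j$ gives $j_\ast\O_\sfZ\otimes\calM\simeq0$, so $\calM\simeq\O_\X(\sfZ)\otimes\calM$. Hence $\calM\simeq i_\ast\O_\sfU\otimes\calM$, and $\calM$ lies in the image of $i_\ast$ by \Cref{prop:open_immersion_omnibus}. For the converse, $j^\ast i_\ast\simeq 0$ because $\sfZ\times_\X\sfU$ is empty. It is the base change of the empty $\sfZ_{\geq0}\times_{\X_{\geq0}}\sfU_{\geq0}$, and emptiness is visible on underlying topoi. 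This is combined with vertical adjointability of the qc-open pullback square.
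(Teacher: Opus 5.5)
Your proposal is correct. For parts 1--3 it follows the paper's argument. You use flatness over $\M$ and \Cref{lem:flat_maps_are_good_in_SpDMnc} to realise $j$ and $i$ as base changes of $j_{\geq0}$ and $i_{\geq0}$ along $c_\X$. You then transport the fibre sequence $\O_\X(-\sfZ)\to\O_\X\to j_*\O_\sfZ$ and the colimit $\O_\X(\infty\sfZ)$ by Beck--Chevalley, and note that quasi-compactness depends only on the shared underlying topoi. You first establish $c_\X^*\O_{\X_{\geq0}}(-\sfZ_{\geq0})\simeq\O_\X(-\sfZ)$ globally, whereas the paper computes directly on étale charts; this is the same argument.

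One attribution should be corrected. The equivalences $c_\X^*(j_{\geq0})_*\simeq j_*c_\sfZ^*$ and $c_\X^*(i_{\geq0})_*\simeq i_*c_\sfU^*$ (horizontal adjointability) come from $j_{\geq0}$ being affine and $i_{\geq0}$ being a qc-open, hence quasi-affine, immersion. They do not come from $c_\X$ being affine; that gives the vertical adjointability $(c_\sfZ)_*j^*\simeq j_{\geq0}^*(c_\X)_*$. The slip is harmless, since both inputs hold, and you do identify the qc-open case as the real issue.

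Part 4 is where you take a different route. The paper does not rerun the connective argument. It uses conservativity of $(c_\sfZ)_*$ and the vertical adjointability to identify $\ker j^*$ with $\ker\bigl(j_{\geq0}^*\circ(c_\X)_*\bigr)$. It then imports part 3 of \Cref{lem:locally_principal_conservative}, in the form that $\ker j_{\geq0}^*$ consists of the objects for which the unit to $(i_{\geq0})_*i_{\geq0}^*$ is an equivalence. Finally it transports this back using conservativity of $(c_\X)_*$.

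You instead rerun the connective proof directly in $\SpDM^\nc$. For one direction you use the projection formula for the affine $j$, invertibility of $\O_\X(\sfZ)$ from part 1, the identification from part 3, and \Cref{prop:open_immersion_omnibus}. For the converse you use emptiness of $\sfZ\times_\X\sfU$, obtained as a base change of the empty connective fibre product, together with adjointability for $i$. Both routes work. The paper's needs no new geometric input in the nonconnective setting, since everything is pulled back through the affine maps $c$. Yours makes part 4 a formal consequence of parts 1--3, at the cost of re-establishing the emptiness and base-change facts for $\sfU$ and $\sfZ$ nonconnectively.
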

\begin{proof}
    For the first statement, consider the commutative diagram in $\SpDM^\nc$
    \[\begin{tikzcd}
        {\sfZ}\ar[r, "j"]\ar[d, "{c_\sfZ}"] &   {\X}\ar[r]\ar[d, "
        {c_\X}"] &   {\M}\ar[d, "{c_\M}"] \\
        {\sfZ_{\geq0}}\ar[r, "{j_{\geq0}}"] &   {\X_{\geq0}}\ar[r] &   {\M_{\geq0}.}
    \end{tikzcd}\]
    As $\X$ and $\sfZ$ are flat over $\M$, then \Cref{lem:flat_maps_are_good_in_SpDMnc} shows that the right-hand and outer squares are pullbacks, hence the left is also a pullback. Choose connective affines $\Spet A$ that form an étale cover of $\X_{\geq0}$ such that $\O_{\sfX_{\geq0}}(-\sfZ_{\geq 0})$ restricts to the structure sheaf on each $\Spet A$. By \Cref{lem:flat_maps_are_good_in_SpDMnc}, these $A$ are the connective cover of $\E_\infty$-rings $B$ that form an étale cover of $\X$ and form a pullback diagram
    \[\begin{tikzcd}
        {\Spet B}\ar[d, "h"]\ar[r, "g"]   &   {\X}\ar[d, "{c_X}"]\\
        {\Spet A}\ar[r, "f"]         &   {\X_{\geq0}.}
    \end{tikzcd}\]
    It then follows that $\O_\X(-\sfZ)$ pulls back to the structure sheaf of $\Spet B$
    \[g^\ast \O_\X(-\sfZ) \simeq \fib(g^\ast \O_{\X} \to g^\ast j_\ast \O_\sfZ) \simeq \fib(\O_{\Spet B} \to g^\ast j_\ast c_\sfZ^\ast \O_{\sfZ_{\geq0}})\]
    \[\simeq \fib(\O_{\Spet B} \to g^\ast c_\X^\ast (j_{\geq 0})_\ast \O_{\sfZ_{\geq0}}) \simeq \fib(h^\ast \O_{\Spet A} \to h^\ast f^\ast (j_{\geq 0})_\ast \O_{\sfZ_{\geq0}})\]
    \[\simeq h^\ast \O_{\X_\geq 0}(-\sfZ_{\geq0}) \simeq h^\ast \O_{\Spet A} \simeq \O_{\Spet B},\]
    the third equivalence using the fact that affine maps such as $j$ and $j_{\geq 0}$, see \Cref{rmk:affine_maps_connectiveVSnonconnective} for why the latter is affine in $\SpDM^\nc$, are adjointable; see \cite[Cor.2.5.4.6]{sag} or \cite[Pr.2.2.2.5]{reconstruction}. This proves part 1.

    For part 2, one simply notes that $i \colon \sfU \to \sfX$ is quasi-compact if and only if $i_{\geq 0}$ is quasi-compact, as by construction these maps induce the same map on underlying topoi and quasi-compactness is a condition only concerning topoi. The result follows from \Cref{lem:locally_principal_conservative}.

    For parts 3 and 4 concerning quasi-coherent sheaves, by \cite[Pr.3.2.2.2]{reconstruction}, it suffices to pass to $\Stk$; we suppress the functor \(h_{(-)}\) for now. As everything is flat over \(\sfM\), \Cref{lem:flat_maps_are_good_in_SpDMnc} gives us pullback squares in \(\SpDM^\nc\) and, by \Cref{lem:h_finite_limits}, in \(\Stk\)
\[\begin{tikzcd}
	\sfZ & \sfX & \sfU\ar[l, "i", swap] \\
	{\sfZ_{\geq0}} & {\sfX_{\geq0}} & {\sfU_{\geq0}}\ar[l, "{i_{\geq0}}"]
	\arrow["j", from=1-1, to=1-2]
	\arrow["{c_\sfZ}"', from=1-1, to=2-1]
	\arrow["{c_\sfX}", from=1-2, to=2-2]
	\arrow["{c_\sfU}"', from=1-3, to=2-3]
	\arrow["{j_{\geq0}}"', from=2-1, to=2-2]
\end{tikzcd}\]
    which are both horizontally and vertically adjointable: horizontally since \(j_{\geq0}\) is affine and \(i_{\geq0}\) is a qc-open immersion, vertically since the map \(c_\sfX\) is affine by \Cref{lm:connective_cover_maps_are_affine}; see \cite[Pr.2.2.2.5 \& Th.3.3.2.1]{reconstruction}. This adjointablity furnishes the equivalence
    \[(\O_\sfX\to j_*\O_\sfZ)\simeq (c_\sfX)^*(\O_{\sfX_{\geq0}}\to (j_{\geq0})_*\O_{\sfZ_{\geq0}}),\]
    so taking fibres, dualising, and taking the colimit identifies the maps
    \[(\O_\sfX\to \O_{\sfX}(\infty \sfZ))\simeq (c_\sfX)^*(\O_{\sfX_{\geq0}}\to \O_{\sfX_{\geq0}}(\infty\sfZ_{\geq0})).\]
    Part 2 of \Cref{lem:locally_principal_conservative} identifies this further with $(c_\sfX)^*(\O_{\sfX_{\geq0}}\to (i_{\geq0})_*\O_{\sfU_{\geq0}})$, and another adjointability identifies this with \(\O_\sfX\to i_*\O_\sfU\), proving part 3 of the statement. For part 4, we note that, as \(c_\sfZ\) is affine, the functor \((c_\sfZ)_*\) is conservative, so the kernel of \(j^*\) agrees with the kernel of \((j_{\geq0})^*\circ(c_\sfX)_*\).
    By \Cref{lem:locally_principal_conservative} and \cite[Cor.2.23]{Tokic_2026_FamilyCompletionTheorem}, the kernel of \((j_{\geq0})^*\) agrees with those \(A\in\QCoh(\sfX_{\geq0})\) such that the adjunction unit
    \[A\to (i_{\geq0})_*(i_{\geq0})^*A\]
    is an equivalence.
    Applying this to \(A=(c_\sfX)_*A'\) and using, again, the adjointability of the diagrams above, we see that \(j^*A'\simeq 0\) if and only if the unit $(c_\sfX)_*A'\to (c_\sfX)_*i^* i_*A'$ is an equivalence.
    Since \((c_\sfX)_*\) is conservative, this happens if and only if $A'\to i_*i^*A'$ is an equivalence, so by \cite[Cor.2.23]{Tokic_2026_FamilyCompletionTheorem} if and only if \(A'\) is in the essential image of \(i_*\).
\end{proof}

\subsection{Perfect stacks}\label{ssec:basics_of_perfect_stacks}
First, we review some notions from \cite{perfectstacks} in the context of our working notion of $\Stk$.

\begin{mydef}[{\cite[Pr.3.9]{perfectstacks}}]
    A stack $\X$ is \emph{perfect} if $\QCoh(\X)$ is compactly generated and compact and dualisable objects agree. A morphism of stacks $\Y \to \X$ is \emph{perfect} if for all $\Spec A \to \X$, the pullback stack $\Y\times_\X \Spec A$ is perfect.
\end{mydef}

The same definition also appears in \cite[Df.2.24]{Davies_galois_geometric_FP}. We have chosen to ignore the affine diagonal condition from \cite[Df.3.2]{perfectstacks} to de-emphasise its importance; note that most of our statements below can be made without any reference to affine diagonals.

Affine stacks and 0-affine stacks, so those stacks $\X$ where $\QCoh(\X)$ is affine, are clearly perfect. The following gives us another useful class of perfect stacks. Recall that a morphism of stacks $\Y \to \X$ is a \emph{relative qcqs nonconnective spectral scheme} if for all maps $\Spec A\to \X$, the pullback $\Y \times_\X \Spec A$ is a qcqs nonconnective spectral scheme.

\begin{prop}\label{ex:schemes_are_passable}
    A relative qcqs nonconnective spectral schemes $f\colon \Y \to \X$ is a perfect morphism. Moreover, if $\X$ is a qcqs nonconnective spectral scheme, then $\QCoh(\X)$ has a single compact generator.
\end{prop}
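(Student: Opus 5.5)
By definition of a perfect morphism, the first claim reduces to the case where $\X = \Spec A$ is affine, so that $\Y$ is a qcqs nonconnective spectral scheme. Both claims then amount to showing that for a qcqs nonconnective spectral scheme $\Y$:
\begin{itemize}
    \item $\QCoh(\Y)$ is compactly generated, in fact by a single compact object;
    \item compact and dualisable objects of $\QCoh(\Y)$ agree.
\end{itemize}
The plan is to adapt the classical Thomason--Trobaugh / Bondal--van den Bergh argument, in the form given by Lurie in \cite[Pr.9.6.1.1 \& Th.9.6.3.1]{sag}, to the nonconnective setting.

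\textbf{Step 1: dualisable $=$ perfect $=$ compact.} Dualisability in $\QCoh(\Y)$ is local, so a dualisable sheaf is one that is perfect on each affine open. For qcqs $\Y$, the global sections $\Ga(\Y,-)$ preserve filtered colimits: write $\Y$ as a finite Zariski colimit and use that finite limits commute with filtered colimits. Hence perfect objects, whose mapping spectra are $\Ga(\Y, \mathcal{P}^\vee \otimes -)$, are compact.

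\textbf{Step 2: compact generation, by induction on the number of affines.} Cover $\Y = \sfU \cup \Spec B$ with $\sfU$ covered by fewer affines; for qcqs schemes the intersections are again qcqs. Zariski descent gives a pullback square
\[\QCoh(\Y) \simeq \QCoh(\sfU) \times_{\QCoh(\sfU \cap \Spec B)} \QCoh(\Spec B).\]
By \Cref{cor:compact_generator_for_complement}, the kernel of restriction $\Mod_B \to \QCoh(\sfU \cap \Spec B)$ is generated by a compact object, $B/(f_1,\ldots,f_n)$. Extending this by zero gives a compact object of $\QCoh(\Y)$ supported on $\Y \setminus \sfU$. This identifies the kernel of restriction $\QCoh(\Y) \to \QCoh(\sfU)$ with a compactly generated category, matching the kernel on $\Spec B$ via the Mayer--Vietoris square. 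Together with the inductive hypothesis for $\sfU$, it then suffices to lift compact generators of $\QCoh(\sfU)$ to $\QCoh(\Y)$, up to adding shifted summands.

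\textbf{Main obstacle: lifting perfect objects from $\sfU$ to $\Y$.} This is Thomason's extension lemma: any perfect complex on $\sfU$ is, after adding its shift $[1]$, the restriction of a perfect complex on $\Y$. In the pullback description, this reduces to lifting a perfect $B_{f}$-module to a perfect $B$-module up to a summand. I would prove this using that $K_0$ of the Verdier quotient $\mathrm{Perf}(B)/\mathrm{Perf}_{V(f)}(B) \to \mathrm{Perf}(B_f)$, an idempotent completion, is detected by Thomason's theorem on dense subcategories; this argument is purely categorical, so it works for nonconnective $\E_\infty$-rings. Alternatively, one can quote Lurie's proof directly, noting that it never uses connectivity.

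\textbf{Conclusion.} Once compact generation holds, a single compact generator is obtained as a finite direct sum: the lifts of the finitely many generators produced by the induction, together with the Koszul objects at each stage. This proves the second claim.
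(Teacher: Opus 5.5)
Your proposal is correct and follows essentially the same route as the paper. The shared steps are: induction over a finite affine cover; the Koszul object of \Cref{cor:compact_generator_for_complement}, pushed forward so that it is supported away from the non-affine piece; Thomason's $K_0$ trick (via Bondal--van den Bergh) to lift $\calM\oplus\calM[1]$ from the intersection to the affine piece; gluing through the Mayer--Vietoris square; and compactness of the resulting generator from its dualisability. The only cosmetic difference is that you get colimit-preservation of $\Ga$ from a finite Zariski limit, whereas the paper uses the descent spectral sequence in \Cref{pr:0semiaffine_spectral_schemes}.

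Two small points to tidy. First, $\sfU\cap\Spec B$ is a general quasi-compact open of $\Spec B$ rather than some $\Spec B_f$, so the Verdier quotient should have target $\QCoh(\sfU\cap\Spec B)^\omega$; the argument is otherwise unchanged. Second, your Step 1 does not yet argue the direction compact $\Rightarrow$ dualisable. As in the paper, it follows because compact objects are retracts of finite colimits of your perfect generator.
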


This result appears in \cite[Pr.3.19]{perfectstacks}, where much credit is given to Thomason--Trobaugh \cite{thomasontrobaugh}, Neeman \cite{neeman_grothendieck_duality}, and Bondal--van den Bergh \cite{bondal_van_den_bergh}. An affine version of the key trick, sometimes called \emph{Thomasons trick} also appears in \cite[Pr.D.5.3.4]{sag}. Our proof below mostly cribs from \cite{bondal_van_den_bergh}. 

First, a short well-known proposition. Recall from \cite[Df.2.2.1.5]{reconstruction}, that a stack $\X$ is \emph{0-semiaffine} if the global sections functor $\Ga(\sfX,-)\colon \QCoh(\X) \to \Sp$ preserves colimits and satisfies the projection formula.

\begin{prop}\label{pr:0semiaffine_spectral_schemes}
    Let $\X$ be a nonconnective qcqs spectral scheme. Then $\X$ is 0-semiaffine. In particular, \(\O_\sfX\otimes (-)\), the left adjoint of \(\Ga(\sfX,-):\QCoh(\sfX)\to\Sp\), preserves compact objects and dualisable objects of $\QCoh(\X)$ are compact.
\end{prop}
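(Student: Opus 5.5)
The plan is to argue by induction on the number of affines in a cover, using Zariski descent for quasi-coherent sheaves. Since $\X$ is a qcqs nonconnective spectral scheme, it admits a finite cover by affine opens $\Spec A_1,\dots,\Spec A_n$, with all pairwise intersections quasi-compact. Write $\X = \sfU \cup \sfV$ with $\sfV = \Spec A_n$ and $\sfU = \bigcup_{k<n}\Spec A_k$. Then $\sfU$ is covered by $n-1$ affines, and $\sfU\cap\sfV$ is a quasi-compact open of the affine $\sfV$, hence covered by finitely many affines. The Zariski sheaf property gives a pullback of categories
\[\QCoh(\X)\simeq \QCoh(\sfU)\times_{\QCoh(\sfU\cap\sfV)}\QCoh(\sfV),\]
and the global sections functor is correspondingly the fibre product
\[\Ga(\X,-)\simeq \fib\bigl(\Ga(\sfU,-)\oplus\Ga(\sfV,-)\to\Ga(\sfU\cap\sfV,-)\bigr),\]
with each term precomposed with the relevant restriction.

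To make the induction run, I will strengthen the statement to all qcqs schemes, so that it applies to $\sfU\cap\sfV$. That intersection is quasi-compact, and quasi-separated because it sits inside the affine $\sfV$. More precisely, I induct first on the statement for separated schemes, where intersections of affines are affine, and then for general qcqs schemes, whose intersections are separated. On the affine pieces, global sections is the forgetful functor $\Mod_A\to\Sp$, which visibly preserves colimits and satisfies $\Ga(M\otimes_A \O\otimes E)\simeq M\otimes E$ for $E\in\Sp$.

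Given this, both properties are straightforward. The restriction functors $i^\ast$ preserve colimits, and finite limits commute with colimits in stable categories, so colimit preservation passes through the fibre expression. For the projection formula, the relevant map $\Ga(\X,\calF)\otimes E\to\Ga(\X,\calF\otimes\O_\X\otimes E)$ is compatible with the fibre expression. Tensoring with $E$ is exact, so it reduces to the three pieces, which hold by induction. Here I use that $i^\ast(\calF\otimes \O_\X\otimes E)\simeq i^\ast\calF\otimes\O\otimes E$ by monoidality of $i^\ast$.

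For the \emph{in particular} claims, first note that the left adjoint $\O_\X\otimes -$ preserves compacts because its right adjoint $\Ga(\X,-)$ preserves colimits (filtered colimits suffice). Next let $\calD$ be dualisable. Then $\QCoh(\X)(\calD,-)\simeq \Ga(\X,\calD^\vee\otimes -)$ as mapping spectra. This is the composite of $\calD^\vee\otimes-$, which preserves colimits, with $\Ga(\X,-)$, which preserves colimits by the first part. So $\calD$ is compact.

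The main obstacle I expect is the bookkeeping needed to make the Mayer--Vietoris/Zariski descent square genuinely available in the nonconnective setting. This includes the fact that restrictions along qc-open immersions satisfy base change, for which I would cite the adjointability of qc-open immersions as in \cite[Pr.2.2.2.5]{reconstruction}. It also includes correctly organising the double induction (separated first, then qcqs) so that the intersections fall within the inductive hypothesis.
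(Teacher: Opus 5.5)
Your proof is correct, but it takes a different route from the paper's.

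\textbf{The paper's route.} It follows Mathew--Meier. It reduces to showing that $\pi_d\Ga(\X,-)$ commutes with direct sums, which it gets from the descent spectral sequence:
\begin{itemize}
\item the $E_2$-page $H^s(\X^\heartsuit,\pi_t(-))$ commutes with direct sums because sheaf cohomology on a qcqs scheme does;
\item the horizontal vanishing line, coming from finite cohomological dimension of qcqs schemes, carries this through the $E_\infty$-page to the abutment.
\end{itemize}
Its treatment of the ``in particular'' claims is the same as yours.

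\textbf{Your route.} Your Mayer--Vietoris double induction works: first separated schemes, where pairwise intersections of affines are affine; then qcqs schemes, where $\sfU\cap\sfV$ is a quasi-compact open of the affine $\sfV$ and hence separated. It is purely categorical, needing only Zariski descent for $\QCoh$ to write $\Ga(\X,-)$ as a finite limit of forgetful functors on affines. This avoids importing classical cohomology facts and avoids any convergence discussion for the spectral sequence of a nonconnective sheaf. In effect your induction reproves the finite cohomological dimension that the paper cites. It also matches the gluing style the paper itself uses in the proof of \Cref{ex:schemes_are_passable}. The paper's argument is shorter once those classical facts are granted.

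\textbf{Two simplifications.}
\begin{itemize}
\item The adjointability of qc-open immersions, which you flag as the main obstacle, is not needed. The pullback $\QCoh(\X)\simeq\QCoh(\sfU)\times_{\QCoh(\sfU\cap\sfV)}\QCoh(\sfV)$ alone computes mapping spectra out of $\O_\X$, and hence the fibre sequence for $\Ga(\X,-)$.
\item The projection formula needs no separate induction. Once $\Ga(\X,-)$ preserves colimits, for fixed $\calF$ both $\Ga(\X,\calF)\otimes E$ and $\Ga(\X,\calF\otimes\O_\X\otimes E)$ are exact, colimit-preserving functors of $E\in\Sp$ that agree at $E=\Sph$. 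They therefore agree on all of $\Sp$, which is the reasoning the paper uses implicitly.
\end{itemize}
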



\begin{proof}
    We follow \cite[Cor.3.25]{akhilandlennart}. To see that $\Ga(\X,-)$ commutes with colimits, it suffices to show it commutes with arbitrary direct sums. As homotopy groups are conservative and commute with direct sums, we are reduced to showing that $\pi_d \Ga(\X,-)$ commutes with direct sums. As sheaf cohomology commutes with direct sums, the $E_2$-page of the descent spectral sequence for $\Ga(\X, -)$ commutes with direct sums, hence this is also true on the $E_\infty$-page. As the underlying classical scheme of $\X$ is qcqs, it follows that this $E_2$-page has a horizontal vanishing line, meaning that the abutment of this descent spectral sequence, so $\pi_d \Ga(\X,-)$, also commutes with direct sums, as desired.
    
    For the ``in particular'' statement, a left adjoint preserves compacts if its right adjoint preserves filtered colimits, a consequence of the previous paragraph. Next, note that $\X$ is 0-semiaffine if and only if $\O_\X$ is compact in $\QCoh(\X)$. It follows that all dualisable objects $\calM$ are compact as
    \[\QCoh(\X)(\calM, -) \simeq \QCoh(\X)(\O_\X, \calM^\vee \otimes -)\]
    commutes with filtered colimits.
\end{proof}

\begin{proof}[Proof of \Cref{ex:schemes_are_passable}]
    From the definitions, it suffices to show that an arbitrary qcqs nonconnective spectral scheme $\X$ is perfect and that $\QCoh(\X)$ has a single compact generator. If $\X=\Spec A$ is affine, this is clear. As $\X$ is quasi-compact, we can induct on a finite open cover of $\X$, so assume that $\Y \sqcup \sfU \to \X$ is a Zariski cover of $\X$ where $\QCoh(\Y)$ has a single compact generator $\calM$ and $\sfU = \Spec A$ is affine. We then have a pullback of qcqs nonconnective spectral schemes
    \[\begin{tikzcd}
        {\sfW}\ar[r, "{i'}"]\ar[d, "g"] &   {\sfU = \Spec A}\ar[d, "{f}"] \\
        {\sfY}\ar[r, "i"]               &   {\sfX,}
    \end{tikzcd}\]
    where $i$ and $f$ are qc-open immersions by hypothesis, and $i'$ and $g$ are qc-open immersions by base change (\cite[Lm.A.30]{Tokic_2026_FamilyCompletionTheorem}). As $i\colon \sfW \to \sfU$ is a qc-open immersion, it follows that the kernel of $i'^\ast$ is generated by a compact object $\calQ$ of $\QCoh(\sfU)$ courtesy of \Cref{cor:compact_generator_for_complement}; the sheaf $\calQ$ is intuitively the pushforward of the structure sheaf of ``the'' closed immersion complementing $i'$ (and also $i$). Moreover, we also see that $f_\ast \calQ$ lies in the kernel of $i^\ast$, as $i^\ast f_\ast \calQ \simeq g_\ast i'^\ast \calQ = 0$ by assumption and the adjointablity of qc-open immersions; this later point follows from the facts that qc-open immersions are quasi-affine, that quasi-affine maps are universally 0-affine (\cite[Th.3.3.2.1]{reconstruction}), and finally that universally 0-affine maps are adjointable (\cite[Pr.2.2.2.5(1)]{reconstruction}). Moreover, $f_\ast \calQ$ is dualisable in $\QCoh(\X)$ as $f^\ast f_\ast \calQ = \calQ$ is dualisable in $\QCoh(\sfU)=\Mod_A$.    
    
    A key point now is the existence of a compact object $\calF \in \QCoh(\sfU)$ such that $i'^\ast \calF \simeq g^\ast(\calM \oplus \calM[1])$. To construct this object, note that $i'^\ast$ preserves compact objects, itself a consequence of \Cref{pr:0semiaffine_spectral_schemes}. The induced functor $i'^{\ast,\omega} \colon \QCoh(\sfU)^\omega \to \QCoh(\sfW)^\omega$ on compact objects has kernel generated by a compact object $\calQ$ and $\QCoh(\sfU)$ itself is compactly generated, all mentioned above. Hence we can apply \cite[Th.3.2.1 \& Pr.3.2.2]{bondal_van_den_bergh}, which is sometimes known as Thomason's trick, which shows that an object of $\QCoh(\sfW)^\omega$ lies in the essential image of $i'^{\ast,\omega}$ if its representative in $K_0(\sfW)$ lies in the image of $K_0(\sfU)$. As $g^\ast \calM \oplus g^\ast \calM[1]$ represents the zero class in $K$-theory, we obtain the desired class $\calF$ in $\QCoh(\sfU)$. One now glues $\calF$ and $\calM\oplus \calM[1]$ together in $\QCoh(\X)$, ie, we define $\calP$ using the fibre sequence
    \[\calP \to f_\ast \calF \oplus i_\ast(\calM\oplus \calM[1]) \to i_\ast g_\ast g^\ast (\calM\oplus \calM[1]) \simeq f_\ast i'_\ast i'^\ast \calF,\]
    where the second map is the sum of the indicated units and the already discussed adjointability of the pullback square above. Note that $\calP$ is dualisable as one computes $f^\ast \calP \simeq \calF$ and $i^\ast \calP \simeq \calM\oplus \calM[1]$, which are both dualisable by hypothesis. We then set $\calC = \calP \oplus f_\ast\calQ$ in $\QCoh(\X)$. To see that $\calC$ is a generator of $\QCoh(\X)$, one copies the arguments from the proof of \cite[Th.3.1.1]{bondal_van_den_bergh}, a sequence of Mayer--Vietoris sequences from the constructions above. To see that $\calC$ is compact, it suffices to show dualisability by \Cref{pr:0semiaffine_spectral_schemes}, which is clear, as both $\calP$ and $f_\ast \calQ$ are dualisable as discussed above. 
    Finally, to conclude that $\X$ is perfect, we already mentioned that dualisable objects of the compactly generated $\QCoh(\X)$ are compact. Conversely, all compact objects are retracts of finite colimits of the compact generator of $\QCoh(\X)$, hence retracts of perfect objects, which shows they are perfect, as this can be checked locally on affines.
\end{proof}

Now we prove the basic properties of perfect stacks.

Recall the notion of universally 0-affine morphisms from \cite[Df.2.2.2.1]{reconstruction} as well as \cite[Pr.2.2.2.5 \& Th.3.3.2.1]{reconstruction} that state that affine and quasi-affine morphisms are universally 0-affine, respectively.

\begin{prop}\label{pr:permancence_per_proper}
    Let $g\colon \sfZ \to \Y$ and $f\colon \Y \to \X$ be maps of stacks.
    \begin{enumerate}
        \item Perfect morphisms of stacks satisfy base change.
        \item If $f$ is perfect and $g$ is universally 0-affine, then $f\circ g$ is perfect. 
        \item If $f$ is affine and $g$ is perfect, then $f\circ g$ is perfect.
        \item If $f\circ g$ is perfect and $f$ has universally 0-affine diagonal, then $g$ is perfect.
    \end{enumerate}
\end{prop}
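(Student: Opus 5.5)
Each part reduces to the affine-base situation, since a morphism is perfect precisely when all its pullbacks to affines are perfect stacks.

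\emph{Part 1} is formal. Let $\X' \to \X$ be any map and $\Spec A \to \X'$. Then $\Y\times_\X \X' \times_{\X'} \Spec A \simeq \Y\times_\X \Spec A$, which is perfect by hypothesis.

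\emph{Part 2.} By Part 1 we may pull back along $\Spec A \to \X$ and assume $\X=\Spec A$. Then $\Y$ is a perfect stack and $g\colon \sfZ\to\Y$ is universally 0-affine. Universal 0-affineness gives an equivalence $\QCoh(\sfZ)\simeq \Mod_{g_\ast\O_\sfZ}(\QCoh(\Y))$ with $g^\ast$ the free-module functor; this uses \cite[Pr.2.2.2.5]{reconstruction} and the projection formula. In module categories over a commutative algebra in a compactly generated category with compact $=$ dualisable, the objects $g^\ast P$ for $P$ compact generate, and they are compact because $g_\ast$ preserves colimits. Free modules on dualisable objects are dualisable, and the compact objects are retracts of finite colimits of these, hence dualisable. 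Conversely, a dualisable $N$ satisfies $\Hom(N,-)\simeq\Hom(\O_\sfZ,N^\vee\otimes-)$. The unit $\O_\sfZ=g^\ast\O_\Y$ is compact because $\O_\Y$ is compact, being dualisable in the perfect stack $\Y$. Hence $N$ is compact. This is the argument of \cite[Pr.3.9--3.10]{perfectstacks}.

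\emph{Part 3.} Pulling $f$ back to $\Spec A$ gives an affine $\Spec B\to\Spec A$. The map $\sfZ\times_\X\Spec A\to\Spec B$ is a base change of $g$, so the source is perfect by the definition of a perfect morphism applied to $\Spec B\to\Y$.

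\emph{Part 4.} This is the main obstacle. Factor $g$ through its graph:
\[\sfZ\xrightarrow{(\id,g)}\sfZ\times_\X\Y\xrightarrow{\mathrm{pr}_2}\Y.\]
The second map is the base change of $f\circ g$ along $f$, so it is perfect by Part 1. The graph is the base change of the diagonal $\Y\to\Y\times_\X\Y$ along $g\times\id$, so it is universally 0-affine. Part 2 then applies, provided the composite is checked after pulling back to an affine $\Spec A\to\Y$. Doing so, the perfect piece becomes $\sfZ\times_\X\Spec A$, and the graph piece remains universally 0-affine, so Part 2 gives that $\sfZ\times_\Y\Spec A$ is perfect.

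The care needed is in verifying that universal 0-affineness is stable under base change, which holds by definition, and in making sure that Part 2 is applied over the correct affine base.
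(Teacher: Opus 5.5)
Your proposal is correct and follows the paper's proof essentially verbatim. Parts 1 and 3 are formal from the definition. Part 2 uses the description $\QCoh(\sfZ_A)\simeq\Mod_{g'_\ast\O_{\sfZ_A}}(\QCoh(\Y_A))$ over the perfect stack $\Y_A$, and you spell out the compact-generation and compact$=$dualisable check that the paper only asserts. Part 4 is the same graph factorisation $\sfZ\to\sfZ\times_\X\Y\to\Y$ fed into Part 2; the extra pullback along $\Spec A\to\Y$ there is harmless but unnecessary, since Part 2 applies to that composite directly.
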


\begin{proof}
    Base change is clear from the definition. For part 2, if $f$ is perfect and $g$ is universally 0-affine, then for each $\Spec A \to \Y$ which have a diagram of Cartesian squares
    \[\begin{tikzcd}
        {\sfZ_A}\ar[r, "{g'}"]\ar[d]    &   {\Y_A}\ar[r]\ar[d]  &   {\Spec A}\ar[d] \\
        {\sfZ}\ar[r, "g"]   &   {\Y}\ar[r, "f"] &   {\X.}
    \end{tikzcd}\]
    The stack $\Y_A$ is perfect as $f$ is perfect, and $g'$ is universally 0-affine by base change (\cite[Pr.2.2.2.5(1)]{reconstruction}). In particular, $g'$ is 0-affine by \emph{ibid}, hence
    \[\QCoh(\sfZ_A) \simeq \Mod_{g'_\ast \O_{\sfZ_A}}(\QCoh(\Y_A))\]
    is compactly generated and compacts and dualisables agree. Part 3 easily falls out of the definitions as then the stack $\Y_A$ above is affine. Finally, for part 4, consider the diagram of stacks
    \[\begin{tikzcd}
        {\sfZ}\ar[r, "{\Ga_g}"]\ar[d, "g"] &   {\sfZ \times_\X \Y}\ar[d]\ar[r, "{\pi_\Y}"]   &   {\Y}  \\
        {\Y}\ar[r, "{\Delta_f}"]    &   {\Y \times_\X \Y,} &
    \end{tikzcd}\]
    where the upper horizontal composite is $g$. The map $\pi_\Y$ is perfect as the base change of the perfect map $f\circ g$, and the map $\Ga_g$ is universally 0-affine also by base change; see \cite[Pr.2.2.2.5(1)]{reconstruction}. Now apply part 2 to the upper composite.
\end{proof}

\begin{prop}\label{pr:main_perfect_stacks_statement}
    Let $f\colon \Y \to \X$ be a perfect map of stacks.
    \begin{enumerate}
        \item Then $f$ satisfies the projection formula, meaning that the canonical map
        \begin{equation}\label{eq:projection_formula}f_\ast(\calM) \otimes \calN \to f_\ast(\calM \otimes f^\ast(\calN))\end{equation}
        is an equivalence for all $\calM \in \QCoh(\Y)$ and all $\calN \in \QCoh(\X)$.
        \item\label{it:adjointableperfect_stacks} If $g\colon \X' \to \X$ is another map of stacks, then the Cartesian square
        \[\begin{tikzcd}
            {\Y'}\ar[r, "{f'}"]\ar[d, "{g'}"]   &   {\X'}\ar[d, "{g}"]  \\
            {\Y}\ar[r, "{f}"]   &   {\X}
        \end{tikzcd}\]
        is adjointable, meaning the natural Beck--Chevalley transformation
        \[g^\ast f_\ast \Rightarrow f'_\ast g'^\ast \colon \QCoh(\Y) \to \QCoh(\X')\]
        is an equivalence.
        \item Then $f$ is $0$-semiaffine, meaning that the pushforward functor
        \[f_\ast \colon \QCoh(\Y) \to \QCoh(\X)\]
        preserves colimits. In particular, $f_\ast$ admits a right adjoint
        \[f^{(1)} \colon \QCoh(\X) \to \QCoh(\Y),\] and if $\X=\Spec A$ is affine, then $\O_\Y \in \QCoh(\Y)$ is compact. 
    \end{enumerate}
\end{prop}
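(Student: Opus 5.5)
The plan is to reduce everything to the affine case $\X = \Spec A$, where $\Y_A := \Y\times_\X \Spec A$ is a perfect stack, and then glue using the definition $\QCoh(\X) \simeq \lim_{\Spec A \to \X} \Mod_A$. The order is: first prove part 3 over an affine base, then deduce part 2 for affine $\X'$ and $\X$, then globalise, and finally obtain part 1 from part 2.

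\textbf{Part 3, affine base.} Over $\X = \Spec A$ the pushforward $f_\ast$ is $\Ga(\Y,-)$ followed by the forgetful functor, i.e.\ $\QCoh(\Y)(\O_\Y, -)$ as an $A$-module. Since $\Y$ is perfect, $\O_\Y$ is dualisable, hence compact, so $f_\ast$ preserves filtered colimits. Being exact, it therefore preserves all colimits, and the adjoint functor theorem gives $f^{(1)}$.

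\textbf{Part 2, affine case and globalisation.} For $g\colon \Spec B \to \Spec A$, both $g^\ast f_\ast$ and $f'_\ast g'^\ast$ preserve colimits, the latter by part 3 for $f'$, which is again perfect by \Cref{pr:permancence_per_proper}. As $\QCoh(\Y)$ is compactly generated by dualisable objects, it suffices to check the Beck--Chevalley map on a dualisable $\calM$. Here I would identify $\QCoh(\Y_B) \simeq \QCoh(\Y)\otimes_{\Mod_A}\Mod_B$, a Künneth statement for perfect stacks in the spirit of \cite{perfectstacks}. Under this identification $g'^\ast = -\otimes_A B$, so for $\calM$ compact,
\[\map(\O_{\Y_B}, \calM\otimes_A B) \simeq \map(\O_\Y,\calM)\otimes_A B.\]
Alternatively, one can argue directly: the class of $\calM$ for which the map is an equivalence is closed under colimits and contains $\O_\Y$; then use dualisability and the fact that $B$ is a colimit of free $A$-modules in $\Mod_A$ after tensoring.

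I expect this Künneth/base-change step to be the main obstacle, since nothing stated earlier in the paper supplies it directly. I would prove it by showing that the functor $\QCoh(\Y)\otimes_{\Mod_A}\Mod_B \to \QCoh(\Y_B)$ is fully faithful on compact generators, using compactness of $\O_\Y$. Essential surjectivity then follows by descent along $\Spec B \to \Spec A$ applied to both sides, or from the fact that the pullbacks of a set of compact generators generate. Once the affine case holds, the general case of part 2 follows formally: $\QCoh(\X')$ is a limit over affines mapping to $\X'$, and such affines factor through affines over $\X$. In this way $f_\ast$ is computed affine-locally as the compatible family of the $(f_A)_\ast$, which is well defined precisely because of affine base change. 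This also yields part 3 in general, since colimits in the limit category are computed pointwise.

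\textbf{Part 1.} By part 2 the projection formula may be checked after pulling back to each affine $\Spec A \to \X$, so assume $\X$ is affine. For fixed $\calM$, both sides of the projection formula map commute with colimits in $\calN$ by part 3. Since $\Mod_A$ is generated under colimits by $A$, and the map is an equivalence for $\calN = A$, it is an equivalence for all $\calN$.
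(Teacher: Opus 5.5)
Your overall structure matches the paper's: part 3 over an affine base via compactness of $\O_\Y$, then base change when both $\X$ and $\X'$ are affine, then globalisation through $\QCoh(\X)\simeq\lim\Mod_A$ (this is \cite[Lm.2.2.1.9]{reconstruction}, which the paper cites), and finally part 1 by checking affine-locally on the generator $A$. The gap is in the affine case of part 2, at exactly the step you flag. All the content lies in the identification $\QCoh(\Y_B)\simeq\QCoh(\Y)\otimes_{\Mod_A}\Mod_B\simeq\Mod_{f^\ast B}(\QCoh(\Y))$, under which $g'^\ast$ becomes $-\otimes f^\ast B$ and $f'_\ast$ becomes $f_\ast$ on underlying objects. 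None of your proposed justifications establishes it:
\begin{enumerate}
\item Descent along $\Spec B\to\Spec A$ is unavailable, since $B$ is an arbitrary $A$-algebra (e.g.\ $A/I$), not a cover.
\item Full faithfulness on compact generators amounts to $g'_\ast g'^\ast\calN\simeq\calN\otimes f^\ast B$. Equivalently, $\Gamma(\Y_B,g'^\ast\calP)\simeq\Gamma(\Y,\calP\otimes f^\ast B)$ for dualisable $\calP$. This is itself a base change statement and does not follow from compactness of $\O_\Y$. Compactness only gives the other half, $\Gamma(\Y,\calP\otimes f^\ast B)\simeq\Gamma(\Y,\calP)\otimes_A B$.
\item Your ``direct'' alternative begs the question. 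That the class contains $\O_\Y$ is precisely $\Gamma(\Y,\O_\Y)\otimes_A B\simeq\Gamma(\Y_B,\O_{\Y_B})$. Moreover, $\O_\Y$ does not in general generate $\QCoh(\Y)$ under colimits; that would make $\Y$ $0$-affine.
\end{enumerate}

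The missing idea is that $g'\colon\Y_B\to\Y$ is affine, being a base change of $g$. Affine maps are universally $0$-affine and adjointable \cite[Pr.2.2.2.5 \& Th.2.2.3.1]{reconstruction}. Hence $\Y_B\simeq\relSpec_\Y(C')$ with $C'=g'_\ast\O_{\Y_B}\simeq f^\ast g_\ast B$, and $g'_\ast$ induces $\QCoh(\Y_B)\simeq\Mod_{C'}(\QCoh(\Y))$. The same tool is already used in the paper's proof of \Cref{pr:permancence_per_proper}, so it is available before this statement. Under this equivalence, $g'^\ast$ is $-\otimes C'$ and $f'_\ast$ is the linearisation of $f_\ast$, so
$f'_\ast g'^\ast\calM\simeq f_\ast(\calM\otimes f^\ast g_\ast B)\simeq f_\ast\calM\otimes_A g_\ast B\simeq g^\ast f_\ast\calM$.
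The middle step is the affine projection formula. As you note, it follows from colimit-preservation of $f_\ast$ and agreement on $A$, and it holds for all $\calM$, so your reduction to dualisable $\calM$ is unnecessary. This is the paper's argument, and it is why the paper proves the affine projection formula before base change rather than deriving it afterwards. With this replacement, the rest of your outline goes through as written.
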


This proof uses many of the arguments of \cite[\textsection3.2]{perfectstacks} verbatim. 

\begin{proof}
    Suppose that $\X=\Spec A$ is affine. In this case, $\Y$ is perfect by definition, hence the global sections functor $\Ga \colon \QCoh(\Y) \to \Sp$ preserves colimits as it is represented by the obviously dualisable and therefore also compact unit $\O_\Y$. As $\Ga$ factors through $\Mod_A$ via the pushforward $f_\ast$ and the conservative and colimit-preserving forgetful functor $\Mod_A \to \Sp$, we see $f_\ast$ preserves colimits if $\X$ is affine. In particular, the functors $f_\ast \calM \otimes -$ and $f_\ast(\calM \otimes f^\ast(-))$ are both colimit-preserving endomorphisms of $\Mod_A$, hence are determined by their value on the generator $A$, where they both naturally take the value of $f_\ast \calM$. This proves the projection formula when $\X$ is affine. Let us further suppose that $\X' = \Spec B$ is affine. This means that $g$, and by base change also $g'$, are affine too. The relative spectrum equivalence of \cite[Th.2.2.3.1]{reconstruction} identifies $\X' = \relSpec_\X C$ for $C = g_\ast B$ in $\CAlg(\QCoh(\X))$ and $g_\ast$ induces the equivalence
    \[\QCoh(\Y') \simeq \Mod_C(\QCoh(\Y)).\]
    Likewise, we have $\Y' \simeq \relSpec_\Y C'$ with $C' = f^\ast C \simeq f^\ast g_\ast B$. This also shows that $g^\ast$ and $g'^\ast$ are given by tensoring with $C$ and $C'$, respectively, and that $f'_\ast$ is a linearised version of $f_\ast$. In particular, this shows that $f'_\ast g'^\ast(\calM) \simeq f_\ast(\calM \otimes C')$. This, combined with the projection formula, then identifies, for any $\calM$ in $\QCoh(\Y)$,
    \[f'_\ast g'^\ast (\calM) \simeq f_\ast(\calM \otimes f^\ast g_\ast B) \simeq f_\ast(\calM) \otimes_A g_\ast(B) \simeq g^\ast f_\ast(\calM) \otimes_B B \simeq g^\ast f_\ast(\calM).\]
    In other words, we have parts 1-3 above when $\X$ and $\X'$ are affine.

    Generally, consider a presentation $\X \simeq \colim \Spec A$ as a colimit of affines and write $\Y_A= \Y \times_\X \Spec A$ and $\Y \simeq \colim \Y_A$. By \cite[Lm.2.2.1.9]{reconstruction} and the restricted cases above, we see that $f_\ast$ preserves colimits and that every Cartesian square
    \[\begin{tikzcd}
        {\Y_A}\ar[r, "{f'}"]\ar[d, "{g'}"]  &   {\Spec A}\ar[d, "g"] \\
        {\Y}\ar[r, "f"] &   {\X}
    \end{tikzcd}\]
    is adjointable. This proves part 3. For part 1, the projection formula, we note that it suffices to prove that (\ref{eq:projection_formula}) is an equivalence after pullback along $g^\ast$ for each $g\colon \Spec A \to \X$ in our fixed presentation of $\X$. Courtesy of the adjointability of the above Cartesian square, this identifies the pullback of (\ref{eq:projection_formula}) with
    \[f'_\ast g'^\ast \calM \otimes g^\ast \calN \simeq g^\ast f_\ast \calM \otimes g^\ast\calN \simeq g^\ast(f_\ast \calM \otimes \calN)\]
    \[\to g^\ast(f_\ast(\calM \otimes f^\ast \calN)) \simeq f'_\ast(g'^\ast \calM \otimes g'^\ast f^\ast \calN) \simeq f'_\ast(g'^\ast \calM \otimes f'^\ast g^\ast \calN),\]
    so the canonical projection formula map for $f'$. This is an equivalence by the restricted version of part 1 proven for affine $\X$. A similar argument shows part 2 in general, by identifying the pullback of the Beck--Chevalley map along $\Spec A \to \X$ with a Beck--Chevalley map in the case where $\X$ is affine.
\end{proof}

An alternative form of part 3 above is the next result.

\begin{prop}\label{pr:pullback_has_left_adjoint_perfect_stacks}
    Let $f\colon \Y \to \X$ be a map between perfect stacks. Then $f$ is 0-semiaffine, and the following are equivalent:
    \begin{enumerate}
        \item $f_\ast \colon \QCoh(\Y) \to \QCoh(\X)$ preserves compact objects.
        \item $f^\ast\colon \QCoh(\X) \to \QCoh(\Y)$ admits a left adjoint.
        \item The right adjoint $f^{(1)}$ to $f_\ast$ commutes with small coproducts.
    \end{enumerate}
\end{prop}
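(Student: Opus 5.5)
The plan is to use only the two defining features of perfect stacks: $\QCoh$ is compactly generated, and compact objects coincide with dualisable ones. We combine these with the symmetric monoidality of $f^\ast$ and standard adjoint-functor bookkeeping for compactly generated categories.

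\textbf{Step 1: $f$ is 0-semiaffine.} The functor $f^\ast$ is symmetric monoidal, so it preserves dualisable objects. Since $\X$ and $\Y$ are perfect, this says $f^\ast$ preserves compact objects. Because $\QCoh(\X)$ is compactly generated, this is equivalent to its right adjoint $f_\ast$ preserving filtered colimits. As $f_\ast$ is exact, it then preserves all small colimits.

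For the projection formula, consider the map $f_\ast\calM\otimes\calN\to f_\ast(\calM\otimes f^\ast\calN)$. Both sides are colimit-preserving in $\calN$, the right-hand side by what we just proved. It is an equivalence for dualisable $\calN$ by the usual formal argument: maps out of any test object $\calK$ into either side are identified using $\calN^\vee$ and $f^\ast(\calN^\vee)\simeq (f^\ast\calN)^\vee$. Since dualisables generate $\QCoh(\X)$ under colimits, the map is an equivalence in general. This argument also shows the right adjoint $f^{(1)}$ of $f_\ast$ exists.

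\textbf{Step 2: (1)$\Leftrightarrow$(3).} This is the standard fact for an adjunction $f_\ast\dashv f^{(1)}$ with $f_\ast$ colimit-preserving and source $\QCoh(\Y)$ compactly generated. The right adjoint preserves filtered colimits if and only if the left adjoint preserves compact objects. For exact functors between stable categories, preserving filtered colimits is the same as preserving small coproducts.

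\textbf{Step 3: (1)$\Rightarrow$(2).} For a compact, hence dualisable, $\calP\in\QCoh(\Y)$, define $f_!\calP=(f_\ast\calP^\vee)^\vee$. This makes sense because $f_\ast\calP^\vee$ is compact by (1), hence dualisable. Then there are natural equivalences
\[\QCoh(\X)(f_!\calP,\calN)\simeq\QCoh(\X)(\O_\X,f_\ast\calP^\vee\otimes\calN)\simeq\QCoh(\X)(\O_\X,f_\ast(\calP^\vee\otimes f^\ast\calN))\simeq\QCoh(\Y)(\calP,f^\ast\calN),\]
using the projection formula from Step 1. Ind-extending from $\QCoh(\Y)^\omega$ gives a left adjoint to $f^\ast$, because $f^\ast$ preserves colimits.

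\textbf{Step 4: (2)$\Rightarrow$(1).} This is the step I expect to need the most care. Let $f_!\dashv f^\ast$. Since $f^\ast$ preserves colimits, $f_!$ preserves compact objects. Formally, $f_!$ satisfies a projection formula against dualisable $\calN$, namely $f_!(\calQ\otimes f^\ast\calN)\simeq f_!\calQ\otimes\calN$; this is checked by mapping into arbitrary objects and using $f^\ast(\calN^\vee)\simeq(f^\ast\calN)^\vee$. Then, for compact $\calP\in\QCoh(\Y)$ and dualisable $\calN\in\QCoh(\X)$, we get
\[\QCoh(\X)(\calN,f_\ast\calP)\simeq\QCoh(\Y)(f^\ast\calN\otimes\calP^\vee,\O_\Y)\simeq\QCoh(\X)(f_!\calP^\vee\otimes\calN,\O_\X)\simeq\QCoh(\X)(\calN,(f_!\calP^\vee)^\vee),\]
naturally in $\calN$. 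Since $\QCoh(\X)$ is compactly generated by these $\calN$, the restricted Yoneda embedding into presheaves on compacts is fully faithful. Hence $f_\ast\calP\simeq(f_!\calP^\vee)^\vee$, which is the dual of a compact object and so is compact.

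The main thing to verify carefully is the naturality of these identifications, in particular the projection formula for $f_!$ and that the Yoneda step is legitimate. Both are formal, given compact generation and compacts being dualisable.
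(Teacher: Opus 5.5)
Your argument is correct, and it takes a different, more self-contained route than the paper. The paper's proof is just two citations to Balmer--Dell'Ambrogio--Sanders: their Theorem~1.3 supplies $f^\ast\dashv f_\ast\dashv f^{(1)}$ and the projection formula, and Neeman's criteria in their Grothendieck--Neeman duality theorem give the equivalence of (1)--(3).

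Those results are stated for rigidly-compactly generated tensor-triangulated categories and are proved on homotopy categories via Brown representability. Invoking them here therefore tacitly uses that, for presentable stable $\infty$-categories, the existence of adjoints is detected by preservation of (co)products, which can be checked on homotopy categories.

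You instead work directly in $\PrLst$:
\begin{itemize}
\item In Step~1, the adjoint functor theorem replaces Brown representability.
\item (1)$\Leftrightarrow$(3) is the standard duality between compact objects and filtered colimits for $f_\ast\dashv f^{(1)}$.
\item For (1)$\Leftrightarrow$(2) you build the adjoints by hand: $f_!\calP\simeq(f_\ast\calP^\vee)^\vee$ on compacts, and conversely $f_\ast\calP\simeq(f_!\calP^\vee)^\vee$. This uses only that compacts are dualisable, together with the two projection formulas.
\end{itemize}

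Your version gives explicit formulas and avoids any translation between triangulated and $\infty$-categorical statements. The citation is shorter and comes with further Grothendieck--Neeman duality statements that the paper does not need, such as $f^{(1)}\simeq f^{(1)}(\O_\X)\otimes f^\ast$ under these equivalent conditions.

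A minor point about Step~3: the Ind-extension is left adjoint to $f^\ast$ because $\Map(-,f^\ast\calN)$ turns colimits into limits and $f^\ast\calN$ is detected by maps out of compacts. The fact that $f^\ast$ preserves colimits is not what is used there, though the conclusion is unaffected.
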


\begin{proof}
    The 0-semiaffineness if simply \cite[Th.1.3]{BalmerDellAmbrogioSanders_2016_GrothendieckNeemanDualityWirthmuller} and the equivalences follow from Neeman's criteria listed in \cite[Th.1.7(GN2)]{BalmerDellAmbrogioSanders_2016_GrothendieckNeemanDualityWirthmuller}.
\end{proof}

Next, we have a Künneth theorem for quasi-coherent sheaves of products of perfect stacks.

\begin{prop}\label{pr:general_kuenneth}
    Let $\Y,\X$ be perfect stacks and $f\colon \Y \to \X$ be a map between them. Suppose that $\X$ has universally $0$-affine diagonal. Then for any map of stacks $\X' \to \X$, the functor induced by pullbacks along projections
    \[\QCoh(\X') \otimes_{\QCoh(\X)} \QCoh(\Y) \xrightarrow{\simeq} \QCoh(\X' \times_\X \Y)\]
    is an equivalence. In particular, the product of perfect stacks is perfect.
\end{prop}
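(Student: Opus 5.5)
The plan is to follow the classical argument of Ben-Zvi--Francis--Nadler \cite{perfectstacks}, reducing to the case where $\X'$ is affine, where the statement becomes a module-theoretic computation. The first step is this affine case, $\X' = \Spec B$. The composite $\X' \to \X$ need not be affine. However, the graph-type factorisation
\[\X' \times_\X \Y \to \X' \times \Y\]
is a base change of the diagonal $\X \to \X \times \X$, which is universally 0-affine by hypothesis. Since $\Y$ is perfect and $\X'$ is affine, the projection $\X'\times \Y \to \X'$ is perfect, and $\QCoh(\X'\times \Y) \simeq \Mod_B \otimes \QCoh(\Y)$ by the 0-affineness of $\Spec B \to \Spec \Sph$ together with the fact that $\QCoh(\Y)$ is dualisable, being compactly generated.

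Using 0-affineness of $\X'\times_\X\Y \to \X'\times\Y$, I would then identify
\[\QCoh(\X'\times_\X \Y) \simeq \Mod_{\calA}(\QCoh(\X'\times\Y)),\]
where $\calA$ is the pushforward of the structure sheaf, which is the pullback of $\Delta_\ast \O_\X$. On the other side, $\QCoh(\X)$ is rigid: it is compactly generated by dualisables, since $\X$ is perfect. Hence the relative tensor product $\QCoh(\X')\otimes_{\QCoh(\X)} \QCoh(\Y)$ is computed as modules over the algebra $\Delta_\ast\O_\X$ pulled into $\QCoh(\X')\otimes\QCoh(\Y)$. Here I invoke the standard identification $\QCoh(\X)\otimes\QCoh(\X) \simeq \QCoh(\X\times\X)$ for perfect $\X$, proved first for the product case.

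So the argument should be ordered as follows. First prove the absolute case $\QCoh(\X_1)\otimes \QCoh(\X_2)\simeq\QCoh(\X_1\times \X_2)$ for perfect stacks. Write $\X_2 \simeq \colim \Spec A$; then $\QCoh(\X_2)\simeq \lim \Mod_A$, and tensoring with the dualisable category $\QCoh(\X_1)$ commutes with this limit. Each term $\QCoh(\X_1)\otimes\Mod_A \simeq \QCoh(\X_1\times\Spec A)$ holds by base change of modules. Second, express the relative tensor product as modules over $\Delta_\ast\O_\X$ via the bar construction and the projection formula of \Cref{pr:main_perfect_stacks_statement}. Third, match this with the 0-affine description above.

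Perfection of the product then follows: $\QCoh(\X_1\times\X_2)\simeq\QCoh(\X_1)\otimes\QCoh(\X_2)$ is compactly generated by the external products $\calM\boxtimes\calN$ of compact objects. These are dualisable, and since dualisables are compact in a category with compact unit, compacts and dualisables agree. The expected main obstacle is the identification of the relative tensor product with modules over $\Delta_\ast\O_\X$, which requires rigidity of $\QCoh(\X)$. I would deduce rigidity from perfection: the unit is compact, so $\Ga$ preserves colimits, and the compact generators are dualisable. With rigidity in hand, the relative tensor product is a colimit computed as a limit along right adjoints, which are colimit-preserving by 0-semiaffineness.
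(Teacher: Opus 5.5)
Your proposal is correct, but it follows the Ben-Zvi--Francis--Nadler ``diagonal'' argument rather than the paper's route.

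The paper never passes through $\QCoh(\X\times\X)$. It uses rigidity of $\QCoh(\X)$ (via Gaitsgory--Rozenblyum) to see that $\QCoh(\Y)$ is dualisable as a $\QCoh(\X)$-module. Hence $-\otimes_{\QCoh(\X)}\QCoh(\Y)$ preserves limits, and one may assume $\X'=\Spec A$. Then $g\colon \X'\to\X$ is itself universally $0$-affine, because the diagonal is, so $\QCoh(\X')\simeq\Mod_{g_\ast\O_{\X'}}(\QCoh(\X))$. The relative tensor product becomes $\Mod_{f^\ast g_\ast\O_{\X'}}(\QCoh(\Y))\simeq\QCoh(\X'\times_\X\Y)$ by adjointability, and the absolute case is just $\X=\Spec\Sph$.

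You instead prove the absolute Künneth formula first. You then combine $\QCoh(\X)\simeq\Mod_{\Delta_\ast\O_\X}(\QCoh(\X)\otimes\QCoh(\X))$ with the formal identity $M\otimes_R N\simeq (M\otimes N)\otimes_{R\otimes R}R$ and base change for module categories. This costs you the absolute case as a separate step. In exchange it handles arbitrary $\X'$ at once, and beyond the diagonal hypothesis it only uses that $\QCoh(\X)$ and $\QCoh(\Y)$ are dualisable, rather than rigidity of $\QCoh(\X)$.

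There are two loose ends in your write-up.

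First, you announce a reduction to affine $\X'$ without justifying it; in the paper this is precisely where rigidity is spent. Your route does not need it. Your proof of the absolute case only uses dualisability of $\QCoh(\X_1)$, so taking $\X_1=\Y$ gives $\QCoh(\X')\otimes\QCoh(\Y)\simeq\QCoh(\X'\times\Y)$ for arbitrary $\X'$, not just perfect or affine ones.

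Second, the identification of the relative tensor product with $(g\times f)^\ast\Delta_\ast\O_\X$-modules comes from that identity plus $0$-affineness of $\Delta$. It does not come from rigidity, bar constructions, or the projection formula, so those references can be dropped.
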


This statement and proof are inspired by \cite[\textsection3.Pr.3.5.3]{gaitsgoryrosenblum}.

\begin{proof}
    As $\X$ and $\Y$ are perfect, their categories of quasi-coherent sheaves are \emph{rigid} in the sense of \cite[\textsection1.Df.9.1.2]{gaitsgoryrosenblum}; this is clear through the lens of \cite[\textsection1.Lm.9.1.5]{gaitsgoryrosenblum}, for example. An application of \cite[\textsection1.Pr.9.4.4]{gaitsgoryrosenblum} shows that $\QCoh(\Y)$ is dualisable over $\QCoh(\X)$ if and only if $\QCoh(\Y)$ is dualisable as a stable category, which follows again from the fact that $\Y$ is perfect. This implies that the endofunctor
    \[\QCoh(\Y) \otimes_{\QCoh(\X)} - \colon \Mod_{\QCoh(\X)}(\PrLst) \to \Mod_{\QCoh(\X)}(\PrLst)\]
    preserves limits. By choosing an affine colimit-presentation of $\X' \simeq \colim \Spec A$, we are then reduced to the case where $\X' = \Spec A$ is affine. As $\X$ has universally $0$-affine diagonal, we see that $g\colon \X' \to \X$ is universally $0$-affine, and by base change, so is $g'\colon \Y' = \X' \times_\X \Y \to \Y$. By \cite[Pr.2.2.2.5(2)]{reconstruction}, this implies $g$ and $g'$ are $0$-affine, which leads to the identifications
    \[\QCoh(\X')\otimes_{\QCoh(\X)} \QCoh(\Y) \simeq \Mod_{g_\ast \O_{\X'}}(\QCoh(\X)) \otimes_{\QCoh(\X)} \QCoh(\Y)\]
    \[ \simeq \Mod_{f^\ast g_\ast \O_{\X'}}(\QCoh(\Y)) \simeq \Mod_{g'_\ast f'^\ast \O_{\X'}}(\QCoh(\Y)) \simeq \Mod_{g'_\ast \O_{\Y'}}(\QCoh(\Y)) \simeq \QCoh(\Y'),\]
    where the third equivalence uses the adjointability of universally $0$-affine maps from \cite[Pr.2.2.2.5]{reconstruction}. This gives our desired conclusions. For the ``in particular'' statement, set $\X=\Spec \Sph$ as the terminal stack, from which the equivalence
    \[\QCoh(\X') \otimes \QCoh(\Y) \xrightarrow{\simeq} \QCoh(\X' \times \Y)\]
    shows that $\X'\times \Y$ is perfect as both $\X'$ and $\Y$ are.
\end{proof}

While it is unclear whether one can test perfectness locally on the target, one of the key base change properties, the adjointability from \Cref{pr:main_perfect_stacks_statement}(\ref{it:adjointableperfect_stacks}), can be checked locally.

Recall from \cite[Df.2.2.1.8]{reconstruction} that a commutative diagram of stacks
\begin{equation}\label{eq:adjointable_map_of_stacks}\begin{tikzcd}
    {\Y'}\ar[r, "{f'}"]\ar[d, "{h'}"]   &   {\X'}\ar[d, "{h}"]  \\
    {\Y}\ar[r, "{f}"]   &   {\X}
\end{tikzcd}\end{equation}
is \emph{adjointable} if the Beck--Chevalley map
\[h^\ast f_\ast \Rightarrow f'_\ast h'^\ast \colon \QCoh(\Y) \to \QCoh(\X')\]
is an equivalence.

\begin{mydef}\label{def:universally_adjointable}
    Call a map \(f:\sfY\to\sfX\) of stacks {\it universally adjointable} if, in every diagram
\[\begin{tikzcd}
	{\sfY''} & {\sfX''} \\
	{\sfY'} & {\sfX'} \\
	\sfY & \sfX
	\arrow[from=1-1, to=1-2]
	\arrow[from=1-1, to=2-1]
	\arrow["\lrcorner"{anchor=center, pos=0.125}, draw=none, from=1-1, to=2-2]
	\arrow[from=1-2, to=2-2]
	\arrow[from=2-1, to=2-2]
	\arrow[from=2-1, to=3-1]
	\arrow["\lrcorner"{anchor=center, pos=0.125}, draw=none, from=2-1, to=3-2]
	\arrow[from=2-2, to=3-2]
	\arrow["f", from=3-1, to=3-2]
\end{tikzcd}\]
    where both squares are pullbacks, the top square is adjointable.
\end{mydef}

Perfect and universally 0-affine maps of stacks are universally adjointable by \Cref{pr:main_perfect_stacks_statement} and \cite[Pr.2.2.2.5(1)]{reconstruction}, respectively..

\begin{lemma}\label{lem:uni_adjointable_local_finitary}
    The class of universally adjointable maps is local and finitary in the sense of \cite[Df.A.9]{Tokic_2026_FamilyCompletionTheorem}, meaning this class contains isomorphisms, is closed under base change, composition, and arbitrary coproducts, and satisfies fpqc-descent. 
\end{lemma}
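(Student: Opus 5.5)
We verify the five conditions of \cite[Df.A.9]{Tokic_2026_FamilyCompletionTheorem} one at a time. The first three are formal consequences of pasting Beck--Chevalley squares. Most of the work goes into fpqc-descent, together with coproducts, which behaves like a special case of descent.

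\emph{Isomorphisms.} For an isomorphism $f$, every pullback of $f$ is again an isomorphism. The Beck--Chevalley transformation of a square whose horizontal maps are equivalences is then invertible.

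\emph{Base change.} This is built into \Cref{def:universally_adjointable}: the condition quantifies over all iterated pullbacks. So if $f$ is universally adjointable, any pullback $f'$ of $f$ is too, since a double pullback along $f'$ is a double pullback along $f$ by pasting.

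\emph{Composition.} Let $g\colon \sfZ\to\Y$ and $f\colon\Y\to\X$ be universally adjointable. Given $\X''\to\X'\to\X$, pull back first along $f$ and then along $g$. The square for $f\circ g$ is the horizontal pasting of the square for $g'$ over $\Y'$ and the square for $f'$ over $\X'$. The Beck--Chevalley map of a pasted square is the composite of the two Beck--Chevalley maps, and both are equivalences by hypothesis.

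\emph{Coproducts.} Let $f=\coprod f_\alpha\colon \coprod \Y_\alpha\to\coprod\X_\alpha$. Then $\QCoh(\coprod\X_\alpha)\simeq\prod_\alpha\QCoh(\X_\alpha)$, and pushforward and pullback are computed componentwise. Pullbacks along maps into $\coprod \X_\alpha$ decompose as coproducts by universality of coproducts in $\Stk$. The Beck--Chevalley map therefore splits as a product of Beck--Chevalley maps, each of which is an equivalence.

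\emph{Descent; the main obstacle.} Let $\coprod_i \X_i\to\X$ be an fpqc cover such that each base change $f_i\colon\Y_i\to\X_i$ is universally adjointable; we must show that $f$ is. By base change we may assume $\X'=\X$, and fix $h\colon\X''\to\X$ with pullback $h'\colon \Y''\to\Y$. To test whether $h^\ast f_\ast\Rightarrow f''_\ast h'^\ast$ is an equivalence, pull back along the cover $\X''_i=\X''\times_\X\X_i\to\X''$; pullback along an fpqc cover is jointly conservative on $\QCoh$. The difficulty is that $f_\ast$ need not commute with pullback along $\X_i\to\X$ unless we already know adjointability of $f$ against this cover. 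So we first prove that case. Write $\X\simeq\colim \check{C}(\X_\bullet)$, the Čech nerve colimit, so that $\QCoh(\Y)\simeq\lim\QCoh(\Y_\bullet)$. Each transition map of $\Y_\bullet$ is a pullback of $f_i$-type maps, so the $f_{\bullet}$ commute with these pullbacks by hypothesis. Then \cite[Lm.2.2.1.9]{reconstruction}, the same tool used in the proof of \Cref{pr:main_perfect_stacks_statement}, shows that $f_\ast$ is computed levelwise and that the squares against $\X_i\to\X$ are adjointable. With this in hand, pulling the Beck--Chevalley map for $h$ back along $\X''_i\to\X''$ and pasting identifies it with the Beck--Chevalley map for $f_i$ against $\X''_i\to\X_i$, which is an equivalence. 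Conservativity then finishes the argument. The main care needed is checking that the Čech terms inherit universal adjointability; this follows from the base change and composition steps above.
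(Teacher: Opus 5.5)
Your proof is correct and follows essentially the paper's route: the closure properties are formal, and descent goes through the \v{C}ech nerve of the cover plus a ``colimit of adjointable squares is adjointable'' lemma. The only difference is packaging: the paper applies \cite[Lm.A.21]{Tokic_2026_FamilyCompletionTheorem} directly to the square over $\X''\to\X'$, written as a colimit of \v{C}ech-level squares, whereas you first get adjointability against the cover via \cite[Lm.2.2.1.9]{reconstruction} and then finish by pasting and joint conservativity. For that last step you must also run your first step for the base change $f''\colon \Y''\to\X''$ against the pulled-back cover maps $u_i\colon \X''_i\to\X''$, so as to identify $u_i^\ast f''_\ast$ with $(f''_i)_\ast u_i'^\ast$; this is fine, since $f''$ satisfies the same local hypothesis as $f$.
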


In particular, universally adjointable maps satisfy descent à la \cite[Df.A.1]{Davies_galois_geometric_FP}.

\begin{proof}
    It is clear from the definition that the class of universally adjointable maps contains all equivalences, and is stable under composition and base changes. That this class is finitary is a direct consequence of disjointness of coproducts, as arbitrary coproducts of adjointable squares are adjointable. It remains to show some form of descendability. Let \(f:\sfY\to\sfX\) be a map, and let \(\sfZ\to\sfX\) be an effective epimorphism such that the base change \(f_{\sfZ}:\sfY_{\sfZ}\to \sfZ\) is universally adjointable.
    Let \(\sfZ^\bullet\) be the Čech nerve of \(\sfZ\to\sfX\).
    It follows that all the base changes \(\sfY_{\sfZ^\bullet}\to\sfZ^\bullet\) are universally adjointable. Now, let \(\sfX''\to \sfX'\to \sfX\) and consider the associated pullbacks as in \Cref{def:universally_adjointable}.
    Also, denote the base changes of \(\sfZ\) to \(\sfX'\) and \(\sfX''\) by \(\sfZ'\) and \(\sfZ''\), with Čech nerves \(\sfZ'^\bullet\) and \(\sfZ''^\bullet\), and denote the base changes of \(\sfY'\) and \(\sfY''\) to these by \(\sfY'_{\sfZ'^\bullet}\) and \(\sfY''_{\sfZ''^\bullet}\).
    Universality of colimits implies that our desired square is the colimit of the pullback squares
\[\begin{tikzcd}
	{\sfY''_{\sfZ''^\bullet}} & {\sfZ''^\bullet} \\
	{\sfY'_{\sfZ'^\bullet}} & {\sfZ'^\bullet}
	\arrow[from=1-1, to=1-2]
	\arrow[from=1-1, to=2-1]
	\arrow["\lrcorner"{anchor=center, pos=0.125}, draw=none, from=1-1, to=2-2]
	\arrow[from=1-2, to=2-2]
	\arrow[from=2-1, to=2-2]
\end{tikzcd}\]
    As they are base changed from \(\sfY_\sfZ\to\sfZ\), the horizontal maps above are universally adjointable. By \cite[Lm.A.21]{Tokic_2026_FamilyCompletionTheorem}, we are done.
\end{proof}

\section{Review of equivariant elliptic cohomology theories}\label{sec:review_of_GM}
In this section, we recall the construction of the equivariant elliptic cohomology associated to an oriented elliptic curve $\sfE$ over a stack $\sfM$, due to Gepner--Meier \cite{davidandlennart,gm2}. This then leads to the definition of the $G$-equivariant global sections functor featuring in our main theorem.

The content of this section will appear in Gepner--Meier's forthcoming work \cite{gm2}; we will update the references below in due time.

\subsection{Equivariant homotopy theory}\label{ssec:EHT}
First, we establish some notation.

\begin{mydef}
	Write $\Glo$ for the \emph{global orbit category}; see \cite[Df.2.7]{davidandlennart}, where $\Glo$ is written as $\Orb$. Recall that its objects are given by \emph{orbit stacks} $\B G$ where $G$ is a compact Lie group, while the morphisms are given by continuous group homomorphisms modulo conjugation in the target. Write $\Spc_{\gl} = \Fun(\Glo^\op, \Spc)$ for the category of \emph{global spaces}.
\end{mydef}

A map $f\colon X \to Y$ in $\Spc_{\gl}$ is \emph{faithful} if it is right orthogonal to the morphisms $\B G \to \B G/N$ for all closed normal subgroups $N\subset G$. Note that a map between orbit stacks is faithful if and only if it corresponds to the inclusion of a closed subgroup.  
    
\begin{mydef}
	By \cite[Pr.2.16]{davidandlennart}, the orbit category $\Orb_G$ of a compact Lie group $G$ is equivalent to the full subcategory of $\Glo_{/\B G}$ spanned by the faithful maps $\B H \to \B G$. Using Elmendorf's theorem identifying the category of $G$-spaces $\Spc_G$ with the category of presheaves on $\Orb_G$, write the \emph{global quotient} functor
	\[[-/G]\colon \Spc_G\to \Spc_{\gl}, \quad X\mapsto [X/G]\]
    for the left Kan extension along the $\Orb_G\subset \Glo_{/\B G} \to \Glo$
\end{mydef}
	
\begin{mydef}\label{df:finite_global_quotient}
	A global space $X$ is a \emph{finite global quotient} if it is equivalent to $[Y/G]$ for some compact Lie group $G$ and finite $G$-space $Y\in \Spc^\fin_G$. The full subcategory of finite global quotients is denoted by $\Spc_{\gl}^{\mathrm{fgq}}$.
\end{mydef}

Now we isolate the particular family of compact Lie groups of interest.

\begin{mydef}\label{df:applicableCLG}
	A compact Lie group $G$ is \emph{applicable} if it is connected and for a maximal torus $T$, every abelian subgroup $H\subset G$ with $\pi_0 H$ generated by at most 2 elements is conjugate to a subgroup of $T$. 
\end{mydef}
	
\begin{example}\label{ex:borel_reference}
	By a theorem of Borel \cite[Cor.3.9]{Borel_61_Lie}, every connected compact Lie group with torsion-free fundamental group is applicable. This includes the family of groups $U(n),SU(n), Sp(n)$, and $Spin(n)$, as well as any finite product of such groups. 
\end{example}

The above definition for applicable Lie groups obscures the role they play here. In \cite
{gm2}, it is shown that for $G$ applicable, the map $\B T \to \B G$ induced by the inclusion of a maximal torus is a ``cover'' from the perspective of equivariant elliptic cohomology. This allows us to descend the excellent properties of equivariant elliptic cohomology at a torus to equally excellent properties of equivariant elliptic cohomology for all applicable compact Lie groups. One illustration of this is \Cref{thm:Ell_geo_nice_prop}.

\subsection{The Gepner--Meier geometric model}\label{ssec:GM_geometric_model}
Recall the notion of an elliptic curve over an $\E_\infty$-ring from \cite[Df.2.0.2]{ec1}, the further notion of an oriented elliptic curve from \cite[Df.7.2.7]{ec2}, and the \emph{moduli stack of oriented elliptic curves} $\M_\Ell^\ori$ of \cite[Pr.7.2.10]{ec2}.

\begin{mydef}
    An \emph{oriented elliptic curve} over a stack $\M$ is an abelian group object $\sfE$ in $\Stk_{/\M}$ equipped with a map of abelian group objects from the constant stack on $BU(1)$ such that for each $\Spec A \to \sfM$, the base change of $\sfE$ to $\Spec A$ defines an oriented elliptic curve over $A$.
\end{mydef}

Equivalently, an oriented elliptic curve over $\M$ is a map of stacks $\M \to \M_\Ell^\ori$. Here we do not differentiate between $\M_\Ell^\ori$ in $\SpDM^\nc$ and its image under $h_{(-)} \colon \SpDM^\nc \to \Stk$ as $\M_\Ell^\ori$ has affine diagonal; see \cite[Prs.3.2.2.2(3) \& 3.2.2.4]{reconstruction} and \Cref{lm:mellori_affine_diagonal}. The equivalence of these notions is formal and will appear in \cite{geometricnorms}.

\begin{theorem}\label{thm:Ell_geo_nice_prop}
	Let $\sfE$ be an oriented elliptic curve over a stack $\M$. Then there exists a functor 
	\[	\sfE[-]\colon \Spc_{\gl}^{\mathrm{fgq}}\to \Stk_{/\M}	\] 
	satisfying the following properties:
	\begin{enumerate}
		\item\label{it:agree_on_abelian} Given an abelian compact Lie group $G$, $\sfE[\B G]$ is given by $\HOM(\widehat{G},\sfE)$, the stack of abelian group homomorphisms from the Pontryagin dual of $G$ to $\sfE$.
		\item\label{it:Ell_is_flat_and_good} Let $G$ be an applicable compact Lie group $G$. Then the stack $\sfE[\B G]$ is geometric (\cite[Df.3.4.1.1]{reconstruction}). If $\M$ is a geometric stack, then $\sfE[\B G] \to \M$ is flat (\cite[Df.3.4.2.1]{reconstruction}).
		\item\label{it:base_change_in_stacks} $\sfE[-]$ is natural in base change of elliptic curves: for every map $f\colon \M' \to \M$ of stacks, there is a natural equivalence
		\[
		(f^* \sfE)[X] \simeq f^*(\sfE[X]).
		\]
		Moreover, $\sfE[-]$ satisfies fpqc descent with respect to such maps of stacks.
		\item\label{it:Ell_closed_embedding_affine} If $X\to Y$ is a faithful map of finite global quotients, then $\sfE[X]\to \sfE[Y]$ is affine.
		\item\label{it:Ell_product_preserving} $\sfE[-]$ is product preserving.
		\item\label{it:base change_for_global_quotients} $\sfE[-]$ preserves pullback squares of the form
		\[\begin{tikzcd}
			{f^*[X/G]} & {[X/G]} \\
			\B H & \B G,
			\arrow[from=1-1, to=1-2]
			\arrow[from=1-1, to=2-1]
			\arrow[from=1-2, to=2-2]
			\arrow["f"', from=2-1, to=2-2]
		\end{tikzcd}\]
		where $G$ is applicable, $f$ is any homomorphism, and $X$ is a finite $G$-space.
	\end{enumerate}
\end{theorem}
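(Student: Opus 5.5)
The plan is to build $\sfE[-]$ by Kan extension from the abelian case, following the outline in \Cref{rmk:construction_of_Ellst}. Write $\Glo^{\ab}\subseteq\Glo$ for the full subcategory of orbit stacks $\B A$ with $A$ abelian. On $\Glo^{\ab}$, I would define $\B A\mapsto\HOM(\widehat{A},\sfE)$. This is functorial: a homomorphism $A\to A'$ induces $\widehat{A'}\to\widehat{A}$, and conjugation in an abelian target is trivial, so there is no ambiguity.

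Next I would left Kan extend along $\Glo^{\ab}\subseteq\Glo\subseteq\Spc_{\gl}$ to a colimit-preserving functor, and then restrict to $\Spc^{\fgq}_{\gl}$.
\begin{itemize}
\item Property 1 holds because the inclusion is fully faithful, so the Kan extension restricts to the original functor on $\Glo^{\ab}$.
\item For property 3, formation of $\HOM(\widehat{A},-)$ commutes with base change along $\M'\to\M$, and $f^\ast\colon\Stk_{/\M}\to\Stk_{/\M'}$ preserves colimits by universality of colimits in $\Stk$. Hence $f^\ast$ commutes with the Kan extension. Fpqc descent then follows from descent for $\Stk_{/(-)}$.
\item For property 5, on abelian groups one has $\HOM(\widehat{A\times B},\sfE)\simeq\HOM(\widehat{A},\sfE)\times_\M\HOM(\widehat{B},\sfE)$. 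Products of global spaces and fibre products in the topos $\Stk_{/\M}$ both commute with colimits in each variable, so the extension is product preserving.
\end{itemize}

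For property 4, I would start with a faithful map $\B H\to\B A$ of abelian orbit stacks, i.e.\ a closed subgroup. The dual map $\widehat{A}\to\widehat{H}$ is then surjective with kernel a finitely generated abelian group $K$. So $\HOM(\widehat{H},\sfE)\to\HOM(\widehat{A},\sfE)$ is the fibre of a restriction map to $\HOM(K,\sfE)$ over the zero section. Since $\sfE$ has affine (even separated, closed) diagonal, this fibre is affine. For general faithful maps of finite global quotients, I would reduce to this case: first pull back along $\B T\to\B G$ and use descent, then use that faithful maps $[X/G]\to[Y/G]$ are built from cells $[G/H\times D^n/G]\simeq\B H$.

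The main obstacle is properties 2 and 6, which require applicability. My plan is to show that for $G$ applicable, the maximal torus induces an fpqc-cover-like presentation: $\sfE[\B G]$ is the colimit of the Čech-type diagram built from $\sfE[\B T]=\sfE^{\times r}$ and the Weyl group action. The key input is that the colimit computing $\sfE[\B G]$ over $\Glo^{\ab}_{/\B G}$ only sees abelian subgroups with $\pi_0$ generated by at most $2$ elements. Heuristically this is because $\sfE$ is $2$-dimensional on torsion, as $\sfE[n]$ is locally $(\Z/n)^2$. Applicability says all such subgroups are conjugate into $T$, which yields cofinality of the $T$-indexed diagram.

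Granting that, geometricity and flatness follow because $\sfE^{\times r}$ is flat and geometric over $\M$, and because quotients by the finite group $W$ of such stacks remain geometric. Property 6 would follow by computing both sides through the torus presentation and applying universality of colimits. I expect the cofinality argument to be the hard technical core; I would attempt it first for $U(n)$ via an explicit stratification by isotropy types before the general case.
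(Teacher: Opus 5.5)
Your treatment of property 1, and of property 4 for closed subgroups of abelian groups, is fine. The trouble is that your construction is not the one the paper uses, and the difference is exactly where property 2 fails.

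\textbf{Where you Kan extend.} You left Kan extend $\B A\mapsto\HOM(\widehat A,\sfE)$ directly in $\Stk_{/\M}$. In \Cref{rmk:construction_of_Ellst}, following Gepner--Meier, the extension from abelian orbits to abelian finite global quotients is done in spectrally ringed spaces $\Top^{\loc}_{\CAlg/\M}$. There, a pushout along closed immersions has structure sheaf the fibre product of structure sheaves, so it is a scheme-theoretic union. Only afterwards does one pass to $\Stk$ and extend to nonabelian $G$.

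This already matters for $G=SU(2)$. One has $\B T\times_{\B G}\B T\simeq[(G/T)/T]\simeq\B T\sqcup_{\B C_2}\B T$; the two projections to $\B T$ agree on the first copy and differ by $z\mapsto z^{-1}$ on the second.

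In the paper's construction, its value is classically the closed subscheme $\Delta\cup\Gamma_{-1}\subset\sfE\times_\M\sfE$ (diagonal and antidiagonal, meeting in $\sfE[2]$). This is the kernel pair of the finite flat quotient $\sfE\to\sfE/\pm$, and it is the kind of input behind $\sfE_{SU(2)}^\heartsuit\simeq\sfE^\heartsuit/\pm$ in \Cref{thm:extra-applicable}.

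In your construction, its value is the fpqc-sheaf pushout $\sfE\sqcup_{\sfE[2]}\sfE$, which is strictly smaller. Near a nonzero $2$-torsion point, with $2$ invertible, the tautological point of $\Delta\cup\Gamma_{-1}$ does not factor through either branch, even after a faithfully flat extension. To see this, take local coordinates $s_1,s_2$ on which inversion is $s\mapsto -s$. A flat algebra over the local ring of the node that is killed by $s_1\mp s_2$ has $s_1$ invertible, so no faithfully flat cover splits into the two branches.

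Consequences:
\begin{itemize}
\item Your simplicial diagram is not the \v{C}ech nerve of $\sfE\to\sfE/\pm$, and its realization is not $\sfE/\pm$.
\item Your justification of property 2 (``quotients by $W$ remain geometric'') does not apply, since the realization is neither the stack quotient $[\sfE_T/W]$ nor the coarse quotient.
\item Your short proofs of properties 3 and 5 rest on colimits in $\Stk$ being universal and commuting with fibre products. Colimits in $\Top^{\loc}$ do not behave this way, which is why the paper cites \cite{gm2} for 3, 5 and 6 rather than deriving them formally.
\end{itemize}

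\textbf{The at-most-two-generators restriction.} In the paper this is built into the definition: one restricts to $\Glo_{\ab,2}$ before Kan extending, and precomposes with $\Spc^{\fgq}_{\gl}\to\Spc^{\fgq}_{\twoabgl}$. Applicability is then precisely what makes $\B T\to\B G$ an effective epimorphism of $2$-abelian global spaces. So $\sfE_G$ is computed from the \v{C}ech nerve of $\B T\to\B G$, with no statement needed about abelian subgroups on three or more generators.

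Your ``key input'' is that the colimit over all of $\Glo^{\ab}_{/\B G}$ only sees $2$-generated subgroups. This amounts to $\HOM(\widehat A,\sfE)\simeq\colim\HOM(\widehat B,\sfE)$ over $2$-generated $B\to A$, and it is false in $\Stk$ when torsion is infinitesimal. For example, take $A=(\Z/2)^3$ over a supersingular point in characteristic $2$. Then $\sfE[2]^{\times 3}$ is local Artinian, and its tautological point factors through no proper closed subscheme, even after a faithfully flat extension. So for applicable groups with non-toral abelian subgroups on three generators (these exist, e.g.\ in $Spin(n)$ for large $n$), your $\sfE[\B G]$ is a different object, and the cofinality you plan to prove cannot hold in that generality.

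\textbf{Higher morphisms in $\Glo^{\ab}$.} $\Glo^{\ab}$ is not a $1$-category; for instance $\Map(\B e,\B U(1))\simeq BU(1)$. A functor out of it must say what these higher morphisms do. In the Gepner--Meier construction, this is where the map $BU(1)\to\sfE$ in the definition of an oriented elliptic curve is used. Your ``no ambiguity'' amounts to factoring through the homotopy category, i.e.\ using the trivial preorientation.
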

	
\begin{proof}
Our $\sfE[-]$ is $\Ellst_{\sfE/\M}(-)$, as defined in \cite{gm2}.
	\begin{itemize}
		\item[(\ref{it:agree_on_abelian})] This is immediate from the construction; see \Cref{rmk:construction_of_Ellst} below.
		\item[(\ref{it:Ell_is_flat_and_good})] Both statements follow from the abelian case by descent to the maximal torus.
		\item[(\ref{it:base_change_in_stacks})] This is proven in \cite
        {gm2}.
		\item[(\ref{it:Ell_closed_embedding_affine})] This follows from descent for affine morphisms, see \cite[Pr.A.3]{Davies_galois_geometric_FP}, from the case of $G$ abelian, which is \cite[Pr.6.3]{davidandlennart}. 
			
		\item[(\ref{it:Ell_product_preserving})] This is proven in \cite
        {gm2}.
			
		\item[(\ref{it:base change_for_global_quotients})] Similarly, this is proven in \cite
        {gm2}.\qedhere
	\end{itemize}  
\end{proof}

The construction of $\Ellst$ given in \cite{gm2} is surprisingly subtle; we give a summary here.
    
\begin{remark}\label{rmk:construction_of_Ellst}
	Let $\sfE$ be an oriented elliptic curve over a nonconnective spectral scheme $\M$. 
		\begin{enumerate}
		\item One begins by applying \cite[Construction 3.13]{davidandlennart} to the oriented elliptic curve $\sfE$, which we view as a strict abelian group object in spectrally ringed topological spaces. This results in the functor
			\[
			\Ell(-)\colon \Glo_{\ab}\rightarrow \Top^{\mathrm{loc}}_{\CAlg/\M}; \qquad \B G \mapsto \Hom(\dual{G}, \sfE),
			\]
            where $\Glo_\ab \subset \Glo$ is the subcategory spanned by orbit stacks of abelian compact Lie groups.
			\item We restrict this to the subcategory $\Glo_{\ab,2}$ spanned by those $BG$ where $\pi_0 G$ has at most $2$ generators, and then left Kan extend to the full subcategory $\Spc^{\abfgq}_{\twoabgl}$ of the associated presheaf category $\Spc_{\twoabgl}$ spanned by global quotients $[X/G]$ where $X$ is compact and $G$ is abelian. This yields a functor
			\[
			\Ell(-)\colon \Spc^{\abfgq}_{\twoabgl}\to \Top^{\mathrm{loc}}_{\CAlg/\M}.
			\]
			\item We then pass through the functor of points embedding $\Top^{\mathrm{loc}}_{\CAlg/\M}\subset \Stk_\M$ and further left Kan extend to the subcategory spanned by global quotients $[X/G]$ such that $X$ is compact and $G$ is arbitrary. This yields a functor 
			\[
			\Ell^{\st}(-)\colon \Spc^{\fgq}_{\twoabgl}\to \Stk_\M.
			\]
			\item Precomposition with the restriction functor $\Spc^{\fgq}_{\gl} \to \Spc^{\fgq}_{\twoabgl}$ produces our desired functor
			\[
			\sfE[-]\colon \Spc^{\fgq}_{\gl}\to \Stk_S
			\]
            when the base $\M$ is a nonconnective spectral scheme.
            \item Finally, for an elliptic curve $\E$ over a general $\M$, we define $\sfE[-]$ by descent: choosing a colimit representation of $\M \simeq \colim \Spec A$ by affines and writing $\sfE_A = \sfE \times_\M \Spec A$, we define
            \[\sfE[-] = \colim \sfE_A[-].\qedhere\]          
		\end{enumerate}
\end{remark}

\begin{notation}
    Let us write $\sfE[\B G]$ as $\sfE_G$. This will help when we want to think of it as functorial in morphisms of compact Lie groups, as opposed to morphisms in the global orbit category $\Glo$. 
\end{notation}
	
The previous theorem is too general to give us adequate control of $\sfE_G$. The following result is then crucial in our understanding of the stacks $\sfE_G$ for particular $G$.
	 
\begin{theorem}\label{thm:extra-applicable}
Let $\sfE$ be an oriented elliptic curve over a strictly even-periodic Noetherian $\E_\infty$-ring $A$; see \Cref{def:even_periodic}. Let $G$ be an applicable compact Lie group and fix a maximal torus $T$. Suppose that the geometric realisation of the simplicial object $\sfE[\B T^{\times_{\B G} \bullet}]^\heartsuit$ in $\Stk^\heartsuit$ is a scheme. Then $\sfE_G$ is an even-periodic qcqs spectral scheme.
Moreover, let \(W\) be the Weyl group of \(i:T\subset G\).
Then the map
\[\sfE_i^\heartsuit\colon \sfE_T^\heartsuit\to \sfE_G^\heartsuit\]
of classical schemes factors uniquely through the categorical quotient \((\sfE_T^\heartsuit)/W\) (which exists by \cite[Rmk.4.9(5) \& Rmk.2.8]{Rydh_2013_ExistencePropertiesGeometric}), and the resulting map
\[(\sfE_T^\heartsuit)/W\to \sfE_G^\heartsuit\]
is an equivalence.
Moreover, these assumptions hold for $G = U(n)$, $SU(n)$, or $Sp(n)$, as well as any finite product of such groups.
\end{theorem}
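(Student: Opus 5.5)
We will deduce the statement from descent along the maximal torus, using \Cref{thm:Ell_geo_nice_prop}. Write $T^{\bullet}$ for the Čech nerve of $\B T \to \B G$ in global spaces. The terms $\B T^{\times_{\B G} k}$ are finite global quotients of the form $[G/T \times \cdots \times G/T / T]$. Since $G$ is applicable, the premise of \cite{gm2} that $\B T\to \B G$ is a cover for $\sfE[-]$ gives an equivalence $\sfE_G \simeq |\sfE[T^\bullet]|$ in $\Stk$. By \Cref{thm:Ell_geo_nice_prop}(\ref{it:Ell_closed_embedding_affine}) the face maps are affine, since they are faithful.

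\textbf{Step 1: the terms of the nerve.} Each $\sfE[T^k]$ is computed from $\sfE_T = \HOM(\dual T,\sfE)\simeq \sfE^{\times r}$ using \Cref{thm:Ell_geo_nice_prop}(\ref{it:base change_for_global_quotients}),(\ref{it:Ell_product_preserving}) and finite $T$-cell structures on $G/T$. Here $\sfE^{\times r}$ is an even-periodic qcqs spectral scheme over $A$, because $A$ is strictly even-periodic and $\sfE$ is oriented and flat. Each $\sfE[T^k]$ is then affine over $\sfE_T$, hence an even-periodic qcqs scheme, flat over $\Spec A$ by \Cref{thm:Ell_geo_nice_prop}(\ref{it:Ell_is_flat_and_good}).

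\textbf{Step 2: the heart.} Passing to $(-)^\heartsuit$ turns the colimit into the classical realisation, which is a scheme $X$ by hypothesis. I would show $X$ is the categorical quotient $(\sfE_T^\heartsuit)/W$. The key input is the identification
\[\sfE_T^\heartsuit \times_X \sfE_T^\heartsuit \simeq \sfE[\B T\times_{\B G}\B T]^\heartsuit .\]
The right-hand side is controlled by the Bruhat decomposition of $T\backslash G/T$, whose fixed-point data are indexed by $W$. Consequently any $W$-invariant map out of $\sfE_T^\heartsuit$ factors uniquely through the coequaliser, and the factorisation through Rydh's quotient is an isomorphism.

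\textbf{Step 3: lifting to spectral geometry.} Even periodicity makes $\pi_{2k}\O = \omega^{\otimes k}$ on each level, pulled back compatibly from $\Spec A$. I would therefore build the spectral scheme $\sfE_G$ on the underlying space of $X$, with structure sheaf given by the limit of the structure sheaves along the cosimplicial affine face maps. The descent spectral sequence then collapses because everything is flat over $A$ with even homotopy, and this shows that $\sfE_G$ is an even-periodic qcqs spectral scheme.

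\textbf{Step 4: the named groups.} For $U(n)$ the quotient is $\sfE^n/\Sigma_n=\mathrm{Sym}^n\sfE$, which is a scheme since $\sfE^\heartsuit$ is projective. For $SU(n)$ it is the fibre of the sum map $\mathrm{Sym}^n\sfE\to\sfE$ over $0$. For $Sp(n)$ it is $\sfE^n/(\Sigma_n\ltimes (\Z/2)^n)$. For products we use product preservation.

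The main obstacle is Step 2: showing that the classical colimit of the nerve really is the categorical $W$-quotient, with no extra gluing from non-generic double cosets. I would first try to reduce this to the flat-locus argument for tori in \cite{gm2}.
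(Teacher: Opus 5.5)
There is a genuine gap. The input that actually drives the theorem is faithful flatness of $\sfE_T\to\sfE_G$ (equivalently of $\sfE_T^\heartsuit\to X$), together with a scheme-theoretic description of its kernel pair. Your proposal never establishes this, and several steps silently depend on it.

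\textbf{Step 2.} Without flatness, neither ``passing to $(-)^\heartsuit$ turns the colimit into the classical realisation'' nor $\sfE_T^\heartsuit\times_X\sfE_T^\heartsuit\simeq\sfE[\B T\times_{\B G}\B T]^\heartsuit$ is justified, because classical truncation does not commute with these colimits and fibre products in general. Step 2, which you rightly flag as the main obstacle, is the entire second half of the statement, and the Bruhat remark is a heuristic rather than a computation. The easy direction is that $\sfE_i^\heartsuit$ is $W$-invariant, since conjugation by $n_w\in N(T)$ is the identity in $\Glo$. The hard direction is that every $W$-invariant map coequalises $R=\sfE_T^\heartsuit\times_X\sfE_T^\heartsuit$. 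This amounts to showing that the graphs of the $W$-action are schematically dense in $R$, over a $\pi_0A$ that need not even be reduced. Knowing that the $T$-fixed points of $G/T$ are indexed by $W$ gives no such scheme-theoretic control.

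\textbf{Step 3.} This step has the same problem. Flatness of each level over $A$ and evenness only identify $E_2^{s,2k}$ with the cohomology of $\pi_0$ of the nerve, twisted by $\pi_{2k}A$. Its vanishing for $s>0$, with $E_2^{0,0}=\O_X$, is faithfully flat descent along $\sfE_T^\heartsuit\to X$; it does not follow from flatness over $\Spec A$. Two further points:
\begin{itemize}
\item \Cref{thm:Ell_geo_nice_prop}(\ref{it:Ell_is_flat_and_good}) only concerns $\sfE[\B H]$ for applicable $H$, so it says nothing about the higher terms $\sfE[\B T^{\times_{\B G}k}]$.
\item Being affine over an even-periodic scheme does not make a stack even-periodic.
\end{itemize}
For comparison, the paper proves none of this itself. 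It imports from \cite{gm2} both the spectral-scheme statement and the identification with $(\sfE_T^\heartsuit)/W$. It only derives qcqs-ness, using Rydh's results that $\sfE_T^\heartsuit\to(\sfE_T^\heartsuit)/W$ is an affine cover and that the quotient is projective.

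\textbf{Step 4.} This step is circular. Your identifications of the quotients for $U(n)$, $SU(n)$ and $Sp(n)$ match the appendix, but they do not verify the hypothesis. The hypothesis concerns the realisation of the truncated nerve in $\Stk^\heartsuit$. Your Step 2 identifies that realisation with the categorical quotient only \emph{after} assuming it is a scheme. So observing that $\sfE^n/\Sigma_n$ is a scheme checks nothing.

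Indeed, by Rydh the categorical quotient $(\sfE_T^\heartsuit)/W$ is a projective scheme for \emph{every} applicable $G$; the statement itself presupposes its existence. Your reasoning would therefore verify the hypothesis for all applicable groups, e.g.\ $G_2$. There one expects a weighted projective stack with a nontrivial stabiliser (cf.\ the Gepner--Meier conjecture recalled after the theorem), whose coarse space is that quotient.

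What has to be checked for the named groups is that $\sfE_T^\heartsuit\to(\sfE_T^\heartsuit)/W$ is faithfully flat with kernel pair $\sfE[\B T\times_{\B G}\B T]^\heartsuit$. That makes the quotient sheaf representable by the quotient scheme. This is where the specific groups enter, e.g.\ via smoothness of $\mathrm{Sym}^n E$, of the fibre of $\sigma_n$ over $e$, and of $\mathrm{Sym}^n T^1(E/M)$, plus miracle flatness. It is exactly the part the paper cites \cite{gm2} for.
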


\begin{remark}\label{ex:u(n)_extra_applicable}
Let us unravel the $G=U(n)$-case. Consider the maximal torus given via the diagonal matrices $U(1)^n \to U(n)$ with Weyl group $W= \Sigma_n$. Then \Cref{thm:extra-applicable} identifies
\[\sfE_{U(n)}^\heartsuit \simeq (\sfE^\heartsuit)^{\times n}/\Sigma_n\]
with the symmetric product. It is a classical fact, reviewed around (\ref{eq:symprod_Hilbert_schemes}), that this symmetric product is equivalent to the Hilbert scheme of $n$-points on $\sfE^\heartsuit$. Moreover, it follows from the definitions that the inclusion $U(1)^n \to U(1)^{n+1}$ into the first $n$-factors induces the map $\sfE^{\times n} \to \sfE^{\times (n+1)}$ into the first $n$-factors. This map is also commutes with the symmetric group actions on both sides, restricting the $\Sigma_{n+1}$-action on the right-hand side along the inclusion induced by choosing the first $n$-letters. By \Cref{thm:extra-applicable}, we see that the inclusion $U(n) \to U(n+1)$ sending $A$ to the diagonal block matrix with $A$ in the upper-left and the singleton $1$ in the lower-right, induces the map
\[\sfE_{U(n)}^\heartsuit \simeq (\sfE^\heartsuit)^{\times n}/\Sigma_n \to (\sfE^\heartsuit)^{\times (n+1)}/\Sigma_{n+1} \simeq \sfE_{U(n+1)}^\heartsuit\]
taking the quotient of the map described above; also see \Cref{prop:Hilb_Schemes_ECD_affine_complement}.
\end{remark}

The cases for $SU(n)$ and $Sp(n)$ are similar, with Hilbert schemes being replaced with related quotients of the $n$-fold products of $\sfE^\heartsuit$; this classical material is reviewed in \Cref{sec:appendix_classical_fun_stuff}. This ensures some of the functoriality of $\sfE_G$ in $G$ can be tracked; see \Cref{ssec:explicit_sequences_of_compact_liegroups}.
		
\begin{proof}
The first part is \cite{gm2}. 
To conclude that $\sfE_G$ is qcqs, it suffices to prove this for $\sfE_G^\heartsuit\simeq (\sfE_T^\heartsuit)/W$. This in turn follows from the fact that $\sfE_T^\heartsuit$ is qcqs and the map $\sfE_T^\heartsuit \to (\sfE_T^\heartsuit)/W$ is an affine cover \cite[Th.4.4]{Rydh_2013_ExistencePropertiesGeometric}; in fact, by \cite[Ex.4.9]{Rydh_2013_ExistencePropertiesGeometric}, the quotient is a projective scheme. That the assumptions hold for finite products of unitary, special unitary, and symplectic groups, independently of \(\sfE\), is proven in \cite{gm2}.
\end{proof}

\begin{remark}
	Gepner--Meier conjecture that for all simple and simply connected compact Lie groups $G$ of rank $n$, the classical stack $\sfE_G^\heartsuit$ is smooth over the base $\M^\heartsuit$ with geometric fibres given by weighted projective stacks $\mathbf{P}(g_1, \cdots, g_n)$ for positive integers $g_i$. Moreover, they prove this is the case for $G=SU(n)$ and $Sp(n)$, which implicitly plays into our proof of \Cref{maintheorem_universal}. Many of our statements building up to equivariant affineness here still hold if $\sfE_G$ is perfect and this conjecture holds.
\end{remark}

\subsection{Equivariant elliptic (co)homology}\label{ssec:EEcohomology}
Using the stacks $\sfE_G$, one can define equivariant elliptic cohomology at a fixed group.
    
\begin{mydef}
	Let $G$ be a compact Lie group and let $X \in \Spc_G^{\fin}$ be a finite $G$-space. Define $\ELL_G(X) := p_* \O_{\sfE[[X/G]]} \in \QCoh(\sfE_G)$
    where 
	\[
	p_\ast \colon \QCoh(\sfE[[X/G]]) \to \QCoh(\sfE_G)
	\] is the pushforward of quasi-coherent sheaves induced by the map $p\colon [X/G]\to [\ast/G] \simeq \B G$. As explained in \cite[\S 7]{gepner2024global2ringsgenuinerefinements}, this defines an \emph{elliptic cohomology} functor
	\[
	\ELL_G\colon (\Spc_G^\fin)^{\op}\to \QCoh(\sfE_G).\qedhere
	\]
\end{mydef}
	
These functors satisfy a host of useful properties, summarised by the following result.
	
\begin{theorem}\label{thm:functor/naturality_GM_theories}
	Let $\sfE$ be an oriented elliptic curve over $\M$. 
	\begin{enumerate}
		\item For all compact Lie groups $G$, the functor 
		\[
		\ELL_G\colon (\Spc_G^\fin)^{\op}\to \QCoh(\sfE_G).
		\]
		is a limit-preserving, lax symmetric monoidal functor.
		\item\label{it:induction_space_cohomology} Let $i\colon H\subset G$ be a closed embedding of compact Lie groups. Then the square
		\[\begin{tikzcd}
			{(\Spc^{\fin}_H)^{\op}} & {\QCoh(\sfE_H)} \\
			{(\Spc^{\fin}_G)^{\op}} & {\QCoh(\sfE_G)}
			\arrow["{\ELL_H}", from=1-1, to=1-2]
			\arrow["{-\times_H G}"', from=1-1, to=2-1]
			\arrow["{i_*}", from=1-2, to=2-2]
			\arrow["{\ELL_G}", from=2-1, to=2-2]
		\end{tikzcd}\]
		commutes, where $i_*$ denotes the pushforward along $\sfE[\B i]$.
		\item Let $\phi\colon H\to G$ be a homomorphism of compact Lie groups such that $G$ is \emph{applicable}. Then the square 
		\[\begin{tikzcd}
			{(\Spc^{\fin}_G)^{\op}} & {\QCoh(\sfE_G)} \\
			{(\Spc^{\fin}_H)^{\op}} & {\QCoh(\sfE_H)}
			\arrow["{\ELL_G}", from=1-1, to=1-2]
			\arrow["{\phi^*}"', from=1-1, to=2-1]
			\arrow["{\phi^*}", from=1-2, to=2-2]
			\arrow["{\ELL_H}"', from=2-1, to=2-2]
		\end{tikzcd}\]
		commutes, where $\phi^*$ denotes on the left the restriction functor along the homomorphism $\phi\colon H\to G$ and on the right the pullback along $\sfE[\B \phi]\colon \sfE_H\to \sfE_G$.
		\item $\ELL_G$ is strong symmetric monoidal when $G$ is \emph{applicable}.
		\item\label{it:rep_sphere_invertible} Let $V$ be a finite dimensional real representation of $G$. Then the object 
		\[
		\ELL_G(S^V, \infty) := \fib(\Ell_G(S(V)) \to \ELL_G(\ast))
		\]
		is a tensor-invertible object of $\QCoh(\sfE_G)$.
        \item\label{it:naturality_of_cohomology_in_stacks} Let $G$ be a compact Lie group and $f\colon \M' \to \M$ a map of stacks. Then the triangle
        \[\begin{tikzcd}
                &      {(\Spc^{\fin}_G)^{\op}}\ar[dl, "{\ELL_G}", swap]\ar[rd, "{\ELL_G'}"]    &   \\
            {\QCoh(\sfE_G)}\ar[rr, "{f^\ast}"]   &&   {\QCoh(\sfE'_G)}
        \end{tikzcd}\]
        commutes, where $\sfE'$ is the base change of $\sfE$ to $\M'$, $f'\colon \sfE_G' \to \sfE_G$ the base change of $f$, and $\ELL_G'$ the equivariant elliptic cohomology theory associated with $\sfE'$.
	\end{enumerate}
\end{theorem}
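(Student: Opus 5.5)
Most parts of the statement reduce to formal properties of the functor $\sfE[-]$ from \Cref{thm:Ell_geo_nice_prop}, combined with base change for pushforwards along affine and perfect maps. I will go through the parts in the order given.

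\emph{Part 1.} For limit preservation, write $\ELL_G(X)=p_\ast\O_{\sfE[[X/G]]}$. Finite $G$-spaces are finite colimits of cells $G/H\times D^n$. The functor $[-/G]$ preserves colimits, and $\sfE[-]$ is defined by left Kan extension (\Cref{rmk:construction_of_Ellst}), so I expect it to send these finite colimits to colimits of stacks. The functor $\QCoh(-)$ turns colimits of stacks into limits, and global-sections-type pushforwards preserve those limits. So $\ELL_G$ sends finite colimits of $G$-spaces to limits. Lax monoidality follows from product preservation (\Cref{thm:Ell_geo_nice_prop}(\ref{it:Ell_product_preserving})). Concretely, $[X\times Y/G]=[X/G]\times_{\B G}[Y/G]$, and $\O_{\sfE[[X/G]]}\boxtimes\O_{\sfE[[Y/G]]}$ maps to the structure sheaf of $\sfE$ evaluated on the fibre product. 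I would cite \cite[\S7]{gepner2024global2ringsgenuinerefinements} for the coherence data.

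\emph{Part 2.} The identity $[(X\times_H G)/G]\simeq[X/H]$ holds over $\B H\to\B G$, so $p_{G\ast}=i_\ast p_{H\ast}$ and this part is immediate.

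\emph{Part 3.} Restriction along $\phi$ corresponds to the pullback $[\phi^\ast X/H]=\B H\times_{\B G}[X/G]$. This is preserved by $\sfE[-]$ by \Cref{thm:Ell_geo_nice_prop}(\ref{it:base change_for_global_quotients}), so the claim reduces to a Beck--Chevalley equivalence for the resulting pullback square of stacks. I would first reduce to $G$-cells: the relevant maps $\sfE[[G/K/G]]=\sfE_K\to\sfE_G$ are affine by \Cref{thm:Ell_geo_nice_prop}(\ref{it:Ell_closed_embedding_affine}), and affine maps are adjointable by \cite[Pr.2.2.2.5]{reconstruction}. Both sides are then exact in $X$ by part 1, which gives the general case.

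\emph{Part 4.} For applicable $G$, the map $\sfE[[X\times Y/G]]\to\sfE[[X/G]]\times_{\sfE_G}\sfE[[Y/G]]$ should be an equivalence. I would check this by applying part 3's base change to $\B T\to\B G$ and descending, since $T$ is a torus and the abelian case is known. The Künneth map for pushforwards then reduces, again by cell induction, to the case of orbits. There $\sfE_{K}\times_{\sfE_G}\sfE_{L}$ is affine over $\sfE_G$, so the projection formula for affine maps applies.

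\emph{Part 5.} $S(V)$ is a finite $G$-space. Invertibility is local, so by part 3 I would restrict to a maximal torus (or check locally, using the fpqc descent of part 6). For a torus, $V$ splits into characters. Each character $\chi$ gives $\ELL(S^{\chi},\infty)\simeq\O(-\sfE[\ker\chi])$, the ideal sheaf of an effective Cartier divisor, and this is invertible. Tensor products of these are invertible.

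\emph{Part 6.} This follows from \Cref{thm:Ell_geo_nice_prop}(\ref{it:base_change_in_stacks}) together with adjointability of the base-change square for $p$. I would again do cell induction, using that $\sfE_K\to\sfE_G$ is affine.

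The main obstacle is part 4 (and the equivariant pullback compatibility feeding into it). Strong monoidality genuinely needs applicability, through descent from the maximal torus, and there the base change arguments have to be verified carefully.
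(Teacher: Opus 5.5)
Your parts 2, 3 and 6 match the paper: you reduce to orbits by exactness, then use that $\sfE_K\to\sfE_G$ is affine and hence adjointable. However, all of these orbit reductions, and your part 4, rest on the limit-preservation in part 1, and the step you use there does not hold.

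\textbf{Part 1.} You expect $\sfE[-]$ to send the cell-attachment pushouts of finite global quotients to pushouts in $\Stk$, because it is a left Kan extension. In \Cref{rmk:construction_of_Ellst}, however, the Kan extension on abelian global quotients is computed in spectrally ringed spaces. Only afterwards is it transported to $\Stk$ along the functor of points, and that functor does not preserve colimits. For instance, $[S^{\mathbf{C}}/U(1)]\simeq\B U(1)\sqcup_{\B e}\B U(1)$ is sent to two copies of $\sfE$ glued along the zero section as a ringed space. This is not the pushout in $\Stk$: already classically, the local ring at a point of the gluing locus maps to it without factoring, even fpqc-locally, through either copy.

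The missing idea is to argue with structure sheaves rather than with stacks. In spectrally ringed spaces, the structure sheaf of a colimit is the limit of the pushed-forward structure sheaves, and this gives limit-preservation of $\ELL$ on abelian global quotients (the argument of \cite[Lemma 6.5]{davidandlennart}). Your formal observation that $p_\ast\O$ of a colimit of stacks is the limit of the $p_\ast\O$'s then carries this through the second Kan extension to non-abelian $G$. That second Kan extension is an honest colimit in $\Stk$ over abelian global quotients mapping to $[X/G]$. This is how the paper argues.

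\textbf{Part 5.} As stated, part 5 carries no applicability hypothesis. Yet your reduction to a maximal torus goes through part 3, which needs $G$ applicable. It also needs pullback along $\sfE_T\to\sfE_G$ to detect invertibility, and that covering statement is not among the facts at your disposal. For a finite group the maximal torus is trivial, so the reduction says nothing. Within the torus case, your formula $\ELL_T(S^\chi,\infty)\simeq\O(-\sfE[\ker\chi])$ also fails for trivial summands (trivial real lines, or trivial $\chi$). For trivial $\chi$, $\ker\chi=T$, so the ideal sheaf is zero, whereas the actual answer is a shift of $\O_{\sfE_T}$. This slip is harmless for invertibility, but the general statement genuinely needs the input of \cite{gm2}, which is where the paper takes part 5 from.

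\textbf{Part 4.} No descent from the torus is needed here. Apply parts (5) and (6) of \Cref{thm:Ell_geo_nice_prop} to the diagonal $\Delta\colon G\to G\times G$; note that $G\times G$ is again applicable. This gives $\sfE[[(X\times Y)/G]]\simeq\sfE[[X/G]]\times_{\sfE_G}\sfE[[Y/G]]$ directly, which is the paper's external-Künneth-plus-diagonal argument. After that, your orbitwise projection-formula check goes through.
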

	
\begin{proof}
	For part (1), we first observe that \cite[Pr.8.1]{gepner2024global2ringsgenuinerefinements} gives $\ELL_G$ its lax symmetric monoidal structure. By construction, $\ELL_G$ is Kan extended from abelian global quotients, and so it suffices to prove that it limit-preserving on abelian global quotients over $\B G$. This follows by the same argument as \cite[Lemma 6.5]{davidandlennart}. Part (2) essentially holds by definition. 
    Part (3) follows from \Cref{thm:Ell_geo_nice_prop}(\ref{it:base change_for_global_quotients}). 
    Since $\Ellst[-]$ preserves products, we immediately obtain an external Künneth isomorphism for equivariant elliptic cohomology. Part (4) follows from this by combining this with part (3) for the diagonal $\Delta\colon G\to G\times G$. Part (5) is contained in \cite{gm2}. 
    For part (6), since both $f^\ast \ELL_G$ and $\ELL_G'$ are exact functors between stable categories, to show they agree on $\Spc_G^\fin$, it suffices to check on orbits $G/H$ for each closed subgroup $H\subset G$. In this case, we can identify
    \[f^\ast \ELL_G(G/H) = f^\ast i_\ast \O_{\sfE_H} \simeq i'_\ast f'^\ast \O_{\sfE_H} \simeq i'_\ast \O_{\sfE_H'} = \ELL_G'(G/H),\]
    where $i\colon \sfE_H \to \sfE_G$ and $i'\colon \sfE_H' \to \sfE_G'$ are both affine by \Cref{thm:Ell_geo_nice_prop}(\ref{it:Ell_closed_embedding_affine}) which allows for the Beck--Chevalley equivalence used above by \cite[Pr.2.2.2.5]{reconstruction}.
\end{proof}
	
\begin{remark}
	By unwinding the definitions, we see that $\ELL_G(G/H) = i_*\O_{\sfE_H}$, where $i\colon \sfE_H\to \sfE_G$ is the map on elliptic cohomology induced by the closed embedding $i\colon H\subset G$. Since this is affine by \Cref{thm:Ell_geo_nice_prop}(\ref{it:Ell_closed_embedding_affine}), we obtain an isomorphism of affine stacks over $\sfE_G$
	\[
	\sfE_H \simeq \relSpec_{\sfE_G}(\ELL_G(G/H)).\qedhere
	\]
\end{remark}

\begin{remark}\label{rmk:reduced_cohomology}
    As $\QCoh(\sfE_G)$ is a pointed category, there exists a canonical factorisation of $\ELL_G$ from the category of based finite $G$-spaces,
    \[\widetilde{\ELL}_G\colon (\Spc_{G,\ast}^\fin)^\op \to \QCoh(\sfE_G),\]
    where $\widetilde{\ELL}_G(X)$ is the fibre of the induced map $\ELL_G(X) \to \ELL_G(\ast)$.
\end{remark}
	
A crucial observation is that the points (4) and (5) of the previous theorem combine with the universal property of genuine $G$-spectra to give an extension of equivariant elliptic cohomology to genuine $G$-spectra; universality also implies that the naturality squares in points (2) and (3) above also extend.
	
\begin{theorem}\label{thm:naturality_of_elliptic_cohomology}
    Let $\sfE$ be an oriented elliptic curve over $\M$. 
	\begin{enumerate}
		\item Let $G$ be applicable. Then there exists a unique limit-preserving symmetric monoidal functor 
		\[
		\ELL_G(-)\colon (\Sp_G^{\fin})^{\op}\to \QCoh(\sfE_G)
		\]
		extending $G$-equivariant elliptic cohomology along $\Sigma^\infty_+\colon \Spc_G^\fin\to \Sp_G^{\fin}$.
		\item\label{it:ell_cohomolgy_induction} Let $i\colon H\subset G$ be a closed embedding of applicable compact Lie groups. Then the square 
		\[\begin{tikzcd}
			{(\Sp^{\fin}_H)^{\op}} & {\QCoh(\sfE_H)} \\
			{(\Sp^{\fin}_G)^{\op}} & {\QCoh(\sfE_G)}
			\arrow["{\ELL_H}", from=1-1, to=1-2]
			\arrow["{-\times_H G}"', from=1-1, to=2-1]
			\arrow["{i_*}", from=1-2, to=2-2]
			\arrow["{\ELL_G}", from=2-1, to=2-2]
		\end{tikzcd}\]
		commutes.
		\item\label{it:cohomology_spectra_nice_pullback} Let $\phi\colon H\to G$ be a homomorphism of applicable compact Lie groups. Then the square 
		\[\begin{tikzcd}
			{(\Sp^{\fin}_G)^{\op}} & {\QCoh(\sfE_G)} \\
			{(\Sp^{\fin}_H)^{\op}} & {\QCoh(\sfE_H)}
			\arrow["{\ELL_G}", from=1-1, to=1-2]
			\arrow["{\phi^*}"', from=1-1, to=2-1]
			\arrow["{\phi^*}", from=1-2, to=2-2]
			\arrow["{\ELL_H}"', from=2-1, to=2-2]
		\end{tikzcd}\]
		commutes.
        \item Using the notation of \Cref{thm:functor/naturality_GM_theories}(\ref{it:naturality_of_cohomology_in_stacks}) where $G$ is also applicable, then the diagram
        \[\begin{tikzcd}
                &      {(\Sp^{\fin}_G)^{\op}}\ar[dl, "{\ELL_G}", swap]\ar[rd, "{\ELL_G'}"]    &   \\
            {\QCoh(\sfE)}\ar[rr, "{f^\ast}"]   &&   {\QCoh(\sfE')}
        \end{tikzcd}\]
        commutes.
	\end{enumerate}
\end{theorem}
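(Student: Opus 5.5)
The plan is to deduce all four parts from the universal property of genuine $G$-spectra, applied to the $G$-space-level results of \Cref{thm:functor/naturality_GM_theories}. The universal property I use is that $\Sp_G^\fin$ is obtained from $\Spc^\fin_{G,\ast}$ by inverting the representation spheres $S^V$, in symmetric monoidal stable categories.

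For part (1), I first pass to reduced cohomology. By \Cref{rmk:reduced_cohomology}, $\widetilde{\ELL}_G$ is defined on based finite $G$-spaces. Part (1) of \Cref{thm:functor/naturality_GM_theories} says $\ELL_G$ is limit-preserving, and for applicable $G$ part (4) makes it strong symmetric monoidal. Together these give a finite-colimit-preserving symmetric monoidal functor $(\Spc^\fin_{G,\ast})^{\op}\to\QCoh(\sfE_G)$, using the smash product on the source. Its value on $S^V$ is $\ELL_G(S^V,\infty)$, which is tensor-invertible by \Cref{thm:functor/naturality_GM_theories}(\ref{it:rep_sphere_invertible}). Since $\QCoh(\sfE_G)^\op$ is stable, the functor factors through the Spanier--Whitehead stabilisation $\Sp^\fin_{G}$ of $\Spc^{\fin}_{G,\ast}$ at the representation spheres. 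This factorisation is unique, and the extension is exact, hence limit-preserving as a functor out of the opposite category. Composing with $X\mapsto X_+$ recovers $\ELL_G$ on unbased spaces.

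For parts (2)–(4), I compare two extensions of the same functor. Each square or triangle consists of a composite of exact functors $(\Sp^\fin_H)^\op\to\QCoh(\sfE_G)$ (or the analogous composites), and these composites agree after precomposing with $\Sigma^\infty_+$ by the corresponding parts (2), (3), (6) of \Cref{thm:functor/naturality_GM_theories}. In parts (3) and (4) every functor involved is symmetric monoidal: $\phi^*$ on $G$-spectra, pullback along $\sfE[\B\phi]$, and $f^*$. So the symmetric monoidal uniqueness in part (1) applies directly, once I check that $\phi^*S^V=S^{\phi^*V}$ and that the relevant $\ELL$'s of representation spheres are again invertible.

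Part (2) is the main obstacle. Induction $-\times_H G$ and pushforward $i_*$ are not symmetric monoidal, so the symmetric monoidal uniqueness cannot be used. My first approach is to note that both composites are exact functors out of $(\Sp^\fin_H)^\op$ that agree on $\Sigma^\infty_+$ of finite $H$-spaces. Since $\Sp^\fin_H$ is generated under finite colimits and desuspensions by the objects $S^{-V}\otimes\Sigma^\infty_+X$, it then suffices to produce compatible equivalences on desuspensions by representation spheres. For this I use two projection formulas. On the spectral side, $G\ltimes_H(S^{-i^*W}\otimes Y)\simeq S^{-W}\otimes(G\ltimes_H Y)$ for $G$-representations $W$. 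On the geometric side, $i_*(i^*\calL\otimes\calF)\simeq\calL\otimes i_*\calF$, which holds because $i$ is affine by \Cref{thm:Ell_geo_nice_prop}(\ref{it:Ell_closed_embedding_affine}). Combining these with part (3) applied to $i$ handles representations restricted from $G$. Every $H$-representation is a summand of a restricted one, by Peter--Weyl together with $\mathrm{Ind}$, and that suffices to reach all $S^{-V}$. Alternatively, one can phrase this as an equivalence of $\Sp^\fin_G$-linear functors (linearity via $\ELL_G$) and invoke the module-level universal property of stabilisation.
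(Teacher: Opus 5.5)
This is essentially the paper's argument. The paper likewise combines the strong monoidality of $\ELL_G$ and the invertibility of $\ELL_G(S^V,\infty)$ with the universal property of genuine $G$-spectra, deferring details to \cite[\textsection 11]{gepner2024global2ringsgenuinerefinements}, and gets the naturality squares from universality; your separate linear treatment of part (2) spells out the non-monoidal case that the paper leaves to that reference. For part (2), rely on your second formulation (or on the comparison map $\ELL_G(G\ltimes_H Y)\to i_*\ELL_H(Y)$ adjoint to $\ELL_H$ applied to the unit $Y\to i^*(G\ltimes_H Y)$, checked on the generators $S^{-i^*W}\otimes\Sigma^\infty_+X$): producing objectwise equivalences on generators does not by itself yield a natural equivalence of functors.
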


When $G=U(1)$ is the torus of rank 1, part 1 of this theorem appears as \cite[Pr.9.2]{davidandlennart}.
	
\begin{proof}
Combine \Cref{thm:functor/naturality_GM_theories} and the discussion in \cite[\textsection 11]{gepner2024global2ringsgenuinerefinements}; we omit the details.
\end{proof}

\begin{remark}\label{rmk:general_cohomology_definition}
    One can also generalise the construction of $\ELL_G$ above to all compact Lie groups as follows: choose a closed embedding $\phi \colon G \to U(n)$, and consider the composite
    \[(\Sp_G^\fin)^\op \xrightarrow{(\Ind_G^{U(n)})^\op} (\Sp_{U(n)}^\fin)^\op \xrightarrow{\ELL_{U(n)}} \QCoh(\sfE_{U(n)}),\]
    courtesy of the fact that $U(n)$ is applicable. By \Cref{thm:functor/naturality_GM_theories}(\ref{it:induction_space_cohomology}), the $G$-sphere spectrum is sent to $\phi_\ast \O_{\sfE_G}$ under this composite, where $\phi\colon \sfE_G \to \sfE_{U(n)}$ is the induced map of stacks. As induction is op-lax monoidal, its opposite is lax monoidal, and the above composite admits a natural factorisation as
    \[(\Sp_G^\fin)^\op \to \Mod_{\phi_\ast \O_{\sfE_G}}(\QCoh(\sfE_{U(n)}).\]
    By \Cref{thm:Ell_geo_nice_prop}(\ref{it:Ell_closed_embedding_affine}), the map $\phi$ is affine, so by \cite[Pr.2.2.2.5(2)]{reconstruction}, we can identify the codomain of the factorisation above with $\QCoh(\sfE_G)$, giving us the desired functor
    \[\ELL_G \colon (\Sp_G^\fin)^\op \to \QCoh(\sfE_G).\]
    This construction seems to be well known when $G$ is a finite cyclic group using $G \leq U(1)$; see \cite[\textsection2.1]{chua_c2TMF}. We warn that this functor will \textbf{not} generally be symmetric monoidal, meaning one cannot continue to work with $\ELL_G$ through this section unless $G$ is applicable.
\end{remark}

The above construction of equivariant elliptic cohomology then formally dualises.
	
\begin{mydef}\label{def:elliptic_homology}
Let $G$ be an applicable compact Lie group. Write
\[\O_{\sfE_G} \otimes - \colon \Sp_G \to \QCoh(\sfE_G)\]
for the filtered colimit extension, using that $\Ind(\Sp_G^\fin) \simeq \Sp_G$, of the composite
\[\Sp_G^{\fin} \xrightarrow{ \mathbb D(-)} (\Sp_G^{\fin})^{\op} \xrightarrow{\ELL_G(-)} \QCoh(\sfE_G)\]
of $G$-equivariant elliptic cohomology and Spanier--Whitehead duality. As a symmetric monoidal functor in $\PrLst$, it admits a lax symmetric monoidal right adjoint
\[\gGa(\sfE_G, -)\colon \QCoh(\sfE_G)\to \Sp_G.\]
This notation is justified by the computation of its $G$-fixed points
\[\gGa(\sfE_G, \calF)^G \simeq \Gamma(\sfE_G, \calF).\]
In fact, for any closed subgroup $\phi \colon H\to G$, we have
\begin{equation}\label{eq:H-fixed_points}
    \gGa(\sfE_G, \calF)^H \simeq \Ga(\sfE_H, \phi^\ast \calF)
\end{equation}
by \Cref{thm:naturality_of_elliptic_cohomology}(\ref{it:cohomology_spectra_nice_pullback}). In particular, we obtain a lift $\Ga_G \sfE = \gGa(\sfE_G,\O_{\sfE_G}) \in \Sp_G$ of the global sections of $\sfE_G$. As usual, we may pass to an adjunction
\begin{equation}\label{eq:equivariant_adjunction}\O_{\sfE_G} \otimes_{\Gamma_G \sfE} - \colon \Mod_{\Gamma_G \sfE}(\Sp_G) \rightleftarrows \QCoh(\sfE_G) : \gGa(\sfE_G,-),\end{equation}
where again the left adjoint, which we call \emph{$G$-equivariant elliptic homology}, is strong symmetric monoidal.
\end{mydef}

\begin{notation}
Over an affine base $\M = \Spec A$ we write $\Ga_G \sfE = A_G$, suppressing the elliptic curve. At the other extreme, for $\calE$ the universal oriented elliptic curve over $\M_\Ell^\ori$ we also write $\Ga_G \calE = \TMF_G$ for \emph{equivariant topological modular forms}.
\end{notation}

\begin{remark}\label{rmk:TMFG_for_general_G}
As mentioned in the introduction, one can define $\Ga_G \sfE$, and therefore also $\TMF_G$, for all compact Lie groups $G$, and there are a few ways to do this. First, one could take the \emph{global} $\E_\infty$-ring $\gTMF$ of Gepner--Pol and the second-named author \cite[Exs.14.25-26]{gepner2024global2ringsgenuinerefinements}, which comes with an underlying $\E_\infty$-$G$-ring $\gTMF_G$ for each compact Lie group $G$. Alternatively, one could first choose a closed embedding $\phi \colon G \to U(n)$ for some $n\geq 1$, and then define $\TMF_G$ as the restriction of $\TMF_{U(n)}$ along $\phi^\ast \colon \Sp_{U(n)} \to \Sp_{G}$. In fact, both definitions agree. To see this, we note that $\QCoh(\calE_\bullet)$ is $\mathcal{A}$-genuine, where $\mathcal{A}$ is the family of applicable groups. Therefore, one can use \cite{gepner2024global2ringsgenuinerefinements} to obtain a global spectrum ${\gTMF}'$ whose value at $U(n)$ is $\TMF_{U(n)}$ and which manifestly agrees with $\gTMF$ after restricting to abelian compact Lie groups. As $\gTMF$ is right-induced from abelian groups, to identify it with $\gTMF'$, it suffices to prove that $\gTMF'$ is right-induced from abelian groups. This is precisely recorded in \cite
{gm2}. The same ideas also apply to $\Ga_G\sfE$ for more generally.
\end{remark}

The following is the analogy of \Cref{thm:naturality_of_elliptic_cohomology} for equivariant elliptic homology.
    
\begin{prop}\label{pr:homomorphism_and_linear_homology}
	Let $\sfE$ be an oriented elliptic curve over a stack $\M$.
    \begin{enumerate}
        \item For each homomorphism of applicable compact Lie groups $\phi\colon H\to G$, the square
		\[\begin{tikzcd}
			{\Mod_{\Ga_G \sfE}(\Sp_G)} && {\QCoh(\sfE_G)} \\
			{\Mod_{\Ga_H \sfE}(\Sp_H)} && {\QCoh(\sfE_H)}
			\arrow["{\O_{\sfE_G} \otimes_{\Ga_G \sfE} -}", from=1-1, to=1-3]
			\arrow["{\phi^*}"', from=1-1, to=2-1]
			\arrow["{\phi^*}", from=1-3, to=2-3]
			\arrow["{\O_{\sfE_H} \otimes_{\Ga_H\sfE} -}"', from=2-1, to=2-3]
		\end{tikzcd}\]	
     in $\CAlg(\PrLst)$ commutes.
        \item\label{it:homology_commutes_with_maps_of_stacks} Using the notation of \Cref{thm:functor/naturality_GM_theories}(\ref{it:naturality_of_cohomology_in_stacks}), where $G$ is also applicable, then the square
        \[\begin{tikzcd}
			{\Mod_{\Ga_G \sfE}(\Sp_G)} && {\QCoh(\sfE_G)} \\
			{\Mod_{\Ga_G \sfE'}(\Sp_G)} && {\QCoh(\sfE_G')}
			\arrow["{\O_{\sfE_G} \otimes_{\Ga_G \sfE} -}", from=1-1, to=1-3]
			\arrow["{f^*}"', from=1-1, to=2-1]
			\arrow["{f^*}", from=1-3, to=2-3]
			\arrow["{\O_{\sfE_G'} \otimes_{\Ga_G\sfE'} -}"', from=2-1, to=2-3]
		\end{tikzcd}\]
        in $\CAlg(\PrLst)$ commutes.
    \end{enumerate} 
\end{prop}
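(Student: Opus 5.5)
The plan is to reduce both squares to the corresponding statements for equivariant elliptic cohomology in \Cref{thm:naturality_of_elliptic_cohomology}, and then linearise over the ring spectra $\Ga_G\sfE$ and $\Ga_H\sfE$. The non-linear homology functor $\O_{\sfE_G}\otimes - \colon \Sp_G\to\QCoh(\sfE_G)$ is the filtered-colimit extension of $\ELL_G\circ\mathbb{D}$. Restriction $\phi^\ast\colon \Sp_G\to\Sp_H$ preserves finite spectra, commutes with Spanier--Whitehead duality (it is symmetric monoidal, so preserves duals), and preserves filtered colimits. Likewise $\phi^\ast\colon \QCoh(\sfE_G)\to\QCoh(\sfE_H)$ preserves colimits.

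Part 3 of \Cref{thm:naturality_of_elliptic_cohomology} gives a natural equivalence $\phi^\ast\ELL_G \simeq \ELL_H\phi^\ast$ on $(\Sp_G^\fin)^\op$. Uniqueness in part 1 of that theorem upgrades this to an equivalence of symmetric monoidal functors: both composites are limit-preserving symmetric monoidal functors out of $(\Sp_G^\fin)^\op$ extending the same functor on spaces. Dualising and Ind-extending then yields a commutative square in $\CAlg(\PrLst)$:
\[\phi^\ast(\O_{\sfE_G}\otimes -)\simeq \O_{\sfE_H}\otimes\phi^\ast(-)\colon \Sp_G\to\QCoh(\sfE_H).\]
Part 4 of \Cref{thm:naturality_of_elliptic_cohomology} gives, by the same argument, the analogous square for a base change $f\colon\M'\to\M$, with $\Sp_G$ on the left and $f^\ast$ on the right.

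Next comes the linearisation. Let $L\colon \Sp_G\to\QCoh(\sfE_G)$ be symmetric monoidal with lax monoidal right adjoint $R$. The functor $\O_{\sfE_G}\otimes_{\Ga_G\sfE}-$ is the unique symmetric monoidal colimit-preserving functor out of $\Mod_{R(\1)}(\Sp_G)$ whose restriction along free modules $\Sp_G\to \Mod_{R(\1)}(\Sp_G)$ is $L$, equipped with the counit $R(\1)\to\1$ as an algebra map. I will phrase this through the universal property of modules in $\CAlg(\PrLst)$: maps $\Mod_A(\calC)\to\calD$ correspond to pairs consisting of a map $F\colon\calC\to\calD$ and an $\E_\infty$-map $F(A)\to\1_\calD$.

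Both composites in the square of part 1 are then symmetric monoidal functors $\Mod_{\Ga_G\sfE}(\Sp_G)\to\QCoh(\sfE_H)$. Restricted to $\Sp_G$, both are identified with $\phi^\ast(\O_{\sfE_G}\otimes-)$ by the square above. It remains to match the two algebra maps $\phi^\ast\Ga_G\sfE\otimes\O_{\sfE_H}\to \O_{\sfE_H}$. The first is $\phi^\ast$ of the counit for $G$. The second comes from the composite $\phi^\ast\Ga_G\sfE\to\Ga_H\sfE$, which is the mate of the square, followed by the counit for $H$. These agree by the standard compatibility of mates with counits. Part 2 is identical, using the mate map $f^\ast\Ga_G\sfE\to\Ga_G\sfE'$. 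Here the left vertical functor $f^\ast$ is by definition extension of scalars along this map.

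The main obstacle is this coherence: upgrading the objectwise commutativity of \Cref{thm:naturality_of_elliptic_cohomology} to a commutative square in $\CAlg(\PrLst)$ compatible with the module structures. I would handle it entirely via the universal property of $\Mod_A$ in $\CAlg(\PrLst)$ (Lurie, HA 4.8.5), so that only a homotopy between $\E_\infty$-maps is needed, and that homotopy is supplied by the mate construction.
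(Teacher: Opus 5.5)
Your proposal is correct, and for part~1 it is the paper's argument. The paper also takes the square $\phi^\ast(\O_{\sfE_G}\otimes-)\simeq\O_{\sfE_H}\otimes\phi^\ast(-)$ on $\Sp_G$ in $\CAlg(\PrLst)$ from part~3 of \Cref{thm:naturality_of_elliptic_cohomology}, using that restriction commutes with Spanier--Whitehead duality, and then simply says ``pass to categories of modules''. Your appeal to the universal property of $\Mod_A$ in $\CAlg(\PrLst)$, together with the mate $\phi^\ast\Ga_G\sfE\to\Ga_H\sfE$ and its compatibility with counits, is a correct unpacking of that step.

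One small precision: the uniqueness you need is the universal property of $\Sp_G^\fin$ for an arbitrary symmetric monoidal target, applied to a symmetric monoidal space-level identification. The uniqueness clause as literally stated only covers the target $\QCoh(\sfE_G)$. The paper relies on the same input implicitly.

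For part~2 your route differs mildly. The paper does not invoke part~4 of \Cref{thm:naturality_of_elliptic_cohomology}. Instead it argues that both composites preserve colimits and agree on the generators $\Ga_G\sfE\otimes G/H_+$, by the space-level statement in part~6 of \Cref{thm:functor/naturality_GM_theories}. You instead rerun the linearisation, using the $\Sp_G$-level square for $f^\ast$ and the map $\Ga_G\sfE\to\Ga_G\sfE'$. This costs nothing extra and has a real advantage: it produces a symmetric monoidal natural equivalence. Objectwise agreement on generators does not by itself determine a commuting square in $\CAlg(\PrLst)$.
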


\begin{proof}
For part 1, passing to equivariant Spanier--Whitehead duals commutes with taking restrictions of compact Lie groups, so we immediately obtain a commutative square
\[\begin{tikzcd}
		{\Sp_G} && {\QCoh(\sfE_G)} \\
		{\Sp_H} && {\QCoh(\sfE_H)}
		\arrow["{\O_{\sfE_G} \otimes -}", from=1-1, to=1-3]
		\arrow["{\phi^*}"', from=1-1, to=2-1]
		\arrow["{\phi^*}", from=1-3, to=2-3]
		\arrow["{\O_{\sfE_H} \otimes -}"', from=2-1, to=2-3]
\end{tikzcd}\]
in $\CAlg(\PrLst)$ from part 3 of \Cref{thm:naturality_of_elliptic_cohomology}. By passing to categories of modules, we obtain the desired commutative square. For part 2, we naturally identify these two composites as both preserve colimits, as all functors are left adjoints, and both agree on the generators of $\Ga_G \sfE \otimes G/H_+$ of $\Mod_{\Ga_G \sfE}(\Sp_G)$ by \Cref{thm:functor/naturality_GM_theories}(\ref{it:naturality_of_cohomology_in_stacks}).
\end{proof}

\begin{remark}
    To be explicit, one can compute the values of the functor
    \[\phi^*\colon \Mod_{\Ga_G \sfE}(\Sp_G) \to \Mod_{\Ga_H \sfE}(\Sp_H); \qquad X \mapsto \Ga_H \sfE \otimes_{\phi^* \Ga_G \sfE} \phi^* X.\]
    When $\phi$ is a closed embedding, one may further identify $\phi^* \Ga_G \sfE \simeq \Ga_H \sfE$.
\end{remark}

\section{Equivariant affineness}\label{sec:affine_case}
The set-up from the previous section allows us to formulate the main concept of this article:

\begin{mydef}
    Let $G$ be an applicable compact Lie group. An oriented elliptic curve $\sfE$ is \emph{$G$-equivariantly affine} if the $G$-equivariant elliptic homology--$G$-equivariant global sections adjunction of (\ref{eq:equivariant_adjunction}) is an equivalence.
\end{mydef}

\subsection{The inductive argument}\label{ssec:inductive_step}
This subsection contains our main inductive argument, which allows us to bootstrap equivariant affineness of $\sfE$ from smaller to larger groups. We begin with a simple, yet very important, fully-faithfulness statement.

\begin{prop}\label{pr:left_adjoint_is_fullyfaithful}
    Let $\sfE$ be an oriented elliptic curve over a stack $\M$ and $G$ an applicable compact Lie group such that $\sfE_G$ is 0-semiaffine. Then the functor 
    \[    \O_{\sfE_G} \otimes_{\Ga_G \sfE} - \colon \Mod_{\Ga_G \sfE}(\Sp_G)\to \QCoh(\sfE_G)    \]
    is fully faithful.
\end{prop}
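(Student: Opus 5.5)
The plan is to prove that the unit of the adjunction (\ref{eq:equivariant_adjunction}) is an equivalence. Write $L = \O_{\sfE_G} \otimes_{\Ga_G \sfE} -$ and $R = \gGa(\sfE_G,-)$. The category $\Mod_{\Ga_G \sfE}(\Sp_G)$ is compactly generated by the free modules $\Ga_G\sfE \otimes \Sigma^\infty_+ G/H$ and their desuspensions, as $H$ ranges over closed subgroups of $G$. So it suffices to carry out three steps: (a) show that $R$ preserves colimits; (b) conclude that the unit $\eta\colon X \to RLX$ is a natural transformation between colimit-preserving functors, and hence check it only on these generators; (c) verify that $\eta$ is an equivalence on the generators.

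For step (a), $R$ is exact, so it suffices that $R$ commutes with filtered colimits, or equivalently with direct sums. Equivalences of $G$-spectra are detected on $H$-fixed points for all closed $H \subseteq G$, and fixed points commute with colimits. By (\ref{eq:H-fixed_points}) we have $R(\calF)^H \simeq \Ga(\sfE_H, \phi^\ast \calF)$. Here $\phi^\ast$ preserves colimits, and $\sfE_H \to \sfE_G$ is affine by \Cref{thm:Ell_geo_nice_prop}(\ref{it:Ell_closed_embedding_affine}). Therefore
\[
\Ga(\sfE_H, \phi^\ast \calF) \simeq \Ga(\sfE_G, \phi_\ast\phi^\ast\calF) \simeq \Ga(\sfE_G, \phi_\ast\O_{\sfE_H} \otimes \calF),
\]
using the projection formula for affine maps (\cite[Pr.2.2.2.5]{reconstruction}). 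This commutes with colimits in $\calF$ because $\sfE_G$ is 0-semiaffine, so $\Ga(\sfE_G,-)$ preserves colimits. Hence $R$ preserves colimits.

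For step (c), it is cleaner to use adjunctions than a direct computation. Fix a free module $X = \Ga_G\sfE \otimes \Sigma^\infty_+ G/H$, and test the unit against any compact generator $Y$ of the same form. We must show that $L$ induces an equivalence
\[
\Mod_{\Ga_G \sfE}(\Sp_G)(Y,X) \to \QCoh(\sfE_G)(LY,LX).
\]
By freeness, the left side is $\Sp_G(\Sigma^\infty_+ G/K, \Ga_G \sfE \otimes \Sigma^\infty_+ G/H)$. The right side is $\QCoh(\sfE_G)(\O_{\sfE_G}\otimes \Sigma^\infty_+G/K, \O_{\sfE_G}\otimes \Sigma^\infty_+G/H)$, and by the $L\dashv R$ adjunction for $\O_{\sfE_G}\otimes - \colon \Sp_G \to \QCoh(\sfE_G)$ this equals $\Sp_G(\Sigma^\infty_+G/K, R(\O_{\sfE_G}\otimes \Sigma^\infty_+ G/H))$. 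So it reduces to showing that the canonical map
\[
\Ga_G\sfE \otimes Z \to R(\O_{\sfE_G} \otimes Z)
\]
is an equivalence for $Z = \Sigma^\infty_+ G/H$, or indeed for any $Z \in \Sp_G^\fin$. This is a projection formula for $R$. Since $Z$ is dualisable and $\O_{\sfE_G}\otimes -$ is symmetric monoidal, $\O_{\sfE_G}\otimes Z$ is dualisable with dual $\O_{\sfE_G}\otimes \mathbb{D}Z$. The standard argument for a right adjoint of a symmetric monoidal functor then gives
\[
R(\calF \otimes \O_{\sfE_G}\otimes Z) \simeq R(\calF)\otimes Z
\]
for dualisable $Z$, with no hypothesis needed: both sides corepresent $W \mapsto \QCoh(\sfE_G)(\O_{\sfE_G}\otimes W\otimes\mathbb D Z, \calF)$. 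Applying this with $\calF = \O_{\sfE_G}$ gives the claim.

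Step (c) is therefore formal, and the only genuine input is step (a), the colimit preservation of $R$. That is where the 0-semiaffineness hypothesis enters, together with the affineness of $\sfE_H \to \sfE_G$ for each closed $H$. The main care needed is in checking that the fixed-point identification (\ref{eq:H-fixed_points}) is natural in $\calF$, so that it is compatible with colimits; I expect this to be the main obstacle, though it is mild. Once (a) holds, the unit is an equivalence on generators and both sides preserve colimits, so $\eta$ is an equivalence everywhere and $L$ is fully faithful.
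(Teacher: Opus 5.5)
Your argument is correct and is essentially the paper's. The paper packages your steps (a)--(c) into \Cref{lem:source_rigid_fully_faithfulness}: a rigid source and a compact unit make the right adjoint colimit-preserving and linear over the source (via Ramzi's internal-left-adjoint observation), so that $\eta_X\simeq X\otimes\eta_{\1}$. Your step (a) can be shortened: $\gGa(\sfE_G,-)^H$ is corepresented by the dualisable object $\O_{\sfE_G}\otimes\Sigma^\infty_+G/H$, which is compact because $\O_{\sfE_G}$ is, so you need neither (\ref{eq:H-fixed_points}) nor the affineness of $\sfE_H\to\sfE_G$.
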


Keep in mind that the assumptions above hold in the situation of \Cref{thm:extra-applicable} by \Cref{pr:0semiaffine_spectral_schemes}. The proof of this statement uses the following formal fact.

\begin{lemma}\label{lem:source_rigid_fully_faithfulness}
    Let \(F\colon \calC\to \calD\) be a map in \(\CAlg(\PrLst)\), with right adjoint \(G\), such that
    \begin{itemize}
        \item \(\calC\) is compactly generated, and compact and dualisable objects agree,
        \item the tensor unit \(\1_\calD\) is compact, and
        \item the adjunction unit \(\1_\calC\to GF\1_\calC\) is an equivalence.
    \end{itemize}
    Then \(G\) commutes with filtered colimits and \(F\) is fully faithful.
\end{lemma}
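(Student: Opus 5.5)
We prove the two conclusions in order: first that \(G\) preserves filtered colimits, then that the unit \(X\to GFX\) is an equivalence for every \(X\in\calC\).

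For the first claim we use that \(F\) is symmetric monoidal and preserves dualisable objects. Let \(X\in\calC\) be compact, hence dualisable by the first hypothesis. For \(Y\in\calD\) we have natural equivalences
\[\calC(X, GY) \simeq \calD(FX, Y) \simeq \calD(\1_\calD, F(X)^\vee\otimes Y).\]
Here \(F(X)^\vee \simeq F(X^\vee)\) is dualisable, so tensoring with it preserves colimits. Since \(\1_\calD\) is compact, the right-hand side commutes with filtered colimits in \(Y\). The objects \(X\) range over a set of compact generators of \(\calC\), so these functors jointly detect equivalences. Hence \(G\) preserves filtered colimits. As \(G\) is an exact functor between stable categories, it in fact preserves all colimits.

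Next we establish a projection formula: the map \(X\otimes G(Y)\to G(FX\otimes Y)\) is an equivalence for \(X\in\calC\) and \(Y\in\calD\). This map is adjoint to \(FX\otimes FGY\to FX\otimes Y\). For dualisable \(X\), it is a standard formal consequence of \(F\) being symmetric monoidal: one checks by mapping in from any \(Z\) and using \(X^\vee\) on both sides. For general \(X\), both sides preserve colimits in \(X\), using that \(G\) preserves colimits by the previous paragraph. Since \(\calC\) is generated under colimits by compact, hence dualisable, objects, the map is an equivalence for all \(X\).

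With the projection formula in hand, set \(Y=\1_\calD\). This gives \(GF(X)\simeq G(FX\otimes\1_\calD)\simeq X\otimes GF(\1_\calC)\), and the unit \(X\to GFX\) is identified with \(X\otimes(\1_\calC\to GF\1_\calC)\). By the third hypothesis, this is an equivalence. So the unit is an equivalence for every \(X\), and \(F\) is fully faithful. The main point needing care is the compatibility check that the unit map corresponds to the projection formula map tensored with the unit of \(\1_\calC\); this is a routine diagram chase in the lax monoidal structure on \(G\).
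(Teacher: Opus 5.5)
Your proof is correct and follows the paper's argument. The paper gets that \(G\) preserves colimits and that the adjunction is \(\calC\)-linear (your projection formula) in one step, by citing Ramzi's observation that \(F\) is an internal left adjoint in \(\Mod_\calC(\PrL)\); it then concludes \(\eta_X\simeq X\otimes\eta_{\1_\calC}\) exactly as you do, so you have simply proved that cited input by hand.
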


\begin{proof}
    The first two assumptions guarantee that \(F\) is an internal left adjoint in \(\Mod_\calC(\PrL)\) by \cite[Obs.4.51]{ramzi_separability}, so \(G\) is itself a left adjoint, and the adjunction \(F\dashv G\) is \(\calC\)-linear.
    This implies that we have an equivalence $\eta_X\simeq X\otimes \eta_{\1_\calC}$ for all $X\in \calC$. Therefore $\eta_X$ is an equivalence since $\eta_{\1_\calC}$ is an equivalence.
\end{proof}

\begin{proof}[Proof of \Cref{pr:left_adjoint_is_fullyfaithful}]
To apply \Cref{lem:source_rigid_fully_faithfulness}, first note that $\Sp_G$ is compactly generated and compacts and dualisables agree, with compact dualisable generators given by suspension spectra of orbits. The second conditions holds by assumption, and the third is automatic from the definitions: $\underline{\Gamma}(\sfE_G, \O_{\sfE_G} \otimes_{\Ga_G \sfE} (\Ga_G \sfE)) = \underline{\Gamma}(\sfE_G, \O_{\sfE_G}) = \Ga_G \sfE$.
\end{proof}

\begin{remark}
    One can also show that the equivariant global sections functor
    \[\gGa(\sfE_G, -) \colon \QCoh(\sfE_G) \to \Mod_{\Ga_G \sfE}(\Sp_G)\]
    preserves colimits in the situation of \Cref{pr:left_adjoint_is_fullyfaithful}. Since we do not need this statement directly, we omit its proof.
\end{remark}

Now we can state our main inductive tool.

\begin{theorem}\label{lem:inductive_step_general}
    Let $\sfE$ be an oriented elliptic curve over an even-periodic \(\E_\infty\)-ring $A$, \(\phi\colon H\to G\) and \(f\colon G\to N\) be maps between applicable Lie groups, with \(\phi\) a closed embedding.
    Further assume the following:
    \begin{enumerate}
        \item The stacks $\sfE_G, \sfE_H$, and $\sfE_N$ are all nonconnective spectral schemes.
        \item $\sfE$ is both $H$- and $N$-equivariantly affine.
        \item The map of underlying classical schemes $\sfE_H^\heartsuit \to \sfE_G^\heartsuit$ is an effective Cartier divisor.
        \item The open complement $\sfE^\heartsuit_G \setminus \sfE_H^\heartsuit$ is affine over \(\sfE_N^\heartsuit\).
    \end{enumerate}
    Then $\sfE$ is $G$-equivariantly affine.
\end{theorem}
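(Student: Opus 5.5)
The plan is to show that the strong symmetric monoidal left adjoint
\[F_G = \O_{\sfE_G}\otimes_{A_G} - \colon \Mod_{A_G}(\Sp_G)\to \QCoh(\sfE_G)\]
is an equivalence, mirroring the $U(1)$ outline in the introduction. By hypothesis~1, $\sfE_G$ is a nonconnective spectral scheme; I will assume it is qcqs, as in \Cref{thm:extra-applicable}. Then it is $0$-semiaffine by \Cref{pr:0semiaffine_spectral_schemes}, so \Cref{pr:left_adjoint_is_fullyfaithful} makes $F_G$ fully faithful. Its essential image $\calC$ is therefore a localizing subcategory, closed under tensor products, and it contains $F_G(X)^\vee\simeq F_G(X^\vee)$ for every dualisable $X$. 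It remains to show $\calC=\QCoh(\sfE_G)$.

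\textbf{Divisor sheaves and reduction to two pieces.} Write $j\colon \sfE_H\to\sfE_G$ and $i\colon \sfU\to \sfE_G$ for the complementary open. By \Cref{lem:even_periodic_underlying_ECD} and hypothesis~3, both $j$ and $j_{\geq0}$ are effective Cartier divisors. Everything is flat over $\Spec A$ by \Cref{thm:Ell_geo_nice_prop}, so \Cref{prop:nc_ECD_flat_over_base} applies. It shows that $i$ is quasi-compact, that $i_\ast\O_\sfU\simeq\O(\infty\sfZ)$ with $\sfZ=\sfE_H$, and that $\ker j^\ast$ equals the essential image of $i_\ast$.

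Now $j_\ast\O_{\sfE_H}=\ELL_G(G/H_+)=F_G(A_G\otimes \mathbb{D}(G/H_+))$. Hence $\O(-\sfZ)=F_G(X)$ with $X=\fib(A_G\to A_G\otimes\mathbb D(G/H_+))$ dualisable, so $\O(\pm n\sfZ)\in\calC$ for all $n$, and $i_\ast\O_\sfU\in\calC$. Every $\calM$ sits in a fibre sequence $\calM'\to\calM\to \calM\otimes i_\ast\O_\sfU\simeq i_\ast i^\ast\calM$, with $\calM'\in\ker i^\ast$. It therefore suffices to show two things: that $i_\ast\QCoh(\sfU)\subseteq\calC$, and that $\ker i^\ast\subseteq\calC$.

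\textbf{The open piece.} Let $q\colon\sfU\to\sfE_N$ be the composite with $\sfE[\B f]$; it is affine by hypothesis~4, using \Cref{pr:equiv_of_closed_immersions} and the flatness to pass from classical to spectral affineness. Since $q_\ast$ is conservative, $\QCoh(\sfU)$ is generated under colimits by $q^\ast\calP$ for $\calP\in\QCoh(\sfE_N)$. By $N$-equivariant affineness, $\calP=F_N(Y)$. Then
\[i_\ast q^\ast\calP\simeq f^\ast F_N(Y)\otimes i_\ast\O_\sfU\simeq F_G(f^\ast Y)\otimes i_\ast\O_\sfU,\]
using \Cref{pr:homomorphism_and_linear_homology}. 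This lies in $\calC$ by tensor-closure.

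\textbf{The closed piece.} For $\calM\in\ker i^\ast$ I write
\[\calM\simeq\fib(\calM\to\calM\otimes\O(\infty\sfZ))\simeq\colim_n \calM\otimes\fib(\O\to\O(n\sfZ)).\]
Each $\fib(\O\to\O(n\sfZ))$ is a finite extension of the twists $j_\ast\O_{\sfE_H}\otimes\O(k\sfZ)$. By the projection formula, $\calM\otimes j_\ast\O_{\sfE_H}\otimes\O(k\sfZ)\simeq j_\ast j^\ast(\calM(k\sfZ))$. By $H$-equivariant affineness, $j^\ast(\calM(k\sfZ))=F_H(Z')$ for some $H$-module $Z'$. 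So it suffices to prove the Beck--Chevalley identification $j_\ast F_H\simeq F_G\circ \mathrm{Coind}_H^G$, where $\mathrm{Coind}_H^G$ is the right adjoint of $\phi^\ast$ on module categories.

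This identification is the step I expect to be the main obstacle. There is a natural map $F_G\,\mathrm{Coind}\Rightarrow j_\ast F_H$, adjoint to $\phi^\ast F_G\simeq F_H\phi^\ast$ followed by the counit. Both sides preserve colimits: $j$ is affine, and coinduction for a closed subgroup of a compact Lie group agrees with a twisted induction. It then suffices to check the map on the generators $A_H\otimes K/L_+$. There it should reduce to \Cref{thm:naturality_of_elliptic_cohomology}(\ref{it:ell_cohomolgy_induction}) after Spanier--Whitehead duality, with the Wirthmüller twist absorbed because $\ELL_G$ of representation spheres is invertible. Once this is in place, $\ker i^\ast\subseteq\calC$, and $F_G$ is an equivalence.
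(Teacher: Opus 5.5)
Your argument is correct in outline, and it takes a genuinely leaner route than the paper.

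\textbf{What the paper does.} It matches the isotropy separation recollement for $\widetilde{E}\langle H\rangle$, which requires an explicit model of that spectrum, with the open--closed recollement of $\sfU\subset\sfE_G$. It then proves that $F^\loc$ and $F^\cpl$ are each equivalences. For the local part it identifies both sides as module categories over the $N$-side, using Barr--Beck--Lurie. For the complete part it proves comonadicity of $\mathrm{Res}^G_H$ and $\phi^\ast$ and applies \Cref{lem:comonadic_helper}. Comonadicity in turn requires showing that $\phi^\ast$ preserves limits, via compactness of $\phi_\ast\O_{\sfZ}$ and perfectness.

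\textbf{What you do instead.} You work entirely inside $\QCoh(\sfE_G)$. The essential image $\calC$ of the fully faithful symmetric monoidal $F_G$ is localizing and closed under $\otimes$. It contains $\O(\pm\sfZ)$, hence $i_\ast\O_\sfU$. The open part is generated by objects $f^\ast\calP\otimes i_\ast\O_\sfU$, and you d\'evissage the closed part along $\fib(\O\to\O(n\sfZ))$ into objects $j_\ast j^\ast(-)$.

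\textbf{Comparison.} Both routes use the same essential inputs: the induction formula, $H$- and $N$-affineness, affineness of $\sfU\to\sfE_N$, and \Cref{prop:nc_ECD_flat_over_base}. Yours avoids the model of $\widetilde{E}\langle H\rangle$, comonadicity, and the left adjoint of $\phi^\ast$. In exchange, the paper's route yields the finer statement that each piece of the recollement is matched separately. For example, it gives a completion-type equivalence $\Mod_{A_G}(\Sp_G)^{\langle H\rangle-\cpl}\simeq\QCoh(\sfE_G)/\QCoh(\sfU)$.

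\textbf{The step you flag as the main obstacle is unnecessary.} You do not need a Beck--Chevalley equivalence $j_\ast F_H\simeq F_G\circ\mathrm{Coind}^G_H$. You only need $j_\ast F_H$ to land in $\calC$. Since $j_\ast F_H$ preserves colimits ($j$ is affine) and $\calC$ is localizing, it suffices to check the generators $A_H\otimes\mathbb{D}(H/K_+)$. There \Cref{thm:naturality_of_elliptic_cohomology}(\ref{it:ell_cohomolgy_induction}) gives
\[
j_\ast F_H(A_H\otimes\mathbb{D}(H/K_+))\simeq j_\ast\ELL_H(H/K_+)\simeq\ELL_G(G/K_+)\simeq F_G(A_G\otimes\mathbb{D}(G/K_+)).
\]
This is exactly the computation that closes the paper's $F^\cpl$ argument, and no Wirthm\"uller twist ever appears. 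If you did insist on going through coinduction, the twist would cancel because $\mathbb{D}$ exchanges induction and coinduction on compact objects. The invertibility of $\ELL_G$ on representation spheres is not what makes it cancel.

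Your explicit qcqs assumption on $\sfE_G$ is the same one the paper makes implicitly when it invokes \Cref{pr:left_adjoint_is_fullyfaithful}.
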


Our proof of the above fact uses the following categorical lemma.

\begin{lemma}\label{lem:comonadic_helper}
    Consider the following diagram in \(\Cat^L\), so categories and left adjoint functors, whose right adjoints are denoted to the right of the symbol $\dashv$:
\[\begin{tikzcd}
	{X'} && {Y'} \\
	X && Y \\
	& C
	\arrow["{F'\dashv G'}", from=1-1, to=1-3]
	\arrow["{A_X\dashv B_X}"', from=1-1, to=2-1]
	\arrow["{A_Y\dashv B_Y}", from=1-3, to=2-3]
	\arrow["{F\dashv G}", from=2-1, to=2-3]
	\arrow["{L_X\dashv R_X}"', from=2-1, to=3-2]
	\arrow["{L_Y\dashv R_Y}", from=2-3, to=3-2]
\end{tikzcd}\]
    Denote the composites \(L_X\circ A_X=L'_X\) and \(L_Y\circ A_Y=L'_Y\), with right adjoints \(R'_X\) and \(R'_Y\).
    Assume that \(B_X\) and \(B_Y\) are fully faithful, and that \(L_X\) and \(L_Y\) are comonadic.
    Then \(F\) is an equivalence if and only if the natural transformation
    \begin{equation}\label{eq:comonad_comparison}
    L'_Y\circ F'\circ G'\circ R'_Y\xrightarrow{L'_Y\circ\epsilon\circ R'_Y}L'_Y\circ R'_Y\end{equation}
    is an equivalence, where \(\epsilon\) is the adjunction counit.
\end{lemma}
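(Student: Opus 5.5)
The plan is to reduce everything to a comparison of comonads on $C$ and then apply the comonadic form of the Barr--Beck--Lurie theorem. I read the diagram as commuting in $\Cat^L$, so there are equivalences $L_Y\circ F\simeq L_X$ and $A_Y\circ F'\simeq F\circ A_X$. Passing to right adjoints gives $R_X\simeq G\circ R_Y$ and $G'\circ B_Y\simeq B_X\circ G$.

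First I treat the unprimed level. Since $L_X$ and $L_Y$ are comonadic, the Barr--Beck--Lurie theorem (the dual of \cite[Th.4.7.3.5]{ha}) identifies $X$ and $Y$ with coalgebras in $C$ over the comonads $L_XR_X$ and $L_YR_Y$. Under $R_X\simeq GR_Y$ and $L_X\simeq L_YF$, the comonad $L_XR_X$ becomes $L_YFGR_Y$. The transformation $L_Y\epsilon R_Y\colon L_YFGR_Y\to L_YR_Y$, with $\epsilon$ the counit of $F\dashv G$, is a map of comonads. Under the coalgebra descriptions, $F$ corresponds to the functor induced by this comonad map; this is the dual of \cite[Cor.4.7.3.16]{ha}.

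From this I get that $F$ is an equivalence if and only if $L_Y\epsilon R_Y$ is an equivalence. One direction is immediate: if $F$ is an equivalence then $\epsilon$ is invertible. For the other, an equivalence of comonads induces an equivalence of their $\infty$-categories of coalgebras, compatible with the forgetful functors to $C$.

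Next I show that \eqref{eq:comonad_comparison} is this same transformation. On the target, $L'_YR'_Y=L_YA_YB_YR_Y\simeq L_YR_Y$, because $B_Y$ is fully faithful and hence the counit $A_YB_Y\to\id$ is an equivalence. On the source,
\[L'_YF'G'R'_Y = L_YA_YF'G'B_YR_Y\simeq L_YFA_XB_XGR_Y\simeq L_YFGR_Y,\]
where the first equivalence uses the two commutation relations and the second uses that $A_XB_X\to\id$ is an equivalence because $B_X$ is fully faithful.

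It remains to match the counit maps. The composite adjunction $A_YF'\dashv G'B_Y$ has counit given by $A_Y\epsilon'B_Y$ followed by the counit of $A_Y\dashv B_Y$. The isomorphic adjunction $FA_X\dashv B_XG$ has counit given by $F(\text{counit}_{A_X})G$ followed by $\epsilon$. Counits of equivalent adjunctions agree, and the counits of $A_X\dashv B_X$ and $A_Y\dashv B_Y$ are invertible. Whiskering with $L_Y$ and $R_Y$ therefore identifies \eqref{eq:comonad_comparison} with $L_Y\epsilon R_Y$ up to equivalences, which finishes the proof.

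The step I expect to need the most care is the coherent identification of the induced functor on coalgebras with $F$, including the comonad-map structure in the $\infty$-setting. I would handle it by citing the dual of \cite[Cor.4.7.3.16]{ha}, applied to $F$ viewed as a map over $C$ between comonadic categories, rather than by building the coalgebra structures by hand.
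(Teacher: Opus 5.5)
Your proposal is correct and follows essentially the same route as the paper: you reduce, via the dual of HA's Cor.~4.7.3.16, to the comonad comparison map $L_Y F G R_Y \to L_Y R_Y$, and then remove the $A$'s using that the counits of $A_X\dashv B_X$ and $A_Y\dashv B_Y$ are invertible, with your composite-adjunction counit bookkeeping doing the work of the paper's string-diagram manipulations. The one step you leave implicit, which the paper checks in its first string-diagram identity, is that HA's comparison map (built from the unit of $L_Y\dashv R_Y$ and the counit of $L_X\dashv R_X$) agrees with $L_Y\epsilon R_Y$ under $R_X\simeq GR_Y$; this follows by a routine naturality and triangle-identity computation.
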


\begin{proof}
    In what follows, we denote the adjunction unit and counit of any adjunction by \(\eta\) and \(\epsilon\), respectively.
    By (the dual of) \cite[Cor.4.7.3.16 \&  Rmk.4.7.3.17]{ha}, $F$ is an  equivalence if and only if the natural transformation
    \[L_X\circ R_X=L_Y\circ F\circ R_X\xrightarrow{L_Y\circ \eta \circ F\circ R_X}L_Y\circ R_Y\circ L_Y\circ F\circ R_X
    \xrightarrow{L_Y\circ R_Y\circ \epsilon}L_Y\circ R_Y\]
    is an equivalence.
    This expresses that, since \(L_X\) and \(L_Y\) are comonadic, \(F\) is an equivalence if and only if the comparison on their associated comonads induced by \(F\) is an equivalence.
    Since \(B_X\) and \(B_Y\) are fully faithful, their adjunction counits are equivalences, so we can check whether the above is an equivalence after precomposing with the counit of \(A_X\dashv B_X\) and postcomposing with the inverse of the counit of \(A_Y\dashv B_Y\).
    
    We will show that the resulting natural transformation is isomorphic to (\ref{eq:comonad_comparison}), hence proving the proposition. The required manipulations are most easily expressed with string diagrams; see \cite{Marsden_2014_CategoryTheoryUsing,hinze_marsden_string_diagrams}. To employ this language, we note that we may work in the homotopy 2-category. Our convention is that functors act from right to left and transformations from bottom to top.

    Since \(F\) admits a right adjoint, we can rewrite the original transformation as
\begin{equation*}
\begin{tikzpicture}[x=1cm,y=1cm,baseline=(current bounding box.center)]
  \path
    coordinate (bl) at (0,0) coordinate (br) at (4,0) coordinate (tr) at (4,4)
    coordinate (tl) at (0,4) coordinate (v) at (1.5,0.5) coordinate (eta) at (1.5,1.5)
    coordinate (v2) at (2,2) coordinate (epsilon) at (2.5,3) coordinate (p1) at (1.5,0)
    coordinate (p2) at (0.5,4) coordinate (p3) at (1,4) coordinate (p4) at (3.5,0);

  \fill[fill={rgb,255:red,255;green,204;blue,204}] (p3) -- (tr) -- (br) -- (p4) to[out=90,in=0,out looseness=1.25,in looseness=0.5] (epsilon) to[out=180,in=90] (v2) to[out=-135,in=0,looseness=1.25] (eta) to[out=180,in=-90,out looseness=0.5,in looseness=1.25] (p3) -- cycle;
  \fill[fill={rgb,255:red,238;green,238;blue,187}] (p4) -- (p1) -- (v) to[out=45,in=-45,in looseness=1.25] (v2) to[out=90,in=180] (epsilon) to[out=0,in=90,out looseness=0.5,in looseness=1.25] (p4) -- cycle;
  \fill[fill={rgb,255:red,255;green,204;blue,204}] (tl) -- (p2) to[out=-90,in=135,out looseness=1.25,in looseness=0.5] (v) -- (p1) -- (bl) -- cycle;
  \fill[fill={rgb,255:red,204;green,221;blue,170}] (p2) -- (p3) to[out=-90,in=180,out looseness=1.25,in looseness=0.5] (eta) to[out=0,in=-135,looseness=1.25] (v2) to[out=-45,in=45,out looseness=1.25] (v) to[out=135,in=-90,out looseness=0.5,in looseness=1.25] (p2) -- cycle;

  \draw[line width=0.8pt] (v) -- (p1);
  \draw[line width=0.8pt] (v) to[out=135,in=-90,out looseness=0.5,in looseness=1.25] (p2);
  \draw[line width=0.8pt] (v2) to[out=-135,in=0,looseness=1.25] (eta) to[out=180,in=-90,out looseness=0.5,in looseness=1.25] (p3);
  \draw[line width=0.8pt] (v) to[out=45,in=-45,in looseness=1.25] (v2);
  \draw[line width=0.8pt] (v2) to[out=90,in=180] (epsilon) to[out=0,in=90,out looseness=0.5,in looseness=1.25] (p4);

  \fill[black] (v) circle (1.56pt);
  \fill[black] (eta) circle (1.56pt);
  \fill[black] (v2) circle (1.56pt);
  \fill[black] (epsilon) circle (1.56pt);

  \node[inner sep=0pt] at (1.2,0.2) {$L_X$};
  \node[inner sep=0pt] at (0.3,2.5) {$L_Y$};
  \node[inner sep=0pt] at (1.3,3.5) {$R_Y$};
  \node[inner sep=0pt] at (2.3,1.0) {$F$};
  \node[inner sep=0pt] at (3.2,0.2) {$R_X$};
  \node[inner sep=0pt,shift={(-0.0,-0.2)}] at (eta) {$\eta$};
  \node[inner sep=0pt,shift={(0.0,0.2)}] at (epsilon) {$\epsilon$};
\end{tikzpicture}
=
\begin{tikzpicture}[x=1cm,y=1cm,baseline=(current bounding box.center)]
  \path
    coordinate (bl) at (0,0) coordinate (br) at (4,0) coordinate (tr) at (4,4)
    coordinate (tl) at (0,4) coordinate (v) at (1,1.5) coordinate (v2) at (3,1.5)
    coordinate (epsilon) at (2,2.5) coordinate (p1) at (1,0) coordinate (p2) at (3,0)
    coordinate (p3) at (0.5,4) coordinate (p4) at (3.5,4);

  \fill[fill={rgb,255:red,204;green,221;blue,170}] (p3) -- (p4) to[out=-90,in=45,out looseness=1.25,in looseness=0.75] (v2) to[out=135,in=0,out looseness=1.25,in looseness=0.75] (epsilon) to[out=180,in=45,out looseness=0.75,in looseness=1.25] (v) to[out=135,in=-90,out looseness=0.75,in looseness=1.25] (p3) -- cycle;
  \fill[fill={rgb,255:red,238;green,238;blue,187}] (p2) -- (p1) -- (v) to[out=45,in=180,out looseness=1.25,in looseness=0.75] (epsilon) to[out=0,in=135,out looseness=0.75,in looseness=1.25] (v2) -- cycle;
  \fill[fill={rgb,255:red,255;green,204;blue,204}] (tl) -- (p3) to[out=-90,in=135,out looseness=1.25,in looseness=0.75] (v) -- (p1) -- (bl) -- cycle;
  \fill[fill={rgb,255:red,255;green,204;blue,204}] (p4) -- (tr) -- (br) -- (p2) -- (v2) to[out=45,in=-90,out looseness=0.75,in looseness=1.25] (p4) -- cycle;

  \draw[line width=0.8pt] (v) -- (p1);
  \draw[line width=0.8pt] (v2) -- (p2);
  \draw[line width=0.8pt] (v) to[out=45,in=180,out looseness=1.25,in looseness=0.75] (epsilon) to[out=0,in=135,out looseness=0.75,in looseness=1.25] (v2);
  \draw[line width=0.8pt] (p3) to[out=-90,in=135,out looseness=1.25,in looseness=0.75] (v);
  \draw[line width=0.8pt] (p4) to[out=-90,in=45,out looseness=1.25,in looseness=0.75] (v2);

  \fill[black] (v) circle (1.56pt);
  \fill[black] (v2) circle (1.56pt);
  \fill[black] (epsilon) circle (1.56pt);

  \node[inner sep=0pt] at (0.7,0.7) {$L_X$};
  \node[inner sep=0pt] at (3.3,0.7) {$R_X$};
  \node[inner sep=0pt] at (1.3,2.2) {$F$};
  \node[inner sep=0pt] at (2.7,2.2) {$G$};
  \node[inner sep=0pt] at (0.8,3.5) {$L_Y$};
  \node[inner sep=0pt] at (3.2,3.5) {$R_Y$};
  \node[inner sep=0pt,shift={(0.011,0.246)}] at (epsilon) {$\epsilon$};
\end{tikzpicture}
\end{equation*}
Then, as described above, we conjugate this, composing from the bottom to the top, and slide some strings around:
\begin{equation*}
\begin{tikzpicture}[x=1cm,y=1cm,baseline=(current bounding box.center)]
  \path
    coordinate (bl) at (0,0) coordinate (br) at (4,0) coordinate (tr) at (4,4)
    coordinate (tl) at (0,4) coordinate (v) at (1,1.5) coordinate (v2) at (3,1.5)
    coordinate (epsilon) at (2,2.5) coordinate (epsilon2) at (2,3) coordinate (epsilon3) at (2,1.5)
    coordinate (v3) at (1.5,0.5) coordinate (v4) at (2.5,0.5) coordinate (v5) at (1,3.5)
    coordinate (v6) at (3,3.5) coordinate (p1) at (1.5,0) coordinate (p2) at (2.5,0)
    coordinate (p3) at (1,4) coordinate (p4) at (3,4);

  \fill[fill={rgb,255:red,255;green,204;blue,204}] (p4) -- (tr) -- (br) -- (p2) -- (v4) to[out=45,in=-90,out looseness=1.5,in looseness=1.25] (v2) to[out=45,in=-45] (v6) -- cycle;
  \fill[fill={rgb,255:red,255;green,204;blue,204}] (tl) -- (p3) -- (v5) to[out=-135,in=135] (v) to[out=-90,in=135,out looseness=1.25,in looseness=1.5] (v3) -- (p1) -- (bl) -- cycle;
  \fill[fill={rgb,255:red,204;green,221;blue,170}] (epsilon) to[out=180,in=45,out looseness=0.75,in looseness=1.25] (v) to[out=135,in=-135] (v5) to[out=-45,in=180,out looseness=0.5,in looseness=1.25] (epsilon2) to[out=0,in=-135,out looseness=1.25,in looseness=0.5] (v6) to[out=-45,in=45] (v2) to[out=135,in=0,out looseness=1.25,in looseness=0.75] (epsilon) -- cycle;
  \fill[fill={rgb,255:red,238;green,238;blue,187}] (v) to[out=45,in=180,out looseness=1.25,in looseness=0.75] (epsilon) to[out=0,in=135,out looseness=0.75,in looseness=1.25] (v2) to[out=-90,in=45,out looseness=1.25,in looseness=1.5] (v4) to[out=135,in=0,out looseness=1.25] (epsilon3) to[out=180,in=45,in looseness=1.25] (v3) to[out=135,in=-90,out looseness=1.5,in looseness=1.25] (v) -- cycle;
  \fill[fill={rgb,255:red,221;green,221;blue,221}] (p3) -- (p4) -- (v6) to[out=-135,in=0,out looseness=0.5,in looseness=1.25] (epsilon2) to[out=180,in=-45,out looseness=1.25,in looseness=0.5] (v5) -- cycle;
  \fill[fill={rgb,255:red,204;green,238;blue,255}] (p2) -- (p1) -- (v3) to[out=45,in=180,out looseness=1.25] (epsilon3) to[out=0,in=135,in looseness=1.25] (v4) -- cycle;

  \draw[line width=0.8pt] (v) to[out=45,in=180,out looseness=1.25,in looseness=0.75] (epsilon) to[out=0,in=135,out looseness=0.75,in looseness=1.25] (v2);
  \draw[line width=0.8pt] (v3) -- (p1);
  \draw[line width=0.8pt] (v4) -- (p2);
  \draw[line width=0.8pt] (v3) to[out=135,in=-90,out looseness=1.5,in looseness=1.25] (v);
  \draw[line width=0.8pt] (v4) to[out=45,in=-90,out looseness=1.5,in looseness=1.25] (v2);
  \draw[line width=0.8pt] (v4) to[out=135,in=0,out looseness=1.25] (epsilon3) to[out=180,in=45,in looseness=1.25] (v3);
  \draw[line width=0.8pt] (v6) to[out=-135,in=0,out looseness=0.5,in looseness=1.25] (epsilon2) to[out=180,in=-45,out looseness=1.25,in looseness=0.5] (v5);
  \draw[line width=0.8pt] (v5) -- (p3);
  \draw[line width=0.8pt] (v6) -- (p4);
  \draw[line width=0.8pt] (v) to[out=135,in=-135] (v5);
  \draw[line width=0.8pt] (v2) to[out=45,in=-45] (v6);

  \fill[black] (v) circle (1.56pt);
  \fill[black] (v2) circle (1.56pt);
  \fill[black] (epsilon) circle (1.56pt);
  \fill[black] (epsilon2) circle (1.56pt);
  \fill[black] (epsilon3) circle (1.56pt);
  \fill[black] (v3) circle (1.56pt);
  \fill[black] (v4) circle (1.56pt);
  \fill[black] (v5) circle (1.56pt);
  \fill[black] (v6) circle (1.56pt);

  \node[font=\fontsize{8}{10}\selectfont,inner sep=0pt] at (1.3,2.3) {$F$};
  \node[font=\fontsize{8}{10}\selectfont,inner sep=0pt] at (2.7,2.3) {$G$};
  \node[font=\fontsize{8}{10}\selectfont,inner sep=0pt] at (1.2,0.2) {$L'_X$};
  \node[font=\fontsize{8}{10}\selectfont,inner sep=0pt] at (2.8,0.2) {$R'_X$};
  \node[font=\fontsize{8}{10}\selectfont,inner sep=0pt] at (0.7,1.2) {$L_X$};
  \node[font=\fontsize{8}{10}\selectfont,inner sep=0pt] at (3.3,1.2) {$R_X$};
  \node[font=\fontsize{8}{10}\selectfont,inner sep=0pt] at (1.4,1.2) {$A_X$};
  \node[font=\fontsize{8}{10}\selectfont,inner sep=0pt] at (2.6,1.2) {$B_X$};
  \node[font=\fontsize{8}{10}\selectfont,inner sep=0pt] at (1.3,2.9) {$A_Y$};
  \node[font=\fontsize{8}{10}\selectfont,inner sep=0pt] at (2.7,2.9) {$B_Y$};
  \node[font=\fontsize{8}{10}\selectfont,inner sep=0pt] at (0.406,3.027) {$L_Y$};
  \node[font=\fontsize{8}{10}\selectfont,inner sep=0pt] at (3.659,2.943) {$R_Y$};
  \node[font=\fontsize{8}{10}\selectfont,inner sep=0pt,shift={(0.0,-0.2)}] at (epsilon) {$\epsilon$};
  \node[font=\fontsize{8}{10}\selectfont,inner sep=0pt,shift={(0.0,0.2)}] at (epsilon2) {$\epsilon^{-1}$};
  \node[font=\fontsize{8}{10}\selectfont,inner sep=0pt,shift={(0.0,0.2)}] at (epsilon3) {$\epsilon$};

\end{tikzpicture}
=
\begin{tikzpicture}[x=1cm,y=1cm,baseline=(current bounding box.center)]
  \path
    coordinate (bl) at (0,0) coordinate (br) at (4,0) coordinate (tr) at (4,4)
    coordinate (tl) at (0,4) coordinate (epsilon) at (2,2.5) coordinate (epsilon2) at (2,3)
    coordinate (epsilon3) at (2,1.5) coordinate (v) at (3.5,0.5) coordinate (v2) at (1,3.5)
    coordinate (v3) at (3,3.5) coordinate (v4) at (0.5,0.5) coordinate (v5) at (1,1)
    coordinate (v6) at (3,1) coordinate (p1) at (0.5,0) coordinate (p2) at (3.5,0)
    coordinate (p3) at (1,4) coordinate (p4) at (3,4);

  \fill[fill={rgb,255:red,204;green,221;blue,170}] (v2) to[out=-45,in=180,out looseness=0.5,in looseness=1.25] (epsilon2) to[out=0,in=-135,out looseness=1.25,in looseness=0.5] (v3) to[out=-45,in=45,looseness=0.75] (v) to[out=135,in=-90] (v6) to[out=45,in=0,in looseness=1.25] (epsilon) to[out=180,in=135,out looseness=1.25] (v5) to[out=-90,in=45] (v4) to[out=135,in=-135,looseness=0.75] (v2) -- cycle;
  \fill[fill={rgb,255:red,204;green,238;blue,255}] (p2) -- (p1) -- (v4) to[out=45,in=-90] (v5) to[out=45,in=180,looseness=1.25] (epsilon3) to[out=0,in=135,out looseness=1.25] (v6) to[out=-90,in=135] (v) -- cycle;
  \fill[fill={rgb,255:red,238;green,238;blue,187}] (epsilon3) to[out=180,in=45,looseness=1.25] (v5) to[out=135,in=180,in looseness=1.25] (epsilon) to[out=0,in=45,out looseness=1.25] (v6) to[out=135,in=0,in looseness=1.25] (epsilon3) -- cycle;
  \fill[fill={rgb,255:red,255;green,204;blue,204}] (p4) -- (tr) -- (br) -- (p2) -- (v) to[out=45,in=-45,looseness=0.75] (v3) -- cycle;
  \fill[fill={rgb,255:red,255;green,204;blue,204}] (tl) -- (p3) -- (v2) to[out=-135,in=135,looseness=0.75] (v4) -- (p1) -- (bl) -- cycle;
  \fill[fill={rgb,255:red,221;green,221;blue,221}] (p3) -- (p4) -- (v3) to[out=-135,in=0,out looseness=0.5,in looseness=1.25] (epsilon2) to[out=180,in=-45,out looseness=1.25,in looseness=0.5] (v2) -- cycle;

  \draw[line width=0.8pt] (v4) -- (p1);
  \draw[line width=0.8pt] (v) -- (p2);
  \draw[line width=0.8pt] (v3) to[out=-135,in=0,out looseness=0.5,in looseness=1.25] (epsilon2) to[out=180,in=-45,out looseness=1.25,in looseness=0.5] (v2);
  \draw[line width=0.8pt] (v2) -- (p3);
  \draw[line width=0.8pt] (v3) -- (p4);
  \draw[line width=0.8pt] (v2) to[out=-135,in=135,looseness=0.75] (v4);
  \draw[line width=0.8pt] (v3) to[out=-45,in=45,looseness=0.75] (v);
  \draw[line width=0.8pt] (v4) to[out=45,in=-90] (v5);
  \draw[line width=0.8pt] (v) to[out=135,in=-90] (v6);
  \draw[line width=0.8pt] (v5) to[out=45,in=180,looseness=1.25] (epsilon3) to[out=0,in=135,out looseness=1.25] (v6);
  \draw[line width=0.8pt] (v5) to[out=135,in=180,in looseness=1.25] (epsilon) to[out=0,in=45,out looseness=1.25] (v6);

  \fill[black] (epsilon) circle (1.56pt);
  \fill[black] (epsilon2) circle (1.56pt);
  \fill[black] (epsilon3) circle (1.56pt);
  \fill[black] (v) circle (1.56pt);
  \fill[black] (v2) circle (1.56pt);
  \fill[black] (v3) circle (1.56pt);
  \fill[black] (v4) circle (1.56pt);
  \fill[black] (v5) circle (1.56pt);
  \fill[black] (v6) circle (1.56pt);

  \node[font=\fontsize{8}{10}\selectfont,inner sep=0pt] at (0.8,0.2) {$L'_X$};
  \node[font=\fontsize{8}{10}\selectfont,inner sep=0pt] at (3.2,0.2) {$R'_X$};
  \node[font=\fontsize{8}{10}\selectfont,inner sep=0pt] at (1.4,3) {$A_Y$};
  \node[font=\fontsize{8}{10}\selectfont,inner sep=0pt] at (2.8,3) {$B_Y$};
  \node[font=\fontsize{8}{10}\selectfont,inner sep=0pt] at (0.3,2.9) {$L_Y$};
  \node[font=\fontsize{8}{10}\selectfont,inner sep=0pt] at (3.7,2.9) {$R_Y$};
  \node[font=\fontsize{8}{10}\selectfont,inner sep=0pt] at (1.4,1.6) {$A_X$};
  \node[font=\fontsize{8}{10}\selectfont,inner sep=0pt] at (2.6,1.6) {$B_X$};
  \node[font=\fontsize{8}{10}\selectfont,inner sep=0pt] at (0.9,2.1) {$F$};
  \node[font=\fontsize{8}{10}\selectfont,inner sep=0pt] at (3.1,2.1) {$G$};
  \node[font=\fontsize{8}{10}\selectfont,inner sep=0pt,shift={(0.0,-0.2)}] at (epsilon) {$\epsilon$};
  \node[font=\fontsize{8}{10}\selectfont,inner sep=0pt,shift={(0.0,0.2)}] at (epsilon2) {$\epsilon^{-1}$};
  \node[font=\fontsize{8}{10}\selectfont,inner sep=0pt,shift={(0.0,-0.2)}] at (epsilon3) {$\epsilon$};
\end{tikzpicture}
=
\begin{tikzpicture}[x=1cm,y=1cm,baseline=(current bounding box.center)]
  \path
    coordinate (bl) at (0,0) coordinate (br) at (4,0) coordinate (tr) at (4,4)
    coordinate (tl) at (0,4) coordinate (epsilon) at (2,2.5) coordinate (epsilon2) at (2,3)
    coordinate (epsilon3) at (2,1.5) coordinate (v) at (3.5,0.5) coordinate (v2) at (1,3.5)
    coordinate (v3) at (3,3.5) coordinate (v4) at (0.5,0.5) coordinate (v5) at (1,1)
    coordinate (v6) at (3,1) coordinate (p1) at (0.5,0) coordinate (p2) at (3.5,0)
    coordinate (p3) at (1,4) coordinate (p4) at (3,4);

  \fill[fill={rgb,255:red,204;green,238;blue,255}] (p2) -- (p1) -- (v4) to[out=45,in=-90] (v5) to[out=45,in=180,looseness=1.25] (epsilon3) to[out=0,in=135,out looseness=1.25] (v6) to[out=-90,in=135] (v) -- cycle;
  \fill[fill={rgb,255:red,204;green,221;blue,170}] (v2) to[out=-45,in=180,out looseness=0.5,in looseness=1.25] (epsilon2) -- (epsilon) to[out=180,in=135,out looseness=1.25] (v5) to[out=-90,in=45] (v4) to[out=135,in=-135,looseness=0.75] (v2) -- cycle;
  \fill[fill={rgb,255:red,204;green,221;blue,170}] (epsilon2) to[out=0,in=-135,out looseness=1.25,in looseness=0.5] (v3) to[out=-45,in=45,looseness=0.75] (v) to[out=135,in=-90] (v6) to[out=45,in=0,in looseness=1.25] (epsilon) -- cycle;
  \fill[fill={rgb,255:red,221;green,221;blue,221}] (epsilon3) to[out=180,in=45,looseness=1.25] (v5) to[out=135,in=180,in looseness=1.25] (epsilon) to[out=0,in=45,out looseness=1.25] (v6) to[out=135,in=0,in looseness=1.25] (epsilon3) -- cycle;
  \fill[fill={rgb,255:red,255;green,204;blue,204}] (p4) -- (tr) -- (br) -- (p2) -- (v) to[out=45,in=-45,looseness=0.75] (v3) -- cycle;
  \fill[fill={rgb,255:red,255;green,204;blue,204}] (tl) -- (p3) -- (v2) to[out=-135,in=135,looseness=0.75] (v4) -- (p1) -- (bl) -- cycle;
  \fill[fill={rgb,255:red,221;green,221;blue,221}] (p3) -- (p4) -- (v3) to[out=-135,in=0,out looseness=0.5,in looseness=1.25] (epsilon2) to[out=180,in=-45,out looseness=1.25,in looseness=0.5] (v2) -- cycle;

  \draw[line width=0.8pt] (v4) -- (p1);
  \draw[line width=0.8pt] (v) -- (p2);
  \draw[line width=0.8pt] (epsilon2) to[out=180,in=-45,out looseness=1.25,in looseness=0.5] (v2);
  \draw[line width=0.8pt] (epsilon2) to[out=0,in=-135,out looseness=1.25,in looseness=0.5] (v3);
  \draw[line width=0.8pt] (v2) -- (p3);
  \draw[line width=0.8pt] (v3) -- (p4);
  \draw[line width=0.8pt] (v2) to[out=-135,in=135,looseness=0.75] (v4);
  \draw[line width=0.8pt] (v3) to[out=-45,in=45,looseness=0.75] (v);
  \draw[line width=0.8pt] (v4) to[out=45,in=-90] (v5);
  \draw[line width=0.8pt] (v) to[out=135,in=-90] (v6);
  \draw[line width=0.8pt] (v5) to[out=45,in=180,looseness=1.25] (epsilon3) to[out=0,in=135,out looseness=1.25] (v6);
  \draw[line width=0.8pt] (v5) to[out=135,in=180,in looseness=1.25] (epsilon);
  \draw[line width=0.8pt] (epsilon) to[out=0,in=45,out looseness=1.25] (v6);
  \draw[line width=0.8pt,dotted] (epsilon2) -- (epsilon);

  \fill[black] (epsilon) circle (1.56pt);
  \fill[black] (epsilon2) circle (1.56pt);
  \fill[black] (epsilon3) circle (1.56pt);
  \fill[black] (v) circle (1.56pt);
  \fill[black] (v2) circle (1.56pt);
  \fill[black] (v3) circle (1.56pt);
  \fill[black] (v4) circle (1.56pt);
  \fill[black] (v5) circle (1.56pt);
  \fill[black] (v6) circle (1.56pt);

  \node[font=\fontsize{8}{10}\selectfont,inner sep=0pt] at (0.8,0.2) {$L'_X$};
  \node[font=\fontsize{8}{10}\selectfont,inner sep=0pt] at (3.2,0.2) {$R'_X$};
  \node[font=\fontsize{8}{10}\selectfont,inner sep=0pt] at (1.4,3.0) {$A_Y$};
  \node[font=\fontsize{8}{10}\selectfont,inner sep=0pt] at (2.8,3.0) {$B_Y$};
  \node[font=\fontsize{8}{10}\selectfont,inner sep=0pt] at (0.3,2.9) {$L_Y$};
  \node[font=\fontsize{8}{10}\selectfont,inner sep=0pt] at (3.7,2.9) {$R_Y$};
  \node[font=\fontsize{8}{10}\selectfont,inner sep=0pt] at (1.4,1.6) {$F'$};
  \node[font=\fontsize{8}{10}\selectfont,inner sep=0pt] at (2.6,1.6) {$G'$};
  \node[font=\fontsize{8}{10}\selectfont,inner sep=0pt] at (0.8,2.2) {$A_Y$};
  \node[font=\fontsize{8}{10}\selectfont,inner sep=0pt] at (3.2,2.2) {$B_Y$};
  \node[font=\fontsize{8}{10}\selectfont,inner sep=0pt,shift={(0.0,-0.2)}] at (epsilon) {$\epsilon$};
  \node[font=\fontsize{8}{10}\selectfont,inner sep=0pt,shift={(0.0,0.2)}] at (epsilon2) {$\epsilon^{-1}$};
  \node[font=\fontsize{8}{10}\selectfont,inner sep=0pt,shift={(0.0,-0.2)}] at (epsilon3) {$\epsilon$};
\end{tikzpicture}
\end{equation*}
In the second equality above we used that \(F\circ A_X=A_Y\circ F'\), and that the counit of a composite is the composite of counits. We can now cancel the \(A_Y\dashv B_Y\) counit against its inverse to arrive at the desired conclusion:
\begin{equation*}
\begin{tikzpicture}[x=1cm,y=1cm,baseline=(current bounding box.center)]
  \path
    coordinate (bl) at (0,0) coordinate (br) at (4,0) coordinate (tr) at (4,4)
    coordinate (tl) at (0,4) coordinate (epsilon) at (2,1.5) coordinate (v) at (3.5,0.5)
    coordinate (v2) at (1,3.5) coordinate (v3) at (3,3.5) coordinate (v4) at (0.5,0.5)
    coordinate (v5) at (1,1) coordinate (v6) at (3,1) coordinate (p1) at (0.5,0)
    coordinate (p2) at (3.5,0) coordinate (p3) at (1,4) coordinate (p4) at (3,4);

  \fill[fill={rgb,255:red,221;green,221;blue,221}] (p3) -- (p4) -- (v3) -- (v6) to[out=135,in=0,in looseness=1.25] (epsilon) to[out=180,in=45,looseness=1.25] (v5) -- (v2) -- cycle;
  \fill[fill={rgb,255:red,204;green,238;blue,255}] (p2) -- (p1) -- (v4) to[out=45,in=-90] (v5) to[out=45,in=180,looseness=1.25] (epsilon) to[out=0,in=135,out looseness=1.25] (v6) to[out=-90,in=135] (v) -- cycle;
  \fill[fill={rgb,255:red,255;green,204;blue,204}] (tl) -- (p3) -- (v2) to[out=-135,in=135,looseness=0.75] (v4) -- (p1) -- (bl) -- cycle;
  \fill[fill={rgb,255:red,255;green,204;blue,204}] (p4) -- (tr) -- (br) -- (p2) -- (v) to[out=45,in=-45,looseness=0.75] (v3) -- cycle;
  \fill[fill={rgb,255:red,204;green,221;blue,170}] (v3) to[out=-45,in=45,looseness=0.75] (v) to[out=135,in=-90] (v6) -- cycle;
  \fill[fill={rgb,255:red,204;green,221;blue,170}] (v4) to[out=135,in=-135,looseness=0.75] (v2) -- (v5) to[out=-90,in=45] (v4) -- cycle;

  \draw[line width=0.8pt] (v4) -- (p1);
  \draw[line width=0.8pt] (v) -- (p2);
  \draw[line width=0.8pt] (v2) -- (p3);
  \draw[line width=0.8pt] (v3) -- (p4);
  \draw[line width=0.8pt] (v2) to[out=-135,in=135,looseness=0.75] (v4);
  \draw[line width=0.8pt] (v3) to[out=-45,in=45,looseness=0.75] (v);
  \draw[line width=0.8pt] (v4) to[out=45,in=-90] (v5);
  \draw[line width=0.8pt] (v) to[out=135,in=-90] (v6);
  \draw[line width=0.8pt] (v5) to[out=45,in=180,looseness=1.25] (epsilon) to[out=0,in=135,out looseness=1.25] (v6);
  \draw[line width=0.8pt] (v5) -- (v2);
  \draw[line width=0.8pt] (v6) -- (v3);

  \fill[black] (epsilon) circle (1.56pt);
  \fill[black] (v) circle (1.56pt);
  \fill[black] (v2) circle (1.56pt);
  \fill[black] (v3) circle (1.56pt);
  \fill[black] (v4) circle (1.56pt);
  \fill[black] (v5) circle (1.56pt);
  \fill[black] (v6) circle (1.56pt);
  \fill[black] (v5) circle (1.56pt);
  \fill[black] (v2) circle (1.56pt);

  \node[font=\fontsize{8}{10}\selectfont,inner sep=0pt] at (0.8,0.2) {$L'_X$};
  \node[font=\fontsize{8}{10}\selectfont,inner sep=0pt] at (3.2,0.2) {$R'_X$};
  \node[font=\fontsize{8}{10}\selectfont,inner sep=0pt] at (0.3,2.9) {$L_Y$};
  \node[font=\fontsize{8}{10}\selectfont,inner sep=0pt] at (3.7,2.9) {$R_Y$};
  \node[font=\fontsize{8}{10}\selectfont,inner sep=0pt] at (1.5,1.6) {$F'$};
  \node[font=\fontsize{8}{10}\selectfont,inner sep=0pt] at (2.5,1.6) {$G'$};
  \node[font=\fontsize{8}{10}\selectfont,inner sep=0pt] at (1.3,2.6) {$A_Y$};
  \node[font=\fontsize{8}{10}\selectfont,inner sep=0pt] at (2.7,2.6) {$B_Y$};
  \node[font=\fontsize{8}{10}\selectfont,inner sep=0pt,shift={(0.0,-0.2)}] at (epsilon) {$\epsilon$};

\end{tikzpicture}
=
\begin{tikzpicture}[x=1cm,y=1cm,baseline=(current bounding box.center)]
  \path
    coordinate (bl) at (0,0) coordinate (br) at (4,0) coordinate (tr) at (4,4)
    coordinate (tl) at (0,4) coordinate (v) at (1,1.5) coordinate (v2) at (3,1.5)
    coordinate (epsilon) at (2,2.5) coordinate (p1) at (1,0) coordinate (p2) at (3,0)
    coordinate (p3) at (0.5,4) coordinate (p4) at (3.5,4);

  \fill[fill={rgb,255:red,221;green,221;blue,221}] (p3) -- (p4) to[out=-90,in=45,out looseness=1.25,in looseness=0.75] (v2) to[out=135,in=0,out looseness=1.25,in looseness=0.75] (epsilon) to[out=180,in=45,out looseness=0.75,in looseness=1.25] (v) to[out=135,in=-90,out looseness=0.75,in looseness=1.25] (p3) -- cycle;
  \fill[fill={rgb,255:red,204;green,238;blue,255}] (p2) -- (p1) -- (v) to[out=45,in=180,out looseness=1.25,in looseness=0.75] (epsilon) to[out=0,in=135,out looseness=0.75,in looseness=1.25] (v2) -- cycle;
  \fill[fill={rgb,255:red,255;green,204;blue,204}] (tl) -- (p3) to[out=-90,in=135,out looseness=1.25,in looseness=0.75] (v) -- (p1) -- (bl) -- cycle;
  \fill[fill={rgb,255:red,255;green,204;blue,204}] (p4) -- (tr) -- (br) -- (p2) -- (v2) to[out=45,in=-90,out looseness=0.75,in looseness=1.25] (p4) -- cycle;

  \draw[line width=0.8pt] (v) -- (p1);
  \draw[line width=0.8pt] (v2) -- (p2);
  \draw[line width=0.8pt] (v) to[out=45,in=180,out looseness=1.25,in looseness=0.75] (epsilon) to[out=0,in=135,out looseness=0.75,in looseness=1.25] (v2);
  \draw[line width=0.8pt] (p3) to[out=-90,in=135,out looseness=1.25,in looseness=0.75] (v);
  \draw[line width=0.8pt] (p4) to[out=-90,in=45,out looseness=1.25,in looseness=0.75] (v2);

  \fill[black] (v) circle (1.56pt);
  \fill[black] (v2) circle (1.56pt);
  \fill[black] (epsilon) circle (1.56pt);

  \node[inner sep=0pt] at (0.7,0.4) {$L_X'$};
  \node[inner sep=0pt] at (3.3,0.4) {$R_X'$};
  \node[inner sep=0pt] at (1.2,2.3) {$F'$};
  \node[inner sep=0pt] at (2.8,2.3) {$G'$};
  \node[inner sep=0pt] at (0.8,3.6) {$L_Y'$};
  \node[inner sep=0pt] at (3.2,3.6) {$R_Y'$};
  \node[inner sep=0pt,shift={(0.0,0.2)}] at (epsilon) {$\epsilon$};
\end{tikzpicture}
\end{equation*}
\end{proof}

We will now prove \Cref{lem:inductive_step_general}. Although the proof is lengthy, at its core, it follows the same pattern as our overview of the $U(1)$-case from the introduction: we want to match isotropy separation sequences for $H \leq G$ with an effective Cartier divisor $\sfE_H\to \sfE_G$ as recollemonts.

\begin{proof}[Proof of \Cref{lem:inductive_step_general}]
    Let us make the abbreviations $\sfE_G=\sfX$, $\sfE_H=\sfZ$, $\sfE_N = \sfS$, and \(\Spec A=\sfM\) together with the map of schemes $f\colon \sfX \to \sfS$. The map $\phi$ induces an affine map of stacks $\phi \colon \sfZ \to \sfX$ by \Cref{thm:Ell_geo_nice_prop}(\ref{it:Ell_closed_embedding_affine}). Moreover, by \Cref{lem:even_periodic_underlying_ECD}, whose hypotheses hold by assumption, the induced map of nonconnective spectral Deligne--Mumford stacks $\sfZ \to \X$ is an effective Cartier divisor. Let us denote the associated open complement as \(i \colon \sfU = \sfX\setminus \sfZ \to \sfX\). As all of the stacks \(\sfX\), \(\sfZ\), \(\sfS\), and \(\sfM\) are, by assumption, in the essential image of the fully faithful functor
    \[h_{(-)}:\SpSch^\nc\to\Stk,\]
    we will not distinguish between these two categories.
    
    Let $\widetilde{E}\langle H\rangle$ be the unique $G$-spectrum with geometric fixed points 
    \[
    \Phi^K (\widetilde{E}\langle H\rangle) \simeq \begin{cases} 0 & \text{if }K\text{ is subconjugate to }H,\\ \Sph & \text{otherwise.}\end{cases}
    \] 
    As a first step, we want to show that
    \[\O_{\sfX} \otimes \widetilde{E}\langle H\rangle\simeq \O_{\sfX}(\infty \sfZ) \in \QCoh(\sfX)\]
    as \(\E_0\)-algebras, using the notation of \Cref{prop:nc_ECD_flat_over_base}. For this, it helps to have an explicit model of $\widetilde{E}\langle H\rangle$. Consider the \(G\)-space \(G/H\). For \(K\subset G\) a closed subgroup, we have that \((G/H)^K\) is nonempty exactly when \(K\) is subconjugate to \(H\).
    Therefore, the cofibre of pointed $G$-spaces
    \[(G/H)_+ \to *_+ = S^0 \to C\]
    has \(C^K=S^0\) if \(K\) is not subconjugate to \(H\), and it is connected otherwise.
    We find that the geometric fixed points of the \(G\)-spectrum
    \[D=\colim \left(\Sph_G^0 = \Sigma_G^\infty \ast_+ \to\Sigma^\infty_GC\to (\Sigma^\infty_GC)^{\otimes 2}\to \cdots\right)\]
    agree with the geometric fixed points of \(\widetilde{E}\langle H\rangle\), hence \(D \simeq \widetilde{E}\langle H\rangle\).
    By construction, applying \(\O_{\sfE_G} \otimes -\) to the suspension spectra of \(S^0\to C\) is given by first dualising in $G$-spectra and then applying \(\widetilde{{\ELL}}_G\); recall this reduced variant from \Cref{rmk:reduced_cohomology}. Equivalently, this is the dual of the fibre of
    \[\ELL_G(S^0)\to \ELL_G(G/H_+).\]
    By construction of equivariant elliptic cohomology in \Cref{thm:functor/naturality_GM_theories}, this is the map $\O_{\sfX} \to \phi_* \O_{\sfZ}$ in $\QCoh(\sfX)$ that identifies $\O_{\sfX} \otimes \Sigma^\infty_G C\simeq \O_{\sfX}(\sfZ)$, as well as the $\E_0$-algebras
    \[\O_\X \otimes \widetilde{E}\langle H\rangle\simeq \O_\sfX \otimes D \simeq \O_\sfX(\infty \sfZ) \in \QCoh(\sfX),\]
    giving our first step.

    Next, one combines \Cref{lem:even_periodic_underlying_ECD}, whose hypotheses hold by assumption, with \Cref{thm:Ell_geo_nice_prop}(\ref{it:Ell_is_flat_and_good}) allows us to apply \Cref{prop:nc_ECD_flat_over_base}, which then shows that \(i:\sfU\to\sfX\) is a quasi-compact open immersion, \(i_*\O_\sfU\simeq \O_\sfX(\infty \sfZ)\) as \(\E_0\)-algebras, and the kernel of \(\phi^*\) agrees with the essential image of \(i_*\). We now consider the commutative diagram
\begin{equation}\label{eq:general_induction_diagram1}
\begin{tikzcd}
	{\Mod_{A_G}(\Sp_G)} && {\QCoh(\sfX)} \\
	{\Mod_{A_H}(\Sp_H)} && {\QCoh(\sfZ)}
	\arrow["{\O_{\X} \otimes_{\Ga_G \sfE} - }", from=1-1, to=1-3]
	\arrow["{\mathrm{Res}^G_H}"', from=1-1, to=2-1]
	\arrow["{\phi^*}", from=1-3, to=2-3]
	\arrow["{\O_{\sfZ} \otimes_{\Ga_H \sfE} - }"', from=2-1, to=2-3]
\end{tikzcd}
\end{equation}
    in \(\CAlg(\PrLst)\) from \Cref{pr:homomorphism_and_linear_homology}. As a module \(M\in\Mod_{A_G}(\Sp_G)\) is \(\langle H\rangle\)-local, meaning local with respect to the $G$-spectrum $\widetilde{E}\langle H \rangle$, if and only if its restriction to \(\Mod_{A_H}(\Sp_H)\) vanishes, and since we have the equivalence \(\O_{\X} \otimes \widetilde{E}\langle H\rangle\simeq i_*\O_\sfU\) as established above, we can extend this to a larger commutative diagram
\begin{equation}\label{eq:general_induction_diagram2}
\begin{tikzcd}
	{\Mod_{A_G}(\Sp_G)^{\langle H\rangle-\loc}} & {\QCoh(\sfU)} \\
	{\Mod_{A_G}(\Sp_G)} & {\QCoh(\sfX)} \\
	{\Mod_{A_G}(\Sp_G)^{\langle H\rangle-\cpl}} & {\QCoh(\sfX)/\QCoh(\sfU)} \\
	{\Mod_{A_H}(\Sp_H)} & {\QCoh(\sfZ)}
	\arrow["{F^\loc}", from=1-1, to=1-2]
	\arrow[hook, from=1-1, to=2-1]
	\arrow["{i_*}", hook, from=1-2, to=2-2]
	\arrow["{F=\O_{\sfX} \otimes_{\Ga_G \sfE} - }", from=2-1, to=2-2]
	\arrow[two heads, from=2-1, to=3-1]
	\arrow[two heads, from=2-2, to=3-2]
	\arrow["{F^\cpl}", from=3-1, to=3-2]
	\arrow[from=3-1, to=4-1]
	\arrow[from=3-2, to=4-2]
	\arrow["{F_H=\O_{\sfZ} \otimes_{\Ga_H \sfE} - }", from=4-1, to=4-2]
\end{tikzcd}
\end{equation}
    in \(\PrLst\) which has the following features (compare \cite[Pr.2.10]{Tokic_2026_FamilyCompletionTheorem}):
    \begin{enumerate}
        \item the upper and middle vertical functors in each column form a symmetric monoidal recollement,
        \item the lower vertical functors in each column are conservative,
        \item the upper square is vertically left-adjointable, and the associated left adjoints to the upper vertical functors are symmetric monoidal localisations,
        \item the middle square is vertically left-adjointable, and
        \item the pasting of the lower two squares agrees with (\ref{eq:general_induction_diagram1}).
    \end{enumerate}

    We claim that both \(F^\loc\) and \(F^\cpl\) are essentially surjective.
    Assuming this, we see that the essential image of \(F\) contains every local and every nilpotent object of \(\QCoh(\sfX)\) with respect to the recollement associated with \(i_*\O_\sfU\).
    Since \(F\) is fully faithful by \Cref{pr:left_adjoint_is_fullyfaithful}, and every object of \(\QCoh(X)\) is the fibre of a map from a local to a nilpotent object, \(F\) must be essentially surjective, thus an equivalence.

    To prove this claim, we start with $F^\cpl$.
    The idea is to apply \Cref{lem:comonadic_helper}.
    First, we show that the lower vertical functors in (\ref{eq:general_induction_diagram2}) are comonadic: We have already seen that they are conservative, so by (the dual of) \cite[Th.4.7.0.3]{ha}, it suffices to show that they commute with limits. Since the middle vertical functors in (\ref{eq:general_induction_diagram2}) admit fully faithful right adjoints, this is equivalent to the vertical functors in (\ref{eq:general_induction_diagram1}) commuting with limits.
    It is well-known that \(\mathrm{Res}^G_H\) admits a left adjoint, while $\phi^\ast$, by \Cref{pr:pullback_has_left_adjoint_perfect_stacks}, admits a left adjoint if and only if the right adjoint to $\phi_\ast$, written as $\phi^{(1)}$, commutes with coproducts. As $\phi$ is affine, \(\phi_*\) is conservative, and the endofunctor \(\phi_*\phi^{(1)}\) is equivalent to \(\underline{\map}_X(\phi_*\O_Z,\bullet)\).
    By assumption, \(\phi_*\O_Z\) is the cofibre of a map between invertible, and thus compact, objects, so it is compact itself. Hence \(\phi_*\phi^{(1)}\) commutes with coproducts, so also \(\phi^{(1)}\) commutes with coproducts. In particular, the lower vertical functors of (\ref{eq:general_induction_diagram2}) are comonadic.

    By assumption \(F_H\) is an equivalence, so  \Cref{lem:comonadic_helper} tells us that \(F^\cpl\) is an equivalence if
    \[\mu\colon \phi^*\circ F\circ \underline{\Ga}(\sfE_G, -)\circ \phi_*\overset{\phi^*\circ\epsilon\circ\phi_*}{\Longrightarrow} \phi^*\circ \phi_*\]
    is an equivalence of enodfunctors of $\QCoh(\sfZ)$, where \(\epsilon\) is the adjunction counit of \(F\dashv \underline{\Ga}(\sfE_G, -)\). By \Cref{pr:pullback_has_left_adjoint_perfect_stacks}, \(\mu\) is a natural transformation between left adjoint functors between presentable stable categories.
    Thus, it suffices to check that \(\mu\) is an equivalence on a set of generators of \(\QCoh(\sfZ)\).
    Since \(F_{H}\) is an equivalence and \(\Mod_{A_{H}}(\Sp_{H})\) is generated by \(A_{H}\otimes \mathbb{D}(H/K_+)\), for \(K\subset H\), it suffices to show that \(\mu\) is an equivalence on objects of the form \(F_{H}(A_{H}\otimes \mathbb{D}(H/K_+))\). As \(F\) is fully faithful, \(\epsilon\) is an equivalence on all objects in the essential image of \(F\).
    Thus, it suffices to show that \(\phi_* F_{H}(A_{H}\otimes \mathbb{D}(H/K_+))\) is in that essential image for all closed subgroups \(K\subset G\). By the construction of elliptic homology (\Cref{def:elliptic_homology}) and the naturality of elliptic cohomology (\Cref{thm:naturality_of_elliptic_cohomology}(\ref{it:ell_cohomolgy_induction})), we obtain natural equivalences
\[\phi_*F_{H}(A_{H}\otimes \mathbb{D}(H/K_+)) \simeq \phi_*\ELL_{H}(H/K_+) \simeq \ELL_{G}(G\times_H(H/K_+))\]
\[\simeq \ELL_{G}(G/K_+) \simeq F(A_{G}\otimes \mathbb{D}(G/K_+)),\]
    hence \(F^\cpl\) is an equivalence.

    Now let us consider the functor \(F^\loc\), for which we finally involve $\sfE_N$. By \Cref{pr:homomorphism_and_linear_homology} and the adjointability of the top square in (\ref{eq:general_induction_diagram2}), we obtain a commutative diagram
\begin{equation}\label{eq:general_induction_diagram3}
\begin{tikzcd}
	{\Mod_{A_G}(\Sp_G)^{\langle H\rangle-\loc}} && {\QCoh(\sfU)} \\
	{\Mod_{A_G}(\Sp_G)} && {\QCoh(\X)} \\
	{\Mod_{A_N}(\Sp_N)} && {\QCoh(\sfS)}
	\arrow["F^\loc", from=1-1, to=1-3]
	\arrow[from=2-1, to=1-1]
	\arrow["{F}", from=2-1, to=2-3]
	\arrow["{i^*}"', from=2-3, to=1-3]
	\arrow[from=3-1, to=2-1]
	\arrow["{F_N=\O_{\X} \otimes_{\Ga_N \sfE} - }", from=3-1, to=3-3]
	\arrow["{f^*}"', from=3-3, to=2-3]
\end{tikzcd}
\end{equation}
    in \(\CAlg(\PrLst)\).
    By assumption, \(F_N\) is an equivalence, and \((f\circ i)^\heartsuit\) is an affine map of schemes. By \cite[Rmk.2.4.4.2]{sag}, we see that \((f\circ i)_{\geq 0}\) is an affine map between connective spectral schemes, and as $\sfU$ and $\sfS$ are flat over $\sfM$, then $f\circ i$ is affine by \Cref{pr:equiv_of_closed_immersions}. Since \(F\) is fully faithful, \(F^\loc\) must be fully faithful, so the right adjoint of \(F^\loc\) maps the tensor unit to the tensor unit.
    By \cite[Th.4.8.5.11 \& Rmk.4.8.5.12]{ha}, we can extract the commutative diagram
\[\begin{tikzcd}
	{\Mod_{A_G}(\Sp_G)^{\langle H\rangle-\loc}} & {\QCoh(\sfU)} \\
	{\Mod_{\underline{\Ga}(\sfE_N,f_*i_*\O_U)}(\Mod_{A_N}(\Sp_N))} & {\Mod_{f_*i_*\O_U}(\QCoh(S))}
	\arrow["F^\loc", from=1-1, to=1-2]
	\arrow[from=2-1, to=1-1]
	\arrow[from=2-1, to=2-2]
	\arrow[from=2-2, to=1-2]
\end{tikzcd}\]
 in $\PrLst$ from (\ref{eq:general_induction_diagram3}). The lower-horizontal functor is an equivalence by assumption, and the right-vertical functor is an equivalence by \cite[Pr.2.2.2.5(2)]{reconstruction}. Thus, \(F^\loc\) is essentially surjective, finishing the proof.
\end{proof}

\subsection{Application to sequences of compact Lie groups}\label{ssec:explicit_sequences_of_compact_liegroups}

Let us now apply \Cref{lem:inductive_step_general} to $U(1)$, $U(n)$, $SU(n)$, and lastly, $Sp(n)$. An important step in many of the proofs below is the use of \Cref{thm:extra-applicable}; see \Cref{ex:u(n)_extra_applicable} for a prelude.

For the unitary case, our induction will start with $U(1)$, which we prove explicitly. Again, we suggest the reader refers to the more explicit approach to this statement outlined in the introduction for some initial motivation.

\begin{prop}\label{thm:ellipticU(1)0affineness}
  Let $p\colon \sfE \to \Spec A$ be an oriented elliptic curve over a strictly even-periodic Noetherian $\E_\infty$-ring $A$. Then $\sfE$ is $U(1)$-equivariantly affine.
\end{prop}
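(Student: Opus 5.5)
The plan is to apply \Cref{lem:inductive_step_general} with $H = 1$ the trivial subgroup, $G = U(1)$, and $N = 1$ the trivial group, with $\phi\colon 1 \to U(1)$ the inclusion and $f\colon U(1) \to 1$ the projection. In this situation $\sfE_H = \sfE_N = \Spec A$, with the map $\sfE_H \to \sfE_G$ being the zero section $e\colon \Spec A \to \sfE$ and the map $\sfE_G \to \sfE_N$ being the structure map $p$. Once the four hypotheses of \Cref{lem:inductive_step_general} are verified, it gives exactly that $\sfE$ is $U(1)$-equivariantly affine.

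I would check the hypotheses in order.

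For (1), $\sfE_{U(1)} = \sfE$ by \Cref{thm:Ell_geo_nice_prop}(\ref{it:agree_on_abelian}). An elliptic curve over an affine $\E_\infty$-ring is a nonconnective spectral scheme; this follows from Lurie's definition, or alternatively from \Cref{thm:extra-applicable} applied with $T = G = U(1)$. The other two stacks are $\Spec A$, which is a spectral scheme.

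For (2), equivariant affineness for the trivial group is the statement that $\Mod_A(\Sp) \to \QCoh(\Spec A)$ is an equivalence, which is tautological.

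For (3), the underlying classical map $e^\heartsuit\colon \Spec \pi_0 A \to \sfE^\heartsuit$ is the identity section of a classical elliptic curve over $\pi_0 A$. A section of a smooth separated curve is an effective Cartier divisor, locally cut out by a single non-zero-divisor parameter.

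For (4), the complement $\sfE^\heartsuit \setminus e^\heartsuit$ of the identity section is affine over $\Spec \pi_0 A$. This is the classical fact that removing a relatively ample divisor from a projective curve leaves an affine scheme; locally on the base it is the affine Weierstrass cubic.

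The main obstacle, in my view, is making sure the even-periodicity hypotheses used inside the proof of \Cref{lem:inductive_step_general} are really satisfied, since that proof invokes \Cref{lem:even_periodic_underlying_ECD} and \Cref{prop:nc_ECD_flat_over_base}. First, $A$ is strictly even-periodic, so $\Spec A$ is even-periodic. Second, $\sfE$ is even-periodic, because it is flat over $A$ by \Cref{thm:Ell_geo_nice_prop}(\ref{it:Ell_is_flat_and_good}): locally $\O_\sfE$ is flat over $A$, so its odd homotopy vanishes and it inherits the periodicity of $A$. Third, both $\sfE$ and $\Spec A$ are 1-localic, being spectral schemes.

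With these in place, \Cref{lem:inductive_step_general} applies and the proposition follows. The Noetherian hypothesis on $A$ enters only to guarantee that $\sfE$ is a qcqs scheme, as in \Cref{thm:extra-applicable}. This qcqs property makes $\sfE$ 0-semiaffine by \Cref{pr:0semiaffine_spectral_schemes}, which is needed for the full faithfulness in \Cref{pr:left_adjoint_is_fullyfaithful}.
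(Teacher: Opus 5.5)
Your proposal is correct and matches the paper's proof: both apply \Cref{lem:inductive_step_general} with $H=N=e$ and $G=U(1)$, and then check the four hypotheses in the same way. Those checks are that $\sfE$ and $\Spec A$ are spectral schemes, that the trivial-group case is tautological, that the zero section is an effective Cartier divisor (the paper cites Katz--Mazur), and that its complement is affine over the base via Weierstrass coordinates. Your check that $\sfE$ is even-periodic by flatness over $A$ is a detail the paper leaves implicit. Your closing remark overstates the role of the Noetherian hypothesis, though: $\sfE^\heartsuit$ is proper over $\Spec \pi_0 A$, so it is qcqs for $U(1)$ regardless.
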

\begin{proof}
    We want to apply \Cref{lem:inductive_step_general} with \(H=e\), \(G=U(1)\), \(N=e\), and both \(\phi\) and \(f\) the unique maps.
    Let us check that the assumptions are fulfilled:
    \begin{enumerate}
        \item $\sfE = \sfE_{U(1)}$ is a nonconnective spectral scheme by assumption. 
        \item Since \(\sfE_e\simeq\Spec A\) is affine, \(\underline{\Ga}(\sfE_e,-)\) is an equivalence.
        \item The map in question is given by the zero section \(e\colon \sfM^\heartsuit\to \sfE^\heartsuit\), which is an effective Cartier divisor by \cite[Lm.1.2.2]{km}.
        \item That \(\sfE^\heartsuit\setminus e\to \M^\heartsuit\) is affine can be checked Zariski locally on \(\M^\heartsuit\), where it follows from choosing Weierstrass coordinates; see \cite[\textsection 13.1.6]{olsson}.\qedhere
    \end{enumerate}
\end{proof}

\begin{prop}\label{thm:U_applicable_affine}
    Let $p\colon \sfE \to \Spec A$ be an oriented elliptic curve over a strictly even-periodic Noetherian $\E_\infty$-ring $A$ and $n\geq 1$. Then $\sfE$ is $U(n)$-equivariantly affine.
\end{prop}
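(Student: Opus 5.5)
We argue by induction on $n$, with the base case $n=1$ given by \Cref{thm:ellipticU(1)0affineness}. For the inductive step we apply \Cref{lem:inductive_step_general} to the following data:
\begin{itemize}
    \item $H=U(n)$ and $G=U(n+1)$;
    \item $\phi\colon U(n)\to U(n+1)$ the block inclusion $A\mapsto \mathrm{diag}(A,1)$;
    \item $N=U(1)$ and $f=\det\colon U(n+1)\to U(1)$.
\end{itemize}
All three groups are applicable by \Cref{ex:borel_reference}, and $\phi$ is a closed embedding. We now check the four hypotheses in turn.

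\textbf{Hypotheses 1 and 2.} By \Cref{thm:extra-applicable}, each of $\sfE_{U(n)}$, $\sfE_{U(n+1)}$ and $\sfE_{U(1)}=\sfE$ is an even-periodic qcqs spectral scheme, which gives hypothesis 1. Hypothesis 2 asks for $U(n)$-equivariant affineness, which is the inductive hypothesis, and $U(1)$-equivariant affineness, which is \Cref{thm:ellipticU(1)0affineness}.

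\textbf{Hypothesis 3.} This is a purely classical statement. By \Cref{ex:u(n)_extra_applicable}, the map $\sfE_{U(n)}^\heartsuit\to\sfE_{U(n+1)}^\heartsuit$ is the map of symmetric products
\[
\mathrm{Sym}^n\sfE^\heartsuit\to\mathrm{Sym}^{n+1}\sfE^\heartsuit,\qquad D\mapsto D+e,
\]
which adds the zero section. Via the identification of symmetric products with Hilbert schemes of points on the curve, this is the image divisor of "those divisors containing $e$". It is an effective Cartier divisor because $\mathrm{Sym}^{n+1}$ of a smooth relative curve is smooth over the base, and the image is the zero locus of the section of a line bundle obtained by pushing forward $\O(D)|_e$ along the universal divisor. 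I expect this to be the content of \Cref{prop:Hilb_Schemes_ECD_affine_complement} in the appendix, and I would cite it.

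\textbf{Hypothesis 4.} The determinant induces the Abel--Jacobi summation map
\[
\mathrm{Sym}^{n+1}\sfE^\heartsuit\to\sfE^\heartsuit,
\]
since on the maximal torus $\det$ restricts to multiplication $U(1)^{n+1}\to U(1)$. Its fibre over a point $x$ is the complete linear system $|\O((n)e+x)|$, which is a $\P^n$. The divisor of those $D$ containing $e$ meets this fibre in a hyperplane $\P^{n-1}$. Hence the complement is, fibrewise, $\P^n\setminus\P^{n-1}\cong\mathbf{A}^n$.

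Globally, the summation map is a projective bundle $\P(\mathcal{V})$ for a rank $n+1$ vector bundle $\mathcal{V}$ on $\sfE^\heartsuit$, and the divisor is a sub-projective bundle $\P(\mathcal{V}')$ for a corank-one subbundle. The complement of a hyperplane subbundle is an affine bundle, and so it is affine over $\sfE^\heartsuit$. This also appears to be the intended content of \Cref{prop:Hilb_Schemes_ECD_affine_complement}.

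\textbf{Main obstacle.} The hard step is hypothesis 4: making the projective bundle description precise, uniformly over a base, via the Fourier--Mukai/Abel--Jacobi construction. A further subtlety is verifying that the map induced by $\det$ on $\sfE_G^\heartsuit$, through the quotient description of \Cref{thm:extra-applicable}, really is the summation map. This follows because $\sfE_{U(1)^{n+1}}\to\sfE_{U(n+1)}$ is a categorical $\Sigma_{n+1}$-quotient and $\det$ restricts on the torus to the sum map $\sfE^{n+1}\to\sfE$, which is $\Sigma_{n+1}$-invariant.
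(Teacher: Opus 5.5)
Your proposal is correct and matches the paper's proof: induction on $n$ starting from \Cref{thm:ellipticU(1)0affineness}, applying \Cref{lem:inductive_step_general} with $H=U(n)$, $G=U(n+1)$, $N=U(1)$, the block inclusion and the determinant, with hypotheses 1--2 from \Cref{thm:extra-applicable} and the induction hypothesis, and hypotheses 3--4 obtained by identifying the maps with $i_n$ and $\sigma_{n+1}$ and citing \Cref{prop:Hilb_Schemes_ECD_affine_complement}. Your projective-bundle picture for hypothesis 4 is sound, but note that the appendix proves affineness of the complement differently, via relative ampleness of the dual of the ideal sheaf of $i_n$ over $\sigma_{n+1}$ together with Serre's affineness criterion, rather than through an explicit hyperplane subbundle.
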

\begin{proof}
    The proof proceeds by induction in \(n\); the base case holds by \Cref{thm:ellipticU(1)0affineness}. For the inductive step, we want to apply \Cref{lem:inductive_step_general} with \(H=U(n)\), \(G=U(n+1)\), \(N=U(1)\), \(\phi\) the standard inclusion as block matrices in the upper-left corner, and  \(f\) the determinant map.
    Let us check that the assumptions are fulfilled:
    \begin{enumerate}
        \item Apply \Cref{thm:extra-applicable}.
        \item $U(n)$-equivariant affineness holds by induction or \Cref{thm:ellipticU(1)0affineness}.
        \item By \Cref{thm:extra-applicable}, also see \Cref{ex:u(n)_extra_applicable}, the composite $\sfE_{U(n)}^\heartsuit \xrightarrow{\phi} \sfE_{U(n+1)}^\heartsuit \xrightarrow{f} \sfE_{U(1)}^\heartsuit$ can be identified with
        \[S^n(\sfE^\heartsuit/\sfM^\heartsuit) \xrightarrow{i_n} S^{n+1}(\sfE^\heartsuit / \sfM^\heartsuit) \xrightarrow{\sigma_{n+1}} \sfE^\heartsuit.\]
        By \Cref{prop:Hilb_Schemes_ECD_affine_complement}, the map \(i_n\) is an effective Cartier divisor.
        \item Similarly, by \Cref{prop:Hilb_Schemes_ECD_affine_complement}, the open complement of \(i_n\) is affine over \(\sfE^\heartsuit\).\qedhere
    \end{enumerate}
\end{proof}

This brings us to the special unitary case.

\begin{prop}\label{thm:SU_applicable_affine}
    Let $p\colon \sfE\to \Spec A$ be an oriented elliptic curve over a strictly even-periodic Noetherian $\E_\infty$-ring $A$ and $n\geq 1$. Then $\sfE$ is $SU(n)$-equivariantly affine.
\end{prop}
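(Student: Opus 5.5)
We argue by induction on $n$, following the pattern of \Cref{thm:U_applicable_affine} and feeding each step into \Cref{lem:inductive_step_general}. The base case $n=1$ is trivial: $SU(1)=e$, so $\sfE_{SU(1)}\simeq \Spec A$ is affine and the global sections adjunction is an equivalence. For the inductive step from $SU(n)$ to $SU(n+1)$, we apply \Cref{lem:inductive_step_general} with the following choices:
\begin{itemize}
\item $H=SU(n)$ and $G=SU(n+1)$;
\item $\phi\colon SU(n)\to SU(n+1)$ the upper-left block inclusion;
\item $N=e$ the trivial group and $f$ the unique map.
\end{itemize}
This mirrors the $U(1)$-case rather than the $U(n)$-case. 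The point is that $\sfE_{SU(n+1)}$ has no determinant map to $\sfE$: its underlying classical scheme is the fibre of the summation $\sigma_{n+1}\colon S^{n+1}(\sfE^\heartsuit/\sfM^\heartsuit)\to \sfE^\heartsuit$ over the zero section. So the natural target for the affine complement is the base $\sfM=\sfE_e$ itself, which is trivially equivariantly affine.

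We now check the hypotheses of \Cref{lem:inductive_step_general} in turn.
\begin{enumerate}
\item The stacks $\sfE_{SU(n)}$, $\sfE_{SU(n+1)}$ and $\sfE_e=\Spec A$ are nonconnective spectral schemes by \Cref{thm:extra-applicable}.
\item $H$-equivariant affineness is the inductive hypothesis, and $N$-equivariant affineness is trivial.
\item By \Cref{thm:extra-applicable}, with maximal torus the diagonal torus of determinant one and Weyl group $\Sigma_{n+1}$, we have $\sfE_{SU(n+1)}^\heartsuit\simeq \ker(\sigma_{n+1})$. This is the quotient by $\Sigma_{n+1}$ of the kernel of the sum map $(\sfE^\heartsuit)^{\times(n+1)}\to\sfE^\heartsuit$. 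The map induced by $\phi$ adds the identity point $e$: in terms of divisors it sends $D\mapsto D+[e]$, restricting $i_n$ to the zero-sum loci. It remains to identify this map as an effective Cartier divisor.
\item The complement of this divisor must be shown to be affine over $\sfM^\heartsuit$.
\end{enumerate}

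These last two geometric facts are the classical input from \Cref{sec:appendix_classical_fun_stuff}. Hypothesis (3) should follow from the corresponding statement for $i_n$ in \Cref{prop:Hilb_Schemes_ECD_affine_complement} by base change along the zero section $e\colon \sfM^\heartsuit\to \sfE^\heartsuit$. Indeed, $i_n$ is a map over $\sfE^\heartsuit$ via $\sigma_n$ and $\sigma_{n+1}$, so the square is cartesian, and effective Cartier divisors are stable under flat base change. Flatness holds because $\sigma_{n+1}$ is a projective bundle (the Abel–Jacobi map) for $n+1\geq 2$, and $\sigma_n$ is flat as well (for $n=1$ it is an isomorphism). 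Hypothesis (4) follows the same way: the open complement of $i_n$ is affine over $\sfE^\heartsuit$, and affineness is stable under base change along $e$, so the complement of $\sfE_{SU(n)}^\heartsuit$ in $\sfE_{SU(n+1)}^\heartsuit$ is affine over $\sfM^\heartsuit$. Geometrically, this is the complement of a hyperplane in the fibrewise projective space $\mathbf{P}^n$ given by the linear system $|(n+1)e|$.

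The main obstacle is the identification in (3). We must confirm, via the functoriality of \Cref{thm:extra-applicable} for the tori $T_{SU(n)}\subset T_{SU(n+1)}$ and their Weyl quotients, that the map $\sfE_{SU(n)}^\heartsuit\to\sfE_{SU(n+1)}^\heartsuit$ really is the base change of $i_n$ along $e$. In particular, the categorical quotients by $\Sigma_n$ and $\Sigma_{n+1}$ must commute with taking the fibre over $e$. We expect to handle this by noting that both sides are flat over $\sfM^\heartsuit$ with the same fibres, or by using the explicit descriptions of the appendix.
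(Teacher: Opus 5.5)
Your proposal is correct and is essentially the paper's proof. It uses the same induction via \Cref{lem:inductive_step_general} with $H=SU(n)$, $G=SU(n+1)$, $N=e$, and obtains hypotheses (3) and (4) by base changing \Cref{prop:Hilb_Schemes_ECD_affine_complement} along the zero section. The quotient-versus-fibre compatibility you flag as the main obstacle is supplied by the pullback square of \Cref{lem:SpecialUnitary_Schemes_ECD_affine_complement}.

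One small correction: the base change along $e$ is not flat, so ``stable under flat base change'' is not the right principle. What you need is that $i_n$ is a \emph{relative} effective Cartier divisor over $\sfE^\heartsuit$, since its source is flat via $\sigma_n$. Such divisors are stable under arbitrary base change (Stacks Tag 056Q), which is exactly how the paper argues.
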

\begin{proof}
    Let us induct on the variable $n$; the case of $n=1$ being clear as $SU(1)$ is the trivial group, so \(\Sp_{SU(1)}\simeq \Sp\) and \(\sfE_{SU(1)}\simeq \Spec A\) is affine by assumption. For the inductive step $n\geq 1$, we want to apply \Cref{lem:inductive_step_general} with \(H=SU(n)\), \(G=SU(n+1)\), \(N=e\), \(\phi\) the standard inclusion as block matrices in the upper-left corner, and  \(f\) the unique map.
    Let us check that the assumptions are fulfilled:
    \begin{enumerate}
        \item Apply \Cref{thm:extra-applicable}.
        \item Holds by induction and that we are working over an affine base.
        \item We first note that $SU(n)$ has $(n-1)$-dimensional maximal torus with Weyl group the permutation group $\Sigma_n$. By \Cref{thm:extra-applicable}, we can identify
        \[\sfE_{SU(n)}^\heartsuit \simeq K_n \simeq \Spec \pi_0 A \times_{\sfE^\heartsuit} S^n(\sfE^\heartsuit/\Spec \pi_0 A)\] 
        as the pullback of the symmetric product along the zero section $e \colon \Spec \pi_0 A \to \sfE^\heartsuit$; see the discussion preceding \Cref{lem:SpecialUnitary_Schemes_ECD_affine_complement}. This also allows us to identify the map \(\sfE_{SU(n)}^\heartsuit \to \sfE_{SU(n+1)}^\heartsuit\) as the map \(j_n \colon K_n/\Sigma_n\to K_{n+1}/\Sigma_{n+1}\) induced by the base change of the usual inclusion of symmetric products \(i_n\colon S^n(\sfE^\heartsuit/\M^\heartsuit)\to S^{n+1}(\sfE^\heartsuit/\M^\heartsuit)\), along the zero section. By \Cref{prop:Hilb_Schemes_ECD_affine_complement}, the map \(i_n\) is an effective Cartier divisor relative to \(\sfE^\heartsuit\), hence $j_n$ is an effective Cartier divisor relative to $\M$ by base change (\cite[Tag \href{https://stacks.math.columbia.edu/tag/056Q}{056Q}]{stacks}).
        \item Similarly, we know that the open complement to $i_n$ is affine by \Cref{prop:Hilb_Schemes_ECD_affine_complement}, and open complements and affineness is closed under base change, hence the open complement to $j_n$ is affine over $\M$ as desired.\qedhere
    \end{enumerate}
\end{proof}

Finally, the symplectic case.

\begin{prop}\label{thm:Sp_applicable_affine}
    Let $p\colon \sfE \to \Spec A$ be an oriented elliptic curve over a strictly even-periodic Noetherian $\E_\infty$-ring $A$ and $n\geq 1$. Then $\sfE$ is $Sp(n)$-equivariantly affine.
\end{prop}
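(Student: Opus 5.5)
We argue by induction on $n$, following the pattern of the proofs of \Cref{thm:U_applicable_affine,thm:SU_applicable_affine}. At each step we apply \Cref{lem:inductive_step_general} with $H = Sp(n)$, $G = Sp(n+1)$, $N = e$ the trivial group, $\phi$ the standard block inclusion in the upper-left corner, and $f$ the unique map. For the base case, $Sp(1) \cong SU(2)$, so $\sfE$ is $Sp(1)$-equivariantly affine by \Cref{thm:SU_applicable_affine}. One could equally start the induction at $Sp(0) = e$, since $\sfE_e \simeq \Spec A$ is affine.

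We then verify the four hypotheses of \Cref{lem:inductive_step_general} in turn.
\begin{enumerate}
    \item $\sfE_{Sp(n)}$ and $\sfE_{Sp(n+1)}$ are nonconnective spectral schemes by \Cref{thm:extra-applicable}, and $\sfE_e = \Spec A$ is affine.
    \item $Sp(n)$-equivariant affineness holds by the inductive hypothesis. Affineness for the trivial group is immediate because the base is affine.
    \item We need that $\sfE_{Sp(n)}^\heartsuit \to \sfE_{Sp(n+1)}^\heartsuit$ is an effective Cartier divisor.
    \item We need that its open complement is affine over $\M^\heartsuit = \Spec \pi_0 A$.
\end{enumerate}

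Hypotheses 3 and 4 are where the geometry enters. The maximal torus of $Sp(n)$ is $U(1)^n$, and its Weyl group is the hyperoctahedral group $W_n = (\Z/2)^n \rtimes \Sigma_n$, acting on $(\sfE^\heartsuit)^{\times n}$ by permutations and coordinatewise inversion. \Cref{thm:extra-applicable} therefore identifies
\[\sfE_{Sp(n)}^\heartsuit \simeq (\sfE^\heartsuit)^{\times n}/W_n \simeq S^n(\sfE^\heartsuit/[-1]),\]
where $\sfE^\heartsuit/[-1]$ is the quotient of the curve by inversion, which is a $\P^1$-bundle over $\M^\heartsuit$. Under this identification the map induced by $\phi$ is the inclusion $S^n(\P) \to S^{n+1}(\P)$ that adds the image of the zero section as an extra point, where $\P = \sfE^\heartsuit/[-1]$. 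We take this identification from the classical facts collected in \Cref{sec:appendix_classical_fun_stuff}, presumably stated there in a lemma analogous to \Cref{prop:Hilb_Schemes_ECD_affine_complement} and \Cref{lem:SpecialUnitary_Schemes_ECD_affine_complement}.

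Granting this, $S^{n+1}(\P) \cong \P(\mathrm{Sym}^{n+1})$ is a projective bundle over $\M^\heartsuit$. The image of $S^n(\P)$ is the hyperplane of degree-$(n+1)$ divisors containing the point $[e]$. A hyperplane in a projective bundle is an effective Cartier divisor, and its complement is an affine-space bundle, hence affine over $\M^\heartsuit$. This gives hypotheses 3 and 4.

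The main obstacle is making this identification rigorous over a general base. Specifically, we must show two things:
\begin{enumerate}
    \item The categorical quotient by $W_n$ commutes with the construction relative to $\M^\heartsuit$.
    \item The map induced by $\phi$ really is the stated inclusion.
\end{enumerate}
To do this we would track the torus inclusion $U(1)^n \to U(1)^{n+1}$ into the first factors. This is compatible with the inclusion $W_n \subset W_{n+1}$ and covers $\phi$, exactly as in \Cref{ex:u(n)_extra_applicable}. For the affineness claim, we would check Zariski locally on $\M^\heartsuit$ using Weierstrass coordinates. Once these facts are in place, \Cref{lem:inductive_step_general} gives that $\sfE$ is $Sp(n+1)$-equivariantly affine, which completes the induction.
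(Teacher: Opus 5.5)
Your proposal is correct and follows the paper's proof essentially verbatim. You use the same induction from $Sp(1)\cong SU(2)$, the same application of \Cref{lem:inductive_step_general} with $H=Sp(n)$, $G=Sp(n+1)$, $N=e$, and the same identification $\sfE_{Sp(n)}^\heartsuit\simeq T^n(\sfE^\heartsuit/\M^\heartsuit)\simeq S^n(T^1(\sfE^\heartsuit/\M^\heartsuit)/\M^\heartsuit)$ via \Cref{thm:extra-applicable}; your upper-left rather than lower-right block inclusion is immaterial, since the two are conjugate. The only difference is that you check hypotheses 3 and 4 by exhibiting $T^{n+1}$ as a projective bundle in which $T^n$ is a hyperplane subbundle, a more explicit route to \Cref{prop:Symplectic_Schemes_ECD_affine_complement} than the paper's appeal to the Hilbert-scheme and ampleness argument of \Cref{prop:Hilb_Schemes_ECD_affine_complement}.
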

\begin{proof}
    The proof proceeds by induction in \(n\); the base case \(n=1\) follows from \Cref{thm:SU_applicable_affine}, by the isomorphism \(Sp(1)\simeq SU(2)\). For the inductive step, we want to apply \Cref{lem:inductive_step_general} with \(H=Sp(n)\), \(G=Sp(n+1)\), \(N=e\), \(\phi\) the standard inclusion as block matrices in the lower right corner, and  \(f\) the unique map.
    Let us check that the assumptions are fulfilled:
    \begin{enumerate}
        \item Apply \Cref{thm:extra-applicable}.
        \item Holds by induction and that we are working over an affine base.
        \item By \Cref{thm:extra-applicable} we have that
        \[\sfE_{Sp(n)}^\heartsuit\xrightarrow{\phi} \sfE_{Sp(n+1)}^\heartsuit\xrightarrow{f}\sfE_e^\heartsuit\]
        agrees with
        \[T^n(\sfE^\heartsuit/\sfM^\heartsuit)\xrightarrow{k_n} T^{n+1}(\sfE^\heartsuit/\sfM^\heartsuit)\to \sfM^\heartsuit\]
        using the notation preceding \Cref{prop:Symplectic_Schemes_ECD_affine_complement}. In particular, the map $k_n$ above is an effective Cartier divisor over $\sfM^\heartsuit$ by \Cref{prop:Symplectic_Schemes_ECD_affine_complement}.
        \item Similarly, by \Cref{prop:Symplectic_Schemes_ECD_affine_complement}, the open complement of \(k_n\) is affine over \(\M^\heartsuit\). \qedhere
    \end{enumerate}
\end{proof}

\begin{remark}\label{rmk:identification-of-divisors}
    Let $\sfE$ be an oriented elliptic curve over stack $\M$. In \cite[Lm.9.1]{davidandlennart}, it is shown that the line bundle $\O_{\sfE}(-e)$, corresponding to the effective Cartier divisor given by the unit section $e\colon \Spec A \to \sfE$, is given by the $U(1)$-equivariant elliptic cohomology of the $U(1)$-representation sphere $S^\mu$, where $\mu$ is the tautological $1$-dimensional unitary $U(1)$-representation. Gepner--Meier prove this for $\M\in \SpDM^\nc$, but the same argument holds for $\M$ in $\Stk$. In terms of the $U(1)$-equivariant affineness of $\sfE$ of \Cref{thm:ellipticU(1)0affineness}, so if $\M = \Spec A$ is a strictly even-periodic Noetherian affine, this identifies the line bundle $\O_{\sfE}(e)$ in $\QCoh(\sfE)$ with the $A_{U(1)}$-module $A_{U(1)}\otimes S^{\mu}$.

    In fact, this is a generic phenomenon for the other matrix groups and the inclusions between them appearing in \Cref{thm:U_applicable_affine,thm:SU_applicable_affine,thm:Sp_applicable_affine}, and does not appeal to equivariant affineness; in all cases, this essentially holds by definition as in the proof of \cite[Lm.9.1]{davidandlennart}. To summarise:
    \begin{itemize}
        \item For $n\geq 1$, the $U(n+1)$-global sections of the line bundle $\O_{\sfE_{U(n+1)}}(\sfE_{U(n)})$ on $\sfE_{U(n+1)}$ are given by the $A_{U(n+1)}$-module $A_{U(n+1)} \otimes S^V$, where $V$ is the tautological $(n+1)$-dimensional complex $U(n+1)$-representation.
        \item For $n\geq 1$, the $SU(n+1)$-global sections of the line bundle $\O_{\sfE_{SU(n+1)}}(\sfE_{SU(n)})$ on $\sfE_{SU(n+1)}$ are given by the $A_{SU(n+1)}$-module $A_{SU(n+1)} \otimes S^V$, where $V$ is the tautological $(n+1)$-dimensional complex $SU(n+1)$-representation.
        \item For $n\geq 1$, the $Sp(n+1)$-global sections of the line bundle $\O_{\sfE_{Sp(n+1)}}(\sfE_{Sp(n)})$ on $\sfE_{Sp(n+1)}$ are given by the $A_{Sp(n+1)}$-module $A_{Sp(n+1)} \otimes S^V$, where $V$ is the tautological $(n+1)$-dimensional quaternionic $Sp(n+1)$-representation.
    \end{itemize}
    Using \Cref{rmk:general_cohomology_definition}, one can also consider similar computations for any compact Lie groups. It seems like an interesting question to try to find a general dictionary between divisors of $\sfE_G$ and $G$-representations, if one exists.
\end{remark}

\subsection{Extension to finite products}\label{ssec:extension_to_finite_products}
Next we deal with products of groups, starting with quasi-coherent sheaves.

\begin{prop}\label{pr:kuenneth_for_qcoh_of_E_G}
    Let $\sfE$ be an oriented elliptic curve over a perfect stack $\M$ with universally 0-affine diagonal and $G,K$ be compact Lie groups such that either $\sfE_G$ or $\sfE_K$ is a perfect stack. Then the natural map induced by the projections $G \gets G\times K \to K$
    \[\QCoh(\sfE_G) \otimes_{\QCoh(\M)} \QCoh(\sfE_K) \to \QCoh(\sfE_{G\times K})\]
    in $\CAlg(\PrLst)$ is an equivalence.
\end{prop}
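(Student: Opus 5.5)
The plan is to reduce to the Künneth formula \Cref{pr:general_kuenneth}. The key geometric input is \Cref{thm:Ell_geo_nice_prop}(\ref{it:Ell_product_preserving}): $\sfE[-]$ preserves products, and the product in $\Stk_{/\M}$ is the fibre product over $\M$. Since $\B(G\times K)\simeq \B G\times \B K$ in $\Glo$, this gives an equivalence of stacks
\[\sfE_{G\times K}\simeq \sfE_G\times_\M \sfE_K.\]
Under this identification, the two projections $G\gets G\times K\to K$ induce the two projection maps of the fibre product, so the functor in the statement is exactly the external-product functor of \Cref{pr:general_kuenneth}, induced by pullback along the projections.

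By symmetry, assume $\sfE_G$ is perfect. I would apply \Cref{pr:general_kuenneth} with $\Y=\sfE_G$, $f\colon \sfE_G\to\M$ the structure map, $\X=\M$, and $\X'=\sfE_K$. The hypotheses to check are that $\Y$ and $\X$ are perfect, which is assumed, and that $\X=\M$ has universally $0$-affine diagonal, which is also assumed. No condition is needed on $\X'$, since \Cref{pr:general_kuenneth} allows an arbitrary map of stacks $\X'\to\X$. This yields
\[\QCoh(\sfE_K)\otimes_{\QCoh(\M)}\QCoh(\sfE_G)\xrightarrow{\simeq}\QCoh(\sfE_K\times_\M\sfE_G).\]
Swapping the tensor factors, which is harmless because the tensor product of commutative algebras is symmetric, gives the claimed equivalence.

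It remains to see that the map lives in $\CAlg(\PrLst)$. Both pullback functors are symmetric monoidal, and the relative tensor product is the coproduct in $\CAlg(\PrLst)_{\QCoh(\M)/}$. So the induced functor is the canonical symmetric monoidal one, and an equivalence of underlying categories is an equivalence in $\CAlg(\PrLst)$.

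The main obstacle I expect is the first step. The product preservation in \Cref{thm:Ell_geo_nice_prop} has to be read relative to $\M$, meaning products in $\Stk_{/\M}$, and the resulting equivalence must be compatible with the maps induced by the two projection homomorphisms. I would argue that a product-preserving functor sends the projections of $\B G\times\B K$ to the projections of the product, and that $\sfE[\B p]$ for $p\colon G\times K\to G$ is precisely $\sfE[-]$ applied to the corresponding projection in $\Spc_\gl^{\fgq}$. The equivalence therefore identifies the functors themselves, not just the categories.
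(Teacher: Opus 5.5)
Your proposal is correct and follows the paper's argument: use the product preservation of \Cref{thm:Ell_geo_nice_prop}(\ref{it:Ell_product_preserving}) to identify $\sfE_{G\times K}\simeq \sfE_G\times_\M \sfE_K$, then apply \Cref{pr:general_kuenneth} with the perfect factor as $\Y$ and $\X=\M$. Your extra checks, that the identification is compatible with the projection maps and that the functor is symmetric monoidal, spell out details the paper leaves implicit.
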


\begin{proof}
    First, recall that \Cref{thm:Ell_geo_nice_prop}(\ref{it:Ell_product_preserving}) identifies $\sfE_{G\times K} \simeq \sfE_G \times_\M \sfE_K$. We are then exactly in the situation of \Cref{pr:general_kuenneth}, which finishes the proof.
\end{proof}

This immediately yields an equivariant affineness statement.

\begin{prop}\label{pr:propegation_to_products}
    Let $\sfE \to \M$ be an oriented elliptic curve over a 0-affine stack with universally 0-affine diagonal and $G,K$ be two applicable Lie groups. If $\sfE$ is both $G$- and $K$-equivariantly affine, then $\sfE$ is $G\times K$-equivariantly affine.
\end{prop}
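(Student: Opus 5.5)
We will show that $\O_{\sfE_{G\times K}}\otimes_{\Ga_{G\times K}\sfE}-$ is an equivalence by writing it as the tensor product of the two equivalences we already have. The key structural input is a Künneth decomposition on both sides of the adjunction (\ref{eq:equivariant_adjunction}). On the geometric side, $G$-equivariant affineness of $\sfE$ identifies $\QCoh(\sfE_G)\simeq\Mod_{\Ga_G\sfE}(\Sp_G)$, which is compactly generated with compact $=$ dualisable objects because $\Sp_G$ has this property. Hence $\sfE_G$ is perfect, and likewise $\sfE_K$. Since $\M$ is $0$-affine, it is perfect, and \Cref{pr:kuenneth_for_qcoh_of_E_G} gives
\[\QCoh(\sfE_G)\otimes_{\QCoh(\M)}\QCoh(\sfE_K)\xrightarrow{\simeq}\QCoh(\sfE_{G\times K}).\]

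On the equivariant side, write $R=\Ga(\M)$, so $\QCoh(\M)\simeq\Mod_R$. We have $\Sp_G\otimes\Sp_K\simeq\Sp_{G\times K}$ via external smash product; this is standard, since both sides are generated by $(G/H\times K/L)_+$. Products of subgroups are not all subgroups, but they generate $\Sp_{G\times K}$, as one checks by isotropy separation. We would justify this as in the standard literature, or accept it as known. Consequently
\[\Mod_{\Ga_G\sfE}(\Sp_G)\otimes_{\Mod_R}\Mod_{\Ga_K\sfE}(\Sp_K)\simeq\Mod_{\Ga_G\sfE\boxtimes_R\Ga_K\sfE}(\Sp_{G\times K}),\]
where $\boxtimes_R$ denotes the external relative tensor product.

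Next we compare the square
\[\begin{tikzcd}
\Mod_{\Ga_G\sfE}(\Sp_G)\otimes_{\Mod_R}\Mod_{\Ga_K\sfE}(\Sp_K) \ar[r,"F_G\otimes F_K"]\ar[d] & \QCoh(\sfE_G)\otimes_{\QCoh(\M)}\QCoh(\sfE_K)\ar[d,"\simeq"]\\
\Mod_{\Ga_{G\times K}\sfE}(\Sp_{G\times K})\ar[r,"F_{G\times K}"] & \QCoh(\sfE_{G\times K}).
\end{tikzcd}\]
It commutes in $\CAlg(\PrLst)$ by the universal property of the relative tensor product in $\CAlg(\PrLst)$: both composites are symmetric monoidal left adjoints restricting on each factor, via pullback along the projections, to $F_G$ and $F_K$. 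This uses \Cref{pr:homomorphism_and_linear_homology}(1) applied to the projections $G\times K\to G$ and $G\times K\to K$, which are homomorphisms of applicable groups. The left vertical functor is the base change along the map of $\E_\infty$-rings
\[\Ga_G\sfE\boxtimes_R\Ga_K\sfE\to\Ga_{G\times K}\sfE\]
in $\Sp_{G\times K}$. The top arrow is an equivalence by hypothesis.

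It therefore suffices to show that this ring map is an equivalence; then the left vertical arrow is an equivalence, and hence so is $F_{G\times K}$. Note that $F_{G\times K}$ is already fully faithful by \Cref{pr:left_adjoint_is_fullyfaithful}, because $\sfE_{G\times K}$ is perfect and hence $0$-semiaffine. We verify the ring equivalence on $L$-geometric or $L$-categorical fixed points, or more economically by applying the right adjoint $\gGa(\sfE_{G\times K},-)$ to the square. The top-right composite applied to the unit gives $\O_{\sfE_{G\times K}}$, and the equivalence $F_G\otimes F_K$ identifies the unit with $\Ga_G\sfE\boxtimes_R\Ga_K\sfE$. So the question is whether $\gGa(\sfE_{G\times K},\O)$ equals this external tensor product, i.e.\ whether the fully faithful $F_{G\times K}$ sends the unit to the unit, which holds since all functors are symmetric monoidal.

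More cleanly: the composite along the top and right is an equivalence sending units to units. The left vertical functor $\theta$ is a symmetric monoidal left adjoint whose composite with the fully faithful $F_{G\times K}$ is an equivalence. Hence $F_{G\times K}$ is essentially surjective, and being fully faithful it is an equivalence. This circumvents the ring comparison entirely.

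The main obstacle is the Künneth equivalence $\Sp_G\otimes\Sp_K\simeq\Sp_{G\times K}$ and its relative module version, together with commutativity of the square in $\CAlg(\PrLst)$. If the former is troublesome, it is not actually needed for the final argument: we only need that $F_{G\times K}\circ\theta$ is an equivalence. That is immediate from the commuting square, whose top and right sides are equivalences, provided the square is constructed from the universal property of $\otimes_{\Mod_R}$ in $\CAlg(\PrLst)$ as described above.
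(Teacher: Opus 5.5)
Your final argument is the paper's proof: the top arrow is an equivalence by $G$- and $K$-equivariant affineness together with $0$-affineness of $\M$, and the right arrow is an equivalence by \Cref{pr:kuenneth_for_qcoh_of_E_G}, using that $\sfE_G,\sfE_K$ are perfect. Hence $F_{G\times K}$ is essentially surjective, and it is fully faithful by \Cref{pr:left_adjoint_is_fullyfaithful}.

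Drop the detour, though. The asserted equivalence $\Sp_G\otimes\Sp_K\simeq\Sp_{G\times K}$ is false in general, because non-product subgroups are missed, and so is your description of the left vertical functor as a base change in $\Sp_{G\times K}$, which rests on it. For example, $\pi_0$ of the endomorphisms of the unit is $A(C_2)\otimes A(C_2)$, of rank $4$, on one side and $A(C_2\times C_2)$, of rank $5$, on the other; none of this is used in your final argument.
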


\begin{proof}
    We have a commutative diagram
\begin{equation}\label{eq:extend_to_products}
\begin{tikzcd}
	{\Mod_{\Ga_G \sfE}(\Sp_G)\otimes_{\Mod_A(\Sp)}\Mod_{\Ga_K \sfE}(\Sp_K)} & {\QCoh(\sfE_G)\otimes_{\QCoh(\sfM)}\QCoh(\sfE_K)} \\
	{\Mod_{\Ga_{G\times K} \sfE}(\Sp_{G\times K})} & {\QCoh(\sfE_{G\times K})}
	\arrow[from=1-1, to=1-2]
	\arrow["{pr_1^*\otimes pr_2^*}"', from=1-1, to=2-1]
	\arrow["{pr_1^*\otimes pr_2^*}", from=1-2, to=2-2]
	\arrow["{\O_{\sfE_{G\times K}}\otimes_{\Ga_{G\times K} \sfE} - }", from=2-1, to=2-2]
\end{tikzcd}
\end{equation}
    in \(\CAlg(\PrLst)\) courtesy of \Cref{pr:homomorphism_and_linear_homology}. The assumptions that $\sfE$ is both $G$- and $K$-equivariantly affine and that $\M$ is 0-affine show the upper-horizontal functor is an equivalence. Moreover, $G$- and $K$-equivariant affineness show that both $\sfE_G$ and $\sfE_K$ are perfect, respectively. The previous result (\Cref{pr:kuenneth_for_qcoh_of_E_G}) shows that the right-vertical functor is also an equivalence, hence the lower-horizontal functor of (\ref{eq:extend_to_products}) is essentially surjective. By \Cref{pr:left_adjoint_is_fullyfaithful}, this functor is also fully faithful, finishing the proof.
\end{proof}

This gives us the affine version of our main equivariant affineness result.

\begin{cor}\label{cor:affine_version_of_main_statement}
    Let $G= \prod_i U(n_i) \times \prod_j SU(n_j) \times \prod_k Sp(n_k)$ be a finite product of unitary, special unitary, and symplectic groups. Then any oriented elliptic curve $\sfE$ over a strictly even-periodic Noetherian $\E_\infty$-ring $A$ is $G$-equivariantly affine.
\end{cor}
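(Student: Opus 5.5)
The plan is an induction on the number of factors of $G$, with the individual families as base cases and \Cref{pr:propegation_to_products} as the inductive step.

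\textbf{Base cases.} For a single factor, $G$-equivariant affineness of $\sfE$ over $\Spec A$ is exactly \Cref{thm:U_applicable_affine}, \Cref{thm:SU_applicable_affine}, and \Cref{thm:Sp_applicable_affine}. We may discard trivial factors such as $SU(1)$, since they change neither $\Sp_G$ nor $\sfE_G$. If no nontrivial factors remain, $G$ is trivial and the statement holds because $\Spec A$ is affine.

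\textbf{Hypotheses of \Cref{pr:propegation_to_products}.} Write $G = G'\times K$, where $K$ is one factor and $G'$ is the product of the remaining factors. We apply the proposition to $G'$ and $K$ over $\M=\Spec A$, which requires the following.
\begin{enumerate}
\item The base $\Spec A$ is 0-affine, which is clear.
\item The base has universally 0-affine diagonal. Its diagonal is an equivalence, hence affine, and affine maps are universally 0-affine by \cite[Pr.2.2.2.5]{reconstruction}.
\item Both $G'$ and $K$ are applicable. A finite product of such groups is connected with torsion-free $\pi_1$, so this follows from \Cref{ex:borel_reference}.
\item $\sfE$ is $G'$-equivariantly affine and $K$-equivariantly affine, which are the inductive hypothesis and the base case respectively.
\end{enumerate}
\Cref{pr:propegation_to_products} then gives $G$-equivariant affineness.

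\textbf{Main obstacle: the perfectness input.} Inside the proof of \Cref{pr:propegation_to_products}, \Cref{pr:kuenneth_for_qcoh_of_E_G} is applied, and this needs $\sfE_{G'}$ or $\sfE_K$ to be perfect. That proof derives perfectness from equivariant affineness: $\Mod_{\Ga_G\sfE}(\Sp_G)$ is compactly generated by the dualisable objects $\Ga_G\sfE\otimes G/H_+$. One must also check that compact and dualisable objects agree there, since this is a rigid category.

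As an independent check, \Cref{thm:extra-applicable} shows that $\sfE_K$ is a qcqs nonconnective spectral scheme. Hence $\sfE_K$ is perfect by \Cref{ex:schemes_are_passable}, which is all \Cref{pr:kuenneth_for_qcoh_of_E_G} needs.

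Fully faithfulness for the product group comes from \Cref{pr:left_adjoint_is_fullyfaithful}. This requires $\sfE_G$ to be 0-semiaffine. \Cref{thm:extra-applicable} says $\sfE_G$ is a qcqs spectral scheme for finite products $G$, so \Cref{pr:0semiaffine_spectral_schemes} supplies this, and the induction closes.
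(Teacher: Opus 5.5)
Your proposal is correct and is essentially the paper's proof: it combines \Cref{thm:U_applicable_affine,thm:SU_applicable_affine,thm:Sp_applicable_affine} with \Cref{pr:propegation_to_products}, and you make the induction on the number of factors explicit. One small slip: the base's diagonal $\Spec A\to\Spec(A\otimes A)$ is not an equivalence in general, but it is a map between affines and hence affine, so your conclusion that it is universally 0-affine still holds.
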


\begin{proof}
    Combine \Cref{thm:SU_applicable_affine,thm:U_applicable_affine,thm:Sp_applicable_affine} together with \Cref{pr:propegation_to_products}.
\end{proof}

\subsection{Equivariant quasi-coherent sheaves and base change}\label{ssec:equivariant_qcoh_and_base_change}
Let us recall the definition of $\QCoh_G(\M)$ from the introduction.

\begin{mydef}
    For a compact Lie group $G$ and a stack $\M$, let
    \[\QCoh_G(\M) = \QCoh(\M) \otimes \Sp_G \in \CAlg(\PrLst)\]
    denote the category of \emph{$G$-equivariant quasi-coherent sheaves}.
\end{mydef}

To emphasise, this definition is not supposed to be a genuinely equivariant replacement for quasi-coherent sheaves on $\M$, that would presumably require $\M$ itself to be a kind of genuinely equivariant stack $\M$. This is rather a definition designed to provide a relativised version of \Cref{maintheorem_universal}, of course given as \Cref{maintheorem_arbitrary_base}.

In the affine case $\M=\Spec A$, one has
\begin{equation}\label{eq:qcoh_G_on_affines}\QCoh_G(\Spec A) \simeq \Mod_A(\Sp) \otimes \Sp_G \simeq \Mod_{L_G A}(\Sp_G),\end{equation}
where $L_G \colon \Sp \to \Sp_G$ is the symmetric monoidal left adjoint to the underlying spectrum functor $U_G \colon \Sp_G \to \Sp$. As $\Sp_G$ is dualisable in $\CAlg(\PrLst)$, being compactly generated, one can rewrite $\QCoh_G(\X)$ using a presentation of $\M = \colim \Spec A$ as
\[\QCoh_G(\M) = \QCoh(\M) \otimes \Sp_G {\simeq} \lim (\Mod_A(\Sp) \otimes \Sp_G) \simeq \lim \Mod_{L_G A}(\Sp_G).\]
If $\M$ is 0-affine, such as the universal case of $\M=\M_\Ell^\ori$, we can also identify
\begin{equation}\label{eq:0affine_qcoh_g_comptuation}\QCoh_G(\M) = \QCoh(\M) \otimes \Sp_G \simeq \Mod_{\Ga(\M)}(\Sp) \otimes \Sp_G \simeq \Mod_{L_G \Ga(\M)}(\Sp_G).\end{equation}
If one further assumes that $\Ga(\M)$ comes equipped with a $G$-equivariant refinement, so some $A \in \CAlg(\Sp_G)$ such that $U_G(A) = \Ga(\M)$, then the counit for $L_G \dashv U_G$ also identifies
\[\Mod_A(\Sp_G) \simeq \Mod_A(\Mod_{L_G (\Ga(\M))}(\Sp_G)) \simeq \Mod_{A}(\QCoh_G(\M)).\]
One can then formally globalise the notion of an equivariant global sections functor.

\begin{mydef}\label{df:g-pushforward}
    Let $G$ be an applicable compact Lie group and $p \colon \sfE \to \M$ an oriented elliptic curve. The right adjoint to the $\QCoh(\M)$-linearisation of the functor
    \[\O_{\sfE_G} \otimes - \colon \Sp_G \to \QCoh(\sfE_G) \in \CAlg(\PrLst)\]
    of \Cref{def:elliptic_homology} takes the form $p_{\ast,G} \colon \QCoh(\sfE_G) \to \QCoh_G(\M)$. This right adjoint is lax monoidal, so it naturally factors as the \emph{$G$-equivariant pushforward}
    \[p_{\ast,G} \colon \QCoh(\sfE_G) \to \Mod_{p_{\ast,G} \O_{\sfE_G}}(\QCoh_G(\M)).\]
    By construction, for each closed embedding $\phi \colon H \to G$, the $H$-fixed points of $p_{\ast,G}$ are computed as
    \[p_{\ast,G}(\calF)^H \simeq (p_H)_\ast \O_{\sfE_H} \in \QCoh(\M),\]
    where $p_H \colon \sfE_H \to \M$ is the structure map.
\end{mydef}

\begin{remark}
    Alternatively, one can define $p_{\ast,G}$ using the right adjoint to the $\Sp_G$-linearisation of the pullback $(p_G)^\ast \colon \QCoh(\M) \to \QCoh(\sfE_G)$.
\end{remark}

Using the concept of equivariant quasi-coherent sheaves, one can state a general base change property for equivariant elliptic cohomology theories.

\begin{theorem}\label{pr:base_change_for_equivariant_theories}
	Let $p \colon \sfE \to \M$ be an oriented elliptic curve over a stack $\M$, $f\colon \M'\to \M$ be a map of stacks, and $G$ an applicable compact Lie group. Write $p'\colon \sfE' \to \M'$ for the oriented elliptic curve over $\M'$ defined by pulling back $\sfE$. Then natural Beck--Chevalley map
    \[f_G^\ast p_{\ast,G} \O_{\sfE_G} \to p'_{\ast,G} \O_{\sfE'_G} \in \CAlg(\QCoh_G(\M'))\]
    is an equivalence, where $f_G^\ast\colon \QCoh_G(\M) \to \QCoh_G(\sfM')$ is the tensor product of the pullback functor $f^\ast\colon \QCoh(\M) \to \QCoh(\M')$ with $\Sp_G$.
\end{theorem}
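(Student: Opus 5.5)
My plan is to reduce to the case where both $\M$ and $\M'$ are affine, and then check the statement on a set of generators. Throughout I use that $\QCoh_G(-)=\QCoh(-)\otimes\Sp_G$ turns colimits of stacks into limits of categories, since $\Sp_G$ is dualisable. By \Cref{thm:Ell_geo_nice_prop}(\ref{it:base_change_in_stacks}), the construction $\sfE\mapsto\sfE_G$ is compatible with base change. Together with \Cref{pr:homomorphism_and_linear_homology}(\ref{it:homology_commutes_with_maps_of_stacks}), this shows that the formation of $p_{\ast,G}$ is compatible with restriction along maps $\Spec B\to\M$, provided the Beck--Chevalley map is an equivalence in the affine case. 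So I would first prove the statement when $\M'=\Spec B$, $\M=\Spec A$, and then deduce the general case by writing $\M'\simeq\colim\Spec B$ and $\M\simeq\colim\Spec A$. An object of $\QCoh_G(\M')\simeq\lim\Mod_{L_GB}(\Sp_G)$ is an equivalence-testable compatible family, so checking the Beck--Chevalley map after pulling back to each $\Spec B$ suffices. The identity $\mathrm{BC}_{g\circ f}=\mathrm{BC}_f\circ f^\ast\mathrm{BC}_g$ then reduces everything to maps $\Spec B\to\Spec A$ and to maps $\Spec B\to\M$ that factor through some $\Spec A$.

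In the affine case, $\QCoh_G(\Spec A)\simeq\Mod_{L_GA}(\Sp_G)$ and $p_{\ast,G}\O_{\sfE_G}=A_G$, and I must show that $B\otimes_A A_G\to B_G$ is an equivalence of $G$-spectra, where $B\otimes_A-$ means $L_GB\otimes_{L_GA}-$. Equivalences of $G$-spectra are detected on $K$-fixed points for closed $K\subseteq G$, equivalently by mapping out of $\Sigma^\infty_+G/K$. By \eqref{eq:H-fixed_points}, $(B_G)^K\simeq\Gamma(\sfE'_K)$, and this holds for all closed $K$ via \Cref{rmk:general_cohomology_definition}. On the other side, $\Sigma^\infty_+G/K$ is compact, so taking $K$-fixed points commutes with the relevant colimits. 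The projection formula for $L_G$ gives $(L_GB\otimes_{L_GA}A_G)^K\simeq B\otimes_A (A_G)^K\simeq B\otimes_A\Gamma(\sfE_K)$. The question therefore becomes whether $B\otimes_A\Gamma(\sfE_K)\to\Gamma(\sfE_K\times_{\Spec A}\Spec B)$ is an equivalence. This is the Beck--Chevalley map for the structure map $\sfE_K\to\Spec A$.

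This last step is the main obstacle, since $\sfE_K$ need not be perfect for arbitrary $K$. I would handle it in two stages. First, $\sfE_K\to\sfE_G$ is affine by \Cref{thm:Ell_geo_nice_prop}(\ref{it:Ell_closed_embedding_affine}), hence universally adjointable, so it suffices that $\sfE_G\to\Spec A$ satisfies base change for all quasi-coherent sheaves, applied to $\ELL_G(G/K_+)$. Second, for $\sfE_G\to\Spec A$ itself, I would descend along the maximal torus. By \Cref{thm:Ell_geo_nice_prop}(\ref{it:agree_on_abelian}), $\sfE_T\simeq\sfE^{\times r}$ is a relative scheme over the base, so $\sfE_T\to\Spec A$ is perfect by \Cref{ex:schemes_are_passable} and universally adjointable by \Cref{pr:main_perfect_stacks_statement}. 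Applicability of $G$ makes $\sfE_T\to\sfE_G$ a cover in the sense of \cite{gm2}, and \Cref{lem:uni_adjointable_local_finitary} lets adjointability descend along such covers. If this descent is not available in the required generality, my fallback is to reduce by fpqc descent on the base to the case of schemes, and then use \Cref{thm:extra-applicable} together with \Cref{ex:schemes_are_passable}.

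Finally, all the maps involved are maps of commutative algebras, and equivalences are detected on underlying objects. So the Beck--Chevalley map is an equivalence in $\CAlg(\QCoh_G(\M'))$, which proves the statement.
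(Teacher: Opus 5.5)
Up to the final step your plan tracks the paper. You detect the Beck--Chevalley map on $K$-fixed points for closed $K\subseteq G$, and identify these with the nonequivariant Beck--Chevalley maps for $\sfE_K\to\M$. The paper instead reduces to the $0$-affine universal base $\M_{\Ell}^{\ori}$ and gets joint conservativity of fixed points on $\QCoh_G(\M')$ from \Cref{lem:conversative_after_tensor}. Your reduction to affine $\M$, $\M'$ also works, provided you run the affine argument for all sheaves, so that the right adjoint on $\lim\QCoh_G(\Spec A)$ is computed termwise.

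The gap is in the adjointability of $\sfE_G\to\Spec A$. \Cref{lem:uni_adjointable_local_finitary} is descent in the \emph{target}: it tests universal adjointability of $\Y\to\X$ after base change along a cover of $\X$. You apply it along $\sfE_T\to\sfE_G$, which is a cover of the \emph{source}. Descent in the source would mean writing $p_\ast\calF$ as a totalisation over the \v{C}ech nerve of $\sfE_T\to\sfE_G$ and commuting the left adjoint $B\otimes_A-$ past that totalisation. That is not automatic. It would need extra input, such as descendability of $\sfE_T\to\sfE_G$, and nothing in the paper about the torus ``cover'' supplies this.

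Your fallback is closer to a correct argument, but as written it has two problems.
\begin{enumerate}
\item It only handles groups for which \Cref{thm:extra-applicable} makes $\sfE_G$ a qcqs spectral scheme over strictly even-periodic Noetherian bases, i.e.\ finite products of $U(n)$, $SU(n)$ and $Sp(n)$. For a general applicable $G$, such as $Spin(n)$ from \Cref{ex:borel_reference}, nothing says $\sfE_G$ is a scheme.
\item An arbitrary $\Spec A$ has no cover by strictly even-periodic Noetherian affines, so the descent has to be run over $\M_{\Ell}^{\ori}$.
\end{enumerate}

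The repair is the paper's \Cref{lem:elliptic_stacks_uni_adjointable}:
\begin{enumerate}
\item In your first stage, embed $K$ into some $U(n)$ rather than into $G$. Then $\sfE_K\to\sfE_{U(n)}$ is affine by \Cref{thm:Ell_geo_nice_prop}(\ref{it:Ell_closed_embedding_affine}).
\item Using \Cref{thm:Ell_geo_nice_prop}(\ref{it:base_change_in_stacks}) and stability of universal adjointability under base change, pass to the universal map $\calE_{U(n)}\to\M_{\Ell}^{\ori}$.
\item Apply \Cref{lem:uni_adjointable_local_finitary} (target descent) along a strictly even-periodic Noetherian affine cover of $\M_{\Ell}^{\ori}$.
\item Over such a base, $\sfE_{U(n)}$ is a qcqs spectral scheme by \Cref{thm:extra-applicable}. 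It is therefore perfect over the base by \Cref{ex:schemes_are_passable} and \Cref{pr:permancence_per_proper}, and so universally adjointable by \Cref{pr:main_perfect_stacks_statement}.
\end{enumerate}
Composing with the affine map $\sfE_K\to\sfE_{U(n)}$ gives the base change for $\sfE_K\to\Spec A$ that you need, and the rest of your argument then goes through.
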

This generalised base change formula has many potential applications; in \Cref{sec:appendix2}, we will see how it implies an equivariant form of Wood's theorem for $\TMF$.

\begin{example}\label{ex:relative_base_change}
    If both $\M$ and $\M'$ are 0-affine, then \Cref{pr:base_change_for_equivariant_theories} takes the form
    \[\Ga_G \sfE \otimes_{\Ga(\M)} \Ga(\M') \simeq \Ga_G \sfE' \in \CAlg(\Sp_G).\]
    For instance, consider the \emph{oriented Tate curve} $\sfT$ over either $\Spec \KU\llpar q\rrpar$ or $\Spec \KU\llpar q\rrpar / C_2$ of \cite[Th.A]{globaltate}; the associated $G$-equivariant $\E_\infty$-rings are written as $\KU\llpar q\rrpar_G$ and $\KO\llpar q\rrpar$, respectively. The natural quotient map of stacks then gives the identification
    \[\KO\llpar q\rrpar_G \otimes_{\KO\llpar q \rrpar} \KU\llpar q \rrpar \simeq \KU\llpar q \rrpar_G.\]
    When mixed with the universal case, this also gives the identifications
    \[\TMF_G \otimes_\TMF \KU\llpar q \rrpar \simeq \KU\llpar q \rrpar_G \qquad \TMF_G \otimes_\TMF \KO\llpar q \rrpar \simeq \KO\llpar q \rrpar_G.\qedhere\]
\end{example}

\begin{remark}\label{rmk:base_change_for_general_guys}
    Staying within the situation of \Cref{ex:relative_base_change}, if one defines $\Ga_G \sfE$ and $\Ga_G \sfE'$ for a general compact Lie group $G$ as in \Cref{rmk:TMFG_for_general_G}, then one immediately obtains the equivalences in $\CAlg(\Sp_G)$
    \[\Ga_G \sfE \otimes_{\Ga(\M)} \Ga(\M') = \res_{G}^{U(n)} (\Ga_{U(n)} \sfE) \otimes_{\Ga(\M)}  \Ga(\M') \simeq \res_{G}^{U(n)} (\Ga_{U(n)} \sfE \otimes_{\Ga(\M)} \Ga(\M')) \simeq \Ga_G \sfE'\]
    from the $U(n)$-case and the definition of $\Ga_G \sfE' = \res^{U(n)}_G \Ga_{U(n)}\sfE'$, as restriction is symmetric monoidal and commutes with colimits. In particular, this applies in the universal case:
    \[\TMF_G \otimes_{\Ga(\M)} \Ga(\M) \simeq \Ga_G \sfE\qedhere\]
\end{remark}

To prove the above base change result, we first show a related general adjointability fact.

\begin{lemma}\label{lem:elliptic_stacks_uni_adjointable}
    Let \(\sfE\) be an oriented elliptic curve over a base stack \(\sfM\), and let \(G\) be a compact Lie group.
    Then the map \(\sfE_G\to\sf M\) is universally adjointable.
\end{lemma}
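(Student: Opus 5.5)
Since universally adjointable maps form a local class by \Cref{lem:uni_adjointable_local_finitary}, the plan is to reduce to a situation where adjointability is already known. The class is closed under base change and satisfies fpqc descent. By \Cref{thm:Ell_geo_nice_prop}(\ref{it:base_change_in_stacks}), the formation of $\sfE_G$ commutes with base change along maps $\M' \to \M$. Hence it suffices to prove the statement after base change along the map $\M \to \M_\Ell^\ori$ classifying $\sfE$. We may therefore assume $\sfE = \calE$ is universal, and then, by descent, work over an étale cover $\Spec A \to \M_\Ell^\ori$ with $A$ Noetherian and strictly even-periodic. More precisely, we pick such a cover, check the claim for $\sfE_{A,G} \to \Spec A$, and then deduce it for $\calE_G \to \M_\Ell^\ori$ by descent. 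Base change from $\M_\Ell^\ori$ then gives the claim for an arbitrary $\M$.

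Next we reduce in the direction of the group. By \Cref{rmk:general_cohomology_definition}, we choose a closed embedding $\phi\colon G \to U(n)$. \Cref{thm:Ell_geo_nice_prop}(\ref{it:Ell_closed_embedding_affine}) says $\sfE_G \to \sfE_{U(n)}$ is affine. Affine maps are universally $0$-affine by \cite[Pr.2.2.2.5]{reconstruction}, hence universally adjointable. Since universally adjointable maps are closed under composition (\Cref{lem:uni_adjointable_local_finitary}), it is enough to treat $\sfE_{U(n)} \to \Spec A$.

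Over a strictly even-periodic Noetherian $A$, \Cref{thm:extra-applicable} shows that $\sfE_{U(n)}$ is a qcqs nonconnective spectral scheme. So $\sfE_{U(n)} \to \Spec A$ is a relative qcqs nonconnective spectral scheme, and \Cref{ex:schemes_are_passable} makes it a perfect morphism. Perfect morphisms are universally adjointable, by \Cref{pr:main_perfect_stacks_statement}(\ref{it:adjointableperfect_stacks}) together with stability of perfect maps under base change (\Cref{pr:permancence_per_proper}). Combining this with the group reduction gives the affine case, and the descent steps of the first paragraph finish the argument.

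The main obstacle is the descent step. We have to verify that the base changes of $\calE_G \to \M_\Ell^\ori$ to the Čech nerve of $\Spec A \to \M_\Ell^\ori$ are again of the form handled above. The terms of the Čech nerve are $\Spec$ of étale $A$-algebras $A^k$, which remain strictly even-periodic and Noetherian, so \Cref{thm:extra-applicable} still applies to them. In any case, closure under base change already makes this automatic once the map over $\Spec A$ is shown to be universally adjointable.

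A subtler point is that for general $\M$, the pullback $\sfE_G \times_\M \M'$ must agree with $\sfE'_G$. This is exactly \Cref{thm:Ell_geo_nice_prop}(\ref{it:base_change_in_stacks}). Combined with \Cref{lem:uni_adjointable_local_finitary}, it lets us avoid any perfectness or geometricity hypothesis on $\M$ itself.
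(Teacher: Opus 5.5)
Your proposal is correct and follows the paper's proof essentially step for step. You reduce to the universal curve by base change, descend to a strictly even-periodic Noetherian affine cover using that universally adjointable maps form a local class, and factor through the affine map $\sfE_G \to \sfE_{U(n)}$ for a closed embedding $G \subset U(n)$; you then conclude because $\sfE_{U(n)}$ is a qcqs spectral scheme, hence perfect over the base, hence universally adjointable. The only cosmetic difference is that you get perfectness of $\sfE_{U(n)} \to \Spec A$ directly from the relative-scheme clause of the perfect-stacks proposition, whereas the paper passes through the permanence properties of perfect morphisms.
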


\begin{proof}
    By \Cref{thm:Ell_geo_nice_prop}(\ref{it:base_change_in_stacks}), and the fact that universally adjointable morphisms satisfy base change, it is enough to prove this in the universal case \(\calE_G\to \sfM_\Ell^\ori\).
    The stack \(\sfM_\Ell^\ori\) admits an effective epimorphism from a strictly even-periodic Noetherian affine \(\Spec R\), so by \Cref{lem:uni_adjointable_local_finitary} it suffices to work over such a base. Choose a closed embedding \(G\subset U(n)\).
    By \Cref{thm:Ell_geo_nice_prop}(\ref{it:Ell_closed_embedding_affine}) the resulting map \(\sfE_G\to \sfE_{U(n)}\) is affine, thus universally adjointable.
    Thus, it will suffice to prove this for \(G=U(n)\).
    As a consequence of \Cref{ex:schemes_are_passable} and \Cref{thm:extra-applicable}, we find that the stack \(\sfE_{U(n)}\) is perfect.
    By \Cref{pr:permancence_per_proper}, the map \(\sfE_{U(n)}\to \sfM=\Spec R\) is perfect, thus universally adjointable by \Cref{pr:main_perfect_stacks_statement}, and we are done.
\end{proof}

Before proving \Cref{pr:base_change_for_equivariant_theories}, we separate out one final categorical lemma.

\begin{lemma}\label{lem:conversative_after_tensor}
Suppose $F\colon \calC\to \calC'$ is a functor of presentable categories which is both a left and a right adjoint. Then, given any $\calD\in \PrL$ the following square commutes 
\[\begin{tikzcd}
	{\calD\otimes \calC} & {\calD\otimes \calC'} \\
	{\Fun^R(\calD^{\op},\calC)} & {\Fun^R(\calD^{\op},\calC')}
	\arrow["{\calD\otimes F}", from=1-1, to=1-2]
	\arrow["\simeq", from=1-1, to=2-1]
	\arrow["\simeq", from=1-2, to=2-2]
	\arrow["{F_*}", from=2-1, to=2-2]
\end{tikzcd},\]
where the vertical equivalences are \cite[Pr.4.8.1.17]{ha}. In particular, if $F$ is conservative then $\calD \otimes F$ is again conservative.
\end{lemma}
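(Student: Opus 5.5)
The plan is to identify $\calD\otimes F$ through the equivalence $\calD\otimes\calC\simeq\Fun^R(\calD^\op,\calC)$ of \cite[Pr.4.8.1.17]{ha}, and to exploit that $F$ is simultaneously a left and a right adjoint. Two observations drive everything. First, since $F$ is a right adjoint, it preserves limits, so postcomposition $F_\ast=F\circ-$ sends limit-preserving functors $\calD^\op\to\calC$ to limit-preserving functors $\calD^\op\to\calC'$. Hence $F_\ast\colon\Fun^R(\calD^\op,\calC)\to\Fun^R(\calD^\op,\calC')$ is well defined. Second, since $F$ is a left adjoint, it is a morphism in $\PrL$, so $\calD\otimes F$ makes sense. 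The content of the statement is that these two functors agree under the equivalence.

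To compare them, I will use the naturality of the equivalence of \cite[Pr.4.8.1.17]{ha} in the second variable. Recall that the tensor product $\calD\otimes\calC$ is constructed as $\Fun^R(\calD^\op,\calC)$, with the functoriality in $\calC$ along colimit-preserving $F$ given as follows: under the identification $\Fun^R(\calD^\op,\calC)\simeq\Fun^L(\calC^\op... )$, one passes to right adjoints, i.e.\ through $\Fun^L(\calD,\calC^\op)^\op$ or equivalently through $\calD\otimes\calC\simeq\Fun^L(\calC^\vee\ldots)$. Rather than untangling this, the cleanest route is a universal-property argument. Both $F_\ast$ and the transported $\calD\otimes F$ are colimit-preserving; $F_\ast$ preserves colimits because colimits in $\Fun^R(\calD^\op,\calC)$ are computed as a localisation of pointwise colimits, and $F$ commutes with both pointwise colimits and the localisation, the latter because $F$ preserves limits. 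Both functors are compatible with the $\calD$-module structures. It therefore suffices to check that they agree on pure tensors $d\otimes c$, which generate under colimits. The object $d\otimes c$ corresponds to the functor $d'\mapsto c^{\Map(d',d)}$, a cotensor. Applying $F$ pointwise gives $F(c)^{\Map(d',d)}$, since $F$ preserves limits, and this is exactly the image of $d\otimes F(c)$. This naturality in $(d,c)$, made functorial in the usual way via the $\calD$-linear universal property, yields the commutative square.

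For the ``in particular'' claim, if $F$ is conservative then $F_\ast$ is conservative, since equivalences in $\Fun^R(\calD^\op,\calC)$ are detected pointwise. Transporting along the vertical equivalences then shows that $\calD\otimes F$ is conservative.

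The main obstacle is to justify the commutativity coherently, rather than merely on objects. I would first try to reduce this to the fact that the equivalence of \cite[Pr.4.8.1.17]{ha} is natural for maps in $\PrL$ via precisely this cotensor formula. If that route fails, I would instead argue that $F_\ast$ is itself $\calD\otimes F$ because both arise from applying the functor $\Fun^R(\calD^\op,-)$, restricted to left-and-right adjoints.
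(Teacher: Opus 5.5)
Your overall strategy is the same as the paper's: reduce along the universal functor $\calD\times\calC\to\calD\otimes\calC$ and compare the two composites on pure tensors. The gap is that the key computation rests on a false identification of those pure tensors. The object $d\otimes c$ does \emph{not} correspond to $d'\mapsto c^{\Map(d',d)}$. As written, that assignment is covariant in $d'$, so it is not even a functor $\calD^\op\to\calC$. Any cotensor formula also fails the test case $\calC=\Spc$, $c=\ast$. There $\Fun^R(\calD^\op,\Spc)\simeq\calD$ via Yoneda and $d\otimes\ast=d$ corresponds to the representable $\Map(-,d)$, whereas $\ast^{X}=\ast$ gives the terminal functor. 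The correct description, which the paper extracts from the proof of \cite[Pr.4.8.1.17]{ha}, is $d\otimes c\simeq L_d(c)$, where $L_d$ is the left adjoint of $\mathrm{ev}_d\colon\Fun^R(\calD^\op,\calC)\to\calC$. Concretely, $L_d(c)$ is the localisation onto limit-preserving functors of the pointwise tensor $d'\mapsto\Map(d',d)\otimes c$, and in general there is no pointwise formula. So your check that ``$F$ preserves limits, hence commutes with the cotensor'' verifies the wrong thing, and the actual content of the lemma is missing.

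What is needed is a natural equivalence $F_*\circ L_d\simeq L_d\circ F$. The paper gets it by passing to right adjoints. Let $G$ be the right adjoint of $F$. Then $G_*$ preserves $\Fun^R$ and is right adjoint to $F_*$ there, so the right adjoints of the two sides are $\mathrm{ev}_d\circ G_*$ and $G\circ\mathrm{ev}_d$, which visibly agree. Equivalently, $F_*$ commutes with the pointwise tensor because $F$ preserves colimits, and with the localisation because $F_*\dashv G_*$. Note that this step uses the right adjoint $G$ in the key compatibility, not merely to make $\calD\otimes F$ meaningful; in your version only limit-preservation of $F$ enters.

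Obtaining the equivalence as a mate of $\mathrm{ev}_d\circ G_*\simeq G\circ\mathrm{ev}_d$ also resolves the coherence worry you raise. The relevant universal property is that of $\calD\otimes\calC$ for functors out of $\calD\times\calC$ preserving colimits in each variable separately. It is not a ``$\calD$-linear'' one, since $\calD$ is not assumed monoidal.

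Your argument for the ``in particular'' clause is fine: equivalences in $\Fun^R(\calD^\op,\calC')$ are detected pointwise.
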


\begin{proof}
It suffices to prove that the square
\[\begin{tikzcd}
	{\calD\times \calC} & {\calD\times \calC'} \\
	{\Fun^R(\calD^{\op},\calC)} & {\Fun^R(\calD^{\op},\calC')}
	\arrow["{\calD\times F}", from=1-1, to=1-2]
	\arrow["{- \boxtimes - }"', from=1-1, to=2-1]
	\arrow["{- \boxtimes -}", from=1-2, to=2-2]
	\arrow["{F_*}", from=2-1, to=2-2]
\end{tikzcd}\]
commutes, where now the vertical arrows are the universal functors. Unravelling the proof of \cite[Pr.4.8.1.17]{ha}, we see that $d\boxtimes c \simeq L_d(c)$, where $L_d$ is the left adjoint of $\mathrm{ev}_{d}\colon \Fun^R(\calD^{\op},\calC)\to \calC$. But then $d\boxtimes F(c)\simeq L_d(F(c)) \simeq F_* L_d(c)$, where the final equivalence can be checked after passing to right adjoints. The second statement follows immediately.
\end{proof}
	
\begin{proof}[Proof of \Cref{pr:base_change_for_equivariant_theories}]
    Note that it suffices to prove the universal case, so where $\M=\M_\Ell^\ori$ and $\sfE=\calE$, as the relative case follows by pasting together two Beck--Chevalley equivalences. In the universal case then, consider the map of stacks $f\colon \M\to \M_\Ell^\ori$ defining $\sfE$, write $\pi\colon \calE \to \M_\Ell^\ori$ for the universal projection, and $g\colon \sfE_G \to \calE_G$ for the base change of $f$ along $\calE_G \to \M_\Ell^\ori$. Let us also abbreviate $f_G^\ast$ as $f^\ast$; context should abate any potential confusion.
    
    The functors $\O_{\calE_G}\otimes -$ and $\O_{\sfE_G}\otimes -$ each admit $\QCoh(\M_\Ell^\ori)$- and $\QCoh(\M)$-linearisations
    \[\calF \colon \QCoh_G(\M_\Ell^\ori) \to \QCoh(\calE_G), \qquad F \colon \QCoh_G(\M) \to \QCoh(\sfE_G),\]
    respectively, whose right adjoints are given by the $G$-equivariant pushforwards of \Cref{df:g-pushforward}. Consider the square
\begin{equation}\label{eq:base_change_BC_square}\begin{tikzcd}
	{\QCoh_G(\sfM_{\Ell}^\ori)} && {\QCoh({\cal E}_G)} \\
	{\QCoh_G(\sfM)} && {\QCoh(\sfE_G).}
	\arrow["{\calF}", from=1-1, to=1-3]
	\arrow["{f^*}"', from=1-1, to=2-1]
	\arrow["{g^*}", from=1-3, to=2-3]
	\arrow["{F}"', from=2-1, to=2-3]
\end{tikzcd}\end{equation}
    To see that this square commutes, it suffices to check this on the compact generators $G/H_* \otimes \O_{\M_\Ell^\ori}$ of $\QCoh_G(\M_\Ell^\ori)$ as all functors commute with colimits. We then identify
    \[g^\ast \calF(G/H_* \otimes \O_{\M_\Ell^\ori}) \simeq g^\ast i_\ast \O_{\calE_H}
    \simeq j_\ast \O_{\sfE_H} \simeq F(G/H_+ \otimes \O_{\M}) \simeq F(f^\ast(G/H_+ \otimes \O_{\M_\Ell^\ori})),\]
    courtesy of the adjointable pullback of stacks
    \[\begin{tikzcd}
        {\sfE_H}\ar[r, "j"]\ar[d, "h"]  &   {\sfE_G}\ar[d, "g"] \\
        {\calE_H}\ar[r, "i"]            &   {\calE_G,}
    \end{tikzcd}\]
    which is a pullback by the naturality of the equivalence in \Cref{thm:Ell_geo_nice_prop}(\ref{it:base_change_in_stacks}), and adjointable as $i$ and $j$ are affine by \Cref{thm:Ell_geo_nice_prop}(\ref{it:Ell_closed_embedding_affine}) and the fact that affine maps are adjointable by \cite[Pr.2.2.2.5]{reconstruction}. The comparison map in question is by definition the Beck--Chevalley transformation associated with the square (\ref{eq:base_change_BC_square}) evaluated at \(\O_{{\cal E}_G}\). Let \(H\subset G\) be a closed subgroup.
    Since \(G/H_+\in \Sp_G\) is compact, the square
\[\begin{tikzcd}
	{\QCoh(\sfM_{\Ell}^\ori)} && {\QCoh_G(\sfM_{\Ell}^\ori)} \\
	{\QCoh(\sfM)} && {\QCoh_G(\sfM)}
	\arrow["{(G/H_+)\otimes - }", from=1-1, to=1-3]
	\arrow["{f^*}"', from=1-1, to=2-1]
	\arrow["{f^*}", from=1-3, to=2-3]
	\arrow["{(G/H_+)\otimes - }"', from=2-1, to=2-3]
\end{tikzcd}\]
is horizontally right adjointable. Now we note that the horizontal right adjoints above \((-)^H\) are jointly conservative as $H$ varies over conjugacy classes of closed subgroups of $G$. Indeed, this follows from \Cref{lem:conversative_after_tensor} applied to the functor $\Sp_G \to \prod_{(H)} \Sp$ given by taking $H$-fixed points for each conjugacy class of closed subgroups $H\leq G$, which is well-known to be conservative. It therefore suffices to prove that the comparison map on $H$-fixed points
\[(f^\ast \pi_{\ast,G} \O_{\calE_G})^H \simeq f^\ast (\pi_H)_\ast \O_{\calE_H} \to (p_H)_\ast \O_{\sfE_H} \simeq (p_{\ast,G} \O_{\sfE_G})^H\]
is an equivalence, where $\pi_H \colon \calE_H \to \M_\Ell^\ori$ is the structure map and $p_H$ is its pullback along $f$. However, the above map is the Beck--Chevalley transformation associated with the pullback square
\[\begin{tikzcd}
	{\sfE_H}\ar[r, "{p_H}"]\ar[d] & \sfM\ar[d, "{f}"] \\
	{\calE_H}\ar[r, "{\pi_H}"] & {\sfM_\Ell^\ori}
\end{tikzcd}\]
of \Cref{thm:Ell_geo_nice_prop}(\ref{it:base_change_in_stacks}), evaluated at \(\O_{\calE_H}\), which is an equivalence by \Cref{lem:elliptic_stacks_uni_adjointable}.
\end{proof}

\subsection{Over a general base stack}\label{ssec:general_case}
From the case over a strictly even-periodic Noetherian affine, we can formally deduce equivariant affineness for the universal oriented elliptic curve $\calE$.

\begin{theorem}\label{thm:step_to_universal_case}
    Let $\Spec R^\bullet$ be a diagram of affine stacks with colimit $\M_\Ell^\ori$ and write $\sfE^\bullet$ for the pullback of the universal oriented elliptic curve along $\Spec R^\bullet \to \M_\Ell^\ori$. Suppose $G$ is an applicable Lie group such that each $\sfE^\bullet$ is $G$-equivariantly affine. Then $\calE$ is $G$-equivariantly affine.
\end{theorem}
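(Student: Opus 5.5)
The strategy is to assemble the equivalence for $\calE$ out of the equivalences for each $\sfE^\bullet$, exactly as in the proof outline of the introduction:
\[\QCoh(\calE_G) \xrightarrow{\simeq} \lim \QCoh(\sfE^\bullet_G) \xrightarrow{\lim \gGa^\bullet} \lim \Mod_{\Ga_G\sfE^\bullet}(\Sp_G) \xleftarrow{\simeq} \Mod_{\TMF_G}(\Sp_G).\]
It is cleanest to work with the left adjoints $F_\bullet=\O_{\sfE^\bullet_G}\otimes_{\Ga_G\sfE^\bullet}-$, since these are compatible with pullback by \Cref{pr:homomorphism_and_linear_homology}(\ref{it:homology_commutes_with_maps_of_stacks}). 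They therefore assemble into a natural transformation of diagrams in $\CAlg(\PrLst)$ from $\Mod_{\Ga_G\sfE^\bullet}(\Sp_G)$ to $\QCoh(\sfE^\bullet_G)$. Each component is an equivalence by hypothesis, so the induced functor on limits is an equivalence.

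\textbf{Step 1 (geometric side).} Since $\calE_G\times_{\M_\Ell^\ori}\Spec R^\bullet\simeq \sfE^\bullet_G$ by \Cref{thm:Ell_geo_nice_prop}(\ref{it:base_change_in_stacks}), and colimits in $\Stk$ are universal, we get $\calE_G\simeq\colim \sfE^\bullet_G$. Hence $\QCoh(\calE_G)\simeq\lim\QCoh(\sfE^\bullet_G)$, because $\QCoh$ sends colimits of stacks to limits.

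\textbf{Step 2 (equivariant side).} We must show that the functor $\Mod_{\TMF_G}(\Sp_G)\to\lim\Mod_{\Ga_G\sfE^\bullet}(\Sp_G)$ is an equivalence. By Mathew--Meier, $\QCoh(\M_\Ell^\ori)\simeq\Mod_\TMF$. As $\Sp_G$ is dualisable, tensoring with it commutes with limits, so
\[\QCoh_G(\M_\Ell^\ori)\simeq\lim\QCoh_G(\Spec R^\bullet)\simeq\lim\Mod_{L_GR^\bullet}(\Sp_G),\]
and the left-hand side is $\Mod_{L_G\TMF}(\Sp_G)$ by (\ref{eq:0affine_qcoh_g_comptuation}). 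Now pass to modules over the algebra $\pi_{\ast,G}\O_{\calE_G}=\TMF_G$. By \Cref{pr:base_change_for_equivariant_theories}, its pullback to $\QCoh_G(\Spec R^\bullet)$ is $\Ga_G\sfE^\bullet$. Taking modules over a compatible family of algebras commutes with limits of symmetric monoidal categories, via the formula $\Mod_A(\lim\calC_i)\simeq\lim\Mod_{A_i}(\calC_i)$. This gives the equivalence $\Mod_{\TMF_G}(\Sp_G)\simeq\lim\Mod_{\Ga_G\sfE^\bullet}(\Sp_G)$.

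\textbf{Step 3 (compatibility).} The limit of the $F_\bullet$, precomposed and postcomposed with these two equivalences, should agree with $\O_{\calE_G}\otimes_{\TMF_G}-$. Both functors are colimit-preserving, because pullbacks to the $\Spec R^\bullet$ are jointly conservative and preserve colimits on each term. So it suffices to compare them on the generators $\TMF_G\otimes G/H_+$. Under pullback, both sides give $(i_H)_\ast\O_{\sfE^\bullet_H}$, by \Cref{pr:homomorphism_and_linear_homology}(\ref{it:homology_commutes_with_maps_of_stacks}). Hence $\O_{\calE_G}\otimes_{\TMF_G}-$ is an equivalence, and so is its right adjoint $\gGa(\calE_G,-)$.

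The main obstacle is Step 2. The concern is that the limit is taken in $\PrL$ while the module categories are defined after tensoring, and one must check that the limit $\lim\Mod_{L_GR^\bullet}(\Sp_G)$ is computed correctly. This requires dualisability of $\Sp_G$, and the fact that $\lim$ in $\PrL$ agrees with $\lim$ in $\Cat$. The limit of module categories is handled by identifying each side with modules in $\QCoh_G$ over a single commutative algebra object, which is exactly what the base change theorem \Cref{pr:base_change_for_equivariant_theories} provides.
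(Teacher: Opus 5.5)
Your proposal is correct and is essentially the paper's argument. Both use descent for $\QCoh$ along $\calE_G\simeq\colim\sfE^\bullet_G$, apply the hypothesis termwise, and identify $\lim\Mod_{\Ga_G\sfE^\bullet}(\Sp_G)\simeq\Mod_{\TMF_G}(\Sp_G)$ via Mathew--Meier's $0$-affineness of $\M_\Ell^\ori$, dualisability of $\Sp_G$, and the base change theorem \Cref{pr:base_change_for_equivariant_theories}.

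Your Step 3 checks that the composite equivalence really is $\O_{\calE_G}\otimes_{\TMF_G}-$, which the paper's chain of equivalences leaves implicit. This is most cleanly obtained by including the cone point $\M_\Ell^\ori$ in your natural family of elliptic homology functors (via \Cref{pr:homomorphism_and_linear_homology}(\ref{it:homology_commutes_with_maps_of_stacks})), rather than by comparing objectwise on generators.
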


An alternative proof using an explicit descendability argument is given in \Cref{rmk:alternative_proof}.

\begin{proof}
    In this proof, let us occasionally abbreviate $\Mod_A(\calC)$ as $\Mod_A$ for brevity; the appropriate $\calC$ will be clear from context. We then compute quasi-coherent sheaves on $\calE_G$ as follows:
     \begin{align*}
    \QCoh(\calE_G) \simeq  & \lim \QCoh(E_G^\bullet) \\
                   \simeq  & \lim \Mod_{R^\bullet_{G}} \\
                   \simeq  & \lim \Mod_{R^\bullet_{G}}(\Mod_{L_G R^\bullet})\\
                   \simeq  & \lim \Mod_{R^\bullet \otimes_\TMF \TMF_G}\left(\Mod_{L_G R^\bullet}\right) \\
                   \simeq  & \lim \Mod_{R^\bullet \otimes_\TMF \TMF_G}\left(\Mod_{R^\bullet} \otimes_{\Mod_\TMF} \Mod_{L_G \TMF} \right) \\
                   \simeq  & \Mod_{\TMF \otimes_\TMF \TMF_G}\left(\Mod_\TMF \otimes_{\Mod_\TMF} \Mod_{L_G \TMF} \right) \\
                   \simeq  & \Mod_{\TMF_G}\left(\Mod_{L_G \TMF} \right) \\
                   \simeq  & \Mod_{\TMF_G}(\Sp_G)
    \end{align*}
    The first equivalence follows from the equivalence of stacks
    \[\calE_G \simeq \calE_G \times_{\M_\Ell^\ori} \colim \Spec R^\bullet \simeq \colim E_G^\bullet;\]
    the second holds by the $G$-equivariantly affine assumption; the third from canonical counit of $\E_\infty$-$G$-rings $L_G R^\bullet \to R^\bullet_{G}$; the forth by \Cref{pr:base_change_for_equivariant_theories} (see \Cref{ex:relative_base_change}); the fifth is an easy consequence of (\ref{eq:qcoh_G_on_affines}); the sixth follows as the algebra in the fifth line corresponding to $\TMF \otimes_\TMF \TMF_G$ under the equivalence of categories
    \[\Mod_{\TMF} \otimes_{\Mod_\TMF} \Mod_{L_G \TMF} \simeq \lim \left( \Mod_{R^\bullet} \otimes_{\Mod_\TMF} \Mod_{L_G \TMF}\right),\]
    which uses that $\Sp_G$ is dualisable as a stable category,
    is precisely $R^\bullet \otimes_\TMF \TMF_G$ by \Cref{ex:relative_base_change}; the seventh is just cancellation; and finally, the eighth follows from the counit $L_G \TMF \to \TMF_G$.
\end{proof}

Note that $\M_\Ell^\ori$ has a cover by strictly even-periodic Noetherian affines $\Spec R$. For example, take the affine étale cover
    \[\Spec R = \Spec \TMF(2) \times \TMF(3) \to \M_\Ell^\ori.\]
    The terms in its associated Čech nerve remain affine strictly even-periodic Noetherian, as each term is étale over $R$ and $\M_\Ell^\ori$ has affine diagonal:
    
\begin{lemma}\label{lm:mellori_affine_diagonal}
    The stack $\M_\Ell^\ori$ has affine, hence universally 0-affine, diagonal.
\end{lemma}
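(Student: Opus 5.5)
The plan is to reduce affineness of the diagonal $\Delta\colon \M_\Ell^\ori \to \M_\Ell^\ori \times \M_\Ell^\ori$ to a statement about isomorphism schemes of oriented elliptic curves, and then deduce universal 0-affineness from \cite[Pr.2.2.2.5]{reconstruction}, which says that affine maps are universally 0-affine.

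For a map $\Spec B \to \M_\Ell^\ori \times \M_\Ell^\ori$, classifying a pair of oriented elliptic curves $\sfE_1,\sfE_2$ over $B$, the pullback of the diagonal is the stack $\mathrm{Isom}_B(\sfE_1,\sfE_2)$. Its points are isomorphisms of oriented elliptic curves over $B$-algebras. We must show that this stack is affine.

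\textbf{Step 1 (reduction to the connective case).} I would first use Lurie's description of $\M_\Ell^\ori$. By \cite[Pr.7.2.10]{ec2}, $\M_\Ell^\ori$ is a nonconnective spectral Deligne--Mumford stack whose connective cover is $\M_\Ell$. The structure map $\M_\Ell^\ori \to \M_\Ell$ is affine; this is implicit in \cite{ec2}, because the orientation is encoded by inverting the Bott-type element $\omega$ over the flat cover, and locally $\M_\Ell^\ori \to \M_\Ell$ is $\Spet A \to \Spet A_{\geq 0}$. Granting this, affineness of $\Delta$ reduces to two inputs. The first is affineness of the diagonal of $\M_\Ell$. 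The second is affineness of the relative diagonal of an affine morphism, and that relative diagonal is always affine; in fact it is a closed immersion-like monomorphism whose source and target are affine over the base. Writing $\Delta_{\M_\Ell^\ori}$ as the composite
\[\M_\Ell^\ori \to \M_\Ell^\ori \times_{\M_\Ell}\M_\Ell^\ori \to \M_\Ell^\ori \times \M_\Ell^\ori,\]
the second map is the base change of $\Delta_{\M_\Ell}$, and the first is the diagonal of an affine map. The composite of affine maps is then affine. I would pass through $h_{(-)}$ using \Cref{lm:affine_maps_and_descent} and the appendix results on affine maps.

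\textbf{Step 2 (the diagonal of $\M_\Ell$ is affine).} This is the classical core of the argument. Over a connective $B$, $\mathrm{Isom}(\sfE_1,\sfE_2)$ is a closed substack of the mapping stack of proper flat curves. Classically, the isomorphism scheme of two elliptic curves is finite and unramified over the base, and so in particular affine; see \cite{km}.

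To pass from the classical statement to the spectral one, I would use \Cref{lem:even_periodic_underlying_ECD}-style arguments through 1-localic objects. An affineness test on the underlying classical stack suffices via \cite[Cor.1.4.7.3]{sag}, once we know the pullback is a spectral Deligne--Mumford stack, which holds because $\M_\Ell$ is one. Lurie also proves directly that $\M_\Ell$ is Deligne--Mumford with representable, separated, quasi-compact and unramified diagonal. Any such diagonal that is also proper and quasi-finite is finite, hence affine.

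\textbf{Main obstacle.} The main obstacle is making Step 1 rigorous in the nonconnective setting. This means checking that $\M_\Ell^\ori \to \M_\Ell$ really is affine, and that the pullback squares computed in $\SpDM^\nc$ agree with those in $\Stk$. I would handle this with \Cref{lm:connective_cover_maps_are_affine}, which already gives affineness of $\X \to \X_{\geq 0}$ for any $\X$, applied with $\X = \M_\Ell^\ori$. Together with \Cref{lem:h_finite_limits}, this transports the conclusion to $\Stk$.
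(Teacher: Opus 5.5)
There is a genuine gap in Step 1. You identify the connective cover of $\M_\Ell^\ori$ with Lurie's connective moduli stack $\M_\Ell$, and you model $\M_\Ell^\ori\to\M_\Ell$ locally as $\Spet A\to\Spet A_{\geq 0}$. Both claims are false.

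The stack $\M_\Ell$ classifies elliptic curves over connective $\E_\infty$-rings and is flat over $\Spec \Sph$. So étale locally its homotopy is $\pi_0\otimes\pi_\ast\Sph$, which has nonzero odd classes such as $\eta$. By contrast, $(\M_\Ell^\ori)_{\geq 0}$ has homotopy sheaves $\omega^{\otimes k}$ in degrees $2k\geq 0$ and nothing in odd degrees.

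Over an affine $\Spet R\to\M_\Ell$, the forgetful map is actually $\Spet \mathfrak{O}\to\Spet R$, where $\mathfrak{O}$ is Lurie's orientation classifier of the formal group of the elliptic curve. Already for the formal multiplicative group over $\Sph$ this is $\Sph\to\KU$, which is nothing like a connective cover map.

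Consequently, your resolution of the ``main obstacle'' via \Cref{lm:connective_cover_maps_are_affine} only proves that $\M_\Ell^\ori\to(\M_\Ell^\ori)_{\geq 0}$ is affine. That is a different map. The input your factorisation actually uses, affineness of $\M_\Ell^\ori\to\M_\Ell$, is left unjustified. It is true, but it is a real theorem rather than a formal consequence of periodicity: the existence of orientation classifiers in \cite{ec2} exhibits $\M_\Ell^\ori$ as a relative spectrum over $\M_\Ell$.

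The simplest repair with this paper's tools is to run your factorisation with $(\M_\Ell^\ori)_{\geq 0}$ in place of $\M_\Ell$. Write the diagonal as the relative diagonal of the affine map $\M_\Ell^\ori\to(\M_\Ell^\ori)_{\geq 0}$, followed by a base change of the diagonal of $(\M_\Ell^\ori)_{\geq 0}$. Your Step 2 still applies to the latter, because it only sees $0$-truncations:
\begin{enumerate}
\item The underlying classical stack of $(\M_\Ell^\ori)_{\geq 0}$ is still the classical moduli stack of elliptic curves.
\item Truncation preserves fibre products, so the truncation of the pullback of the diagonal along an étale $\Spet B\to(\M_\Ell^\ori)_{\geq 0}\times(\M_\Ell^\ori)_{\geq 0}$ is a classical Isom scheme. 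This is finite over $\Spec \pi_0 B$ by \cite{km}.
\item \cite[Cor.1.4.7.3]{sag} then gives affineness, exactly as in the proof of \Cref{lem:even_periodic_underlying_ECD}.
\item \Cref{rmk:affine_maps_connectiveVSnonconnective}, \Cref{lem:h_finite_limits}, and \Cref{cor:h_affine_map} transport the conclusion to $\Stk$.
\end{enumerate}

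The paper argues differently. It uses that $\M_\Ell^\ori\to\M_\FG^\ori$ is affine (\cite[Ex.4.2.3.4]{reconstruction}) and that $\M_\FG^\ori$ has invertible diagonal (\cite[Cor.4.1.4.1]{reconstruction}). Hence $\M_\Ell^\ori\times\M_\Ell^\ori\simeq\M_\Ell^\ori\times_{\M_\FG^\ori}\M_\Ell^\ori$, and the diagonal is the relative diagonal of an affine map, which is affine by cancellation. That route needs no classical input on Isom schemes and stays entirely in $\Stk$, at the price of relying on Lurie's deep results on oriented formal groups. Your repaired route is more hands-on, using only the paper's own lemmas plus Katz--Mazur.
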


\begin{proof}
    Consider the unique map $\M_\Ell^\ori \to \M_\FG^\ori$ into Lurie's \emph{moduli stack of oriented formal groups} from \cite[Df.4.1.1.2]{reconstruction}, which is affine by \cite[Ex.4.2.3.4]{reconstruction}. Consider the following commutative diagram of stacks
    \[\begin{tikzcd}
        {\M_\Ell^\ori}\ar[r, "\Delta"]\ar[d]    &   {\M_\Ell^\ori \times \M_\Ell^\ori}\ar[d]    \\
        {\M_\FG^\ori}\ar[r, "\Delta"]   &   {\M_\FG^\ori \times \M_\FG^\ori.}
    \end{tikzcd}\]
    We just stated that the vertical maps are affine, and the lower horizontal map is an equivalence by \cite[Cor.4.1.4.1]{reconstruction}. As affine morphisms satisfy cancellation, see \cite[Pr.A.3]{Davies_galois_geometric_FP}, we conclude that $\M_\Ell^\ori$ has affine diagonal.
\end{proof}
    
Combining \Cref{cor:affine_version_of_main_statement} with \Cref{thm:step_to_universal_case} yields \Cref{maintheorem_universal}:

\begin{theorem}[\Cref{maintheorem_universal}]
    Let $G= \prod_i U(n_i) \times \prod_j SU(n_j) \times \prod_k \Sp(n_k)$ be a finite product of unitary, special unitary, and symplectic groups. Then $\calE$ is $G$-equivariantly affine.
\end{theorem}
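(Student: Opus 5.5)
The plan is to assemble two earlier results: \Cref{cor:affine_version_of_main_statement}, which gives equivariant affineness over strictly even-periodic Noetherian affines, and \Cref{thm:step_to_universal_case}, which globalises it. The argument has three steps.

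First, check that $G$ is applicable. By \Cref{ex:borel_reference}, each of $U(n)$, $SU(n)$ and $Sp(n)$ is connected with torsion-free fundamental group, and so is any finite product of them. Hence Borel's theorem applies, and all the constructions of \Cref{ssec:EEcohomology} are available for $G$.

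Second, choose a presentation of $\M_\Ell^\ori$ by affines. I take the étale cover $\Spec R \to \M_\Ell^\ori$ with $R = \TMF(2)\times\TMF(3)$, and let $\Spec R^\bullet$ be its Čech nerve. By \Cref{lm:mellori_affine_diagonal}, $\M_\Ell^\ori$ has affine diagonal, so every term $\Spec R^k = \Spec R \times_{\M_\Ell^\ori} \cdots \times_{\M_\Ell^\ori} \Spec R$ is affine. Each $R^k$ is étale over $R$. Since $R$ is strictly even-periodic and Noetherian, these properties pass along étale maps: $\pi_\ast R^k \simeq \pi_0 R^k \otimes_{\pi_0 R} \pi_\ast R$, so $R^k$ keeps a unit in degree $2$, has vanishing odd homotopy, and has Noetherian $\pi_0$.

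The cover is an effective epimorphism of fpqc stacks, so the colimit of the Čech nerve is $\M_\Ell^\ori$. This is the hypothesis of \Cref{thm:step_to_universal_case}.

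Third, for each $k$, the pullback $\sfE^k$ of the universal curve $\calE$ to $\Spec R^k$ is an oriented elliptic curve over a strictly even-periodic Noetherian $\E_\infty$-ring. \Cref{cor:affine_version_of_main_statement} therefore shows that $\sfE^k$ is $G$-equivariantly affine. Applying \Cref{thm:step_to_universal_case} then shows that $\calE$ is $G$-equivariantly affine.

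The symmetric monoidal enhancement in \Cref{maintheorem_universal} comes for free. The left adjoint $\O_{\calE_G}\otimes_{\TMF_G}-$ is strong symmetric monoidal by construction (\Cref{def:elliptic_homology}). Once it is an equivalence, its inverse $\underline{\Ga}(\calE_G,-)$ is a symmetric monoidal equivalence as well.

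I expect no real obstacle, since the substantive work lives in the cited results. The one point that needs care is making sure every term of the Čech nerve, not just $R$ itself, satisfies the hypotheses of \Cref{cor:affine_version_of_main_statement}. Affineness comes from the affine diagonal, and the remaining properties are preserved under étale base change, as explained in the second step.
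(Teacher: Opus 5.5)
Your argument follows the paper's proof: it combines \Cref{cor:affine_version_of_main_statement} with \Cref{thm:step_to_universal_case} along the \v{C}ech nerve of an étale cover. It uses \Cref{lm:mellori_affine_diagonal} to keep the terms affine and étale base change to keep them strictly even-periodic and Noetherian. You even use the same example cover $\Spec(\TMF(2)\times\TMF(3))$ that the paper names just before \Cref{lm:mellori_affine_diagonal}. However, one step fails as written. This $R$ is not strictly even-periodic, and in fact $\Spec R$ does not map to $\M_\Ell^\ori$ at all. The paper's text makes the same choice, so this is a slip in the example rather than in the strategy.

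The problem is the factor $\TMF(2)$. The automorphism $[-1]$ of an elliptic curve fixes every full level-$2$ structure and acts by $-1$ on $\omega$. With $2$ inverted this gives $H^0(\M(2),\omega^{\otimes k})=0$ for odd $k$, and indeed $\pi_\ast\TMF(2)\cong\Z[\tfrac12][\lambda_1,\lambda_2,\Delta^{-1}]$ with $|\lambda_i|=4$. Hence $\pi_2\TMF(2)=0$. So $\TMF(2)$ is not even complex-periodic, and it cannot carry an oriented elliptic curve, since an orientation forces the base to be complex-periodic. Correspondingly, $\M^\ori(2)$ is not affine: its underlying classical stack is a $BC_2$-gerbe, not a scheme.

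\Cref{thm:step_to_universal_case} needs affine terms $\Spec R^\bullet$ over $\M_\Ell^\ori$, and \Cref{cor:affine_version_of_main_statement} needs strictly even-periodic Noetherian bases, so this must be repaired. Only the choice of cover has to change. The clean way is to argue abstractly. The classical stack $\M_\Ell$ is a quasi-compact Noetherian Deligne--Mumford stack, so it has a finite étale cover by affines $\Spec B_i$. You may Zariski-refine these until $\omega$ is trivial on each. The corresponding étale maps $\Spet R_i\to\M_\Ell^\ori$ then have $\pi_\ast R_i\cong B_i[u^{\pm1}]$ with $|u|=2$, so $R=\prod_i R_i$ is strictly even-periodic and Noetherian.

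Concretely, the factor $\TMF(3)$ is fine: full level-$3$ structures are rigid, and $\M(3)$ is an affine scheme on which $\omega$ is trivial. You can replace $\TMF(2)$ by the $2$-periodic $\E_\infty$-ring étale over $\M_\Ell^\ori$ attached to the Legendre family over $\Z[\tfrac12][\lambda,(\lambda(1-\lambda))^{-1}]$, whose $C_2$-homotopy fixed points recover $\TMF(2)$. After this substitution the rest of your proof goes through unchanged, including the étale base-change check on every term of the \v{C}ech nerve and the symmetric monoidal remark.
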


The proof of \Cref{maintheorem_arbitrary_base} then follows using the next short statement.

\begin{prop}\label{pr:kuenneth_for_general_qcoh_orientedEll}
    Let $\sfE$ be an oriented elliptic curve over a stack $\M$ defining a map $\M \to \M_\Ell^\ori$ and $G$ be a compact Lie group such that the universal $\calE_G$ is a perfect stack. Then the natural map in $\CAlg(\PrLst)$
    \[\QCoh(\EE_G) \otimes_{\QCoh(\M_\Ell^\ori)} \QCoh(\M) \xrightarrow{\simeq} \QCoh(E_G)\]
    is an equivalence.
\end{prop}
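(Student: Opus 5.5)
This is a direct application of \Cref{pr:general_kuenneth}, taking $\X = \M_\Ell^\ori$, $\Y = \calE_G$ and $\X' = \M$. The plan is to check its three hypotheses and then identify the fibre product.

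First, $\M_\Ell^\ori$ is perfect. By Mathew--Meier (\Cref{ex:MM_mellor_affineness_intro}) it is 0-affine, so $\QCoh(\M_\Ell^\ori)\simeq\Mod_\TMF(\Sp)$. This category is compactly generated by the unit, and in it compact and dualisable objects agree.

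Second, $\M_\Ell^\ori$ has universally 0-affine diagonal. This is exactly \Cref{lm:mellori_affine_diagonal}.

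Third, $\calE_G$ is perfect. This is an assumption of the statement.

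\Cref{pr:general_kuenneth}, applied to the structure map $\calE_G\to\M_\Ell^\ori$ and the map $\M\to\M_\Ell^\ori$ classifying $\sfE$, then gives an equivalence
\[\QCoh(\M)\otimes_{\QCoh(\M_\Ell^\ori)}\QCoh(\calE_G)\xrightarrow{\simeq}\QCoh(\M\times_{\M_\Ell^\ori}\calE_G).\]
This equivalence is induced by pullback along the two projections.

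It remains to identify the fibre product $\M\times_{\M_\Ell^\ori}\calE_G$ with $\sfE_G$. Since $\sfE$ is the pullback of $\calE$ along $\M\to\M_\Ell^\ori$, base change naturality (\Cref{thm:Ell_geo_nice_prop}(\ref{it:base_change_in_stacks})) gives $\sfE_G\simeq f^\ast\calE_G=\M\times_{\M_\Ell^\ori}\calE_G$.

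Two routine points remain. First, the symmetric monoidal tensor product in $\CAlg(\PrLst)$ is symmetric, so the order of the factors does not matter. Second, the map in \Cref{pr:general_kuenneth} is built from symmetric monoidal pullback functors, so the equivalence lives in $\CAlg(\PrLst)$ and agrees with the natural map in the statement.

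The only substantive input is the perfectness of $\M_\Ell^\ori$, which comes from 0-affineness. Everything else is formal.
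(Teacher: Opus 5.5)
Your proposal is correct and is essentially the paper's proof. The paper likewise only checks the hypotheses of \Cref{pr:general_kuenneth}: it uses the assumed perfectness of $\calE_G$, the 0-affineness (hence perfectness) of $\M_\Ell^\ori$ from \Cref{ex:MM_mellor_affineness_intro}, and the affine diagonal from \Cref{lm:mellori_affine_diagonal}, and it leaves implicit your identification $\sfE_G\simeq \M\times_{\M_\Ell^\ori}\calE_G$ via base change naturality.
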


\begin{proof}
    From our hypotheses, \Cref{ex:MM_mellor_affineness_intro}, and \Cref{lm:mellori_affine_diagonal}, the conditions of \Cref{pr:general_kuenneth} are satisfied, which immediately finishes the proof.
\end{proof}

\begin{theorem}[{\Cref{maintheorem_arbitrary_base}}]
    Let $G= \prod_i U(n_i) \times \prod_j SU(n_j) \times \prod_k \Sp(n_k)$ be a finite product of unitary, special unitary, and symplectic groups. Then for any oriented elliptic curve $p\colon \sfE \to \M$ over a stack $\M$, the $G$-pushforward functor
    \[p_{\ast,G} \colon \QCoh(\sfE_G) \xrightarrow{\simeq} \Mod_{p_{\ast,G} \O_{\sfE_G}}(\QCoh_G(\M))\]
    is an equivalence of symmetric monoidal categories.
\end{theorem}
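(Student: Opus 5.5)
The plan is to reduce to the universal case via a chain of equivalences, using the Künneth formula of \Cref{pr:kuenneth_for_general_qcoh_orientedEll} and the base change theorem \Cref{pr:base_change_for_equivariant_theories}. First, I note that the universal $\calE_G$ is a perfect stack for these $G$. Indeed, \Cref{maintheorem_universal} (already proven) identifies $\QCoh(\calE_G)\simeq\Mod_{\TMF_G}(\Sp_G)$. The latter is compactly generated by the dualisable objects $\TMF_G\otimes G/H_+$, and compacts agree with dualisables there, as they are retracts of finite colimits of these generators. Thus \Cref{pr:kuenneth_for_general_qcoh_orientedEll} applies and gives
\[\QCoh(\sfE_G)\simeq \QCoh(\calE_G)\otimes_{\QCoh(\M_\Ell^\ori)}\QCoh(\M).\]

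Second, I rewrite the right-hand side. By \Cref{maintheorem_universal}, \Cref{ex:MM_mellor_affineness_intro} and (\ref{eq:0affine_qcoh_g_comptuation}), we have $\QCoh(\calE_G)\simeq \Mod_{\pi_{\ast,G}\O_{\calE_G}}(\QCoh_G(\M_\Ell^\ori))$, where $\QCoh_G(\M_\Ell^\ori)=\QCoh(\M_\Ell^\ori)\otimes\Sp_G$. Base changing a module category along $\QCoh(\M_\Ell^\ori)\to\QCoh(\M)$ gives
\[\Mod_{\pi_{\ast,G}\O_{\calE_G}}(\QCoh_G(\M_\Ell^\ori))\otimes_{\QCoh(\M_\Ell^\ori)}\QCoh(\M)\simeq \Mod_{f_G^\ast\pi_{\ast,G}\O_{\calE_G}}(\QCoh_G(\M)).\]
This uses the standard fact (\cite[Th.4.8.4.6]{ha}) that $\Mod_B(\calC)\otimes_\calC\calD\simeq\Mod_{B\otimes\1}(\calD)$. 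By \Cref{pr:base_change_for_equivariant_theories}, the algebra $f_G^\ast\pi_{\ast,G}\O_{\calE_G}$ is identified with $p_{\ast,G}\O_{\sfE_G}$. Composing these gives an equivalence
\[\QCoh(\sfE_G)\simeq\Mod_{p_{\ast,G}\O_{\sfE_G}}(\QCoh_G(\M)).\]

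The main obstacle is checking that this composite equivalence is really (the inverse of) $p_{\ast,G}$, i.e.\ that it matches the left adjoint $F$, the $\QCoh(\M)$-linearised equivariant homology. I would check this at the level of left adjoints in $\CAlg(\PrLst)$. Tensoring the universal equivalence $\calF$ with $\QCoh(\M)$ over $\QCoh(\M_\Ell^\ori)$ yields a symmetric monoidal colimit-preserving functor $\QCoh_G(\M)\to\QCoh(\sfE_G)$. This functor agrees with $F$ because both are $\QCoh(\M)$-linear and agree on the generators $G/H_+\otimes\O_\M$, by the commuting square (\ref{eq:base_change_BC_square}). Passing to modules over the unit's image then identifies the module-level functor with $F$ linearised over $p_{\ast,G}\O_{\sfE_G}$. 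Its right adjoint is therefore the factored $p_{\ast,G}$, which is hence an equivalence. Symmetric monoidality follows because the left adjoint is symmetric monoidal.
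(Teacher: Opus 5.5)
Your proposal is correct and follows the paper's proof essentially step for step. That proof uses the Künneth equivalence of \Cref{pr:kuenneth_for_general_qcoh_orientedEll}, with perfectness of $\calE_G$ supplied by \Cref{maintheorem_universal}; it then rewrites $\QCoh(\calE_G)$ as $\Mod_{\TMF_G}(\QCoh_G(\M_\Ell^\ori))$ via $0$-affineness of $\M_\Ell^\ori$, base changes the module category along $\QCoh(\M_\Ell^\ori)\to\QCoh(\M)$, and identifies the resulting algebra with $p_{\ast,G}\O_{\sfE_G}$ via \Cref{pr:base_change_for_equivariant_theories}. Your extra check that this composite equivalence really is (inverse to) $p_{\ast,G}$, by comparing left adjoints on the generators $G/H_+\otimes\O_\M$, is a point the paper leaves implicit, so it adds care rather than changing the route.
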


It should be highlighted that even in the affine case, this generalises \Cref{cor:affine_version_of_main_statement}, as there we assume our affine base is strictly even-periodic and Noetherian. If $\M$ is 0-affine, this also shows that $\sfE_G$ is equivariantly affine.

\begin{proof}
    Consider the following equivalences in $\CAlg(\PrLst)$:
    \begin{align*}
    \QCoh(E_G) & \xleftarrow{\simeq} \QCoh(\EE_G) \otimes_{\QCoh(\M_\Ell^\ori)} \QCoh(\M)\\
    &\xrightarrow{\simeq} \Mod_{\TMF_G}(\Sp_G) \otimes_{\QCoh(\M_\Ell^\ori)} \QCoh(\M) \\
    &\xrightarrow{\simeq} \Mod_{\TMF_G}(\QCoh_G(\M_\Ell^\ori)) \otimes_{\QCoh(\M_\Ell^\ori)} \QCoh(\M) \\
    & \simeq \Mod_{\TMF_G \otimes_\TMF \O_\M}(\QCoh_G(\M_\Ell^\ori) \otimes_{\QCoh(\M_\Ell^\ori)} \QCoh(\M))\\
    & \simeq \Mod_{p_{\ast,G} \O_{\sfE_G}}(\QCoh_G(\M))
    \end{align*}
The first equivalence is \Cref{pr:kuenneth_for_general_qcoh_orientedEll}, which holds as $\calE_G$ is perfect by \Cref{maintheorem_universal} and $\M_\Ell^\ori$ is perfect with affine diagonal by \Cref{ex:MM_mellor_affineness_intro} and \Cref{lm:mellori_affine_diagonal}. The second equivalence is \Cref{maintheorem_universal}, the third is just (\ref{eq:0affine_qcoh_g_comptuation}), and the rest are clear.
\end{proof}

\appendix
\section{The spectral functor-of-points}\label{ssec:properties_preserved_by_h}
By \cite[Th.3.2.1.3]{reconstruction}, there is a unique fully faithful embedding
\[h^\et_{(-)} \colon \SpDM^\nc \to \Stk^\et\]
extending the assignment on affines $\Spet R \mapsto \Spec R$ and commuting with étale colimits, and \cite[Pr.2.1.2.6]{reconstruction} provides us with a colimit-preserving sheafification functor
\[L^\fpqc \colon \Stk^\et \to \Stk.\]
The composite is written as
\[h_{(-)} \colon \SpDM^\cn \xrightarrow{h^\et_{(-)}} \Stk^\et \xrightarrow{L^\fpqc} \Stk.\]
The following complements some of the basic facts above $h_{(-)}$ from \cite[Pr.3.2.2.2]{reconstruction}; the subtle point is that this functor is not generally fully faithful.

\begin{lemma}\label{lem:h_finite_limits}
    The functors \(h_{(-)}^\et\) and \(h_{(-)}\) preserve finite limits.
\end{lemma}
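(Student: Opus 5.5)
The proof splits into two steps: first show that $h^\et_{(-)}$ preserves finite limits, then that the fpqc sheafification $L^\fpqc$ preserves finite limits. Since $h_{(-)} = L^\fpqc \circ h^\et_{(-)}$, the claim for $h_{(-)}$ follows from these two steps.

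For the second step, $L^\fpqc$ is a left exact localisation. It is a sheafification functor from étale sheaves to fpqc sheaves on $\E_\infty$-rings, and sheafification in an $\infty$-topos (or in these large presheaf-type categories, with the usual universe caveats of \cite[Pr.2.1.2.6]{reconstruction}) is always accessible and left exact. So this step amounts to citing that reference, or the general fact that sheafification preserves finite limits. Concretely, it can be exhibited as a transfinite composite of plus-constructions, each of which is built from filtered colimits and limits, and these commute with finite limits.

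For the first step, it suffices to check that $h^\et$ preserves the terminal object and pullbacks.

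\begin{itemize}
\item \emph{Terminal object.} The terminal object $\Spet \Sph$ of $\SpDM^\nc$ is sent to $\Spec \Sph$, which is terminal in $\Stk^\et$.
\item \emph{Reduction to affine pullbacks.} Given a cospan $\Y \to \X \leftarrow \sfZ$ in $\SpDM^\nc$, write $\X$ as an étale colimit of affines $\Spet A$. Then $\Y\times_\X\Spet A$ and $\sfZ\times_\X \Spet A$ are étale over $\Y$ and $\sfZ$, and we cover them in turn by étale affines.
\item \emph{Using universality of colimits.} Pullbacks in $\SpDM^\nc$ commute with these étale colimits, since étale surjections are stable under base change and the underlying topos of the fibre product is computed locally. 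Colimits in the topos $\Stk^\et$ are universal. Together with the fact that $h^\et$ preserves étale colimits \cite[Th.3.2.1.3]{reconstruction}, this reduces the claim to pullbacks of affines $\Spet B \to \Spet A \leftarrow \Spet C$.
\item \emph{Affine case.} For such affines the pullback in $\SpDM^\nc$ is $\Spet(B\otimes_A C)$, by \cite[\textsection1.4]{sag}. In $\Stk^\et$ the pullback is $\Spec(B\otimes_A C)$, because the Yoneda embedding of $\CAlg^\op$ preserves limits and the representables are already étale sheaves. These agree.
\end{itemize}

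I expect the main obstacle to be making the reduction to affines rigorous. The étale colimit presentation of $\X$ must be compatible with the maps from $\Y$ and $\sfZ$. I would handle this by working with the slice over $\X$: presentations by étale maps over $\X$ are stable under base change, and descent in $\Stk^\et$ is checked on a cover. An alternative route avoids the bookkeeping. One observes that $h^\et$ is, by construction, the restriction of the functor of points $\X \mapsto \Map_{\SpDM^\nc}(\Spet(-),\X)$, which is manifestly limit-preserving as a presheaf-valued functor. Its values are already étale sheaves, so limits computed objectwise agree with limits in $\Stk^\et$. If \cite[Th.3.2.1.3]{reconstruction} identifies $h^\et$ with this restricted Yoneda functor, which I believe it does, then finite limit preservation is immediate, and this is the argument I would try first.
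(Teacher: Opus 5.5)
Your preferred route is exactly the paper's proof: identify $h^\et_\X$ with the restricted functor of points $A\mapsto \SpDM^\nc(\Spet A,\X)$, using full faithfulness of $h^\et_{(-)}$ and $h^\et_{\Spet A}\simeq \Spec A$. That functor preserves all small limits, since limits in $\Stk^\et$ are computed objectwise; then cite left exactness of $L^\fpqc$ from \cite[Pr.2.1.2.6]{reconstruction}. The affine-reduction argument is unnecessary, and as written it assumes universality of étale colimits in $\SpDM^\nc$, which the paper itself deduces from this very lemma, so that route would need an independent justification of that step.
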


\begin{proof}
    As sheafification \(L^\fpqc\) preserves finite limits (\cite[Pr.2.1.2.6]{reconstruction}), it suffices to show that \(h^\et_{(-)}\) preserves small limits. Let $\X_i$ be a diagram in $\SpDM^\nc$ with limit $\X$ and $\sfT$ be an object of $\Stk^\et$ with presentation $\sfT \simeq \colim \Spec A$. Observe the desired equivalences
    \[\Stk^\et(\sfT, h^\et_\X) \simeq \lim_A \Stk^\et(\Spec A, h^\et_\X) \simeq \lim_A \Stk^\et(h^\et_{\Spet A}, h^\et_\X) \simeq \lim_A \SpDM^\nc(\Spet A, \X)\]
    \[\simeq \lim_A \lim_i \SpDM^\nc(\Spet A, \X_i) \simeq \lim_i \lim_A \Stk^\et(\Spec A, h^\et_{\X_i}) \simeq \lim_i \Stk^\et(\sfT, h^\et_{\X_i}),\]
    the third and fifth equivalences coming from the fully faithfulness of $h_{(-)}^\et$.
\end{proof}

\begin{lemma}\label{lem:h_map_from_affine_local_lift}
    Let \(\X\in \SpDM^\nc\) and let \(p\colon \Spec A\to h_\X\) be a map in $\Stk$. Then there exists a faithfully flat map of $\E_\infty$-rings \(q\colon A\to B\) and a map \(f\colon \Spet B\to \X\) in $\SpDM^\nc$ together with a homotopy \(h_f\simeq p\circ \Spec q\).
\end{lemma}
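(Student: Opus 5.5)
The plan is to unwind the definition $h_\X = L^\fpqc h^\et_\X$ and use the standard description of points of a sheafification. A map $\Spec A \to L^\fpqc h^\et_\X$ corresponds, by the construction of fpqc sheafification in \cite[Pr.2.1.2.6]{reconstruction}, to a point of the fpqc sheafification of the presheaf $h^\et_\X$ evaluated at $A$. Such a point is fpqc-locally given by a point of the presheaf itself. Concretely, this means there is a faithfully flat $A \to B$ (or a finite family $A \to B_i$ that is jointly faithfully flat, which we replace by the single faithfully flat map $A \to B = \prod_i B_i$) such that the composite $\Spec B \to \Spec A \to h_\X$ factors, up to homotopy, through the unit $h^\et_\X \to L^\fpqc h^\et_\X$ via a point of $h^\et_\X(B)$.

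The first step is to make this local-lifting claim precise. Sheafification is a left exact localisation and can be built by iterating the plus-construction, or by a transfinite composite of such. At each stage, a section over $A$ is locally, on some fpqc cover, represented by a section of the previous stage. So the map from the presheaf to its sheafification is a \emph{local epimorphism}: the induced map of fpqc sheaves is an effective epimorphism. Applied to the point $p\in (L^\fpqc h^\et_\X)(A)$, this gives an fpqc covering sieve on $\Spec A$ on whose members $p$ lifts to $h^\et_\X$. Every fpqc covering sieve on an affine contains one generated by a single faithfully flat $A\to B$; this uses the finitary nature of fpqc covers of affines, and the fact that finite products of affines are affine in $\Stk$. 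I expect this to be the main technical point, and I would cite the corresponding statement on effective epimorphisms in \cite[\textsection2.1]{reconstruction}.

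The second step is to convert the lifted point $\Spec B \to h^\et_\X$ in $\Stk^\et$ into a map in $\SpDM^\nc$. Here we use that $h^\et_{(-)}$ is fully faithful and sends $\Spet B$ to $\Spec B$ \cite[Th.3.2.1.3]{reconstruction}. This gives
\[
\Stk^\et(\Spec B, h^\et_\X) \simeq \SpDM^\nc(\Spet B, \X),
\]
so the lift corresponds to some $f\colon \Spet B \to \X$. Applying $L^\fpqc$ recovers $h_f$. The required homotopy $h_f \simeq p\circ \Spec q$ is then exactly the homotopy provided by the factorisation through the unit. For this we also use that $\Spec B$ is already an fpqc sheaf, so $L^\fpqc$ fixes it.
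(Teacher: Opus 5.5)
Your reduction is sound. If $p\circ\Spec q$ lifts through the unit $h^\et_\X\to L^\fpqc h^\et_\X=h_\X$, then full faithfulness of $h^\et_{(-)}$ produces $f\colon\Spet B\to\X$. Naturality of the unit, together with $\Spec B$ already being an fpqc sheaf, then gives $h_f\simeq p\circ\Spec q$. This endgame is slightly more direct than the paper's. The paper lifts through the effective epimorphism $\coprod_i\Spec R_i\to h_\X$ coming from an étale presentation $\X\simeq\mathrm{colim}\,\Spet R_i$. It then has to use flat descent for the scheme $\coprod_i\Spet R_i$ to lift $\Spec B\to\coprod_i\Spec R_i$ back to $\SpDM^\nc$.

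The gap is the step you yourself flag as the main technical point, and it is where essentially all of the paper's proof goes. $\Stk$ is not the $\infty$-topos of sheaves on a small site: its objects are (accessible) fpqc sheaves on the large category of all $\E_\infty$-rings. Consequently:
\begin{enumerate}
\item the covering sieves on $A$ do not form a small collection;
\item the plus-construction $\mathrm{colim}_S\lim_{V\in S}F(V)$ is not available;
\item the topos-theoretic facts you need cannot be invoked in $\Stk$ or $\Stk^\et$ directly. These are that the sheafification unit is a local epimorphism, and that effective epimorphisms onto $\Spec A$ are detected by covering sieves.
\end{enumerate}
There is also no ready-made statement in \cite[\textsection2.1]{reconstruction} doing this for you.

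The paper repairs this with the presentation $\Stk\simeq\mathrm{colim}_\kappa\Shv^\fpqc_\kappa$ of \cite[Cor.3.1.1.4]{reconstruction}:
\begin{enumerate}
\item Choose $\kappa$ with $A$ and all $R_i$ $\kappa$-compact, so that $h_\X$ is the image of $h^\kappa_\X=\mathrm{colim}\,\Spec_\kappa R_i$.
\item By filteredness, lift $p$ to $p_\lambda\colon\Spec_\lambda A\to h^\lambda_\X$ for some $\lambda\geq\kappa$.
\item Only in the genuine topos $\Shv^\fpqc_\lambda$ apply \cite[Lm.6.2.3.13]{htt}, which gives the effective epimorphism $\coprod_i\Spec_\lambda R_i\to h^\lambda_\X$.
\item Still in $\Shv^\fpqc_\lambda$, apply \cite[Cor.B.7]{HesselholtPstragowski_2024_DiracGeometryII}, which gives a single faithfully flat $\lambda$-compact $A\to B$ along which $p_\lambda$ factors.
\end{enumerate}

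Your local-epimorphism argument would also run inside such a $\Shv^\fpqc_\lambda$. However, you would then additionally have to identify the unit $h^\et_\X\to h_\X$ with the image of a level-$\lambda$ sheafification unit, which means knowing how $L^\fpqc$ interacts with the $\kappa$-presentations. The paper avoids this by using $\coprod_i\Spec_\lambda R_i\to h^\lambda_\X$, where $h^\lambda_\X$ is by definition a colimit of representables. The price is the extra flat-descent lifting step mentioned above. Without some such reduction to a small topos, the key step of your proposal is asserted rather than proved.
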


For the proof, we will use the notation $\Shv_\kappa^{\fpqc}$ for the category of fpqc-sheaves on $\kappa$-compact $\E_\infty$-rings for a regular cardinal $\kappa$. By \cite[Cor.3.1.1.4]{reconstruction}, we have an identification
\begin{equation}\label{eq:presentation_of_stack}
    \Stk \simeq \colim_\kappa \Shv_\kappa^{\fpqc},
\end{equation}
where all of the transition maps are left adjoint functors.

\begin{proof}
We can write our $\X$ in $\SpDM^\nc$ as a colimit of affines $\Spet R$ with étale transition maps. As $h_{(-)}$ preserves étale colimits (\cite[Pr.3.2.2.2(2)]{reconstruction}), we also have $h_\X \simeq \colim \Spec R$ in $\Stk$. Fix a regular cardinal \(\kappa\) such that \(A\) and all of the \(R\) are \(\kappa\)-compact.
Let \(h^\kappa_\X\) denote the colimit of \(\Spec_\kappa R\) in \(\Shv^\fpqc_\kappa\). As the transition maps in the diagram defining the colimit (\ref{eq:presentation_of_stack}) all preserve colimits, we find that the image of \(h^\kappa_\X\) in \(\Stk\) is \(h_\X\), and that, for \(\lambda \geq \kappa\), the image of \(h^\kappa_\X\) is \(h^\lambda_\X\). Since the presentation of \(\Stk\) from (\ref{eq:presentation_of_stack}) is filtered, \(p\) lifts to a map
    \[p_\lambda\colon \Spec_\lambda A\to h^\lambda_\X \in \Shv^\fpqc_\lambda\]
for some \(\lambda\geq \kappa\), where $\Shv^\fpqc_\lambda$ is an actual topos. In particular, we can apply \cite[Lm.6.2.3.13]{htt} to see that the morphism
    \[r\colon \coprod_{i\in I}\Spec_\lambda R_i\to h^\lambda_\X\]
is an effective epimorphism in \(\Shv^\fpqc_\lambda\). By \cite[Cor.B.7]{HesselholtPstragowski_2024_DiracGeometryII}, there is a faithfully flat map \(q\colon A\to B\) with \(B\) a \(\lambda\)-compact $\E_\infty$-ring, and a factorisation of the composite $\Spec_\lambda B\to \Spec_\lambda A \to h^\lambda_\X$ through \(r\). Passing this diagram from $\Shv^\fpqc_\lambda$ to \(\Stk\) yields the commutative diagram
\[\begin{tikzcd}
	{\Spec B} & {\coprod_{i\in I}\Spec R_i} \\
	{\Spec A} & {h_\X.}
	\arrow["g", from=1-1, to=1-2]
	\arrow["{\Spec q}"', from=1-1, to=2-1]
	\arrow["r", from=1-2, to=2-2]
	\arrow["p"', from=2-1, to=2-2]
\end{tikzcd}\]
    Since coproducts exist in \(\SpDM^\nc\), \(h_{(-)}\) preserves coproducts, and spectral Deligne--Mumford stacks of the form \(\coprod_{i\in I}\Spet R_i\) are schematic, the map \(g\) lifts uniquely to a map \(g'\colon \Spet B \to \coprod_{i\in I}\Spet R_i\) courtesy of \cite[Pr.3.2.2.2(3) \& Pr.3.2.2.4]{reconstruction}. By definition, the map \(r\) also lifts to \(\SpDM^\nc\), giving the desired result.
\end{proof}

Altogether this allows us to compare the notion of affine morphisms in $\SpDM^\nc$ and $\Stk$.

\begin{cor}\label{cor:h_affine_map}
    If \(f\colon\Y\to \X\) is affine in \(\SpDM^\nc\), then \(h_f\colon h_\Y\to h_\X\) is an affine map in \(\Stk\). If $\Y$ and $\X$ both satisfy \emph{flat descent}, as defined in \cite[Df.3.2.2.1]{reconstruction}, then the converse also holds.
\end{cor}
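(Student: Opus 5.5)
The forward direction will be checked on affine test objects. An affine map in $\Stk$ is one whose base change along every $\Spec A \to h_\X$ is affine, so fix such a map $p\colon \Spec A \to h_\X$. By \Cref{lem:h_map_from_affine_local_lift}, there is a faithfully flat $q\colon A \to B$ and a map $g\colon \Spet B \to \X$ in $\SpDM^\nc$ with $h_g \simeq p \circ \Spec q$.

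Since $f$ is affine in $\SpDM^\nc$, the pullback $\Y \times_\X \Spet B \simeq \Spet C$ is affine. Because $h_{(-)}$ preserves finite limits (\Cref{lem:h_finite_limits}) and sends $\Spet C$ to $\Spec C$, the pullback of $h_f$ along $\Spec B \to h_\X$ is $\Spec C$. Thus the base change $h_\Y \times_{h_\X} \Spec A \to \Spec A$ becomes affine after the fpqc cover $\Spec B \to \Spec A$. Affine morphisms satisfy fpqc descent in $\Stk$ \cite[Pr.A.3]{Davies_galois_geometric_FP}, so the base change to $\Spec A$ is affine. As $p$ was arbitrary, $h_f$ is affine.

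For the converse, assume $\X$ and $\Y$ satisfy flat descent and that $h_f$ is affine. By \Cref{lm:affine_maps_and_descent}, it suffices to show that $\Y \times_\X \Spet A$ is affine for every étale $\Spet A \to \X$. Applying $h_{(-)}$ and \Cref{lem:h_finite_limits}, we get $h_{\Y \times_\X \Spet A} \simeq h_\Y \times_{h_\X} \Spec A \simeq \Spec C$ for some $\E_\infty$-ring $C$, using that $h_f$ is affine.

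Flat descent is designed precisely to make $h_{(-)}$ fully faithful on maps into such objects \cite[Pr.3.2.2.2(3)]{reconstruction}. I plan to use it to lift the map $\Spec C \to h_\Y$ to a map $\Spet C \to \Y$ in $\SpDM^\nc$, and hence a map $\Spet C \to \Y \times_\X \Spet A$. The composite with $h_{(-)}$ is an equivalence. The pullback $\Y \times_\X \Spet A$ is étale over $\Y$, so it also satisfies flat descent, since that property is inherited by étale-over objects. Full faithfulness of $h_{(-)}$ on objects with flat descent then shows $\Spet C \to \Y \times_\X \Spet A$ is an equivalence.

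The main obstacle is this last step. I must make sure that flat descent is inherited by the pullback $\Y \times_\X \Spet A$. I must also make sure that \cite[Prs.3.2.2.2 \& 3.2.2.4]{reconstruction} really give full faithfulness of $h_{(-)}$ restricted to such objects, rather than only for maps out of affines. If inheritance fails, the fallback is to use only that $\Y$ has flat descent: then $\Y(R) \simeq h_\Y(R)$ for all $R$, so $\Y \times_\X \Spet A$ and $\Spet C$ have equivalent functors of points on $\E_\infty$-rings. Spectral Deligne--Mumford stacks are determined by their functors of points on $\E_\infty$-rings, which would again give the equivalence.
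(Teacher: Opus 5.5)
Your argument is essentially the paper's proof. The forward direction is identical: you lift along a faithfully flat cover via \Cref{lem:h_map_from_affine_local_lift}, use that $h_{(-)}$ preserves finite limits, and invoke fpqc descent of affine maps. The converse likewise rests on $h_{(-)}$ being fully faithful on objects with flat descent \cite[Pr.3.2.2.2(3)]{reconstruction}.

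The one step to fix is the one you flagged. $\Y\times_\X\Spet A$ has flat descent not because it is étale over $\Y$, which is an unjustified and in general doubtful inheritance. It has flat descent because objects with flat descent contain affines and are closed under limits: functors of points preserve limits, and limits of fpqc sheaves are fpqc sheaves. This is exactly what the paper uses. Similarly, your fallback needs flat descent of $\X$ as well as of $\Y$ in order to identify the functors of points of the two fibre products.
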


By \cite[Pr.3.2.2.4]{reconstruction}, all nonconnective spectral Deligne--Mumford stacks with quasi-affine diagonal satisfy flat descent. By \cite[Th.1.6.2.1]{sag}, nonconnective spectral schemes also satisfy fpqc flat descent.

\begin{proof}
    First, assume that $f$ is affine. Let \(p\colon \Spec A\to h_\X\) be any map from an affine, and denote the base change of \(h_f\) along \(p\) by \(\Y_A \to\Spec A\). We want to show that \(\Y_A\) is affine. By \Cref{lem:h_map_from_affine_local_lift}, there is a faithfully flat map \(A\to B\) such that the composite $h_{\Spet B}\simeq \Spec B\to \Spec A \to h_\X$ lifts to a map $\Spet B \to \X$ in \(\SpDM^\nc\). As $f$ is affine in $\SpDM^\nc$, the pullback of $\Spet B$ along $f$ is affine $\Spet C$. By \Cref{lem:h_finite_limits}, we see that $\Spec C$ is the pullback of $\Spec B \to h_\X$ along $h_f$. As \(\Spec B\to \Spec A\) is an effective epimorphism and affine maps are local \cite[Pr.A.16]{Tokic_2026_FamilyCompletionTheorem}, otherwise phrased as satisfying descent in $\Stk$ by \cite[Pr.A.3]{Davies_galois_geometric_FP}, we conclude.

    For the converse when both $\Y$ and $\X$ satisfy flat descent, first note that such objects of $\SpDM^\nc$ are closed under limits and contain affines. As \(h_{(-)}\) preserves finite limits by \Cref{lem:h_finite_limits}, and that \(h_{(-)}\) is fully faithful when restricted to objects satisfying flat descent by \cite[Pr.3.2.2.2(3)]{reconstruction}, we are done.
\end{proof}

Finally, we would like to compare the definitions of various immersions given in $\Stk$ and $\SpDM^\nc$. This is mainly to tie in \Cref{ssec:immersions} with \cite{temperedglobal,Tokic_2026_FamilyCompletionTheorem}.

To each stack \(\X\), we can associate a topological space \(|\X|\), see \cite[Con.5.5.1]{temperedglobal}, which on an affine \(\Spec A\) agrees with the classical Zariski spectrum of \(\pi_0 A\), resulting in a colimit-preserving functor
\[|-|\colon \Stk\to \Top,\]
where $\Top$ is the 1-category of topological spaces.

\begin{mydef}\label{def:open_immersion}
    A map \(f\colon \Y\to\X\) of stacks is an {\it open immersion} if \(|f|\colon |\Y|\to |\X|\) is an open embedding of topological spaces, and \(f\) is an \(|-|\)-Cartesian edge. An open immersion $f$ such that \(|f|^{-1}\) preserves quasi-compact open subsets is a {\it quasi-compact (qc-) open immersion}.
\end{mydef}

This is clearly a generalisation of \cite[Df.3.3.1.1]{reconstruction} where the target $\X=\Spec A$ was affine. The corresponding definition in $\SpDM^\nc$ is \Cref{df:open_immersion_SPDM}.

\begin{prop}\label{prop:h_open_immersion}
    Let \(i\colon \U \to \X\) be a (qc-) open immersion of (nonconnective) spectral Deligne--Mumford stacks.
    Then \(h_i\colon h_\U\to h_\X\) is a (qc-) open immersion of stacks.
\end{prop}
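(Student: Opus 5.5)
We need to prove that if $i\colon \U \to \X$ is a (qc-)open immersion in $\SpDM^\nc$, then $h_i$ is a (qc-)open immersion in $\Stk$. In $\Stk$ this means two things: $|h_i|$ is an open embedding, and $h_i$ is $|-|$-Cartesian. The second condition says that for any map $\sfT \to h_\X$ with $|\sfT| \to |h_\X|$ factoring through $|h_\U|$, the map lifts uniquely through $h_\U$. The plan is to reduce everything to the affine case using the local-lifting result \Cref{lem:h_map_from_affine_local_lift} together with descent.

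\textbf{Step 1: the affine case.} When $\X = \Spet A$, an open immersion $\U \to \Spet A$ corresponds to an open subset $V \subseteq |\Spec \pi_0 A|$. Write $V = \bigcup V(f_\alpha)^c$. Since $h_{(-)}$ preserves étale colimits, $h_\U$ is the colimit of the Zariski opens $\Spec A[f^{-1}]$ glued along their intersections. These are exactly the open substacks of $\Spec A$ in the sense of \cite[Df.3.3.1.1]{reconstruction}, and colimits of open immersions into a fixed affine along open overlaps remain open immersions. The topological condition holds because $|-|$ preserves colimits, so $|h_\U| \cong V$ via the identification $|\Spec A| = |\Spec \pi_0 A|$. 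For quasi-compactness: if $i$ is qc, then $V$ is a finite union of basic opens, so preimages of quasi-compact opens are quasi-compact.

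\textbf{Step 2: globalising.} For general $\X$, first establish the topological embedding. Choose an étale cover $\coprod \Spet A_j \to \X$. Its image $\coprod \Spec A_j \to h_\X$ is an effective epimorphism, since $h_{(-)}$ preserves étale colimits. By \Cref{lem:h_finite_limits}, $h_\U \times_{h_\X} \Spec A_j \simeq h_{\U \times_\X \Spet A_j}$. Since $|-|$ preserves colimits and the colimit is computed on the Čech nerve, $|h_\U| \to |h_\X|$ is an open embedding: openness and injectivity can be checked on the cover, as the Čech nerve presents $|h_\X|$ as a quotient.

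Next, the Cartesian property. Take a map $p\colon \sfT \to h_\X$ whose topological image lies in $|h_\U|$. By writing $\sfT$ as a colimit of affines, and since liftings form a sheaf, we may assume $\sfT = \Spec B$. Apply \Cref{lem:h_map_from_affine_local_lift} to get a faithfully flat $B \to B'$ with a lift to $\Spet B' \to \X$ in $\SpDM^\nc$. Pulling $\U$ back, we land in the affine case of Step 1, which gives a unique lift over $\Spec B'$. Uniqueness means the lift is compatible on $B' \otimes_B B'$, so it descends along the fpqc cover $\Spec B' \to \Spec B$ because $h_\U$ is an fpqc sheaf. For the qc statement, note that quasi-compact opens of $|h_\X|$ are detected on a cover by affines, and $|h_i|^{-1}$ commutes with this base change.

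The main obstacle is Step 2. The delicate point is that $|h_\X|$ is only the sobrification of $|\X|$ (\cite[Rmk.A.7]{Tokic_2026_FamilyCompletionTheorem}). To transfer open embeddings, I would use the fact that sobrification induces a bijection of open sets and preserves open embeddings of this type.
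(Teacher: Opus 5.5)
Your outline is sound, but your globalisation takes a genuinely different route from the paper's.

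\textbf{The paper's route.} The paper never checks the definition of an open immersion in $\Stk$ directly. It invokes \cite[Pr.A.29]{Tokic_2026_FamilyCompletionTheorem} to reduce to showing that each base change $\U_A\to\Spec A$ along $\Spec A\to h_\X$ is an open immersion. It then uses \Cref{lem:h_map_from_affine_local_lift} and \Cref{lem:h_finite_limits} to see this after a faithfully flat $\Spec B\to\Spec A$, where the map becomes $h$ of an open immersion into $\Spet B$. That is your Step~1, which the paper leaves implicit. Finally it descends using \cite[Lm.A.30]{Tokic_2026_FamilyCompletionTheorem}. For quasi-compactness, the paper uses that sobrification is bijective on frames of opens to pass from $|h_\X|$ to the space $|\X|$ of the underlying topos. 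It then argues there with quasi-compact $(-1)$-truncated objects $c\times u$ in $\mathcal{X}_{/u}$.

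\textbf{Your route.} You verify the definition by hand: the open embedding via the étale atlas and the Čech coequaliser, the $|-|$-Cartesian property via fpqc descent of lifts, and quasi-compactness via finitely many affine charts. Your argument never compares with $|\X|$, so the sobrification remark at the end is not actually needed.

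\textbf{What the paper's citations absorb.} The cost of your route is a step your sketch passes over. You need the opens $V_j=|h_{\U\times_\X\Spet A_j}|\subseteq|\Spec A_j|$ to be compatible along the overlaps $|\Spec A_j\times_{h_\X}\Spec A_k|$, so that $\coprod_j V_j$ is saturated for the coequaliser relation. This is exactly what makes ``openness and injectivity can be checked on the cover'' true. In the lifting step you also need the open $V'\subseteq|\Spec B'|$ cut out by $\U$ to be the full preimage of $|h_\U|$. Only then does all of $|\Spec B'|$ land in $V'$, so that Step~1 applies.

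This is not formal. In the nonconnective setting the comparison $|\sfT\times_\sfS\sfT'|\to|\sfT|\times_{|\sfS|}|\sfT'|$ need not be surjective (e.g.\ $\Spec(\KU\otimes\F_p)=\varnothing$), so you cannot argue pointwise. It does hold, however, for two reasons. Base changes of the open subfunctor of $\Spec A_j$ attached to $V_j$ are the open subfunctors cut out by the preimages. And $|-|$ is surjective on effective epimorphisms.

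Similarly, detecting quasi-compact opens on the cover requires the quotient map $\coprod_j|\Spec A_j|\to|h_\X|$ to be open. This holds because the projections from the étale overlaps are open.

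With these points filled in, your argument is more self-contained. The paper's is shorter because it outsources the locality and descent of open immersions.
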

\begin{proof}
    By \cite[Pr.A.29]{Tokic_2026_FamilyCompletionTheorem}, it suffices to show that, for every map \(f\colon \Spec A\to h_\X\), the base change \(\U_A\to \Spec A\) of \(h_\U\) is an open immersion. By \Cref{lem:h_map_from_affine_local_lift}, there is a faithfully flat map \(q\colon \Spec B\to \Spec A\) such that \(f\circ q\) lifts to a map \(\Spet B\to \X\).
    Thus, by \Cref{lem:h_finite_limits}, the further base change of \(\U_A\) to \(\Spec B\) is an open immersion.
    Since the class of open immersions satisfies descent by \cite[Lm.A.30]{Tokic_2026_FamilyCompletionTheorem}, then \(\U_A\to \Spec A\) is also an open immersion. Finally, suppose that \(i\colon \U\to \X\) is quasi-compact.
    Since being a quasi-compact map of topological spaces is detected on the associated frames of opens, by \cite[Rmk.A.7]{Tokic_2026_FamilyCompletionTheorem}, it will suffice to show that \(|\U|\to |\X|\) is quasi-compact.
    Let \(C\subset |\X|\) be a quasi-compact open subset.
    Denote the underlying \(\infty\)-topos of \(\X\) by \(\mathcal{X}\).
    Since \(\mathcal{X}\) is spatial by \cite[Rmk.1.6.6.1]{sag}, \(C\subset |X|=|\mathcal{X}|\) corresponds to a \((-1)\)-truncated and quasi-compact object \(c\in\mathcal{X}\).
    As \(i\) is an open immersion, there is a \((-1)\)-truncated object \(u\in\mathcal{X}\) such that the underlying geometric morphism of \(\infty\)-topoi of \(i\) can be identified with the forgetful functor \(\mathcal{X}_{/u}\to \mathcal{X}\).
    That \(i\) is quasi-compact implies, by definition, that \(\mathrm{pr}_2\colon c\times u\to u\) is a quasi-compact object of \(\mathcal{X}_{/u}\), that is that
    \[(\mathcal{X}_{/u})_{/(\mathrm{pr}_2\colon c\times u\to u)}\simeq \mathcal{X}_{/c\times u}\]
    is quasi-compact.
    In particular, the space \(|\mathcal{X}_{/c\times u}|\simeq C\cap |U|\) is quasi-compact.
\end{proof}

Recall from \cite[Df.5.5.3]{temperedglobal}, that closed immersion in $\Stk$ is an affine map $\Y \to \X$ such that when pulled back against any $\Spec A \to \X$, the induced map $\Spec B \to \Spec A$ induces a surjection on $\pi_0$.

\begin{prop}\label{lm:h_closed_immersion}
    If $j \colon \sfZ \to \sfX$ is a closed immersion in $\SpDM^\nc$, then $h_j \colon h_{\sfZ} \to h_\sfX$ is a closed immersion in $\Stk$.
\end{prop}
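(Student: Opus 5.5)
The plan is to argue exactly as in the proof of \Cref{cor:h_affine_map}, by reducing to a faithfully flat cover on which the map lifts to $\SpDM^\nc$. This cover is supplied by \Cref{lem:h_map_from_affine_local_lift}.

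First, the definition of a closed immersion in $\Stk$ from \cite[Df.5.5.3]{temperedglobal} asks for $h_j$ to be affine. Since $j$ is affine in $\SpDM^\nc$ by \Cref{df:closed_immersions}, \Cref{cor:h_affine_map} shows that $h_j$ is affine in $\Stk$.

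Next, fix a map $p\colon \Spec A \to h_\X$ and write the pullback as $h_\sfZ \times_{h_\X} \Spec A \simeq \Spec C$. The task is to show that $\pi_0 A \to \pi_0 C$ is surjective. By \Cref{lem:h_map_from_affine_local_lift}, there is a faithfully flat $A \to B$ together with a map $g\colon \Spet B \to \X$ in $\SpDM^\nc$ lifting the composite $\Spec B \to \Spec A \to h_\X$. Since $j$ is affine, the pullback $\sfZ\times_\X \Spet B$ is an affine $\Spet D$. By \Cref{lem:h_finite_limits}, $h_{(-)}$ preserves this pullback, so $\Spec D \simeq \Spec C \times_{\Spec A}\Spec B$, that is, $D \simeq C\otimes_A B$.

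The remaining step, which I expect to be the main obstacle, is that $g$ need not be étale, while \Cref{df:closed_immersions} only asserts $\pi_0$-surjectivity for étale maps from affines. To handle this, I would choose an étale cover of $\X$ by affines $\Spet R_i$; for each $i$, the map $\pi_0 R_i \to \pi_0 D_i$ is surjective, where $\Spet D_i=\sfZ\times_\X \Spet R_i$. Pulling back $\Spet B$ along this cover gives an étale cover of $\Spet B$ by affines $\Spet B_{ij}$ which factor through the $\Spet R_i$. For each such piece, $D\otimes_B B_{ij} \simeq D_i \otimes_{R_i} B_{ij}$. Since $\Spet B_{ij} \to \Spet R_i\times \dots$ is only étale over $\Spet B$, not flat over $R_i$, I will instead use the Tor spectral sequence $\pi_0(D_i\otimes_{R_i} B_{ij}) \cong \pi_0 D_i \otimes_{\pi_0 R_i} \pi_0 B_{ij}$, which holds for any $\E_\infty$-rings by the Künneth/Tor spectral sequence in degree zero, at least when the rings are connective. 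The nonconnective case is exactly the subtle point.

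To get around this, I would first try to reduce the \emph{surjectivity} question to the statement that the cokernel of $\pi_0 B \to \pi_0 D$ vanishes étale-locally on $B$, which then descends. If the degree-zero Tor formula fails for nonconnective rings, I would replace "surjective on $\pi_0$" by the equivalent condition that $B \to D$ is an epimorphism-type map whose fibre is controlled, or appeal directly to the fact that closed immersions in $\SpDM^\nc$ are stable under arbitrary base change. I would then derive surjectivity of $\pi_0 B \to \pi_0 D$.

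Finally, since $A\to B$ is faithfully flat, $\pi_0 B$ is faithfully flat over $\pi_0 A$, and $\pi_0 D \cong \pi_0 C \otimes_{\pi_0 A}\pi_0 B$ by flatness. Surjectivity of $\pi_0 A \to \pi_0 C$ then follows by faithfully flat descent of surjections, which concludes the argument.
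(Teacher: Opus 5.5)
Your skeleton matches the paper's proof. You get affineness of $h_j$ from \Cref{cor:h_affine_map}, a faithfully flat $A\to B$ with a lift $g\colon\Spet B\to\X$ from \Cref{lem:h_map_from_affine_local_lift}, and $D\simeq C\otimes_A B$ from \Cref{lem:h_finite_limits}, then finish by faithfully flat descent of $\pi_0$-surjectivity. Your last paragraph is fine.

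The gap is the step you flag yourself, and neither fallback closes it. The degree-zero formula $\pi_0(D_i\otimes_{R_i}B_{ij})\cong\pi_0D_i\otimes_{\pi_0R_i}\pi_0B_{ij}$ fails once $B_{ij}$ is nonconnective. Citing ``closed immersions in $\SpDM^\nc$ are stable under arbitrary base change'' is circular: it is exactly the claim you need, by applying the étale test to $\id_{\Spet B}$. Worse, if \Cref{df:closed_immersions} is read literally, with étale test maps only, that claim is false. The map $\Spet\Z\to\Spet\Sph$ passes the étale test, because every étale $\Sph$-algebra $A$ is connective and so $\pi_0(A\otimes\Z)\cong\pi_0A$. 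Its base change along the non-étale map $\Spet\KU\to\Spet\Sph$ is $\KU\to\KU\otimes\Z\simeq\KU\otimes\Q$, which is $\Z\to\Q$ on $\pi_0$. Since $h$ sends this map to $\Spec\Z\to\Spec\Sph$, the same example makes the proposition itself fail under the étale-only reading. So no local computation can bridge your step from that hypothesis.

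The paper's proof uses the reading in which $\pi_0$-surjectivity is required for \emph{every} map $\Spet A\to\X$ from an affine. This matches the Warning right after \Cref{df:closed_immersions}, which says precisely this map is not a closed immersion in $\SpDM^\nc$ because of its $\KU$-linearisation. After lifting via \Cref{lem:h_map_from_affine_local_lift}, the paper simply asserts that $\pi_0A'\to\pi_0B'$ is surjective ``as $j$ is a closed immersion'', even though the lift is not étale. Its preliminary reduction to the pieces of an étale affine cover of $\X$ is not essential.

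With that definition your argument closes in one line: apply it to $g$ itself to get $\pi_0B\to\pi_0D$ surjective, then use your faithfully flat descent. The missing ingredient is this stronger notion of closed immersion, which the statement genuinely requires, not a cleverer local computation.
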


\begin{proof}
    First, note that $h_j$ is affine by \Cref{cor:h_affine_map}. Let $\X \simeq \colim \Spet A$ be a presentation of $\X$ as a colimit of étale affine. As $h_{(-)}$ preserves such colimits by \cite[Pr.3.2.2.2(2)]{reconstruction}, we also have $h_\X \simeq \colim \Spec A$ in $\Stk$. By \cite[Lm.A.12]{Tokic_2026_FamilyCompletionTheorem}, the canonical map $\coprod \Spec A \to h_\X$ is an effective epimorphism in $\Stk$. By \cite[Lm.A.18]{Tokic_2026_FamilyCompletionTheorem}, to show that $h_j$ is a closed immersion, it suffices to show that its base change to each $\Spec A$ is a closed immersion. As $h_j$ is affine, we see that the pullback of $j$ to $\Spec A$ takes the form $\Spec B \to \Spec A$. By \Cref{lem:h_map_from_affine_local_lift}, there is a faithfully flat $A$-algebra $A'$ such that the composite $\Spec A' \to h_\X$ lifts to $\Spet A' \to \X$ in $\SpDM^\nc$. Applying \Cref{lem:h_finite_limits}, we also see that the pullback of $j$ along $\Spet A'$ is given by $\Spet B'$ for some faithfully flat $B$-algebra $B'$. As $j$ is a closed immersion, $\pi_0 A' \to \pi_0 B'$ is surjective, and since closed immersions of classical schemes satisfy fpqc descent, \cite[Tag \href{https://stacks.math.columbia.edu/tag/03I0}{03I0}]{stacks}, we are done.
\end{proof}

\section{Symmetric powers of classical curves}\label{sec:appendix_classical_fun_stuff}
For this appendix, let \(M\) be a classical Noetherian scheme, and \(p\colon E\to M\) be a classical elliptic curve with zero section \(e\colon M\to E\) as in \cite[\textsection13.1.1]{olsson} or \cite[\textsection2.1.1]{km}, for example. In particular, \(E\) is an abelian group scheme over $M$ with unit \(e\) and $p$ is flat, proper, smooth, and projective. We will use that actions of finite groups on quasi-projective \(M\)-schemes admit geometric quotients \cite[Rmk.4.9(5)]{Rydh_2013_ExistencePropertiesGeometric}, which are themselves quasi-projective $M$-schemes, and that these geometric quotients are categorical quotients \cite[Rmk.2.8]{Rydh_2013_ExistencePropertiesGeometric}, in other words, quotients in the category of schemes.

By \cite[Tag \href{https://stacks.math.columbia.edu/tag/0B9A}{0B9A}]{stacks}, the Hilbert functor of $n$-points on $E$ is representable by a scheme
\[p_n \colon \mathrm{Hilb}^n_{E/M}\to M\]
over \(M\). These $p_n$ are separated (\cite[Tag \href{https://stacks.math.columbia.edu/tag/0B98}{0B98}]{stacks}) and smooth of relative dimension \(n\) (\cite[Tag \href{https://stacks.math.columbia.edu/tag/0B9H}{0B9H}]{stacks}).
Moreover, by \cite[Tag \href{https://stacks.math.columbia.edu/tag/0B9G}{0B9G}]{stacks}, there are morphisms
\[f_n\colon E^{\times_M\, n}\to\mathrm{Hilb}^n_{E/M}\]
over \(M\) that are finite locally free of degree \(n!\); this will turn out (\ref{eq:symprod_Hilbert_schemes}) to be the quotient map by the obvious $\Sigma_n$-action on the domain. By \cite[Tags \href{https://stacks.math.columbia.edu/tag/03GN}{03GN}]{stacks}, we conclude that \(p_n\) is proper. For \(n\geq 0\), denote the relative \(n\)th symmetric power of \(E\) by
\[S^n(E/M)=(E\times_M\cdots\times_M E)/\Sigma_n.\]
Since the group structure of \(E\) is commutative, we obtain summation maps
\[S^n(E/M)\xrightarrow{\sigma_n}E.\]
The maps \(f_n\) factor over the quotient maps as
\begin{equation}\label{eq:symprod_Hilbert_schemes}q_n\colon S^n(E/M)\to \mathrm{Hilb}^n_{E/M},\end{equation}
which are equivalences by \cite[XVII.6.3.9]{sgafour}; this relies on $E$ being a curve over $M$. It follows that the summation maps \(\sigma_n\) are proper (\cite[Tag \href{https://stacks.math.columbia.edu/tag/01W6}{01W6}]{stacks}), of finite presentation (\cite[Tag \href{https://stacks.math.columbia.edu/tag/02FV}{02FV}]{stacks}), and even flat (\cite[Tag \href{https://stacks.math.columbia.edu/tag/036K}{036K}]{stacks}). Consider the transition maps
\begin{equation}\label{eq:transition_maps} i_n\colon \mathrm{Hilb}^n_{E/M} \xrightarrow{\id\times e} \mathrm{Hilb}^n_{E/M} \times_ME \xrightarrow{t_n} \mathrm{Hilb}^{n+1}_{E/M},\end{equation}
where \(t_n\) is map constructed in \cite[Tag \href{https://stacks.math.columbia.edu/tag/0B9G}{0B9G}]{stacks}; informally, the map $t_n$ is given by sending a pair $((x_1, \ldots, x_n), y)$ to the $(n+1)$-tuple $(x_1,\ldots, x_n, y)$. As in the proof of \cite[Tag \href{https://stacks.math.columbia.edu/tag/0B9G}{0B9G}]{stacks} we see that
\[\mathrm{Hilb}^n_{E/M} \times_M E \xrightarrow{(t_n,\mathrm{pr_2})} \mathrm{Hilb}^{n+1}_{E/M} \times_M E\]
is the universal closed subscheme parametrised by \(\mathrm{Hilb}^{n+1}_{E/M}\).
In particular, it is an effective Cartier divisor relative to \(\mathrm{Hilb}^{n+1}_{E/M}\) courtesy of \cite[Tag \href{https://stacks.math.columbia.edu/tag/0B9D}{0B9D}]{stacks}.
It is also an effective Cartier divisor relative to \(E\), via the projections to the second factor.
The base change of this along \(e\colon M\to E\)
is given by $i_n$ from (\ref{eq:transition_maps}), which is thus an effective Cartier divisor relative to \(M\) by \cite[Tag \href{https://stacks.math.columbia.edu/tag/056Q}{056Q}]{stacks}. This map also commutes with the summation maps \(\sigma_n\), so it is also an effective Cartier divisor relative to \(E\). This proves the first half of the following statement:

\begin{prop}\label{prop:Hilb_Schemes_ECD_affine_complement}
    For $n\geq 1$, the transition map (\ref{eq:transition_maps})
    \[i_n \colon \mathrm{Hilb}^n_{E/M}\to \mathrm{Hilb}^{n+1}_{E/M}\]
    is an effective Cartier divisor, and its open complement is affine over \(E\).
\end{prop}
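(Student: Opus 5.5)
The effective Cartier divisor half was established in the discussion preceding the statement, so what remains is to show that the open complement $V_n = \mathrm{Hilb}^{n+1}_{E/M}\setminus i_n(\mathrm{Hilb}^n_{E/M})$ is affine over $E$ via $\sigma_{n+1}$. Under the identification $q_{n+1}$ of (\ref{eq:symprod_Hilbert_schemes}), $V_n$ is the locus of degree $n+1$ effective relative divisors $D$ whose support avoids the zero section $e$. I propose to realise $V_n$ as the complement of an ample effective divisor in each fibre of $\sigma_{n+1}$, and to use that the complement of the support of a relatively ample effective Cartier divisor in a proper morphism is affine over the base.

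The first step is to identify the fibres of $\sigma_{n+1}\colon S^{n+1}(E/M)\to E$. By Abel's theorem for elliptic curves, over a point $x\in E$ the fibre parametrises effective divisors $D$ of degree $n+1$ with $\O(D)\simeq \O((n+1)e - e + x)$, that is, a linear system $\P(H^0(\O_{E}(ne + x)))\simeq \P^n$. This is the classical fact that $\sigma_{n+1}$ is a $\P^n$-bundle for $n\geq 1$, namely $\P(p'_\ast\mathcal{L})$ for the rank $n+1$ vector bundle obtained by pushing forward a Poincaré-type line bundle $\mathcal{L}$ on $E\times_M E$ along the second projection $p'$. Cohomology and base change make this pushforward locally free, because $H^1$ vanishes for degree $n+1\geq 2$.

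The second step is to check that $i_n(\mathrm{Hilb}^n)$ is a hyperplane bundle inside this projective bundle. It consists of those $D$ containing $e$, which is the vanishing locus of the linear functional on sections given by evaluation at $e$. Thus $i_n(\mathrm{Hilb}^n)\cap\sigma_{n+1}^{-1}(x)$ is the hyperplane $\P(H^0(\O(ne+x-e)))$, which indeed has dimension $n-1$. Globally, $i_n(\mathrm{Hilb}^n)$ is the zero locus of a nowhere vanishing section of $\O_{\P}(1)\otimes\sigma^\ast\mathcal{N}$, where $\mathcal{N}$ is the line bundle on $E$ given by the fibre of $\mathcal{L}$ at $e$. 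Its complement is therefore, Zariski locally on $E$, the complement of a hyperplane in $\P^n_E$, namely $\mathbf{A}^n_E$. Affineness is Zariski local on the target, so this gives the claim.

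The main obstacle is making the $\P^n$-bundle description rigorous over a general Noetherian base $M$, which requires a relative Abel map and a Poincaré line bundle. As a fallback I would argue more cheaply. The scheme $S^{n+1}(E/M)$ is proper over $E$, and the divisor $i_n(\mathrm{Hilb}^n)$ is $\sigma_{n+1}$-ample because it is ample on each fibre (a hyperplane, or more crudely a nonzero effective divisor on a fibre of Picard rank one). By the Stacks project, the complement of a relatively ample effective Cartier divisor in a proper scheme is affine over the base. Ampleness can be checked on fibres, since $\sigma_{n+1}$ is proper and flat.
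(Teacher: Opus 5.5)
Your argument is correct. Your fallback is essentially the paper's proof. The paper also invokes Schwarzenberger's description of $\sigma_{n+1}$ as a $\mathbf{P}^n$-bundle on which $L_n$ is $\O(1)$, but only over geometric points of $M$. It then spreads ampleness of $L_n$ from geometric fibres to all fibres by flat base change, then to relative ampleness over $E$ via EGA, and finally deduces affineness of the complement.

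Your main route is genuinely different: it globalises the projective bundle itself, $\mathrm{Hilb}^{n+1}_{E/M}\simeq\mathbf{P}(p'_\ast\mathcal{L})$ over $E$, with $i_n$ a hyperplane sub-bundle. The complement is then Zariski-locally $\mathbf{A}^n_E$. This gives a sharper conclusion and bypasses the ampleness and affineness criteria. The price is a relative Abel theorem over an arbitrary Noetherian $M$. You need that $\sigma_{n+1}$ is compatible with $E\simeq\mathrm{Pic}^0_{E/M}$ on $T$-points. You also need that the zero-locus map $\mathbf{P}(p'_\ast\mathcal{L})\to\mathrm{Hilb}^{n+1}_{E/M}$ over $E$ is an isomorphism, for instance by the fibrewise criterion, since both sides are flat and proper over $E$. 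The bundle itself is not the obstacle: $\mathcal{L}=\O_{E\times_M E}(\Delta+n\cdot(\{e\}\times_M E))$ works.

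Two small corrections. First, the section of $\O_{\mathbf{P}}(1)\otimes\sigma^\ast\mathcal{N}$ is not nowhere vanishing; it vanishes exactly along the hyperplane sub-bundle. What you need is that evaluation $p'_\ast\mathcal{L}\to\mathcal{N}$ is surjective. This is base-point-freeness in degree $n+1\geq 2$, which is what your dimension count $n-1$ expresses. Second, only the underlying closed set of the image of $i_n$ matters for the complement, so no scheme-theoretic comparison is needed.

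Finally, your fallback's last step uses properness of $\sigma_{n+1}$ essentially. Locally on $E$, you embed by a power of $\O(D)$ with $s_D^m$ as a coordinate, so the complement is closed in an affine space. The paper instead passes through quasi-affineness of $U_{n+1}\to E$ and Serre's criterion. The vanishing of $R^1(\iota_{n+1})_\ast I$ it needs there really rests on the same properness, not on quasi-affineness alone.
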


\begin{proof}
Denote the dual of the ideal sheaf of \(i_n\) by \(L_n\) and consider the situation after base change to a geometric point of \(M\). Note that, by construction, the formation of Hilbert schemes of points commutes with base changes. By \cite[Th.4 \& Pr.6]{Schwarzenberger_1963_JacobiansSymmetricProducts}, the maps \(\sigma_n\) are projective bundles with fibre the projective space \(\mathbf{P}^{n-1}\) for each $n\geq 2$.
Moreover, \cite[Pr.11]{Schwarzenberger_1963_JacobiansSymmetricProducts} identifies \(L_n\) with the canonical line bundle \(\O(1)\) on these projective bundles.
Thus, if \(M=\Spec \bar{\kappa}\) is the spectrum of an algebraically closed field $\bar{\kappa}$, then \(L_n\) is an ample line bundle relative to \(\sigma_{n+1}\), for \(n\geq 1\).

Returning to our original setting, we see that \(L_n\) is ample on all geometric fibres of \(\sigma_{n+1}\), for \(n\geq 1\).
It is then also ample on all fibres of \(\sigma_{n+1}\): indeed, let \(x\in|E|\) be a point, \(\bar{x}\colon \Spec \bar{\kappa(x)}\to E\) be the associated geometric point, and \(F\) be a coherent sheaf on \(\mathrm{Hilb}^{n+1}_{E/M}\vert_x\). Then \(F\vert_{\bar{x}}\) is coherent on \(\mathrm{Hilb}^{n+1}_{E/M}\vert_{\bar{x}}\). Since \(L_n\vert_{\bar{x}}\) is ample, we have that
\[H^q(\mathrm{Hilb}^{n+1}_{E/M}\vert_{\bar{x}},F\vert_{\bar{x}} \otimes (L_n\vert_{\bar{x}})^{\otimes m}) =0\]
for \(q>0\) and all large enough \(m\).
As \(\bar{x}\to x\) is flat, flat base change (\cite[Tag \href{https://stacks.math.columbia.edu/tag/02KH}{02KH}]{stacks}) gives
\[H^q(\mathrm{Hilb}^{n+1}_{E/M} \vert_{\bar{x}},F\vert_{\bar{x}} \otimes(L_n\vert_{\bar{x}})^{\otimes m}) = H^q(\mathrm{Hilb}^{n+1}_{E/M} \vert_{x},F \otimes (L_n\vert_{x})^{\otimes m}) \otimes_{k(x)} k(\bar{x}).\]
By \cite[Pr.2.6.1]{Grothendieck_1961_ElementsGeometrieAlgebrique}, we conclude that \(L_n\vert_x\) is ample, and by \cite[Cor.9.6.5]{Grothendieck_1966_ElementsGeometrieAlgebrique}, that \(L_n\) is ample relative to \(\sigma_{n+1}\).

Let \(U_{n+1}=\mathrm{Hilb}^{n+1}_{E/M}\setminus\mathrm{Hilb}^{n}_{E/M}\) be the open complement. The line bundle \(L_n\) comes with a canonical section which becomes an equivalence when restricted to \(U_{n+1}\), meaning that the line bundle \(L_n\vert_{U_{n+1}}\) is ample relative to \(\sigma_{n+1}\) (\cite[Tag \href{https://stacks.math.columbia.edu/tag/0B3E}{0B3E}]{stacks}), so
\[U_{n+1}\to \mathrm{Hilb}^{n+1}_{E/M}\xrightarrow{\sigma_{n+1}}E\]
is quasi-affine (\cite[Tag \href{https://stacks.math.columbia.edu/tag/0891}{0891}]{stacks}). Since \(M\) is Noetherian, \(\mathrm{Hilb}^{n+1}_{E/M}\) is also Noetherian (\cite[Cor.6.3.8]{Grothendieck_1960_ElementsGeometrieAlgebrique}), and consequently so is \(U_{n+1}\). In particular, every quasi-coherent sheaf of ideals \(I\subset \O_{U_{n+1}}\) is coherent. We have already seen that \(\O_{U_{n+1}}\) is ample relative to the composite
\[\iota_{n+1}\colon U_{n+1}\to \mathrm{Hilb}^{n+1}_{E/M}\xrightarrow{\sigma_{n+1}}E,\]
so \(R^1(\iota_{n+1})_*I=0\).
Since \(\iota_{n+1}\) is quasi-compact and separated, we see that \(\iota_{n+1}\) is affine by Serre's affineness criterion (\cite[Tag \href{https://stacks.math.columbia.edu/tag/01XF}{01XF}]{stacks}).
\end{proof}

We are also interested in a subobject of the symmetric product above. Let \(K_n\) be the pullback of \(s_n \colon E^{\times_M n}\to E\) along \(e \colon M\to E\), and denote the inclusion by \(\iota \colon K_n\to E^{\times_M n}\). The $M$-scheme $K_n$ inherits a $\Sigma_n$-action such that $\iota$ is $\Sigma_n$-equivariant. Moreover, as a closed subscheme of $E^{\times_M n}$, we see that $K_n$ is quasi-projective, so this $\Sigma_n$-action on $K_n$ admits a geometric quotient $K_n/ \Sigma_n$.

\begin{prop}\label{lem:SpecialUnitary_Schemes_ECD_affine_complement}
    For any $n\geq 2$, the commutative diagram of schemes
    \[\begin{tikzcd}
	{K_n/\Sigma_n} & {S^n(E/M)} \\
	M & E
	\arrow[from=1-1, to=1-2]
	\arrow[from=1-1, to=2-1]
	\arrow["{\sigma_n}", from=1-2, to=2-2]
	\arrow["e", from=2-1, to=2-2]
\end{tikzcd}\]
is a pullback.
\end{prop}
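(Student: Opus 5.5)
The plan is to reduce the claim to a statement about invariants of finite group actions on affine schemes, where it becomes the compatibility of taking $\Sigma_n$-invariants with the flat base change $e\colon M\to E$.

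First, form the fibre product $P = M\times_{E} S^n(E/M)$. The map $K_n\to E^{\times_M n}\to S^n(E/M)$ lies over $e$ and is $\Sigma_n$-invariant. By the categorical quotient property of $K_n/\Sigma_n$ (\cite[Rmk.2.8]{Rydh_2013_ExistencePropertiesGeometric}), we therefore get a canonical map $\theta\colon K_n/\Sigma_n\to P$, and the proposition amounts to showing that $\theta$ is an isomorphism. Since $E^{\times_M n}\to S^n(E/M)$ is a geometric quotient by a finite group of a quasi-projective $M$-scheme, it is affine and integral. Every point therefore has a $\Sigma_n$-stable affine open neighbourhood, and we may work locally.

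Concretely, cover $S^n(E/M)$ by affine opens $\Spec B^{\Sigma_n}$ whose preimage in $E^{\times_M n}$ is $\Spec B$ with $B$ carrying a $\Sigma_n$-action. We may also take these opens to map into affine opens $\Spec R\subseteq E$ on which the zero section is cut out by an ideal $I\subset R$. The claim is then local, and it becomes the statement that the natural map
\[B^{\Sigma_n}\otimes_R R/I \to (B\otimes_R R/I)^{\Sigma_n}\]
is an isomorphism, where $K_n$ corresponds locally to $B\otimes_R R/I$. Here I use that $s_n$ factors as $E^{\times_M n}\to S^n(E/M)\xrightarrow{\sigma_n}E$, so that $K_n = E^{\times_M n}\times_{S^n} P$.

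The key input is that geometric quotients commute with \emph{flat} base change. Taking invariants is the kernel of $B\to \prod_{g\in\Sigma_n}B$, $b\mapsto (gb-b)_g$, and tensoring over $B^{\Sigma_n}$ with a flat algebra preserves this kernel. However, $M\to E$ is a closed immersion and not flat, so this does not apply directly. This is the main obstacle: invariants need not commute with non-flat base change, for instance when $n!$ is not invertible.

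To get around it, I would use the flatness of $\sigma_n$, established above via the Hilbert scheme identification (\ref{eq:symprod_Hilbert_schemes}). Translation gives an isomorphism $E\times_M P\cong S^n(E/M)$ over $E$, namely $(x,D)\mapsto D + [x]$-translate, suitably normalised using the group law. One can also use the addition map $E\times_M K_n\to E^{\times_M n}$, $(x,(y_i))\mapsto (y_i + x)$. This map is $\Sigma_n$-equivariant and, after accounting for the finite étale-over-$\Z[1/n]$ kernel $E[n]$, exhibits $E^{\times_M n}$ as a quotient of $E\times_M K_n$.

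A cleaner route is to compare both sides after the faithfully flat base change $E\to E$, $x\mapsto nx$, or to argue fibrewise. Both $K_n/\Sigma_n$ and $P$ are flat and proper over $M$: $P$ because $\sigma_n$ is flat, and $K_n/\Sigma_n$ via $K_n\cong E^{\times(n-1)}$. Moreover, $\theta$ is a finite bijective map that is an isomorphism on geometric fibres over $M$, since for an algebraically closed field the quotient of the reduced fibre is computed by the classical statement that both sides are $\mathbf{P}^{n-1}$ by \cite{Schwarzenberger_1963_JacobiansSymmetricProducts}. The fibrewise criterion for isomorphisms of flat finitely presented schemes then finishes. The genuinely delicate point is checking that the fibre of $K_n/\Sigma_n$ is reduced and equals that quotient, that is, that quotient formation commutes with passing to fibres over $M$. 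I would handle this using the normality of $P$ together with the fact that $\theta$ is finite birational onto the normal scheme $P$, via Zariski's main theorem.
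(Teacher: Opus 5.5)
Your reduction to the comparison map $\theta\colon K_n/\Sigma_n\to P=M\times_{e,E}S^n(E/M)$ is correct, and so is your diagnosis: invariants commute with flat base change but not with arbitrary base change, and $e$ is not flat. The route you actually develop, however, does not close the argument.

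\begin{itemize}
\item The fibrewise criterion needs $K_n/\Sigma_n$ to be flat over $M$. This does not follow from $K_n\cong E^{\times_M(n-1)}$, because taking invariants does not preserve flatness in general.
\item More seriously, comparing geometric fibres requires $(K_n/\Sigma_n)\times_M\kappa(m)\cong(K_n\times_M\kappa(m))/\Sigma_n$. That is precisely the non-flat base change you set out to avoid, so this step is circular.
\item Your repair via Zariski's main theorem needs $P$ normal and $K_n/\Sigma_n$ reduced, i.e.\ essentially $M$ normal. The appendix allows an arbitrary Noetherian $M$. Where the repair does apply, it makes the fibrewise detour unnecessary anyway.
\item Translation does not give $E\times_M P\cong S^n(E/M)$ over $E$. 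The map $(x,D)\mapsto t_x(D)$ shifts $\sigma_n$ by $nx$, so it lies over $[n]$ and is a degree-$n^2$ cover, not an isomorphism.
\end{itemize}

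What translation does give is the missing idea: it is the $[n]$-base change you mention and then drop. The map $E\times_M K_n\to E\times_{[n],E}E^{\times_M n}$, $(x,(y_i))\mapsto(x,(y_i+x))$, is a $\Sigma_n$-equivariant isomorphism of $E$-schemes. Its inverse subtracts $x$; note that $\sum_i(y_i+x)=nx$.

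Since $E\to M$ is flat and $[n]\colon E\to E$ is finite locally free \cite[Th.2.3.1]{km}, your kernel argument shows that $\Sigma_n$-quotients commute with both base changes. This gives $E\times_M(K_n/\Sigma_n)\cong E\times_{[n],E}S^n(E/M)$ over $E$, compatibly with the maps from $E\times_M K_n$. Pulling back along $e$ in the first factor and using $[n]\circ e=e$ yields $K_n/\Sigma_n\cong M\times_{e,E}S^n(E/M)=P$. The categorical quotient property then identifies this isomorphism with $\theta$. No flatness of $\sigma_n$, no Schwarzenberger, and no Noetherian or normality hypothesis is needed.

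For comparison, the paper instead passes to fppf quotient sheaves, uses universality of colimits, and invokes \cite[Th.V.4.1]{sga3_tomeI}. Be aware that this theorem produces a categorical quotient, which represents the fppf quotient sheaf only for free actions. For example, the $k[\epsilon]/(\epsilon^2)$-points $(0,0)$ and $(\epsilon,-\epsilon)$ of $\mathbf{A}^2$ have the same image in $\mathbf{A}^2/\Sigma_2$ without being fppf-locally $\Sigma_2$-related. So that route also needs an input such as the translation trick above.
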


\begin{proof}
    Let us write $Y\to X$ for the $\Sigma_n$-quotient of the map $K_n \to E^{\times_M n}$ in fppf-sheaves of sets. By universality of colimits, we see that the diagram
    \[\begin{tikzcd}
        {Y}\ar[r]\ar[d]    &   {X}\ar[d]    \\
        {M}\ar[r]    &   {E}
    \end{tikzcd}\]
    is a pullback in fppf-sheaves of sets. It suffices then, to show that these quotients $Y$ and $X$ are actually schemes. Indeed, the fully faithfulness of the functor of points from the 1-category of classical schemes to fppf sheaves of sets would show that the schemes representing $Y$ and $X$ are the quotients in schemes $K_n/\Sigma_n$ and $S^n(E/M)$, respectively. This also would show the desired diagram is a pullback in schemes. To see that $Y$ and $X$ are schemes, we now apply Grothendieck's theorem for quotients of groupoids in schemes \cite[Th.V.4.1]{sga3_tomeI}, which applies as the group $\Sigma_n$ is finite and both $K_n$ and $E^{\times_M n}$ are quasi-projective.
\end{proof}

The elliptic curve comes with a natural action of \(C_2\), given by the inversion in the abelian group structure.
In the body of this article, we will also see the quotients
\[T^n(E/M)=(E\times_M\cdots\times_ME)/((C_2)^n\rtimes \Sigma_n)\]
arise. There is a natural identification
\[T^1(E/M)\simeq\mathbb{P}_M((p_*\O_E(2e))^\vee),\]
and \((p_*\O_E(2e))^\vee\) is a vector bundle of rank \(2\) on \(M\).
Thus, \(T^1(E/M)\) is a proper smooth curve over \(M\) of genus \(0\).

As \(p \colon E\to M\) is flat, \cite[Pr.2.10(ii)]{Rydh_2013_ExistencePropertiesGeometric} identifies
\[(E\times_M E)/C_2\simeq (E/C_2)\times_M E.\]
Applying this iteratively, we find that
\[(E\times_M\cdots \times_M E)/(C_2)^n\simeq T^1(E/M)\times_M\cdots \times_M T^1(E/M),\]
and consequently that
\[T^n(E/M)\simeq S^n(T^1(E/M)/M).\]
Again, by \cite[XVII.6.3.9]{sgafour}, we conclude that
\[T^n(E/M)\simeq \mathrm{Hilb}^n_{T^1(E/M)/M}.\]
Arguing as in \Cref{prop:Hilb_Schemes_ECD_affine_complement}, taking into account that our curve has relative genus 0, immediately yields:

\begin{prop}\label{prop:Symplectic_Schemes_ECD_affine_complement}
    For the $n\geq 1$, the map
    \[T^n(E/M)\to T^{n+1}(E/M)\]
    is an effective Cartier divisor over $M$ and its open complement is affine over \(M\).
\end{prop}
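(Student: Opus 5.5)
Since the statement says the proof of \Cref{prop:Hilb_Schemes_ECD_affine_complement} carries over, I would rerun that argument with $E$ replaced by the genus-zero curve $P = T^1(E/M)$. The one real change is the base of the fibration: the summation map $S^{n+1}(P/M)\to \mathrm{Pic}$ now lands in the Jacobian of a genus-zero curve, which is $M$ itself. So affineness should come out relative to $M$, not relative to $E$.

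\textbf{Step 1: the divisor.} Via the identifications $T^n(E/M)\simeq S^n(P/M)\simeq \mathrm{Hilb}^n_{P/M}$, take the transition map to be
\[
\mathrm{Hilb}^n_{P/M}\xrightarrow{\id\times s}\mathrm{Hilb}^n_{P/M}\times_M P\xrightarrow{t_n}\mathrm{Hilb}^{n+1}_{P/M},
\]
where $s\colon M\to P$ is the image of the zero section $e$ under $E\to P$. The map $(t_n,\mathrm{pr}_2)$ is the universal closed subscheme over $\mathrm{Hilb}^{n+1}_{P/M}$, so it is an effective Cartier divisor relative to $\mathrm{Hilb}^{n+1}_{P/M}$ by Tag 0B9D. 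It is also flat over $P$ by symmetry of the construction, hence an effective Cartier divisor relative to $P$. Base changing along $s$ and applying Tag 056Q shows that the transition map is an effective Cartier divisor relative to $M$.

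Before using this, I would check that it agrees with the map $T^n\to T^{n+1}$ induced by $Sp(n)\subset Sp(n+1)$ from \Cref{thm:extra-applicable}. On the level of tuples, that map adjoins the zero point, which lies over $s$, so the two should coincide.

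\textbf{Step 2: affineness of the complement.} Over a geometric point of $M$, $P\simeq \mathbf{P}^1$ and $\mathrm{Hilb}^{n+1}_{\mathbf{P}^1}\simeq \mathbf{P}^{n+1}$. The divisor is then the hyperplane of degree-$(n+1)$ divisors containing the point $s$, and its complement is $\mathbf{A}^{n+1}$, which is affine. Globally, I would argue as in \Cref{prop:Hilb_Schemes_ECD_affine_complement}:
\begin{enumerate}
\item Show that $L=\O(\text{divisor})$ is ample on every geometric fibre, since there it is $\O(1)$.
\item Descend ampleness to all fibres using flat base change and EGA III 2.6.1.
\item Conclude that $L$ is ample relative to the proper map $\mathrm{Hilb}^{n+1}_{P/M}\to M$, by EGA IV 9.6.5.
\item Restrict to the complement $U$, where the canonical section trivialises $L$. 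Then $\O_U$ is ample relative to $U\to M$, so $U\to M$ is quasi-affine.
\item Apply Serre's criterion, using that $U$ is Noetherian, to get that $U\to M$ is affine.
\end{enumerate}

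\textbf{Main obstacle.} I expect Step 2 to be the hardest, specifically identifying the fibres with $\mathbf{P}^{n+1}$ and $L$ with $\O(1)$. The argument for $E$ cited Schwarzenberger for symmetric products of a genus-$g$ curve, where the fibres of $\sigma_{n+1}$ are projective bundles over the Jacobian. For genus zero this reduces to the classical isomorphism $S^{n+1}\mathbf{P}^1\simeq\mathbf{P}(\mathrm{Sym}^{n+1})$, with the divisors through a fixed point forming a hyperplane. I would prove this directly by using binary forms, rather than citing the same reference.
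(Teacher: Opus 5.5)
Your route is the one the paper intends. Its proof of this statement is literally ``argue as in \Cref{prop:Hilb_Schemes_ECD_affine_complement} with the genus-zero curve $T^1(E/M)$, over $M$ instead of $E$''. Your Step 1 and items 1--4 of Step 2 are fine. In Step 1, flatness of $\mathrm{Hilb}^n_{P/M}\times_M P\to P$ is just the base change of the smooth map $\mathrm{Hilb}^n_{P/M}\to M$; no symmetry argument is needed.

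\textbf{The gap is item 5.} After item 4 you only know that $U\to M$ is quasi-affine, since relative ampleness of $\O_U$ \emph{is} quasi-affineness. Noetherianity of $U$ only lets you test Serre's criterion on coherent ideals. Nothing you have written produces the vanishing of $H^1(U,\mathcal I)$ that the criterion needs, and there is no Serre vanishing for the non-proper morphism $U\to M$. Quasi-affine Noetherian schemes need not be affine: $\mathbf{A}^2\setminus\{0\}$ over a field has $\O$ ample but $H^1(\O)\neq 0$. The model argument in the proof of \Cref{prop:Hilb_Schemes_ECD_affine_complement} makes exactly this leap (``$\O_{U_{n+1}}$ is ample relative to $\iota_{n+1}$, so $R^1(\iota_{n+1})_*I=0$''), so transcribing it does not close the gap. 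Your ``main obstacle'' is also not where the difficulty lies: the fibrewise identification with $\mathbf{P}^{n+1}$ and $\O(1)$ is classical.

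The missing idea is to use ampleness of $L$ on the \emph{proper} scheme $X=\mathrm{Hilb}^{n+1}_{P/M}$, together with the fact that $U=X_\tau$ is the non-vanishing locus of the canonical section $\tau$ of $L$. Affineness is local on $M$, so take $M=\Spec R$ Noetherian. Extend a coherent ideal $\mathcal I\subset\O_U$ to a coherent $\widetilde{\mathcal I}$ on $X$. Since $X$ is qcqs, $H^1(U,\mathcal I)\simeq\mathrm{colim}_{m}\, H^1(X,\widetilde{\mathcal I}\otimes L^{\otimes m})$, with transition maps given by multiplication by $\tau$. These groups vanish for $m\gg 0$ by Serre vanishing for the proper $X\to\Spec R$ with ample $L$, so Serre's criterion now applies. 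Equivalently, $X\simeq\mathrm{Proj}_R\bigoplus_{m\geq 0}\Gamma(X,L^{\otimes m})$ and $U=D_+(\tau)$.

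Alternatively, in genus zero you can bypass items 1--5 entirely. The curve $T^1(E/M)=\mathbb{P}_M((p_*\O_E(2e))^\vee)$ is Zariski-locally $\mathbf{P}^1$ over $M$. On such opens, after moving $s$ to $[1:0]$ (possible Zariski-locally), your binary-forms description works over the base. It identifies $\mathrm{Hilb}^{n+1}_{P/M}$ with $\mathbf{P}^{n+1}$ and the divisor with the coordinate hyperplane $a_0=0$. Hence $U$ is Zariski-locally $\mathbf{A}^{n+1}$ over $M$, and so affine over $M$.
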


\section{Descendability for equivariant topological modular forms}\label{sec:appendix2}
The goal of this appendix is to prove the following statement:

\begin{theorem}\label{thm:descenable_maps_wls}
    Let $G$ be a compact Lie group. Then the morphisms of $\E_\infty$-$G$-rings
    \[\TMF_G[\tfrac{1}{3}] \to \TMF_1(3)_G \to \TMF(3)_G, \qquad \TMF_G[\tfrac{1}{2}] \to \TMF_1(2)_G \to \TMF(2)_G\]
    all admit descent. In particular, the map of $\E_\infty$-$G$-rings
    \[\TMF_G \to \TMF(2)_G \times \TMF(3)_G\]
    admits descent.
\end{theorem}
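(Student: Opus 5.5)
The plan is to reduce every statement to the corresponding nonequivariant descendability results for $\TMF$, due to Mathew (e.g.\ $\TMF[\tfrac13]\to\TMF_1(3)$, $\TMF_1(3)\to\TMF(3)$, and likewise at the prime $2$, together with $\TMF\to\TMF(2)\times\TMF(3)$), and then base change them along $\TMF\to\TMF_G$. The key input is the base change formula of \Cref{ex:relative_base_change} and \Cref{rmk:base_change_for_general_guys}. It gives equivalences of $\E_\infty$-$G$-rings
\[\TMF_G\otimes_{\TMF}\TMF_1(3)\simeq\TMF_1(3)_G,\qquad \TMF_1(3)_G\otimes_{\TMF_1(3)}\TMF(3)\simeq \TMF(3)_G,\]
and similarly for level $2$ and with $\TMF[\tfrac13]$ in place of $\TMF$. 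Here $\TMF_1(3)_G$ and its relatives denote $\Ga_G$ of the pulled back oriented elliptic curve over the corresponding $0$-affine base stacks. These bases are $0$-affine: they are étale, or more generally affine, over $\M_\Ell^\ori$, or are Deligne--Mumford stacks covered by affines in the way Mathew--Meier treat them. Since the base change formula is proven for all compact Lie groups $G$ via restriction from $U(n)$, it applies to arbitrary $G$, as in the statement.

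The second step is the formal fact that descendability is stable under base change. Suppose $A\to B$ is a map of $\E_\infty$-rings and $B$ is descendable over $A$, meaning $A$ lies in the thick $\otimes$-ideal generated by $B$ in $\Mod_A$. Let $A\to C$ be a map to an $\E_\infty$-algebra in $\Sp_G$, i.e.\ $L_GA\to C$ in $\CAlg(\Sp_G)$. Then $C$ lies in the thick $\otimes$-ideal of $\Mod_C(\Sp_G)$ generated by $C\otimes_AB$. This holds because $C\otimes_A-\colon\Mod_A\to\Mod_C(\Sp_G)$ is exact, symmetric monoidal, and preserves retracts, so it carries thick ideals into thick ideals. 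Applying this with $A=\TMF[\tfrac13]$, $B=\TMF_1(3)$, $C=\TMF_G[\tfrac13]$ gives descent for the first map. Applying it with $A=\TMF_1(3)$, $B=\TMF(3)$, $C=\TMF_1(3)_G$ gives descent for the second, and the level $2$ case is identical. The final claim follows in the same way from descendability of $\TMF\to\TMF(2)\times\TMF(3)$, noting that $\TMF_G\otimes_\TMF(\TMF(2)\times\TMF(3))\simeq\TMF(2)_G\times\TMF(3)_G$ since the tensor product commutes with finite products. Alternatively, one can deduce it from the composites: descendable maps compose, and descent after inverting $2$ and after inverting $3$ assembles into integral descent via the standard fracture argument.

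The step I expect to be the main obstacle is justifying the base change equivalences precisely for the relevant bases. $\M_1(3)$ and $\M(3)$, for instance, must have the $0$-affineness and diagonal hypotheses needed for \Cref{ex:relative_base_change}, and the identification $\TMF(3)_G\simeq\TMF_1(3)_G\otimes_{\TMF_1(3)}\TMF(3)$ requires composing two Beck--Chevalley equivalences from \Cref{pr:base_change_for_equivariant_theories}. I would handle this by noting that each of these moduli stacks is affine over, or a finite étale quotient of, something affine over, $\M_\Ell^\ori$ (which has affine diagonal by \Cref{lm:mellori_affine_diagonal}), and by citing Mathew--Meier for their $0$-affineness.
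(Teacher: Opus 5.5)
Your proposal is correct, but it takes a different route from the paper's proof.

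\textbf{The paper's route.} The paper checks Mathew's criterion (faithful plus compact target implies descent) directly in $G$-spectra:
\begin{itemize}
\item Faithfulness of $\TMF_G[\tfrac{1}{3}]\to\TMF_1(3)_G$ is reduced, via the base change equivalence, to faithfulness of $\TMF[\tfrac{1}{3}]\to\TMF_1(3)$.
\item Compactness of $\TMF_1(3)_G$ over $\TMF_G[\tfrac{1}{3}]$ comes from the equivariant Wood equivalence $\TMF_G[\tfrac{1}{3}]\otimes DA(1)\simeq\TMF_1(3)_G$ of \Cref{pr:equivariant_woods_theorem}.
\item Compactness of $\TMF(3)_G$ over $\TMF_1(3)_G$ comes from finite flatness of $\M^\ori(3)\to\M^\ori_1(3)$ together with base change.
\item The final claim is assembled by the same fracture-square argument you give as your alternative.
\end{itemize}

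\textbf{Your route.} You transport descendability itself along the exact symmetric monoidal functor $C\otimes_A-\colon\Mod_A(\Sp)\to\Mod_C(\Sp_G)$. The only equivariant input is the base change identification of \Cref{rmk:base_change_for_general_guys}, applied to the same $0$-affine level-structure bases the paper uses. The nonequivariant input you need (descendability of $\TMF[\tfrac{1}{3}]\to\TMF_1(3)\to\TMF(3)$ and the level-$2$ analogues) is exactly the $G=e$ case of the paper's argument.

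\textbf{Comparison.}
\begin{itemize}
\item Your argument is shorter and needs no Wood-type theorem.
\item Faithfulness in $\Sp_G$ comes for free, since descendability implies it. The paper instead passes from faithfulness for spectra to faithfulness for $G$-spectra, which as written implicitly needs a further step, e.g.\ using compactness to commute $\TMF_1(3)\otimes_{\TMF[\frac{1}{3}]}-$ with fixed points.
\item In return, the paper's route produces an explicit finite cell structure for $\TMF_1(3)_G$ over $\TMF_G[\tfrac{1}{3}]$, which is of independent interest.
\end{itemize}
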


\begin{remark}\label{rmk:alternative_proof}
This immediately leads to a another proof of \Cref{thm:step_to_universal_case}, as $\TMF(2) \times \TMF(3)$ is a strictly even-periodic Noetherian $\E_\infty$-ring. Indeed, by \cite[Pr.3.22]{mathew_Galois} and writing $R_G^{\bullet}$ for the Čech nerve of the map $\TMF_G \to \TMF(2)_G \times \TMF(3)_G$, the result above already identifies the categories
\[\Mod_{\TMF_G}(\Sp_G) \simeq \lim \Mod_{R_G^\bullet}(\Sp_G),\]
meaning that if we have equivariant affineness for each strictly even-periodic Noetherian affine, such as the underlying $\E_\infty$-ring $R^\bullet$ of $R_G^\bullet$, then we have the desired universal equivalence
\[\QCoh(\calE_G) \simeq \lim \QCoh(\sfE_G^\bullet) \simeq \lim \Mod_{R_G^\bullet}(\Sp_G) \simeq \Mod_{\TMF_G}(\Sp_G),\]
where $\sfE_G^\bullet$ is the base change of $\calE_G$ along $\Spec R^\bullet \to \M_\Ell^\ori$.
\end{remark}

We include \Cref{thm:descenable_maps_wls} here as an alternative approach to equivariant affineness that might be helpful in future research.

To prove this result, we will use the following finite spectra. Let $DA(1)$ be the \emph{double of $A(1)$}, the $8$-cell $2$-local finite spectrum of \cite[Df.4.2]{tmfhomology}, and $X_3$ be the $3$-cell $3$-local finite spectrum of \cite[Pr.4.12]{tmfhomology}.

\begin{prop}\label{pr:equivariant_woods_theorem}
    Let $G$ be a compact Lie group. Then there are equivalences of $\TMF_G$-modules
    \[\TMF_G[\tfrac{1}{3}] \otimes DA(1) \simeq \TMF_1(3)_G, \qquad \TMF_G[\tfrac{1}{2}] \otimes X_3 \simeq \TMF_1(2)_G.\]
\end{prop}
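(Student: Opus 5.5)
The plan is to deduce the equivariant statement from the classical (nonequivariant) Wood-type theorems for $\TMF$ via the base change result \Cref{pr:base_change_for_equivariant_theories}, in the form of \Cref{ex:relative_base_change} and \Cref{rmk:base_change_for_general_guys}. Classically, Mathew's work \cite{tmfhomology} provides equivalences of $\TMF$-modules
\[\TMF[\tfrac{1}{3}] \otimes DA(1) \simeq \TMF_1(3), \qquad \TMF[\tfrac{1}{2}] \otimes X_3 \simeq \TMF_1(2).\]
Here $\TMF_1(3)$ and $\TMF_1(2)$ are the global sections of the (0-affine) moduli stacks $\M_1(3)$ and $\M_1(2)$. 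These stacks are affine, or at least 0-affine, over $\M_\Ell^\ori[\tfrac13]$ and $\M_\Ell^\ori[\tfrac12]$ respectively, and their oriented elliptic curves are pulled back from the universal one. I will take these equivalences as given input.

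Next I apply \Cref{rmk:base_change_for_general_guys} to the maps $\M_1(3) \to \M_\Ell^\ori$ and $\M_1(2) \to \M_\Ell^\ori$. Both source and target are 0-affine: the target by \Cref{ex:MM_mellor_affineness_intro}, and the sources by Mathew--Meier. This gives equivalences in $\CAlg(\Sp_G)$
\[\TMF_G \otimes_\TMF \TMF_1(3) \simeq \TMF_1(3)_G, \qquad \TMF_G \otimes_\TMF \TMF_1(2) \simeq \TMF_1(2)_G,\]
valid for every compact Lie group $G$, with $\TMF_G$ defined as in \Cref{rmk:TMFG_for_general_G} by restriction from some $U(n)$. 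Here $\TMF_G \otimes_\TMF -$ means tensoring over $L_G\TMF$ with $L_G(-)$, as in \eqref{eq:0affine_qcoh_g_comptuation}. Since $\TMF_1(3)$ is $3$-inverted, the same base change applied to $\M_\Ell^\ori[\tfrac13] \to \M_\Ell^\ori$ identifies $\TMF_G[\tfrac13] \simeq \TMF_G \otimes_\TMF \TMF[\tfrac13]$, and similarly at $2$. Alternatively, this is immediate because inverting an integer commutes with the colimit-preserving functor.

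Finally, I combine the two steps. The functor $\TMF_G \otimes_\TMF - \colon \Mod_\TMF(\Sp) \to \Mod_{\TMF_G}(\Sp_G)$ is symmetric monoidal and colimit-preserving, and it is compatible with tensoring by finite spectra, since $L_G$ is symmetric monoidal and $L_G(Y)$ is the inflated finite $G$-spectrum. Applying it to the classical equivalence gives
\[\TMF_G[\tfrac13]\otimes DA(1) \simeq \TMF_G \otimes_\TMF (\TMF[\tfrac13]\otimes DA(1)) \simeq \TMF_G\otimes_\TMF \TMF_1(3) \simeq \TMF_1(3)_G\]
as $\TMF_G$-modules, and similarly for $X_3$.

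The main obstacle is the middle step. I need to check that \Cref{pr:base_change_for_equivariant_theories} applies to $\M_1(n)$, which is a spectral Deligne--Mumford stack viewed in $\Stk$ and mapping to $\M_\Ell^\ori$. I also need the identification of its global sections and of the equivariant theory $\TMF_1(n)_G$, meaning that it is defined as $\Ga_G$ of the pulled-back curve. For nonapplicable $G$, this requires the restriction argument of \Cref{rmk:base_change_for_general_guys}, using that restriction is symmetric monoidal and commutes with $-\otimes DA(1)$. The remaining care is to check that the classical equivalence is one of $\TMF$-modules rather than only of spectra. It is, as stated in \cite{tmfhomology}, so base change preserves the module structure.
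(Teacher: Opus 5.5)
Your proposal is correct and follows essentially the same route as the paper: start from the nonequivariant Hopkins--Mahowald equivalence of \cite[Th.1.2]{tmfhomology}, which the paper notes is stated $2$-locally but only requires $3$ to be inverted. Then transport it along the base change equivalence $\TMF_G \otimes_\TMF \TMF_1(3) \simeq \TMF_1(3)_G$ of \Cref{rmk:base_change_for_general_guys}, which applies because $\M_1^\ori(3) \to \M_\Ell^\ori$ is affine and hence $\M_1^\ori(3)$ is 0-affine; the same argument handles $\TMF_1(2)$ and $X_3$.
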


This result appears in unpublished work of Sanath Devalapurkar without a direct proof. Both our proof below and his suggested proof build upon the nonequivariant case, a famous result of Hopkins--Mahowald, proven by Mathew in \cite[Th.1.2]{tmfhomology}.

\begin{proof}
    By \cite[Th.1.2]{tmfhomology}, we have the nonequivariant $2$-local Hopkins--Mahowald equivalence of $\TMF$-modules $\TMF \otimes DA(1) \simeq \TMF_1(3)$. Note that one only need invert $3$ for this equivalence, ie.~it lifts to a $\TMF[\tfrac{1}{3}]$-module equivalence $\TMF[\tfrac{1}{3}] \otimes DA(1) \simeq \TMF_1(3)$. Write $\TMF_1(3)_G$ for $\Ga_G \sfE$ where $\sfE$ is the base change of the universal oriented elliptic curve along $p \colon \M_1^\ori(3) \to \M_\Ell^\ori$. One observes the equivalences
    \[\TMF_1(3)_G \simeq \TMF_G \otimes_\TMF \TMF_1(3) \simeq \TMF_G \otimes_\TMF \TMF[\tfrac{1}{3}] \otimes DA(1) \simeq \TMF_G[\tfrac{1}{3}] \otimes DA(1),\]
    the first equivalence coming from \Cref{rmk:base_change_for_general_guys}, courtesy of the fact that $p$ is affine hence $\M_1^\ori(3)$ is 0-affine by \Cref{ex:MM_mellor_affineness_intro}, and the second by the above Hopkins--Mahowald equivalence. The analogous argument holds for $\TMF_1(2)$ and $X_3$.
\end{proof}

\begin{remark}
    It immediately follows that similar equivalences hold for \emph{global topological modular forms} $\gTMF$, and its variants with level structures, defined using either \cite[Ex.5.1.7]{temperedglobal} or \cite[\textsection14]{gepner2024global2ringsgenuinerefinements}; this point holds in both cases. Indeed, a map of global $\gTMF[\tfrac{1}{3}]$-modules $\gTMF[\tfrac{1}{3}]\otimes DA(1) \to \gTMF_1(3)$ is precisely the data of a map of $\TMF$-modules $\TMF[\tfrac{1}{3}] \otimes DA(1) \to \TMF_1(3)$, so we choose the nonequivariant equivalence above. To check that this map is an equivalence, it suffices to check on underlying $G$-spectra for each compact Lie group $G$ (or finite group in the context of \cite{temperedglobal}), where this map is an equivalence from the proof of \Cref{pr:equivariant_woods_theorem}. A similar argument also recovers the equivalence of global $\gTMF[\tfrac{1}{2}]$-modules
    \[\gTMF[\tfrac{1}{2}] \otimes X_3 \simeq \gTMF_1(2).\]
    Many other statements in this article admit global analogues; we will leave that for another time.
\end{remark}

\begin{proof}[Proof of \Cref{thm:descenable_maps_wls}]
    Let us now focus on the first composite of morphisms; the argument for the second composite goes through with the obvious changes. First, we note that $f\colon \TMF_G[\tfrac{1}{3}] \to \TMF_1(3)_G$ is faithful. Indeed, for an object $M$ of $\Mod_{\TMF_G[\tfrac{1}{3}]}(\Sp_G)$ such that $\TMF_1(3)_G \otimes_{\TMF_G} M$ vanishes. Using the base change equivalence of \Cref{rmk:base_change_for_general_guys}, we identify this with
    \[0 = \TMF_1(3)_G \otimes_{\TMF_G[\tfrac{1}{3}]} M \simeq \TMF_1(3) \otimes_{\TMF[\tfrac{1}{3}]} M,\]
    from which it follows that $M=0$ as ${\TMF[\tfrac{1}{3}]} \to \TMF_1(3)$ is faithful in $\CAlg$. A similar argument shows that $\TMF_1(3)_G \to \TMF(3)_G$ is faithful using \Cref{ex:relative_base_change} this time.

    By \cite[Th.3.38]{mathew_Galois}, which states that faithful algebra maps with target compact over the domain admit descent, it suffices to show that $\TMF_1(3)_G$ is compact as a module over $\TMF_G[\tfrac{1}{3}]$ and that $\TMF(3)_G$ is compact over $\TMF_1(3)_G$. The first statement follows from \Cref{pr:equivariant_woods_theorem}, as $DA(1)$ is a finite $8$-cell complex. For the second statement, we use that $p\colon \M^\ori(3) \to \M_1^\ori(3)$ is finite flat (of rank $6 = |GL_2(\F_3)|/8$), so $p_\ast \O_{\M^\ori(3)}$ is dualisable, as this is true locally. As $\M_1^\ori(3)$ is 0-affine, we see that $p_\ast \O_{\M^\ori(3)} \simeq \TMF(3)$ is compact in $\QCoh(\M_1^\ori(3)) \simeq \Mod_{\TMF_1(3)}(\Sp)$, as desired. By base change, so \Cref{ex:relative_base_change}, it follows that $\TMF_1(3)_G \to \TMF(3)_G$ views the target as compact over the base.

    For the ``in particular'' statement, the map $A \to A[\tfrac{1}{2}] \times A[\tfrac{1}{3}]$ admits descent for any $\E_\infty$-$G$-ring by the usual pullback $A \simeq A[\tfrac{1}{2}] \times_{A[\tfrac{1}{6}]} A[\tfrac{1}{3}]$. The two composites admitting descent above then combine with this to show that $\TMF_G \to \TMF(2)_G \times \TMF(3)_G$ admits descent.
\end{proof}

\begin{remark}\label{rmk-g-stacks}
    We also hope that these kinds of descendability results might help in future work studying \emph{genuinely equivariant stacks}, perhaps based on $\CAlg(\Sp_G)$ or equipped with the finitary \emph{universal descent topology}, where covers are morphisms admitting descent; this appears in \cite[Rmk.2.20]{Davies_galois_geometric_FP} when $G$ is the trivial group.
\end{remark}

\addcontentsline{toc}{section}{References}
\scriptsize
\bibliography{references} 
\bibliographystyle{alpha}
\end{document}